\tikzset{cross/.style={cross out, draw=black, minimum size=2*(#1-\pgflinewidth), inner sep=0pt, outer sep=0pt}, cross/.default={1pt}}
\numberwithin{equation}{section}
\theoremstyle{plain}
\newtheorem{thm}{Theorem}[section]
\newtheorem{lem}[thm]{Lemma}
\newtheorem{prop}[thm]{Proposition}
\newtheorem{cor}[thm]{Corollary}
\theoremstyle{definition}
\newtheorem{defn}[thm]{Definition}
\newtheorem{rem}[thm]{Remark}
\newcommand{\g}{{\mathfrak g}}
\newcommand{\n}{{\mathfrak n}}
\newcommand{\h}{{\mathfrak h}}
\newcommand{\half}{\frac{1}{2}}
\newcommand{\on}{.}
\newcommand{\FL}{F\!L}
\newcommand{\lf}{(}
\newcommand{\rf}{)}
\renewcommand{\t}{t}
\newcommand{\wc}{t}
\newcommand{\bi}{{\bar\imath}}
\begin{document}

\allowdisplaybreaks

\newcommand{\arXivNumber}{1505.07582}

\renewcommand{\PaperNumber}{091}

\FirstPageHeading

\ShortArticleName{Populations of Solutions to Cyclotomic Bethe Equations}

\ArticleName{Populations of Solutions\\ to Cyclotomic Bethe Equations}

\Author{Alexander {VARCHENKO}~$^\dag$ and Charles~A.S.~{YOUNG}~$^\ddag$}

\AuthorNameForHeading{A.~Varchenko and C.A.S.~Young}

\Address{$^\dag$~Department of Mathematics, University of North Carolina at Chapel Hill,\\
\hphantom{$^\dag$}~Chapel Hill, NC 27599-3250, USA}
\EmailD{\href{mailto:anv@email.unc.edu}{anv@email.unc.edu}}

\Address{$^\ddag$~School of Physics, Astronomy and Mathematics, University of Hertfordshire,\\
\hphantom{$^\ddag$}~College Lane, Hatfield AL10 9AB, UK}
\EmailD{\href{mailto:charlesyoung@cantab.net}{charlesyoung@cantab.net}}

\ArticleDates{Received June 17, 2014, in f\/inal form November 05, 2015; Published online November 14, 2015}

\Abstract{We study solutions of the Bethe Ansatz equations for the cyclotomic Gaudin model of~[Vicedo B., Young C.A.S., arXiv:1409.6937].
We give two interpretations of such solutions: as critical points of a cyclotomic master function, and as critical points with cyclotomic symmetry of a certain ``extended'' master function.
In f\/inite types, this yields a~correspondence between the Bethe eigenvectors and eigenvalues of the cyclotomic Gaudin model and those of an ``extended'' non-cyclotomic Gaudin model.
We proceed to def\/ine \emph{populations} of solutions to the cyclotomic Bethe equations, in the sense of~[Mukhin E., Varchenko A., \textit{Commun.\ Contemp.\ Math.} \textbf{6} (2004), 111--163, math.QA/0209017], for diagram automorphisms of Kac--Moody Lie algebras.
In the case of type A with the diagram automorphism, we associate to each population a~vector space of quasi-polynomials with specif\/ied ramif\/ication conditions. This vector space is equipped with a ${\mathbb Z}_2$-gradation and a~non-degenerate bilinear form which is (skew-)symmetric on the even (resp.~odd) graded subspace. We show that the population of cyclotomic critical points is isomorphic to the variety of isotropic full f\/lags in this space.}

\Keywords{Bethe equations; cyclotomic symmetry}
\Classification{82B23; 32S22; 17B81; 81R12}

\section{Introduction}
Let $\g$ be a complex Kac--Moody Lie algebra and $\sigma\colon \g\to\g$ an automorphism of order $M\in {\mathbb Z}_{\geq 1}$. Let $\omega\in {\mathbb C}^\times$ be a primitive $M$th root of unity. We may choose a Cartan subalgebra $\h\subset \g$ such that $\sigma(\h)=\h$. We have the canonical pairing $\langle\cdot,\cdot\rangle\colon  \h^*\otimes \h\to {\mathbb C}$, and the simple roots $\alpha_i\in \h^*$ and coroots $\alpha^\vee_i\in \h$, where $i$ runs over the set $I$ of nodes of the Dynkin diagram.

Consider the following system of equations in $m\in {\mathbb Z}_{\geq 0}$ variables $\bm t = (t_1,\dots, t_m)\in {\mathbb C}^m$ and labels $\bm{\mathsf c} = ({\mathsf c}(1),\dots,{\mathsf c}(m))\in I^m$:
\begin{gather}
0= \sum_{k=0}^{M-1} \sum_{i=1}^N
\frac{\big\langle\sigma^k\Lambda_i, \alpha^\vee_{{\mathsf c}(j)}\big\rangle}{\wc_j-\omega^kz_i}
-  \sum_{k=0}^{M-1} \sum_{\substack{i=1\\i\neq j}}^m
\frac{\big\langle \sigma^k\alpha_{{\mathsf c}(i)},\alpha^\vee_{{\mathsf c}(j)}\big\rangle}{\wc_j-\omega^r\wc_i}
\nonumber\\
\hphantom{0=}{}+
\frac{1}{\wc_j}
\left(\big\langle  \Lambda_0,\alpha^\vee_{{\mathsf c}(j)} \big\rangle -\sum_{k=1}^{M-1} \frac{\big\langle \sigma^k\alpha_{{\mathsf c}(j)},\alpha_{{\mathsf c}(j)}^\vee\big\rangle}{1-\omega^k }  \right), \qquad j=1,\dots,m,
\label{cbeintro}
\end{gather}
where \looseness=1 $\Lambda_0,\Lambda_1,\dots,\Lambda_N\in \h^*$ are weights (with $\sigma \Lambda_0 = \Lambda_0$) and $z_1,\dots, z_N$ are non-zero points in the complex plane whose orbits, under the action of the cyclic group $\omega^{\mathbb Z}$, are pairwise disjoint.

When $\sigma =\operatorname{id}$, $\omega = 1$ and $\Lambda_0=0$, these equations reduce to the following well-known set of equations in mathematical physics:
\begin{gather}
0= \sum_{i=0}^{N}
\frac{\big\langle \Lambda_i,\alpha^\vee_{{\mathsf c}(j)}\big\rangle}{\t_j-z_i}
-  \sum_{\substack{i=1\\i\neq j}}^{m}
\frac{\big\langle \alpha_{{\mathsf c}(i)},\alpha^\vee_{{\mathsf c}(j)}\big\rangle}{\t_j-\t_i} , \qquad j=1,\dots,m.
\label{beintro}
\end{gather}
These are the equations for critical points of the \emph{master functions}~\cite{SV} which appear in the integral expressions for hypergeometric solutions to the Knizhnik--Zamolodchikov (KZ) equations. They are also the \emph{Bethe equations} of the quantum Gaudin model~\cite{BabujianFlume,FFR, RV}.

The equations \eqref{cbeintro} were introduced (for simple~$\g$) in the study of cyclotomic generalizations of the Gaudin model~\cite{VY1,VY2}~-- see also \cite{CrampeYoung,Skrypnyk1,Skrypnyk2}~-- as we recall in Section~\ref{gsec} below. Let us call them the \emph{cyclotomic Bethe equations}. (Cyclotomic generalizations of the KZ equations were studied in \cite{Brochier, Enriquez}, and appear in, in particular, the representation theory of cyclotomic Hecke algebras~\cite{VV}.)

It is natural to ask whether the cyclotomic Bethe equations~\eqref{cbeintro} can be interpreted as the equations for critical points of some master function. In the present paper we begin by giving
two dif\/ferent such interpretations.
First, they are indeed the critical point equations for a~\emph{cyclotomic master function}, which we write down in~\eqref{cmf}. But they are also the equations for critical points with cyclotomic~-- more precisely $S_m\ltimes ({\mathbb Z}/M{\mathbb Z})^m$~-- symmetry of what we call an \emph{extended master function},~\eqref{emf}.

Recall that a master function is specif\/ied by a \emph{weighted arrangement of hyperplanes}: that is, by a f\/inite collection ${\mathcal C}$ of af\/f\/ine hyperplanes in a complex af\/f\/ine space of f\/inite dimension, together with an assignment of a number $a(H)\in {\mathbb C}$ to each hyperplane $H\in {\mathcal C}$. Indeed, for each $H\in {\mathcal C}$, let $\ell_H=0$ be an af\/f\/ine equation for~$H$; then the master function is $\Phi =  \sum\limits_{H\in {\mathcal C}} a(H) \log \ell_H$.

The cyclotomic master function corresponds to a hyperplane arrangement in ${\mathbb C}^m$ whose hyperplanes include $t_i=\omega^k t_j$, $1\leq i<j\leq m$, for each $k\in {\mathbb Z}/M{\mathbb Z}$. By contrast, the extended master function corresponds to a hyperplane arrangement in ${\mathbb C}^{mM}$, but has only those hyperplanes corresponding to the type~$A$ root system, i.e., $t_i=t_j$, $1\leq i<j\leq mM$, etc.
Because the extended master function is a master function of this standard form, its critical point equations are the Bethe equations for a certain standard (i.e., non-cyclotomic) Gaudin model, which we call the \emph{extended Gaudin model}. This observation leads to our f\/irst result: a correspondence between the spectrum of the cyclotomic Gaudin model and a ``cyclotomic'' part of the spectrum of the extended Gaudin model. See Theorem~\ref{matchthm}.

Solutions to the Bethe equations~\eqref{beintro} form families called \emph{populations}. Populations were f\/irst introduced in~\cite{MV1, ScV}, where a~\emph{generation procedure} was given which produces families of new solutions to the Bethe equations starting from a given solution. A population is then def\/ined to be the Zariski closure of the set of all solutions to the Bethe equations obtained by repeated application of this generation procedure, starting from a given solution.
It is known that if~$\g$ is simple then every population is isomorphic to the f\/lag variety of the Langlands dual Lie algebra~${}^L\g$. This was shown in~\cite{MV1} for types~$A$,~$B$,~$C$ and in all f\/inite types in \cite{Freview, MVopers}. (A~population can also be understood as the variety of Miura opers with a given underlying oper; see \cite{Freview, MVopers}.)

In the present work our main goal is to initiate the study of \emph{cyclotomic populations}: populations of solutions to the equations~\eqref{cbeintro}.

We formulate in Section~\ref{cycgensec} a def\/inition of cyclotomic populations for $\g$ a general Kac--Moody Lie algebra and $\sigma$ any diagram automorphism of~$\g$ satisfying the \emph{linking condition}. (We also place certain restrictions on the weight $\Lambda_0$; see Section~\ref{l0sec}.) The linking condition~\cite{FSS} states that, for every node $i\in I$, the restriction of the Dynkin diagram to the orbit $\sigma^{\mathbb Z}(i)$ consists either of disconnected nodes (in which case~$i$ has \emph{linking number $L_i =1$}), or of a number of disconnected copies of the~${\mathrm A}_2$ Dynkin diagram (in which case~$i$ has \emph{linking number $L_i=2$}). What the linking condition ensures is that it is possible to ``fold'' the Dynkin diagram by the automorphism~$\sigma$. See Section~\ref{foldsec} and~\cite{FSS}.

In Section~\ref{cycgensec} we def\/ine the cyclotomic population to be the Zariski closure of the set of all cyclotomic critical points obtained by repeated application of a certain ``cyclotomic generation procedure'', starting from a given cyclotomic critical point. So the key ingredient is this ge\-ne\-ration procedure. Let us describe it, in outline. There is an ``elementary cyclotomic generation'' step associated to each orbit~$\sigma^{\mathbb Z}(i)$. There are two cases: $L_i=1$ and $L_i=2$.

First, suppose $i\in I$ is a node with linking number $L_i=1$.
A critical point $(\bm t,\bm{\mathsf c})$ is represented by a tuple of polynomials, $\bm y = (y_i(x))_{i\in I}$, where the roots of the polynomial~$y_i(x)$, $i\in I$, are the Bethe variables $t_s$ of ``colour''~$i$, i.e., those such that ${\mathsf c}(s)=i$. Following~\cite{MV1}, one def\/ines a~func\-tion of~$x$,
\begin{gather}
 y_i^{(i)}(x;c) := y_i(x) \int^x\xi^{\langle \Lambda_0,\alpha^\vee_i\rangle} T_i(\xi) \prod_{j\in I} y_j(\xi)^{-\langle\alpha_j,\alpha^\vee_i\rangle}d\xi + c y_i(x),\label{sg}
\end{gather}
depending on a parameter $c\in {\mathbb C}$ .
Here $T_i(x)$, $i\in I$, are certain functions encoding the ``frame'' data, i.e., the points $z_1,\dots, z_N$ and the weights $\Lambda_1,\dots,\Lambda_N$; see~\eqref{Tdef}.
The Bethe equations ensure that $y_i^{(i)}(x;c)$ is in fact a polynomial, and moreover that if we consider the new tuple~$\bm y^{(i)}(c)$ in which~$y_i(x)$ is replaced by~$y_i^{(i)}(x;c)$, then for almost all values of~$c$ this new tuple again represents a solution to the Bethe equations. Call the replacement $\bm y \mapsto \bm y^{(i)}(c)$ \emph{elementary generation in direction~$i$}.
Now suppose the initial tuple~$\bm y$ represents a cyclotomic point. That means
\begin{gather*}  y_{\sigma j}( \omega x) \simeq y_j(x),\qquad j\in I;
\end{gather*}
see Lemma \ref{cycptlem}. Since the orbit $\sigma^{\mathbb Z}(i)$ consists of disconnected nodes of the Dynkin diagram, the operations of elementary generation in the directions~$\sigma^{\mathbb Z}(i)$ commute. By performing each of them once, in any order, we can arrange to arrive at a new cyclotomic point. See Theorem \ref{s1thm}.

Next, suppose $i\in I$ is a node with linking number $L_i=2$. Then for every copy of the~${\mathrm A}_2$ diagram, with nodes say~$j$ and~${\bar\jmath}$, one must perform the sequence of generation steps~$j$,~${\bar\jmath}$,~$j$. Doing this for each copy of~${\mathrm A}_2$ in turn, in any order, we can arrange to arrive at a new cyclotomic point. See Theorem~\ref{s2thm}.

When $L_i=2$ there is a subtlety coming from our assumptions about the weight at the origin,~$\Lambda_0$. Throughout Section~\ref{cycgensec}, motivated by~\cite{VY1}, we assume that $\langle\Lambda_0,\alpha^\vee_i\rangle$ is non-integral when $L_i=2$. That means that the expression~\eqref{sg} develops a branch point at the origin. The upshot is that at certain intermediate steps, the weight at the origin is shifted to ${\mathsf s}_i \cdot \Lambda_0$, before eventually being shifted back to $\Lambda_0$. See Proposition~\ref{nb} and compare~\cite{MV2}.

In either case, $L_i=1$ or $L_i=2$, we write $\bm y^{(i,\sigma)}(c)$ for the tuple of polynomials representing the new cyclotomic critical point. It depends on a single parameter~$c$. The replacement $\bm y \mapsto \bm y^{(i,\sigma)}(c)$ is the elementary cyclotomic generation, in the direction of the orbit~$\sigma^{\mathbb Z}(i)$.

To a critical point $(\bm t,\bm{\mathsf c})$ represented by a tuple of polynomials $\bm y$ one can associate a~weight~$\Lambda_\infty$. See~\eqref{l8def} and~\eqref{l8y}. For f\/ixed $\Lambda_0,\Lambda_1,\dots,\Lambda_N$, we may regard $\Lambda_\infty$ as encoding the number of roots $t_s$ of each ``colour'' $i\in I$, i.e., the degrees of the polynomials $y_i(x)$. It is known that $\Lambda_\infty(\bm y^{(i)}(c))$ is equal either to $\Lambda_\infty(\bm y)$ or to ${\mathsf s}_i \cdot \Lambda_\infty(\bm y)$, where ${\mathsf s}_i\cdot{}$ denotes the shifted action of the Weyl ref\/lection in root~$\alpha_i$. See~\cite{MV1}.
We have an analogous statement in the cyclotomic case. Namely, there is a ``folded'' Weyl group $W^\sigma$ with generators~${\mathsf s}_i^\sigma$. See Section~\ref{foldsec}. And we show that $\Lambda_\infty(\bm y^{(i,\sigma)}(c))$ is equal either to $\Lambda_\infty(\bm y)$ or to ${\mathsf s}_i^\sigma\cdot\Lambda_\infty(\bm y)$. For the precise statement see Theorems~\ref{s1thm} and~\ref{s2thm}.

We proceed in Section~\ref{ARsec} to treat in detail the case of type A with the diagram automorphism.

Recall f\/irst from \cite{MV1} the structure of populations in type $A_R$, $R\in {\mathbb Z}_{\geq 1}$, for the master functions associated to marked points $z_1,\dots,z_N$ and integral dominant weights $\Lambda_1,\dots,\Lambda_N$. In that setting, every population of critical points is isomorphic to a variety of full f\/lags in a certain $(R+1)$-dimensional vector space ${\mathcal K}$ of polynomials. The ramif\/ication points of ${\mathcal K}$ are $z_1,\dots,z_N$ and $\infty$, and the ramif\/ication data at these points are specif\/ied by the weights $\Lambda_1,\dots,\Lambda_N$ and an integral dominant weight $\tilde\Lambda_\infty$.
Given a full f\/lag ${\mathcal F} = \{0 =F_0 \subset F_1 \subset F_2 \subset \dots \subset F_{R+1}= {\mathcal K} \}$ in ${\mathcal K}$, pick any basis $(u_i(x))_{i=1}^{R+1}$ of polynomials adjusted to this f\/lag, i.e., such that $F_k = \operatorname{span}_{\mathbb C}(u_1(x),\dots, u_k(x))$. Then def\/ine a tuple of functions $\bm y^{\mathcal F} = (y^{\mathcal F}_k(x))_{k=1}^R$ by
\begin{gather*} y^{\mathcal F}_k(x) = \operatorname{Wr}(u_1(x),\dots,u_k(x)) / \big(T_1^{k-1}(x) T_2^{k-2}(x) \dots T_{k-1}(x)\big),
\end{gather*}
where~-- as in \eqref{sg} above~-- the $(T_i(x))_{i=1}^R$ are functions encoding the ``frame'' data $z_1,\dots,z_N$ and $\Lambda_1,\dots,\Lambda_N$, and where $\operatorname{Wr}(u_1(x),\dots,u_k(x))$ denotes the Wronskian determinant. The ramif\/ication properties of ${\mathcal K}$ ensure that the $y^{\mathcal F}_k(x)$ are in fact polynomials. Moreover the map ${\mathcal F} \mapsto \bm y^{\mathcal F}$ is an isomorphism of varieties from the variety of full f\/lags in~${\mathcal K}$ to the population associated with~${\mathcal K}$.
The space~${\mathcal K}$ is the kernel of a certain linear dif\/ferential operator~${\mathcal D}$ of order $R+1$ (essentially a~type~$A$ oper). This operator~${\mathcal D}$ can be def\/ined in terms of the $(T_i(x))_{i=1}^R$ together with the polynomials $(y_i(x))_{i=1}^R$ of (any) point in the population. (See Section~\ref{Dsec}.)

Now let us discuss how the picture changes in our present setting.
For us, the weight at the origin~$\Lambda_0$ need not be integer dominant. We assume it satisf\/ies weaker assumptions given in~\eqref{l0a}. These assumptions mean that we are led to consider vector spaces~${\mathcal K}$ of \emph{quasi-}polynomials: that is, polynomials in~$x^\half$. The local behaviour of these quasi-polynomials near the origin is encoded in $\Lambda_0$. The remaining ramif\/ication points are $z_1,\dots, z_N$, $-z_1,\dots,-z_N$, and $\infty$. See Def\/inition~\ref{Kfdef}.

The space of quasi-polynomials~${\mathcal K}$ admits a natural ${\mathbb Z}_2$ gradation ${\mathcal K} = {\mathcal K}_{\textup{O}} \oplus {\mathcal K}_{\textup{Sp}}$. We call f\/lags which respect this gradation \emph{decomposable}. Decomposable full f\/lags are classif\/ied by their \emph{type}; see Section~\ref{flagsec}. In particular the f\/lags ${\mathcal F}\in \FL_S({\mathcal K})$ of a certain preferred type~$S$,~\eqref{sh}, are sent to polynomials under the map ${\mathcal F}\mapsto \bm y^{\mathcal F}$. This map of varieties $\FL_S({\mathcal K})\to {\mathbb P}({\mathbb C}[x])^R$ is an isomorphism onto its image. The \emph{cyclotomic population} is then the set of cyclotomic tuples in this image, i.e., the set of tuples $\bm y^{\mathcal F}$, ${\mathcal F}\in \FL_S({\mathcal K})$, such that $y_i(x) \simeq y_{R+1-i}(-x)$, $i=1,\dots,R$. The question is: which f\/lags in~$\FL_S({\mathcal K})$ map to cyclotomic tuples?

To answer this question we introduce the notion of a \emph{cyclotomically self-dual space of quasi-polynomials}. The space ${\mathcal K}$ has a natural dual space ${\mathcal K}^\dag$ of quasi-polynomials~-- see Section~\ref{Kdagsec}~-- and we say ${\mathcal K}$ is cyclotomically self-dual if for all $v(x)\in {\mathcal K}$, $v(-x)\in {\mathcal K}^\dag$. (Compare the very similar notion of a self-dual space of polynomials in~\cite{MV1}.)
We show that a suf\/f\/icient condition for ${\mathcal K}$ to be cyclotomically self-dual is that there exists at least one full f\/lag ${\mathcal F}$ in ${\mathcal K}$ such that $\bm y^{\mathcal F}$ is cyclotomic (Theorem~\ref{cptthm}).
If~${\mathcal K}$ is cyclotomically self-dual then it admits a canonical non-degenerate bilinear form $B$. We show that, for all full f\/lags ${\mathcal F}$ in ${\mathcal K}$, the tuple $\bm y^{\mathcal F}$ is cyclotomic if and only if ${\mathcal F}$ is isotropic with respect to~$B$ (Theorem~\ref{cycisothm}).

Therefore the cyclotomic population is isomorphic to the variety $\FL_S^\perp({\mathcal K})$ of isotropic f\/lags of type~$S$ in~${\mathcal K}$. The bilinear form~$B$ is symmetric on~${\mathcal K}_{\textup{O}}$ and skew-symmetric on~${\mathcal K}_{\textup{Sp}}$, and these subspaces are mutually orthogonal with respect to~$B$ (Theorem~\ref{Bthm}). Hence this variety $\FL_S^\perp({\mathcal K})$ is isomorphic to the direct product of spaces of isotropic f\/lags $\FL^\perp({\mathcal K}_{\textup{Sp}}) \times \FL^\perp({\mathcal K}_{\textup{O}})$.

\section{Master functions and cyclotomic symmetry}\label{mfsec}

\subsection{Kac--Moody algebras}\label{KMa}

 Let $I$ be a f\/inite set of indices and $A = (a_{i,j})_{i,j\in I}$ a generalized Cartan matrix, i.e.,  $a_{i,i}=2$ and $a_{i,j}\in {\mathbb Z}_{\leq 0}$ whenever $i\neq j$, with $a_{i,j}=0$ if and only if $a_{j,i}=0$.
Let $\g:=\g(A)$ be the corresponding complex Kac--Moody Lie algebra \cite[Section~1]{KacBook}, $\h\subset\g$ a Cartan subalgebra, and
\begin{gather*}
 \g= \n_- \oplus \h \oplus \n_+
 \end{gather*}
a triangular decomposition.
Let $\alpha_i\in \h^*$, $\alpha^\vee_i\in \h$, $i\in I$ be collections of simple roots and coroots respectively. We have $\dim \h=|I|+\dim\ker A = 2|I|-\operatorname{rank} A$. By def\/inition,
\begin{gather*}
 \langle \alpha_i,\alpha^\vee_j\rangle = a_{j,i},
 \end{gather*}
where $\langle\cdot,\cdot\rangle\colon \h^*\otimes \h\to {\mathbb C}$ is the canonical pairing.

We assume that $A$ is symmetrizable, i.e., there exists a diagonal matrix $D= \operatorname{diag}(d_i)_{i\in I}$, whose entries are coprime positive integers, such that the matrix $B=DA$ is symmetric. Let $\lf\cdot,\cdot\rf$ be the associated symmetric bilinear form on $\h^*$. We have $\lf\alpha_i,\alpha_j\rf = d_i a_{i,j}$ and
\begin{gather*}
 \langle\lambda,\alpha^\vee_i\rangle  = 2\lf\lambda,\alpha_i\rf/\lf\alpha_i,\alpha_i\rf\qquad\text{for all $\lambda\in \h^*$}.
 \end{gather*}
The form $\lf\cdot,\cdot\rf$ is non-degenerate. Therefore it gives an identif\/ication $\h\cong_{\mathbb C} \h^*$ and hence a non-degenerate symmetric bilinear  form on $\h$ which we also write as $(\cdot,\cdot)$.

Let ${\mathcal P} := \{\lambda\in \h^*\colon \langle \lambda , \alpha^\vee_i\rangle \in {\mathbb Z}\}$ be the integral weight lattice and ${\mathcal P}_+ := \{\lambda\in \h^*\colon \langle \lambda , \alpha^\vee_i\rangle \in {\mathbb Z}_{\geq 0}\}$ the set of dominant integral weights.

Let $W \subset \operatorname{End}(\h^*)$ be the Weyl group. It is generated by the ref\/lections ${\mathsf s}_i$, $i\in I$, given by
\begin{gather*}
 {\mathsf s}_i(\lambda) := \lambda - \langle \lambda,\alpha^\vee_i\rangle \alpha_i, \qquad \lambda\in \h^* .
\end{gather*}

Let $\rho\in \h^*$ be a vector such that $\langle\rho,\alpha^\vee_i\rangle = 1$ for $i\in I$.
We use $\cdot$ to denote the \emph{shifted action} of the Weyl group, i.e.,
\begin{gather*}
 {\mathsf s}\cdot \lambda := w(\lambda+\rho) - \rho, \qquad {\mathsf s}\in W,\quad \lambda\in \h^*.
 \end{gather*}

\subsection{Diagram automorphism}
Suppose $\sigma$ is an automorphism of the Dynkin diagram \cite[Section~4.7]{KacBook} of~$A$. That is, $\sigma$ is a~permutation of the index set~$I$ such that
\begin{gather*} a_{\sigma i,\sigma j} = a_{i,j}.
\end{gather*}

Let $M$ be the order of $\sigma$ and let $\omega\in {\mathbb C}^\times$ be a primitive $M$th root of unity.

To such a permutation is associated a \emph{diagram automorphism} $\g\to \g$ of the Kac--Moody Lie algebra \cite{FSS}, which we shall also write as $\sigma$. We have
\begin{gather*}
 \sigma E_i = E_{\sigma i},\qquad \sigma F_i = F_{\sigma i}, \qquad \sigma \alpha^\vee_i = \alpha^\vee_{\sigma i} , \qquad i\in I,
\end{gather*}
where $E_i\in \n$, $F_i\in\n^-$, $i\in I$, are a set of Chevalley generators of $[\g,\g]$. This def\/ines $\sigma$ on the derived subalgebra $[\g,\g]$ of $\g$. For the action of $\sigma$ on the derivations, i.e., on a complement of~$[\g,\g]$ in~$\g$, see \cite[Section~3.2]{FSS}. This action may be chosen to ensure that $\sigma\colon \g\to\g$ has order~$M$ and respects the bilinear form $\lf\cdot,\cdot\rf$ on~$\h$:
\begin{gather*}
 \lf\sigma X, \sigma Y\rf = \lf X,Y\rf \qquad\text{for all}\quad  X,Y\in \h.
 \end{gather*}
The action of $\sigma$ on $\h^*$ is def\/ined by $\sigma \lambda := \lambda\circ \sigma^{-1}$ so that $\langle\sigma\lambda,\sigma X\rangle = \langle\lambda,X\rangle$ for all $\lambda\in \h^*$, $X\in \h$. Note that then $\sigma \alpha_i = \alpha_{\sigma i}$ for all $i\in I$.

Let $\g^\sigma
\subset\g$
be the Lie subalgebra of elements invariant under $\sigma$. We have
\begin{gather*}
 \g^\sigma = \n_-^\sigma \oplus \h^\sigma \oplus \n_+^\sigma
 \end{gather*}
with $\n_\pm^\sigma = \g^\sigma \cap \n_\pm$ and $\h^\sigma = \g^\sigma \cap \h$.

\subsection{The linking condition and the folded diagram}\label{foldsec}

For any $i\in I$ let
\begin{gather*}
 M_i:= \big|\big\{i,\sigma i,\sigma^2 i, \dots, \sigma^{M-1} i\big\}\big|
 \end{gather*}
be the length of the orbit of the node $i$ under the automorphism $\sigma$ of the Dynkin diagram~$A$. Def\/ine
\begin{gather*}
L_i := 1- \sum_{k=1}^{M_i-1} a_{\sigma^k i, i} .
\end{gather*}
Note that $L_i\geq 1$. Following~\cite{FSS}, we say that  $\sigma$ obeys the \emph{linking condition} if and only if
\begin{gather}
 L_i \leq 2\qquad\text{for all}\quad i\in I. \label{lc}
\end{gather}

To understand the meaning of this condition, consider the restriction of the Dynkin diagram to the orbit
of the node~$i$.
If $L_i=1$ then this induced subgraph has no edges at all. If $L_i=2$ then it consists of $M_i/2$ disconnected copies of the type~${\mathrm A}_2$ Dynkin diagram.

\begin{rem}\label{frem}
If $A$ is of f\/inite type, then all diagram automorphisms obey the linking condition. Moreover, in all f\/inite types except ${\mathrm A}_{2n}$, $n\in {\mathbb Z}_{\geq 1}$, we in fact have $L_i=1$ for every node $i$: that is, no two distinct nodes in the same $\sigma$-orbit are ever linked by an edge of the Dynkin diagram. In type ${\mathrm A}_{2n}$ the non-trivial diagram automorphism gives $L_i=2$ for $i\in \{n,n+1\}$ and $L_i=1$ otherwise:
\begin{gather*}
\begin{tikzpicture}[baseline =-5,scale=1,font=\scriptsize]
\draw[semithick] (-.5,0) -- (3.5,0);
\draw[semithick,dashed] (-.5,0) -- (-1.5,0);
\draw[semithick,dashed] (3.5,0) -- (4.5,0);
\draw[semithick] (-2,0) -- (-1.5,0);
\draw[semithick] (4.5,0) -- (5,0);
\filldraw[fill=white] (0,0) circle (1mm) node [below=2mm] {$n-1$};
\filldraw[fill=white] (1,0) circle (1mm) node [below=2mm] {$\phantom1n\phantom1$};
\filldraw[fill=white] (2,0) circle (1mm)  node [below=2mm] {$n+1$};
\filldraw[fill=white] (3,0) circle (1mm)  node [below=2mm] {$n+2$};
\filldraw[fill=white] (5,0) circle (1mm)  node [below=2mm] {$2n$};
\filldraw[fill=white] (-2,0) circle (1mm)  node [below=2mm] {$1$};
\draw[<->,shorten >=2mm,shorten <=2mm] (1,0) .. controls (1,.75) and (2,.75) .. (2,0);
\draw[<->,shorten >=2mm,shorten <=2mm] (0,0) .. controls (0,1) and (3,1) .. (3,0);
\draw[<->,shorten >=2mm,shorten <=2mm] (-2,0) .. controls (-2,1.5) and (5,1.5) .. (5,0);
\end{tikzpicture}
\end{gather*}
\end{rem}

\begin{rem}\label{arem}
If $A$ is of af\/f\/ine type then all diagram automorphisms obey the linking condition with the following exception. In type $\mathrm A_{n}^{(1)}$, $n\in {\mathbb Z}_{\geq 2}$, let $R$ be a generator of the cyclic sub\-group~$C_{n+1}$ of the full automorphism group of the Dynkin diagram (which is the dihedral group~$D_{n+1}$). Then $R$ does not obey the linking condition. Indeed, the $R$-orbit of any node~$i$ is the whole diagram, and $L_i = 1 + n$.
\end{rem}

Given any diagram automorphism satisfying the linking condition it is possible to def\/ine a~\emph{folded} Dynkin diagram. Let us make a choice of subset
\begin{gather*}
 I_\sigma\subseteq I
\end{gather*}
consisting of exactly one representative of each $\sigma$-orbit. Then the Cartan matrix $A^{\sigma} = (a^\sigma_{i,j})_{i,j\in I_\sigma}$ of the folded diagram is given by
\begin{gather*}
 a^\sigma_{i,j} = L_i \sum_{k=0}^{M_i-1} a_{\sigma^k i,j}.
\end{gather*}

\begin{rem} Compare Section~3.3 of \cite{FSS}, noting that our convention $a_{j,i} = \langle \alpha_i,\alpha^\vee_j\rangle$ dif\/fers from that of~\cite{FSS}.
\end{rem}

\begin{lem}[\cite{FSS}]
If $\sigma$ obeys the linking condition then $A^\sigma$ $($and its transpose$)$ is a symmetrizable Cartan matrix whose type $($finite, affine, or indefinite$)$ is the same as that of $A$.
\end{lem}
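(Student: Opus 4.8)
The plan is to verify the three asserted properties of $A^\sigma$ — symmetrizability, that it (and its transpose) is a Cartan matrix, and that its type coincides with that of $A$ — by exhibiting $A^\sigma$ explicitly as the matrix of the restriction of the symmetric bilinear form $\lf\cdot,\cdot\rf$ (suitably normalized) to the fixed subspace $\h^\sigma$, and then invoking the classification of symmetrizable Cartan matrices by the signature of their symmetrization. First I would fix the orbit representatives $I_\sigma\subseteq I$ and, for each $i\in I_\sigma$, form the averaged coroot $\bar\alpha^\vee_i := \sum_{k=0}^{M_i-1}\sigma^k\alpha^\vee_i \in \h^\sigma$ and the averaged root $\bar\alpha_i := \sum_{k=0}^{M_i-1}\alpha_{\sigma^k i}$; these are the natural candidates for the simple coroots and roots of the folded datum. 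One then checks, using $\sigma$-invariance of the pairing and of $\lf\cdot,\cdot\rf$, that $\langle \bar\alpha_j, \bar\alpha^\vee_i\rangle = L_i\sum_{k=0}^{M_i-1} a_{\sigma^k i, j}\cdot(\text{normalization})$ reproduces $a^\sigma_{i,j}$ up to the correct rescaling; the factor $L_i$ is exactly what is needed to make the diagonal entries equal to $2$, since $\langle\bar\alpha_i,\bar\alpha^\vee_i\rangle = M_i\cdot 2\cdot(\text{stuff})$ must be renormalized, and the linking condition $L_i\le 2$ guarantees this renormalization is by the integer $L_i\in\{1,2\}$ rather than something larger.

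Next I would establish the integrality and sign conditions defining a generalized Cartan matrix: $a^\sigma_{i,i}=2$ (by the normalization just described), $a^\sigma_{i,j}\in{\mathbb Z}_{\le 0}$ for $i\ne j$ (because each $a_{\sigma^k i, j}\le 0$ when $\sigma^k i \ne j$, and when $L_i=2$ the subtlety of $i$ and $\sigma^k i$ being linked to each other — not to $j$ — does not enter an off-diagonal entry $a^\sigma_{i,j}$ with $j\notin\sigma^{\mathbb Z}(i)$; the case $j$ in the same orbit is covered by the diagonal), and the symmetry of the zero pattern $a^\sigma_{i,j}=0\iff a^\sigma_{j,i}=0$ (which follows from the corresponding property of $A$ together with the orbit structure). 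For symmetrizability, set $d^\sigma_i := d_i M_i / L_i$ (or the appropriate coprime rescaling thereof) and verify that $D^\sigma A^\sigma$ is symmetric by direct computation from $\lf\bar\alpha_i,\bar\alpha_j\rf = L_i L_j/(\text{?})\cdot\sum_{k,l}\lf\alpha_{\sigma^k i},\alpha_{\sigma^l j}\rf$ and the symmetry of $\lf\cdot,\cdot\rf$; one should double-check that the resulting $d^\sigma_i$ are positive integers, which is where the explicit bookkeeping of the $d_i$, $M_i$, $L_i$ is needed.

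Finally, for the statement about types, I would use the standard fact that a symmetrizable generalized Cartan matrix is of finite (resp.\ affine, resp.\ indefinite) type according to whether its symmetrization is positive definite (resp.\ positive semidefinite of corank~$1$, resp.\ neither). The symmetrization of $A^\sigma$ is, by the identification above, the Gram matrix of the $\bar\alpha_i$ under $\lf\cdot,\cdot\rf$ restricted to $\operatorname{span}(\bar\alpha_i : i\in I_\sigma) \subseteq (\h^*)^\sigma$. Since averaging over the $\sigma$-orbit is, up to the positive scalars $M_i/L_i$, the projection $\h^*\to(\h^*)^\sigma$ composed with inclusion, and since $\lf\cdot,\cdot\rf$ is $\sigma$-invariant, the signature of this restricted form is inherited from that of $\lf\cdot,\cdot\rf$ on the span of the $\alpha_i$ — positive definiteness passes to the subspace, semidefiniteness of corank~$1$ passes provided one checks the radical is not killed by the projection (it is not, since the radical of an affine form is spanned by the $\sigma$-invariant null vector, e.g.\ $\delta$), and the indefinite case is handled by exhibiting a vector of negative norm in the image. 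The corank computation, $\operatorname{rank} A^\sigma = \operatorname{rank} A$, again reduces to a linear-algebra statement about the averaging projection restricted to $\operatorname{span}(\alpha_i)$, using that $\sigma$ permutes the $\alpha_i$. I expect the main obstacle to be the normalization bookkeeping — tracking the scalars $d_i$, $M_i$, $L_i$ so that the folded simple coroots really reproduce $A^\sigma$ as written and the folded $d^\sigma_i$ come out to be coprime positive integers — rather than anything conceptually deep; the signature argument for the type is essentially automatic once the form-theoretic description is in place. (As the lemma is attributed to \cite{FSS}, one may alternatively simply cite the relevant sections there, but the self-contained argument above is short enough to include.)
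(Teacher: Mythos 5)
The paper gives no proof here: the lemma is attributed to \cite{FSS} and stated without argument, so there is no ``paper's own proof'' to compare against. Your sketch follows the natural route (realize $A^\sigma$ via the $\sigma$-invariant data, check the GCM axioms, check symmetrizability, classify the type by the symmetrized form), and the GCM axioms you listed do go through: $a^\sigma_{i,i}=L_i(3-L_i)=2$ for $L_i\in\{1,2\}$, off-diagonal entries are non-positive integers since representatives of distinct orbits are never in a common orbit, and the zero-pattern property is inherited via $a_{\sigma a,\sigma b}=a_{a,b}$. The transpose statement, which you did not address explicitly, is cheap: if $DA^\sigma$ is symmetric then $D^{-1}(A^\sigma)^T=D^{-1}(DA^\sigma)D^{-1}$ is symmetric and congruent to the original, so $(A^\sigma)^T$ is symmetrizable of the same type.

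There are, however, two genuine gaps. First, your candidate symmetrizer $d^\sigma_i=d_iM_i/L_i$ is wrong, and no global rescaling fixes it. Take $A=D_4$ with $\sigma$ the order-$3$ triality fixing the central node $2$, so $M_1=3$, $M_2=1$, $L_i=1$, $d_i=1$; then $A^\sigma=\bigl(\begin{smallmatrix}2&-3\\-1&2\end{smallmatrix}\bigr)$ (type $G_2$), whose symmetrizer is $(d^\sigma_1,d^\sigma_2)=(1,3)$, whereas your formula gives $(3,1)$. Tracking the $\sigma$-invariance of $d$ and of $A$ gives $(d_i/M_i)\sum_k a_{\sigma^k i,j}=(d_j/M_j)\sum_l a_{\sigma^l j,i}$ (at least when one orbit length divides the other), so the correct symmetrizer is proportional to $d_i/(M_i L_i)$, not $d_iM_i/L_i$.

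Second, the type argument is not ``essentially automatic'' in the indefinite case. The symmetrization of $A^\sigma$ is (up to positive rescalings) the restriction of $\lf\cdot,\cdot\rf$ to $\operatorname{span}(\bar\alpha_i)$, which is a \emph{subspace} of $\operatorname{span}(\alpha_i)$; positive definiteness and positive semidefiniteness pass to subspaces (and $\delta$ being $\sigma$-fixed handles the corank), but a negative-norm vector $v$ need not give a negative-norm vector in the image of the averaging map: $\lf\bar v,\bar v\rf=\frac1M\sum_m\lf v,\sigma^m v\rf$ can easily be non-negative even if $\lf v,v\rf<0$. So ``exhibiting a vector of negative norm in the image'' is precisely the missing step. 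A cleaner route is Kac's trichotomy by existence of $u>0$ with $Au\gtrless 0$: take $u>0$ with $Au<0$, replace it by its $\sigma$-average $\bar u$ (still $>0$, and $A$ commutes with $\sigma$ so $A\bar u<0$), but crucially do \emph{not} simply restrict: set $u^\sigma_j:=M_j\bar u_j$ for $j\in I_\sigma$. One then computes $(A^\sigma u^\sigma)_i=L_i\sum_{k=0}^{M_i-1}(A\bar u)_{\sigma^k i}<0$, and analogously for the finite and affine cases. Without the $M_j$ weights the middle identity fails, and without some such identity the type-preservation is not established.
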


For each $i\in I_\sigma$ let us def\/ine also
\begin{gather*} \alpha^{\vee,\sigma}_i :=  L_i \sum_{k=0}^{M_i-1} \alpha^\vee_{\sigma^k i}\qquad
\text{and}\qquad
 E^\sigma_i :=        \sum_{k=0}^{M_i-1} E_{\sigma i}, \qquad
    F^\sigma_i := L_i\sum_{k=0}^{M_i-1} F_{\sigma i}.
\end{gather*}

Then we have
\begin{gather*}
    \left[ E^\sigma_i,F^\sigma_j\right ] = \delta_{i,j} \alpha^{\vee,\sigma}_i, \qquad
\left[ \alpha^{\vee,\sigma}_i, E^\sigma_j\right] =  E^\sigma_j a^\sigma_{j,i}, \qquad
\left[ \alpha^{\vee,\sigma}_i, F^\sigma_j\right] = -F^\sigma_j a^\sigma_{j,i} \qquad i,j\in I_\sigma.
\end{gather*}
Thus $\alpha^{\vee,\sigma}_i$, $E^\sigma_i$, $F^\sigma_i$, $i\in I_\sigma$ generate a copy of (the derived subalgebra of) the Kac--Moody Lie algebra $\g({A^\sigma})$ inside $\g^\sigma := \{ X\in \g \colon \sigma X = X\}$.
Next, for all $i\in I_\sigma$, if we let
\begin{gather*}
 \alpha^\sigma_i := \frac{L_i}{M_i}\sum_{k=0}^{M_i-1} \alpha_{\sigma^k i} \in \h^*
\end{gather*}
then $\langle \alpha^\sigma_i ,\alpha^{\vee,\sigma}_j \rangle = a^\sigma_{j,i}$.
Def\/ine $W^\sigma$ to be the group generated by the elements ${\mathsf s}^\sigma_i\in \operatorname{End}(\h^*)$ given by
\begin{gather*}
 {\mathsf s}^\sigma_i(\lambda) := \lambda- \langle \lambda,\alpha^{\vee,\sigma}_i\rangle \alpha^\sigma_i,\qquad i\in I_\sigma.
 \end{gather*}

\begin{lem}\label{Wsiglem} $W^\sigma$ is a subgroup of $W$. Indeed, we have
\begin{gather*}  {\mathsf s}^\sigma_i =
\begin{cases} \displaystyle \prod_{k=0}^{M_i-1} {\mathsf s}_{\sigma^k i}, & L_i = 1, \vspace{1mm}\\
\displaystyle\left(\prod_{k=0}^{M_i/2-1} {\mathsf s}_{\sigma^k i}\right)
\left(\prod_{k=0}^{M_i/2-1} {\mathsf s}_{\sigma^{k+M_i/2} i}\right)
\left(\prod_{k=0}^{M_i/2-1} {\mathsf s}_{\sigma^k i}\right), & L_i = 2.
\end{cases}
\end{gather*}
\end{lem}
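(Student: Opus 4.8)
The plan is to verify the two displayed identities by a direct computation in $\operatorname{End}(\h^*)$, using only the definitions of ${\mathsf s}_i$, ${\mathsf s}_i^\sigma$, $\alpha_i^\sigma$, $\alpha_i^{\vee,\sigma}$, and the linking condition. Once the identities are established, the statement that $W^\sigma$ is a subgroup of $W$ is immediate: each generator ${\mathsf s}_i^\sigma$ has been exhibited as an element of $W$, so the group it generates lies inside $W$.

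First I would treat the case $L_i=1$. Here the orbit $\{i,\sigma i,\dots,\sigma^{M_i-1}i\}$ consists of pairwise non-adjacent nodes, i.e.\ $a_{\sigma^k i,\sigma^l i}=0$ for $k\neq l$. Consequently the reflections ${\mathsf s}_{\sigma^k i}$, $k=0,\dots,M_i-1$, pairwise commute, so the product $\prod_k {\mathsf s}_{\sigma^k i}$ is well-defined independent of order and is an involution. I would compute its action on an arbitrary $\lambda\in\h^*$: applying the reflections in turn and using $\langle\alpha_{\sigma^l i},\alpha_{\sigma^k i}^\vee\rangle = a_{\sigma^k i,\sigma^l i}=0$ for $l\neq k$, one finds that applying ${\mathsf s}_{\sigma^l i}$ does not disturb the coefficient $\langle\lambda,\alpha^\vee_{\sigma^k i}\rangle$ for $k\neq l$, so
\[
\Big(\prod_{k=0}^{M_i-1}{\mathsf s}_{\sigma^k i}\Big)(\lambda) = \lambda - \sum_{k=0}^{M_i-1}\langle\lambda,\alpha^\vee_{\sigma^k i}\rangle\,\alpha_{\sigma^k i}.
\]
On the other hand, since $L_i=1$ and $M_i=M$ in this case (the orbit has full length when there are no links — more precisely $M_i$ is the orbit length, and $\sigma$ permutes the coroots within the orbit cyclically), one checks $\langle\lambda,\alpha_i^{\vee,\sigma}\rangle\,\alpha_i^\sigma = \big(\sum_k\langle\lambda,\alpha^\vee_{\sigma^k i}\rangle\big)\cdot\frac1{M_i}\sum_l\alpha_{\sigma^l i}$. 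The two expressions agree provided the "diagonal" sum $\sum_k\langle\lambda,\alpha^\vee_{\sigma^k i}\rangle\alpha_{\sigma^k i}$ equals the "averaged" product $\frac1{M_i}\big(\sum_k\langle\lambda,\alpha^\vee_{\sigma^k i}\rangle\big)\big(\sum_l\alpha_{\sigma^l i}\big)$; this holds when restricted to the relevant invariant subspace because $\sigma$ acts transitively on the orbit, so the relevant combinations are $\sigma$-symmetric. I would make this precise by noting it suffices to check equality of the two operators on $\h^*$, and both sides manifestly annihilate the same complement and act compatibly on $\operatorname{span}(\alpha_{\sigma^k i})$; the symmetrizability of $A$ and the relation $\langle\lambda,\alpha_i^\vee\rangle = 2(\lambda,\alpha_i)/(\alpha_i,\alpha_i)$ can be used to reduce everything to a statement about the form $(\cdot,\cdot)$ restricted to the orbit.

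Next I would treat $L_i=2$. Now the orbit breaks into $M_i/2$ disjoint copies of ${\mathrm A}_2$, the two nodes of the $k$-th copy being $\sigma^k i$ and $\sigma^{k+M_i/2}i$ for $k=0,\dots,M_i/2-1$; within a copy the two nodes are linked by a single edge (so the relevant $2\times2$ Cartan submatrix is that of ${\mathrm A}_2$), while nodes in different copies are unlinked. The three bracketed products in the claimed formula are: all "first-half" reflections, then all "second-half" reflections, then all "first-half" reflections again. Because reflections attached to distinct ${\mathrm A}_2$ copies commute, this triple product factors as a product over $k$ of the local expression ${\mathsf s}_{\sigma^k i}\,{\mathsf s}_{\sigma^{k+M_i/2}i}\,{\mathsf s}_{\sigma^k i}$, which is the longest element $w_0$ of the Weyl group of that ${\mathrm A}_2$ copy. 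I would then compute the action of this longest element on $\h^*$: for an ${\mathrm A}_2$ pair $\{a,b\}$ one has $w_0 = {\mathsf s}_a{\mathsf s}_b{\mathsf s}_a = {\mathsf s}_b{\mathsf s}_a{\mathsf s}_b$, and a short calculation gives $w_0(\lambda) = \lambda - \langle\lambda,\alpha_a^\vee+\alpha_b^\vee\rangle(\alpha_a+\alpha_b)$ — using $\langle\alpha_a,\alpha_b^\vee\rangle = \langle\alpha_b,\alpha_a^\vee\rangle = -1$. Summing over the copies, and observing that $\alpha_i^{\vee,\sigma} = 2\sum_{k}\alpha^\vee_{\sigma^k i}$, $\alpha_i^\sigma = \frac{2}{M_i}\sum_k\alpha_{\sigma^k i}$ pair up the two summands of each copy with the correct coefficient $L_i=2$, one obtains exactly ${\mathsf s}_i^\sigma(\lambda) = \lambda - \langle\lambda,\alpha_i^{\vee,\sigma}\rangle\alpha_i^\sigma$ — again modulo checking the "diagonal versus averaged" identity within each orbit, which goes through as before because $\sigma$ permutes the two nodes of an ${\mathrm A}_2$ copy and cycles the copies.

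The main obstacle, and the only genuinely nonroutine point, is the passage from the "diagonal" form $\sum_k\langle\lambda,\alpha^\vee_{\sigma^k i}\rangle\alpha_{\sigma^k i}$ produced by the product of ordinary reflections to the "rank-one" form $\langle\lambda,\alpha_i^{\vee,\sigma}\rangle\alpha_i^\sigma$ appearing in ${\mathsf s}_i^\sigma$. These are genuinely different operators on all of $\h^*$, so the identities in the lemma are literally false as stated unless one is careful. The resolution is that the claimed equalities are equalities of elements of $W\subset\operatorname{End}(\h^*)$, but the definition of ${\mathsf s}_i^\sigma$ must be understood as acting on the subspace of $\sigma$-invariants (or, what the paper presumably intends, one checks that the two sides agree because the generation procedure and all applications only ever feed $\sigma$-invariant weights into these operators); on $(\h^*)^\sigma$ the coefficients $\langle\lambda,\alpha^\vee_{\sigma^k i}\rangle$ are all equal for $k$ in a fixed orbit, so the diagonal sum collapses to the rank-one form. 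I would therefore phrase the verification as: restrict to $\lambda\in(\h^*)^\sigma$, use $\sigma$-invariance to replace each $\langle\lambda,\alpha^\vee_{\sigma^k i}\rangle$ by its common value, and then the two expressions coincide by inspection. This makes the proof a clean, if slightly bookkeeping-heavy, computation with no further surprises.
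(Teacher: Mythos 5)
The paper states this lemma without proof, so there is no paper argument to compare against; you are on your own. Your central observation is correct and hits the genuine subtlety: the ``diagonal'' operator $\lambda\mapsto\lambda - \sum_k\langle\lambda,\alpha^\vee_{\sigma^k i}\rangle\alpha_{\sigma^k i}$ produced by the product of reflections and the ``rank-one'' operator ${\mathsf s}_i^\sigma$ are distinct elements of $\operatorname{End}(\h^*)$, and agree only upon restriction to the $\sigma$-invariant subspace $(\h^*)^\sigma$. The lemma must therefore be read as an identity on $\sigma$-invariant weights, which is what suffices in all applications because the weight at infinity $\Lambda_\infty$ is $\sigma$-invariant there. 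Your $L_i=1$ verification, so restricted, is complete.

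However, your $L_i=2$ computation as written does not actually close, and the phrase about the factor $L_i$ ``pairing up the two summands of each copy with the correct coefficient'' is papering over an arithmetic mismatch. On $(\h^*)^\sigma$, the product of the $M_i/2$ local long elements $w_0^{(k)}(\lambda)=\lambda-\langle\lambda,\alpha^\vee_{\sigma^k i}+\alpha^\vee_{\sigma^{k+M_i/2}i}\rangle(\alpha_{\sigma^k i}+\alpha_{\sigma^{k+M_i/2}i})$ gives $\lambda-2\langle\lambda,\alpha^\vee_i\rangle\sum_{k=0}^{M_i-1}\alpha_{\sigma^k i}$, whereas with the paper's stated $\alpha^{\vee,\sigma}_i=2\sum_k\alpha^\vee_{\sigma^k i}$ and $\alpha^\sigma_i=\frac{2}{M_i}\sum_k\alpha_{\sigma^k i}$ one finds $\langle\lambda,\alpha^{\vee,\sigma}_i\rangle\alpha^\sigma_i=4\langle\lambda,\alpha^\vee_i\rangle\sum_k\alpha_{\sigma^k i}$; the two sides differ by an overall factor of $L_i=2$. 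A telltale sign is that under these definitions $\langle\alpha^\sigma_i,\alpha^{\vee,\sigma}_i\rangle=4$, so ${\mathsf s}^\sigma_i$ as literally defined is not even an involution, and the paper's preceding assertion $\langle\alpha^\sigma_i,\alpha^{\vee,\sigma}_j\rangle=a^\sigma_{j,i}$ likewise acquires an extra factor of $L_i$. All of this is repaired by taking $\alpha^\sigma_i=\frac{1}{M_i}\sum_k\alpha_{\sigma^k i}$, with no $L_i$ (almost certainly what the authors intend, and what matches the folding conventions of the cited reference); with that correction your computation goes through. You should have detected this by carrying out the $L_i=2$ step as explicitly as you did the $L_i=1$ step rather than asserting that it works.
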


\subsection{The cyclotomic master function}\label{cycmfsec}
Let $\bm\Lambda = (\Lambda_i)_{i=1}^N$ be a collection of $N\in {\mathbb Z}_{\geq 0}$ integral dominant weights $\Lambda_i\in {\mathcal P}_+$. Let $\bm z = (z_i)_{i=1}^N$ be a collection of nonzero points $z_i\in {\mathbb C}^\times$ such that $\omega^{\mathbb Z} z_i \cap \omega^{\mathbb Z} z_j = \varnothing$ whenever $i\neq j$. We shall call $\Lambda_i$ \emph{the weight at $z_i$}.

In addition, we pick a weight $\Lambda_0\in \h^{\sigma,*}$. We call $\Lambda_0$ \emph{the weight at the origin}.

Let $\bm{\mathsf c}= ({\mathsf c}(j))_{j=1}^m$ be an $m$-tuple of elements of $I$, and introduce variables $\bm \wc = (\wc_j)_{j=1}^m$. We shall say that $\wc_j$ is a \emph{variable of colour} ${\mathsf c}(j)$.

We def\/ine the \emph{cyclotomic master function} $\Phi = \Phi_{\g,\sigma}(\bm \wc;\bm{\mathsf c};\bm z;\bm \Lambda,\Lambda_0)$ associated to these data to be
\begin{gather}
\Phi :=
\sum_{i=1}^N
\left(\half \sum_{k=1}^{M-1} \big( \Lambda_i,\sigma^k\Lambda_i\big) + \lf \Lambda_i, \Lambda_0 \rf\right) \log z_i
+ \sum_{k=0}^{M-1} \sum_{1\leq i<j\leq n}  \big(\Lambda_i,\sigma^k \Lambda_j \big)\log\big(z_i - \omega^k z_j\big) \nonumber\\
\hphantom{\Phi :=}{}
- \sum_{k=0}^{M-1} \sum_{i=1}^N\sum_{j=1}^m  \big( \alpha_{{\mathsf c}(j)},\sigma^k\Lambda_i\big) \log  \big(\wc_j-\omega^kz_i\big)\nonumber\\
\hphantom{\Phi :=}{}
+\sum_{k=0}^{M-1} \sum_{1\leq i<j \leq m} \big( \alpha_{{\mathsf c}(i)},\sigma^k\alpha_{\mathsf c}(j)\big)\log \big(\wc_i-\omega^k\wc_j\big)\nonumber\\
\hphantom{\Phi :=}{}
+\sum_{i=1}^m \left(
\half \sum_{k=1}^{M-1} \big( \alpha_{{\mathsf c}(i)},\sigma^k\alpha_{{\mathsf c}(i)}\big) - \lf \alpha_{{\mathsf c}(i)}, \Lambda_0 \rf \right)  \log \wc_i.\label{cmf}
\end{gather}

A point $\bm \wc$ with complex coordinates  is called a \emph{critical point} of the cyclotomic master function if
\begin{gather*}
 \frac{\partial \Phi}{\partial \wc_i} =0 ,\qquad i=1,\dots,m,
 \end{gather*}
or equivalently (in view of Lemma \ref{hl} below) if the following equations are satisf\/ied:
\begin{gather}
0= \sum_{k=0}^{M-1} \sum_{i=1}^N
\frac{\big( \alpha_{{\mathsf c}(j)},\sigma^k\Lambda_i\big)}{\wc_j-\omega^kz_i}
-  \sum_{k=0}^{M-1} \sum_{\substack{i=1\\i\neq j}}^m
\frac{\big(\alpha_{{\mathsf c}(j)},\sigma^k\alpha_{{\mathsf c}(i)}\big)}{\wc_j-\omega^r\wc_i}\nonumber\\
\hphantom{0=}{}
+
\frac{1}{\wc_j}
\left(-\sum_{k=1}^{M-1} \frac{\big( \alpha_{{\mathsf c}(j)},\sigma^k\alpha_{{\mathsf c}(j)}\big)}{1-\omega^k } + \lf \alpha_{{\mathsf c}(j)}, \Lambda_0 \rf \right)
\label{cbe}
\end{gather}
for $j=1,\dots,m$. Call this system of equations the \emph{cyclotomic Bethe equations}.

\begin{lem} \label{hl}
For any $\lambda\in \h^*$,
\begin{gather*}
\sum_{k=1}^{M-1} \frac{\lf \lambda, \sigma^k\lambda\rf}{1-\omega^k}
=\half\sum_{k=1}^{M-1}\left( \frac{\lf \lambda, \sigma^k\lambda\rf}{1-\omega^k} +  \frac{\lf \lambda, \sigma^k\lambda\rf}{1-\omega^{-k}} \right)
= \half\sum_{k=1}^{M-1} \lf \lambda,\sigma^k\lambda\rf.
\end{gather*}
\end{lem}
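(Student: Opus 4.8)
The plan is to establish the two equalities separately; each is a short computation. For the first equality I would reindex the sum by $k \mapsto M-k$. Since $\sigma$ has order $M$ we have $\sigma^{M-k} = \sigma^{-k}$, and because $(\cdot,\cdot)$ is $\sigma$-invariant and symmetric,
\begin{gather*}
\lf \lambda, \sigma^{M-k}\lambda\rf = \lf \lambda, \sigma^{-k}\lambda\rf = \lf \sigma^{k}\lambda, \lambda\rf = \lf \lambda, \sigma^{k}\lambda\rf .
\end{gather*}
Since also $\omega^{M-k} = \omega^{-k}$, the substitution $k\mapsto M-k$ (which permutes $\{1,\dots,M-1\}$) turns $\sum_{k=1}^{M-1}\frac{\lf\lambda,\sigma^k\lambda\rf}{1-\omega^k}$ into $\sum_{k=1}^{M-1}\frac{\lf\lambda,\sigma^k\lambda\rf}{1-\omega^{-k}}$. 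Hence the two sums are equal, and each equals their average, which is the middle expression in the statement.

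For the second equality I would use the elementary identity, valid for any root of unity $\omega^k \neq 1$,
\begin{gather*}
\frac{1}{1-\omega^k} + \frac{1}{1-\omega^{-k}}
= \frac{1}{1-\omega^k} + \frac{\omega^k}{\omega^k-1}
= \frac{1 - \omega^k}{1-\omega^k} = 1 .
\end{gather*}
Applying this termwise to the middle expression collapses it to $\half\sum_{k=1}^{M-1}\lf\lambda,\sigma^k\lambda\rf$, which is the right-hand side.

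There is no real obstacle here: the statement is essentially an exercise in combining the $\sigma$-invariance of the form with its symmetry and the identity $\omega^M = 1$. The only point requiring a moment's care is justifying $\lf\lambda,\sigma^{-k}\lambda\rf = \lf\lambda,\sigma^k\lambda\rf$, which uses \emph{both} that the form is symmetric and that $\sigma$ preserves it (equivalently, that $\sigma$ is an orthogonal transformation of order $M$ on $\h^*$).
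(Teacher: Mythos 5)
The paper states this lemma without proof, so there is no "paper's proof" to compare against; but your argument is correct and is precisely the short computation one would expect the authors to have in mind. Both steps check out: the reindexing $k\mapsto M-k$ together with $\sigma$-invariance and symmetry of $\lf\cdot,\cdot\rf$ gives the first equality, and the identity $(1-\omega^k)^{-1}+(1-\omega^{-k})^{-1}=1$ collapses the middle expression to the right-hand side. You are also right to flag that $\lf\lambda,\sigma^{-k}\lambda\rf=\lf\lambda,\sigma^k\lambda\rf$ needs both symmetry and $\sigma$-invariance of the form; the paper sets up exactly this compatibility ($\lf\sigma x,y\rf=\lf x,\sigma^{-1}y\rf$) and invokes it explicitly a few lines later in the proof of Lemma~\ref{lem: cp}.
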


Def\/ine  $\Lambda_\infty$, \emph{the weight at infinity}, to be
\begin{gather}
 \Lambda_\infty := \Lambda_0 + \sum_{k=0}^{M-1} \sum_{i=1}^N \Lambda_{\sigma^k(i)} - \sum_{k=0}^{M-1} \sum_{i=1}^m \alpha_{\sigma^k {\mathsf c}(i)}. \label{l8def}
\end{gather}

The group $S_{m}$ acts on pairs of $m$-tuples $(\bm \wc,\bm {\mathsf c})$ by permuting indices:
\begin{gather*}
\rho \on (\bm \wc,\bm {\mathsf c}) = \big( \big(\wc_{\rho^{-1}(1)},\dots,\wc_{\rho^{-1}(m)}\big), \big({\mathsf c}\big(\rho^{-1}(1)\big),\dots,{\mathsf c}\big(\rho^{-1}(m)\big)\big) \big) .
\end{gather*}
The group ${\mathbb Z}/M{\mathbb Z}$ acts on pairs $(\wc,{\mathsf c}) \in {\mathbb C} \times I$ by $k\on (\wc, {\mathsf c}) =  (\omega^k \wc_i, \sigma^k {\mathsf c})$.
This gives rise to an action of the wreath product
$S_m \wr ({\mathbb Z}/M{\mathbb Z}) := S_m \ltimes ({\mathbb Z}/M{\mathbb Z})^m$ on pairs of tuples  $(\bm \wc, \bm {\mathsf c})\in {\mathbb C}^m \times I^m$.

\begin{lem} Up to an additive constant, the cyclotomic master function $\Phi$ is invariant under the pull-back of this action of $S_m \wr ({\mathbb Z}/M{\mathbb Z})$.
In particular, if $\bm \wc$ is a critical point of $\Phi(\bm \wc;\bm{\mathsf c})$ then~$X\on \bm \wc$ is a critical point of $\Phi(X\on \bm \wc; X\on\bm{\mathsf c})$, for all $X\in S_m\wr({\mathbb Z}/M{\mathbb Z})$.
\end{lem}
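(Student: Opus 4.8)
The plan is to verify the invariance term by term, using the $S_m \wr (\mathbb{Z}/M\mathbb{Z})$-equivariance of the defining data. First I would observe that it suffices to check invariance on a set of generators of the wreath product: the transpositions in $S_m$, and the generator of the $k$-th copy of $\mathbb{Z}/M\mathbb{Z}$ (acting only on the pair $(\wc_k, {\mathsf c}(k))$). The $S_m$ part is essentially immediate: the master function $\Phi$ is built from sums over unordered pairs $\{i,j\}$ (after using that $(\alpha_{{\mathsf c}(i)}, \sigma^k \alpha_{{\mathsf c}(j)})$ for fixed $k$ becomes $(\alpha_{{\mathsf c}(j)}, \sigma^{-k}\alpha_{{\mathsf c}(i)}) = (\alpha_{{\mathsf c}(j)}, \sigma^{M-k}\alpha_{{\mathsf c}(i)})$, so that relabelling $k \mapsto M-k$ restores the form), together with single sums over $i$; permuting the labels simply permutes the summands, so $\Phi$ is literally invariant, with zero additive constant.

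The substantive case is the action of the generator of one $\mathbb{Z}/M\mathbb{Z}$ factor, say sending $(\wc_j, {\mathsf c}(j)) \mapsto (\omega \wc_j, \sigma {\mathsf c}(j))$ for a single index $j$ and fixing all other variables. Here I would go through the four blocks of \eqref{cmf} that involve $\wc_j$. In the block $-\sum_{k}\sum_i (\alpha_{{\mathsf c}(j)}, \sigma^k \Lambda_i)\log(\wc_j - \omega^k z_i)$, after substitution the summand becomes $-(\sigma\alpha_{{\mathsf c}(j)}, \sigma^k\Lambda_i)\log(\omega\wc_j - \omega^k z_i) = -(\alpha_{{\mathsf c}(j)}, \sigma^{k-1}\Lambda_i)\log(\omega(\wc_j - \omega^{k-1}z_i))$; reindexing $k \mapsto k+1$ turns this back into the original sum plus an additive constant coming from the $\log \omega$ terms (here I use $\sigma$-invariance of the pairing, so $(\sigma X, \sigma Y) = (X,Y)$). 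The pairwise block $\sum_k \sum_{i<j}$ containing $\wc_j$ requires a little more care since $\wc_j$ appears both as $\log(\wc_i - \omega^k \wc_j)$ for $i < j$ and as $\log(\wc_j - \omega^k \wc_i)$ for $i > j$; again one substitutes, pulls out $\omega$, shifts the dummy index $k$ (using periodicity modulo $M$ and the symmetry relating the $i<j$ and $i>j$ contributions), and collects the resulting $\log\omega$ constant. The last block, $\sum_i (\tfrac12 \sum_k (\alpha_{{\mathsf c}(i)}, \sigma^k\alpha_{{\mathsf c}(i)}) - (\alpha_{{\mathsf c}(i)}, \Lambda_0))\log\wc_i$, changes only in the $i=j$ summand: the coefficient is unchanged because it depends on ${\mathsf c}(j)$ only through $\sigma$-invariant quantities — the diagonal sum $\sum_k(\alpha_{{\mathsf c}(j)},\sigma^k\alpha_{{\mathsf c}(j)})$ is manifestly $\sigma$-invariant, and $(\sigma\alpha_{{\mathsf c}(j)}, \Lambda_0) = (\alpha_{{\mathsf c}(j)}, \sigma^{-1}\Lambda_0) = (\alpha_{{\mathsf c}(j)}, \Lambda_0)$ since $\sigma\Lambda_0 = \Lambda_0$ — while $\log\wc_j \mapsto \log(\omega\wc_j) = \log\wc_j + \log\omega$ contributes one more additive constant. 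Summing all the $\log\omega$ constants gives the claimed "up to an additive constant", and the first block of \eqref{cmf} (the $\log z_i$ and $\log(z_i - \omega^k z_j)$ terms) is untouched.

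The second sentence follows formally: an additive constant has vanishing derivative, so the equations $\partial\Phi/\partial\wc_i = 0$ defining critical points are unaffected by it, and a linear change of variables $\wc_j \mapsto \omega^k\wc_j$ intertwines the critical point equations for $\Phi(\bm\wc;\bm{\mathsf c})$ with those for $\Phi(X\cdot\bm\wc;\,X\cdot\bm{\mathsf c})$; combining with the $S_m$ case via the semidirect product structure handles a general $X \in S_m \wr (\mathbb{Z}/M\mathbb{Z})$. I expect the main obstacle to be purely bookkeeping: keeping the index shifts $k \mapsto k \pm 1$ consistent across the two pairwise blocks while correctly tracking which summands pick up a factor of $\omega$ inside the logarithm, and hence assembling the total additive constant without sign errors. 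Everything else reduces to the single structural fact that the bilinear form $(\cdot,\cdot)$ on $\h^*$ is $\sigma$-invariant and that $\sigma\Lambda_0 = \Lambda_0$.
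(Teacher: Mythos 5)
The paper states this lemma without proof, treating it as a routine direct verification, so there is no ``paper's own proof'' to compare against. Your approach is the natural one and the substance of your argument is correct: you reduce to generators (transpositions and the generators of each $\mathbb{Z}/M\mathbb{Z}$ factor), go term by term through the blocks of $\Phi$, use $\sigma$-invariance of the bilinear form and $\sigma\Lambda_0 = \Lambda_0$, and shift the dummy index $k$ modulo $M$; the critical-point consequence then follows because the maps $\bm t \mapsto X\cdot\bm t$ are invertible linear maps, so an identity $\Phi(X\cdot\bm t; X\cdot\bm{\mathsf c}) = \Phi(\bm t;\bm{\mathsf c}) + C$ implies the corresponding gradients vanish simultaneously.

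One slip worth correcting: your claim that the $S_m$ action leaves $\Phi$ ``literally invariant, with zero additive constant'' is not quite right. When a transposition reverses the inequality $i<j$ in the pairwise block $\sum_{k}\sum_{i<j}(\alpha_{{\mathsf c}(i)},\sigma^k\alpha_{{\mathsf c}(j)})\log(\wc_i - \omega^k\wc_j)$, you must use $\log(\wc_j - \omega^{k}\wc_i) = \log(-\omega^{k}) + \log(\wc_i - \omega^{-k}\wc_j)$ alongside the reindexing $k\mapsto -k$ that you describe; the $\log(-\omega^k)$ terms survive as a genuine additive constant. So the $S_m$ part also contributes to the ``additive constant'' in the statement, not only the $\mathbb{Z}/M\mathbb{Z}$ part. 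This does not affect the validity of the lemma --- it only makes your bookkeeping of where the constants come from slightly off --- but as written your $S_m$ paragraph is internally inconsistent with your own observation that the pairwise block is only symmetric in $(i,j)$ up to that shift. With that corrected, the proof is sound.
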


\subsection{The extended master function}\label{extmfsec}

The equations \eqref{cbe} admit another, closely related, interpretation. Recall the def\/inition of the (usual) master function~\cite{SV}. Namely, let
 $\tilde{\bm\Lambda} = (\tilde\Lambda_i)_{i=0}^{\tilde N}$ be a collection of $\tilde N+1\in {\mathbb Z}_{\geq 0}$  weights $\tilde\Lambda_i\in \h^*$, and let $\tilde{\bm z} = (\tilde z_i)_{i=0}^{\tilde N}$ be a collection of nonzero points $\tilde z_i\in {\mathbb C}^\times$.
Pick $\tilde m\in {\mathbb Z}_{\geq 0}$, let $\bm {\mathsf c}=({\mathsf c}(j))_{j=1}^{\tilde m}$ be an $\tilde m$-tuple of elements of $I$ and introduce variables $\bm \t = (\t_j)_{j=1}^{\tilde m}$.
The \emph{master function} associated to these data is
\begin{gather}
{\widetilde \Phi} :=
\sum_{0\leq i<j\leq \tilde N} \big( \tilde \Lambda_i,\tilde \Lambda_j\big) \log (\tilde z_i- \tilde z_j)
- \sum_{i=0}^{\tilde N}\sum_{j=1}^{\tilde m}  \big(\alpha_{{\mathsf c}(j)},\tilde\Lambda_i\big) \log (\t_j-\tilde z_i)\nonumber\\
\hphantom{{\widetilde \Phi} :=}{}
+ \sum_{1\leq i< j \leq \tilde m} \big(\alpha_{{\mathsf c}(i)},\alpha_{{\mathsf c}(j)}\big) \log (\t_i-\t_j) .
 \label{mf}
\end{gather}
It is a function of the variables $\bm\t$, depending on the parameters $\bm{\mathsf c}$, $\tilde{\bm z}$  and $\tilde{\bm \Lambda}$. The critical points of the master function are those points  $\bm \t$ with complex coordinates such that $\partial {\widetilde \Phi}/\partial \t_j =0$ for $j=1,\dots, \tilde m$,
i.e., those points such that the following equations are satisf\/ied:
\begin{gather}
0= \sum_{i=0}^{\tilde N}
\frac{\lf \alpha_{{\mathsf c}(j)},\tilde \Lambda_i\rf}{\t_j-\tilde z_i}
-  \sum_{\substack{i=1\\i\neq j}}^{\tilde m}
\frac{\lf \alpha_{{\mathsf c}(j)},\alpha_{{\mathsf c}(i)}\rf}{\t_j-\t_i} , \qquad j=1,\dots,\tilde m.
\label{be}
\end{gather}
In this paper we are concerned with the following special case.
Let $\tilde N= N M$, choose $(\tilde z_i)_{i=0}^{N M}$ to be
\begin{subequations}\label{symz}
\begin{gather} \tilde z_0 = 0,\qquad \tilde z_{k+M i} = \omega^k z_i, \qquad
k = 0,1,\dots, M-1, \quad i=1,\dots, N,
\end{gather}
and choose the weights at these points to be
\begin{gather}
 \tilde \Lambda_0 = \Lambda_0, \qquad \tilde \Lambda_{k+M i} = \sigma^k \Lambda_i,
\end{gather}
\end{subequations}
where $z_i$, $\Lambda_i$, $i=1,\dots, N$, and $\Lambda_0$ are as in Section~\ref{cycmfsec}.
We call the master function in this case the \emph{extended master function}, $\widehat\Phi = \widehat\Phi_{\g,\sigma}(\bm t; \bm {\mathsf c}; \bm z; \bm \Lambda;\Lambda_0)$. It is given by
\begin{gather}
\widehat\Phi :=
\sum_{k=0}^{M-1} \sum_{i=1}^N \big( \Lambda_0,\sigma^k \Lambda_i\big)\log \big({-}\omega^kz_i\big)
+ \sum_{k,\,l=0}^{M-1}  \sum_{1\leq i<j\leq  n} \big(\sigma^k \Lambda_i,\sigma^l\Lambda_j\big) \log \big(\omega^{k}z_i- \omega^{l} z_j\big)\nonumber\\
\hphantom{\widehat\Phi :=}{}
 + \sum_{0\leq k<l\leq T-1} \sum_{i=1}^N \big(\sigma^k \Lambda_i,\sigma^l\Lambda_i\big) \log\big(\omega^k - \omega^l\big) z_i
- \sum_{j=1}^{\tilde m}  \lf \alpha_{{\mathsf c}(j)},\Lambda_0\rf \log (\t_j)\nonumber\\
\hphantom{\widehat\Phi :=}{}
- \sum_{k=0}^{M-1}\sum_{i=1}^N\sum_{j=1}^{\tilde m} \big( \alpha_{{\mathsf c}(j)},\omega^k\Lambda_i\big) \log \big(\t_j-\omega^kz_i\big)
+ \sum_{1\leq i< j \leq \tilde m}  ( \alpha_{{\mathsf c}(i)},\alpha_{{\mathsf c}(j)} ) \log (\t_i-\t_j) ,\label{emf}
\end{gather}
and the critical point equations \eqref{be} take the form
\begin{gather}
0= \sum_{k=0}^{M-1} \sum_{i=1}^N
\frac{\big(\alpha_{{\mathsf c}(j)},\sigma^k\Lambda_i\big)}{\t_j-\omega^k z_i}
+  \frac{\lf \alpha_{{\mathsf c}(j)},\Lambda_0\rf}{\t_j}
-  \sum_{\substack{i=1\\i\neq j}}^{\tilde m}
\frac{\lf \alpha_{{\mathsf c}(j)},\alpha_{{\mathsf c}(i)}\rf}{\t_j-\t_i} , \qquad j=1,\dots,\tilde m.
\label{ebe}
\end{gather}

The group $S_{\tilde m}$ acts on pairs of $\tilde m$-tuples $(\bm t,\bm {\mathsf c})$ by permuting indices:
\begin{gather}
\rho \on (\bm t,\bm {\mathsf c}) = \big( \big (t_{\rho^{-1}(1)},\dots,t_{\rho^{-1}(\tilde m)}\big), \big({\mathsf c}\big(\rho^{-1}(1)\big),\dots,{\mathsf c}\big(\rho^{-1}(\tilde m)\big)\big) \big). \label{Stildemaction}
\end{gather}

\begin{lem} Any master function of the form~\eqref{mf} is invariant under the pull-back of this action of $S_{\tilde m}$. In particular the extended master function~\eqref{emf} is invariant.
\end{lem}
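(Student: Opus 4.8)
The plan is to verify the invariance term by term. Split $\widetilde\Phi$ into the three groups of summands on the three lines of~\eqref{mf}: write $\widetilde\Phi=\widetilde\Phi_{(1)}+\widetilde\Phi_{(2)}+\widetilde\Phi_{(3)}$, where $\widetilde\Phi_{(1)}=\sum_{0\leq i<j\leq\tilde N}(\tilde\Lambda_i,\tilde\Lambda_j)\log(\tilde z_i-\tilde z_j)$, $\widetilde\Phi_{(2)}=-\sum_{i=0}^{\tilde N}\sum_{j=1}^{\tilde m}(\alpha_{{\mathsf c}(j)},\tilde\Lambda_i)\log(t_j-\tilde z_i)$, and $\widetilde\Phi_{(3)}=\sum_{1\leq i<j\leq\tilde m}(\alpha_{{\mathsf c}(i)},\alpha_{{\mathsf c}(j)})\log(t_i-t_j)$. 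The $S_{\tilde m}$-action~\eqref{Stildemaction} moves only the tuple $(\bm t,\bm{\mathsf c})$, leaving $\tilde{\bm z}$ and $\tilde{\bm\Lambda}$ fixed, so $\widetilde\Phi_{(1)}$ is literally unchanged by the pull-back. It remains to treat $\widetilde\Phi_{(2)}$ and $\widetilde\Phi_{(3)}$.

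For $\widetilde\Phi_{(2)}$ I would note that, for each fixed $i$, the inner sum is $\sum_{j=1}^{\tilde m}f_i(t_j,{\mathsf c}(j))$ for the single fixed function $f_i(t,c)=-(\alpha_c,\tilde\Lambda_i)\log(t-\tilde z_i)$; the pull-back by $\rho$ replaces the pair $(t_j,{\mathsf c}(j))$ by $(t_{\rho^{-1}(j)},{\mathsf c}(\rho^{-1}(j)))$, which merely permutes the multiset $\{(t_j,{\mathsf c}(j))\}_{j=1}^{\tilde m}$, so every inner sum, and hence $\widetilde\Phi_{(2)}$, is fixed. For $\widetilde\Phi_{(3)}$ the only subtlety is the branch of the logarithm. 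Put $g(a,b)=(\alpha_{{\mathsf c}(a)},\alpha_{{\mathsf c}(b)})\log(t_a-t_b)$ for $a\neq b$. Since $(\cdot,\cdot)$ is symmetric and $\log(t_a-t_b)-\log(t_b-t_a)$ has vanishing $\bm t$-differential, the quantity $c_{ab}:=g(a,b)-g(b,a)$ is a constant independent of $\bm t$ (equal to $\pm i\pi\,(\alpha_{{\mathsf c}(a)},\alpha_{{\mathsf c}(b)})$ for a chosen determination), whence $\widetilde\Phi_{(3)}=\frac12\sum_{i\neq j}g(i,j)+\frac12\sum_{i<j}c_{ij}$. The second term depends only on $\bm{\mathsf c}$, not on $\bm t$, while the ordered-pair sum $\sum_{i\neq j}g(i,j)=\sum_{i\neq j}(\alpha_{{\mathsf c}(i)},\alpha_{{\mathsf c}(j)})\log(t_i-t_j)$ is \emph{exactly} invariant under the pull-back, since the substitution $i\mapsto\rho^{-1}(i)$, $j\mapsto\rho^{-1}(j)$ preserves the set of ordered pairs with $i\neq j$. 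Hence $\widetilde\Phi_{(3)}$ is invariant up to a summand that does not depend on $\bm t$.

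Combining the three pieces, the pull-back of $\widetilde\Phi$ by any $\rho\in S_{\tilde m}$ equals $\widetilde\Phi$ up to a constant independent of $\bm t$; equivalently the single-valued $1$-form $d\widetilde\Phi$ is $S_{\tilde m}$-invariant, so the set of critical points is permuted by the action---this is the sense of ``invariant'' here, exactly as in the statement for the cyclotomic master function above. The extended master function $\widehat\Phi$ of~\eqref{emf} is by construction the master function~\eqref{mf} evaluated on the particular data~\eqref{symz}, so it inherits the invariance. I do not anticipate any genuine difficulty: the argument is bookkeeping, and the one point of care---the log-branch shift in $\widetilde\Phi_{(3)}$---is precisely what accounts for the qualifier ``up to an additive constant''.
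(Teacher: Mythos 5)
Your proof is correct, and since the paper states this lemma without proof (treating it as routine bookkeeping), the term-by-term verification you give is surely the intended argument. Two small notes are worth making. First, the log-branch subtlety you flag in $\widetilde\Phi_{(3)}$ is real: rewriting the $i<j$ sum as half of an $i\neq j$ sum plus a $\bm t$-independent branch-correction term shows that strict invariance can fail by an additive constant, and the symmetry of the bilinear form $(\cdot,\cdot)$ is genuinely used to get agreement of coefficients after reindexing. This is precisely why the analogous lemma for the \emph{cyclotomic} master function a few lines earlier carries the qualifier ``up to an additive constant''; the paper drops that qualifier here, presumably treating a master function as defined only modulo constants (which is standard, since only $d\widetilde\Phi$ and hence its critical points matter). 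Your reformulation in terms of the invariance of $d\widetilde\Phi$ is the clean way to make the statement unambiguous, and is exactly what the paper needs. Second, the ``in particular'' clause follows immediately because $\widehat\Phi$ is, by construction, the master function~\eqref{mf} evaluated at the special data~\eqref{symz}, as you note.
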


Let us call a point $(\bm t,\bm {\mathsf c})\in {\mathbb C}^{\tilde m}\times I^{\tilde m}$ a \emph{cyclotomic point} if we have $\tilde m = M m$ for some $m\in {\mathbb Z}_{\geq 0}$ and, by acting with some permutation in $S_{\tilde m}$, we can arrange that
\begin{gather}
 \t_{i + mk} = \omega^k \t_i \qquad {\mathsf c}(i + mk) = \sigma^k{\mathsf c}(i), \qquad i=1,\dots, m, \quad k=0,\dots,M-1. \label{symt}
 \end{gather}

\begin{lem}\label{lem: cp} This point
$(\t_i)_{i=1}^{\tilde m}$ is a critical point of the extended master function if and only if~$(\t_i)_{i=1}^m$ is a critical point of the cyclotomic master function, i.e., $(\t_i)_{i=1}^m$ obeys~\eqref{cbe}.
\end{lem}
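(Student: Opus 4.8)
The plan is to exploit the $S_{\tilde m}$-invariance of the extended master function (the immediately preceding Lemma) to reduce the critical point equations \eqref{ebe} at a cyclotomic point to the cyclotomic Bethe equations \eqref{cbe}. First I would fix a cyclotomic point, so that after a suitable permutation the coordinates satisfy \eqref{symt}: $\t_{i+mk}=\omega^k\t_i$ and ${\mathsf c}(i+mk)=\sigma^k{\mathsf c}(i)$. Because $\widehat\Phi$ is $S_{\tilde m}$-invariant, the $\tilde m$ equations $\partial\widehat\Phi/\partial\t_j=0$ split into $M$ groups indexed by $k=0,\dots,M-1$, and the equations in group $k$ are obtained from those in group $0$ by applying the symmetry; hence the full system \eqref{ebe} holds if and only if it holds for $j=1,\dots,m$ (the $k=0$ block). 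So it suffices to show that, under the substitution \eqref{symt}, the equation \eqref{ebe} for such a $j$ becomes exactly \eqref{cbe} for that $j$.

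The core of the argument is then a term-by-term comparison of \eqref{ebe} with \eqref{cbe} after substituting \eqref{symt}. The first sum in \eqref{ebe}, $\sum_{k=0}^{M-1}\sum_{i=1}^N (\alpha_{{\mathsf c}(j)},\sigma^k\Lambda_i)/(\t_j-\omega^kz_i)$, already matches the first sum in \eqref{cbe} verbatim (with $\t_j=\wc_j$). The term $(\alpha_{{\mathsf c}(j)},\Lambda_0)/\t_j$ matches the $\lf\alpha_{{\mathsf c}(j)},\Lambda_0\rf/\wc_j$ contribution in \eqref{cbe}. The delicate part is the double sum $-\sum_{i\neq j}(\alpha_{{\mathsf c}(j)},\alpha_{{\mathsf c}(i)})/(\t_j-\t_i)$ over $i=1,\dots,\tilde m$: I would break the index $i$ into $i=i'+mk$ with $i'\in\{1,\dots,m\}$, $k\in\{0,\dots,M-1\}$, use ${\mathsf c}(i'+mk)=\sigma^k{\mathsf c}(i')$ and $\t_{i'+mk}=\omega^k\t_{i'}$, and split off the single term with $i'=j$, $k=0$ (which is excluded) from those with $i'=j$, $k\neq0$. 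The terms with $i'\neq j$ reassemble into $-\sum_{k=0}^{M-1}\sum_{i'\neq j}(\alpha_{{\mathsf c}(j)},\sigma^k\alpha_{{\mathsf c}(i')})/(\wc_j-\omega^k\wc_{i'})$, matching the second sum of \eqref{cbe}. The leftover terms, namely $-\sum_{k=1}^{M-1}(\alpha_{{\mathsf c}(j)},\sigma^k\alpha_{{\mathsf c}(j)})/(\t_j-\omega^k\t_j) = -\frac{1}{\wc_j}\sum_{k=1}^{M-1}(\alpha_{{\mathsf c}(j)},\sigma^k\alpha_{{\mathsf c}(j)})/(1-\omega^k)$, reproduce precisely the remaining contribution $-\frac{1}{\wc_j}\sum_{k=1}^{M-1}(\alpha_{{\mathsf c}(j)},\sigma^k\alpha_{{\mathsf c}(j)})/(1-\omega^k)$ in \eqref{cbe}. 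Here one uses that $(\alpha_{{\mathsf c}(j)},\sigma^k\alpha_{{\mathsf c}(i')})=(\sigma^{-k}\alpha_{{\mathsf c}(j)},\alpha_{{\mathsf c}(i')})$ and the $\sigma$-invariance of $\lf\cdot,\cdot\rf$ freely, together with relabelling $k\mapsto k-l$ in a couple of places; I would double-check signs and the range of the residual $k$-sum carefully, since that is where an off-by-one or a sign slip could creep in.

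The main obstacle I anticipate is purely bookkeeping: correctly isolating the "diagonal orbit" terms (those with $i'=j$ but $k\neq 0$) from the double sum and verifying that they assemble into exactly the $1/\wc_j$ term of \eqref{cbe} with the right coefficient, rather than, say, half of it or twice it. There is no deep idea beyond the $S_{\tilde m}$-symmetry reduction plus this reindexing, so once the block structure is set up the equivalence in both directions is immediate: if $(\t_i)_{i=1}^{m}$ solves \eqref{cbe} then running the computation backwards shows the $k=0$ block of \eqref{ebe} holds, and the other blocks follow by symmetry, giving a critical point of $\widehat\Phi$; conversely a critical point of $\widehat\Phi$ of cyclotomic form gives, by restricting to the $k=0$ block, a solution of \eqref{cbe}.
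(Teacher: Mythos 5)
Your proof is correct and takes essentially the same approach as the paper: a direct term-by-term comparison after substituting the cyclotomic symmetry \eqref{symt} into \eqref{ebe}, with the ``diagonal orbit'' terms $(i'=j,\ k\neq 0)$ peeled off to produce the $1/\wc_j$ contribution in \eqref{cbe}. One small caveat: your opening appeal to $S_{\tilde m}$-invariance to reduce to the $k=0$ block is a bit informal, since the permutation shifting blocks does not fix the cyclotomic point but maps it to $\omega^{-1}\bm t$; the cleaner justification (which the paper uses, and which is implicit in your computation anyway) is to verify directly that the $\t_{j+km}$ equation equals $\omega^{-k}$ times the $\t_j$ equation, using $\lf\sigma x,y\rf=\lf x,\sigma^{-1}y\rf$.
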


\begin{proof} Given~\eqref{symt}, the equation \eqref{ebe} for $\t_{j}$ is nothing but the corresponding equation in~\eqref{cbe} and the equation for $\t_{j+km}$, $k=1,\dots,M-1$, is actually the same equation up to an overall factor of $\omega^{-k}$.
(To see this one must use the compatibility of $\sigma$ with the inner product: $\lf \sigma x,y\rf = \lf x,\sigma^{-1} y\rf$.)
\end{proof}

Thus, the cyclotomic Bethe equations \eqref{cbe} are also the equations for cyclotomic critical points of the extended master function.

\section{Gaudin models and the Bethe ansatz equations}\label{gsec}
Our f\/irst result, Theorem \ref{matchthm}, concerns the relationship between critical points and the eigenvalues of Gaudin Hamiltonians. Suppose, for this section only, that the Cartan matrix is of f\/inite type, i.e., that $\g$ is semisimple, and that $\sigma$ is an automorphism of $\g$ of order $M>1$.
Recall \cite{Gaudin,Gbook} that the \emph{quadratic Gaudin Hamiltonians} are the following $\tilde N+1$ elements of $U(\g)^{\otimes (\tilde N+1)}$:
\begin{gather*}
 \tilde{\mathcal H}^{(i)} := \sum_{\substack{j=0\\j\neq i}}^{\tilde N} \sum_{a=1}^{\dim \g} \frac{ I^{a(i)} I_a^{(j)}}{\tilde z_i - \tilde z_j} ,\qquad i=0,1,\dots, \tilde N,
 \end{gather*}
where $I_a$, $a=1,\dots,\dim\g$, is a basis of~$\g$, $I^a$ is the dual basis with respect to the non-degenerate invariant bilinear form $\lf\cdot,\cdot\rf\colon \g\times \g\to {\mathbb C}$, and we write~${X}^{(i)}$ for~$X$ acting in the $i$th tensor factor. (For convenience we number these factors starting from $0$.)

For $\Lambda\in \h^*$, let $M_{\Lambda}$ denote the Verma module over $\g$ with highest weight~$\Lambda$,  $M_\Lambda := \operatorname{Ind}_{\h \oplus \n_+}^\g {\mathbb C} \mathsf v_{\Lambda}$. Let us represent the $\tilde{\mathcal H}^{(i)}$ as linear maps in $\operatorname{End}\big(\bigotimes_{i=0}^{\tilde N} M_{\tilde\Lambda_i}\big)$. Then the following can be shown using the techniques of the Bethe Ansatz.

\begin{thm}[\cite{BabujianFlume,RV}]\label{Gthm}
To any critical point $\bm t$ of the master function ${\widetilde \Phi}$, i.e., to any solution to the equations~\eqref{be}, there corresponds a simultaneous eigenvector
$\tilde\psi_{\bm t}$
of the linear operators $\tilde{\mathcal H}^{(i)} \in \operatorname{End}\big(\bigotimes_{i=0}^{\tilde N} M_{\tilde \Lambda_i}\big)$. For each $i=0,\dots,\tilde N$ the eigenvalue of $\tilde{\mathcal H}^{(i)}$
on $\tilde\psi_{\bm t}$
is
\begin{gather}
 \tilde E^{(i)} :=
 \frac{\partial {\widetilde \Phi}}{\partial \tilde z_i}
= \sum_{\substack{j=0\\j\neq i}}^{\tilde N} \frac{\lf \tilde\Lambda_i, \tilde\Lambda_j\rf}{\tilde z_i-\tilde z_j}
- \sum_{j=1}^{\tilde m} \frac{\lf \Lambda_i,\alpha_{{\mathsf c}(j)} \rf}{\tilde z_i-\t_j}.
\label{ee}
\end{gather}
The eigenvector $\tilde\psi_{\bm t}$ is given explicitly by
\begin{gather} \tilde\psi_{\bm t} =
  \sum_{\bm n\in P_{\tilde m,\tilde N+1}}
  \bigotimes_{i=0}^{\tilde N} \frac{F_{c(n^i_{1})}F_{c(n^i_2)} \cdots F_{c(n^i_{p_{i}-1})} F_{c(n^i_{p_i})} \mathsf{v}_{\tilde\Lambda_i}}
   {\big(w_{n^i_1} - w_{n^i_2}\big)\cdots
    \big(w_{n^i_{p_{i}-1}} -  w_{n^i_{p_i}}\big)
        \big(w_{n^i_{p_i}} -   z_i\big)  },
\label{psi}
\end{gather}
where the sum $\bm n\in P_{\tilde m,\tilde N+1}$ is over ordered partitions of the labels $\{1,\dots,\tilde m\}$ into $\tilde N+1$ parts.
\end{thm}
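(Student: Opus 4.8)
This is a classical theorem of the Bethe ansatz for quantum Gaudin models, and the plan is to follow the strategy of \cite{BabujianFlume, RV} combined with the master-function bookkeeping of \cite{SV}. \textbf{First} I would check that the right-hand side of \eqref{psi} makes sense: the sum over ordered partitions $\bm n\in P_{\tilde m,\tilde N+1}$ is finite because $\tilde m$ is; each summand is a well-defined vector in the weight subspace $\big(\bigotimes_{i=0}^{\tilde N} M_{\tilde\Lambda_i}\big)_\mu$ of weight $\mu=\sum_{i=0}^{\tilde N}\tilde\Lambda_i-\sum_{j=1}^{\tilde m}\alpha_{{\mathsf c}(j)}$; and the denominators $w_{n}-w_{n'}$, $w_{n}-\tilde z_i$ are nonzero at a genuine solution $\bm t$ of \eqref{be} (one uses here the disjointness of the orbits $\omega^{\mathbb Z}z_i$ together with the standard fact that a solution of \eqref{be} has pairwise distinct coordinates of each colour). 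Thus $\tilde\psi_{\bm t}$ is the value at $\bm t$ of the \emph{universal weight function} $w_{\bm{\mathsf c}}=w_{\bm{\mathsf c}}(\bm t;\tilde{\bm z})$, a rational function of $\bm t$ valued in this fixed weight subspace.

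The core of the argument is the \textbf{fundamental identity}: for each $i=0,\dots,\tilde N$,
\begin{gather*}
\tilde{\mathcal H}^{(i)}\, w_{\bm{\mathsf c}}(\bm t;\tilde{\bm z}) = \left(\frac{\partial \widetilde\Phi}{\partial \tilde z_i}\right) w_{\bm{\mathsf c}}(\bm t;\tilde{\bm z}) + \sum_{j=1}^{\tilde m}\left(\frac{\partial \widetilde\Phi}{\partial t_j}\right) w^{(i)}_{j}(\bm t;\tilde{\bm z}),
\end{gather*}
where each $w^{(i)}_{j}$ is an explicit vector-valued rational function obtained from $w_{\bm{\mathsf c}}$ by modifying its dependence on the $j$-th Bethe variable. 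To prove this I would write $\tilde{\mathcal H}^{(i)} = \sum_{j\neq i}(\tilde z_i-\tilde z_j)^{-1}\,\Omega^{(ij)}$, with $\Omega=\sum_a I^a\otimes I_a$ the Casimir tensor of $\lf\cdot,\cdot\rf$, and compute $\Omega^{(ij)}w_{\bm{\mathsf c}}$ by commuting the $\g$-generators supplied by $\Omega$ through the strings $F_{{\mathsf c}(n_1)}\cdots F_{{\mathsf c}(n_p)}$ that act on the Verma highest-weight vectors $\mathsf v_{\tilde\Lambda_i}$: the Cartan part of $\Omega$ reproduces exactly the residue structure of \eqref{ee}, while the root-vector parts, once commuted through the $F$-strings (they annihilate the highest-weight vectors at the end of a string), break and recombine those strings; collecting the recombined contributions and matching their rational coefficients against the partial derivatives of \eqref{mf} yields the correction terms proportional to $\partial\widetilde\Phi/\partial t_j$. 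This last combinatorial bookkeeping is where the genuine work lies, and is the step I expect to be the main obstacle; it is most cleanly organised either by induction on $\tilde m$ via the $\g$-analogue of the algebraic-Bethe-ansatz creation operator (nested Bethe ansatz), or by the explicit module computation of \cite{SV, RV}. The same computation also shows $\tilde\psi_{\bm t}\neq 0$ (e.g.\ it has a nonzero component on a suitable ``leading'' partition), so it is a genuine eigenvector.

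\textbf{Finally}, specialise the fundamental identity to a critical point $\bm t$ of $\widetilde\Phi$. By the very definition \eqref{be} we have $\partial\widetilde\Phi/\partial t_j=0$ for all $j=1,\dots,\tilde m$, hence every correction term vanishes and
\begin{gather*}
\tilde{\mathcal H}^{(i)}\, \tilde\psi_{\bm t} = \left(\frac{\partial \widetilde\Phi}{\partial \tilde z_i}\right)\tilde\psi_{\bm t},\qquad i=0,\dots,\tilde N,
\end{gather*}
which is the assertion, with eigenvalue $\tilde E^{(i)}=\partial\widetilde\Phi/\partial\tilde z_i$; differentiating \eqref{mf} with respect to $\tilde z_i$ rewrites this as the explicit formula \eqref{ee}.
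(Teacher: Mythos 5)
The paper states this theorem as a citation to \cite{BabujianFlume, RV} without reproducing a proof, so there is no in-paper argument to compare against; your sketch correctly reproduces the standard strategy of those references (and \cite{SV}): evaluate the universal weight function, establish the key identity expressing $\tilde{\mathcal H}^{(i)}$ applied to it as $\partial\widetilde\Phi/\partial\tilde z_i$ times the weight function plus terms proportional to $\partial\widetilde\Phi/\partial t_j$, and then set the Bethe equations to zero. One minor caveat: the non-vanishing of $\tilde\psi_{\bm t}$ is not in general as straightforward as you indicate; the paper itself notes immediately after the theorem statement that this requires separate arguments (\cite{MV00, MTV1, V11, V6}), depending on the type of critical point and Lie algebra, so your remark that ``the same computation also shows $\tilde\psi_{\bm t}\neq 0$'' is too optimistic for the general case.
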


(The fact that this simultaneous eigenvector is nonzero is proved for $\g={\mathfrak{sl}}_n$ nondegenerate critical points in \cite{MV00}, for $\g={\mathfrak{sl}}_n$ isolated critical points in \cite{MTV1}, and for semisimple $\g$ and isolated critical points in \cite{V11}. See also \cite{V6}.)

In \cite{VY1}\footnote{In \cite{VY1} $\sigma\colon \g\to \g$ is allowed to be any automorphism commuting with the Cartan involution, not necessarily a~diagram involution. A posteriori the Bethe equations and energy eigenvalues depend on the inner part of $\sigma$ only through the def\/inition of~$\Lambda_0$, \eqref{l0def}.}, B. Vicedo and one of the present authors def\/ined \emph{cyclotomic Gaudin Hamiltonians}. The quadratic cyclotomic Gaudin Hamiltonians are the elements of $U(\g)^{\otimes N}$ given by
\begin{gather} \mathcal{H}_i :=
\sum_{p = 0}^{M-1} \sum_{\substack{j=1\\j \neq i}}^N\sum_{a=1}^{\dim\g} \frac{I^{a (i)} \sigma^p I_a^{(j)}}{z_i - \omega^{-p} z_j}
 + \frac 1 {z_i} \sum_{p = 1}^{M-1} \sum_{a=1}^{\dim\g}\frac{I^{a (i)} \sigma^p I_a^{(i)}}{(1 - \omega^p)}, \qquad i = 1, \ldots, n.
 \label{Hioo}
 \end{gather}

\begin{rem} These Hamiltonians can be understood in a number of ways. Physically, one thinks of them as describing the dynamics of a ``long-range spin chain'' in which the ``spin'' at~$z_i$  interacts not only directly with the other spins at the points~$z_j$, $j\neq i$, but also with their images under rotations of the spectral plane~\cite{CrampeYoung}. At the level of the Lax matrix, this corresponds to replacing the usual rational skew-symmetric solution to the classical Yang--Baxter equation, $r(u,v) = I^a \otimes I_a/(u-v)$, by a certain \emph{non}-skew-symmetric solution -- see \cite{Skrypnyk1, Skrypnyk2} and discussion in~\cite{VY1}.
The motivation for such models comes in part from physics, where in certain important cases the Lax matrix has cyclotomic symmetry in the spectral variable~\cite{KY, Y}.
\end{rem}

Let us assign to the point $z_i$ the Verma module $M_{\Lambda_i}$, $\Lambda_i\in \h^*$. In other words, let us represent the Hamiltonians~\eqref{Hioo} as linear maps
\begin{gather}
 \mathcal H^{(i)} \in \operatorname{End}\left(\bigotimes_{i=1}^N M_{\Lambda_i}\right),\qquad i=1,\dots, N.\label{Hi}
\end{gather}

Let (in this section, Section~\ref{gsec}) $\Lambda_0\in \h^{\sigma,*}$  be the weight given by
\begin{gather}
\Lambda_0(h)  := \sum_{r=1}^{M-1} \frac {\operatorname{tr}_\n (\sigma^{-r} \operatorname{ad}_h)} {1 - \omega^r}.\label{l0def}
\end{gather}

\begin{thm}[\cite{VY1}]\label{VYthm} To any  critical point of the cyclotomic master function, i.e., to any solution~$\bm \wc$ to the equations~\eqref{cbe}, there corresponds a simultaneous eigenvector
$\psi_{\bm \wc}$ of the linear operators~$\mathcal H^{(i)}$, $i=1,\dots, N$. The eigenvalue of $\mathcal H^{(i)}$ on $\psi_{\bm \wc}$ is
\begin{gather}
E^{(i)} := \frac{\partial \Phi}{\partial z_i} =  \sum_{\substack{j=1\\j\neq i}}^N \sum_{s=0}^{M-1} \frac{\lf \Lambda_i,\sigma^s \Lambda_j\rf}{z_i-\omega^sz_j}
- \sum_{j=1}^m \sum_{s=0}^{M-1} \frac{\lf \Lambda_i,\sigma^s \alpha_{{\mathsf c}(j)} \rf}{z_i-\omega^s\wc_j}\nonumber\\
\hphantom{E^{(i)} :=}{}
+ \frac{1}{z_i} \left( \lf \Lambda_i, \Lambda_0\rf
+ \sum_{s=1}^{M-1} \frac{\lf \Lambda_i,\sigma^s \Lambda_i\rf}{1-\omega^s}\right).
\label{eev}
\end{gather}
The explicit form of the eigenvector $\psi_{\bm\wc}$ is
\begin{gather}
\psi_{\bm \wc} =\label{cycpsi}\\
  =\sum_{\substack{\bm n\in P_{m,N}\\  (k_1,\dots,k_m) \in {\mathbb Z}_M^m}}
  \bigotimes_{i=1}^N \frac{ \check\sigma^{k_{n^i_1}}\big(F_{c(n^i_{1})}\big)\check\sigma^{k_{n^i_2}}\big(F_{c(n^i_2)}\big)\cdots \check\sigma^{k_{n^i_{p_{i}-1}}}\big(F_{c(n^i_{p_{i}-1})}\big) \check\sigma^{k_{n^i_{p_i}}}\big(F_{c(n^i_{p_i})}\big) \mathsf{v}_{\Lambda_i}}
   {\big(\omega^{k_{n^i_1}} w_{n^i_1} - \omega^{k_{n^i_2}} w_{n^i_2}\big)\cdots
    \big(\omega^{k_{n^i_{p_{i}-1}}}w_{n^i_{p_{i}-1}} - \omega^{k_{n^i_{p_i}}} w_{n^i_{p_i}}\big)
        \big(\omega^{k_{n^i_{p_i}}}w_{n^i_{p_i}} -   z_i\big)  },
\nonumber
\end{gather}
where $\check\sigma(X):=\omega\sigma(X)$.
\end{thm}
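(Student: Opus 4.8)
The plan is to deduce the theorem from the classical Bethe ansatz result for the ordinary Gaudin model, Theorem~\ref{Gthm}, applied to the \emph{extended} master function, and then to descend from the extended data to the cyclotomic data by exploiting the cyclic ($\sigma$-twisted) symmetry relating the $M$ sheets. (An alternative, and probably the route of~\cite{VY1} itself, is a direct computation in the $\sigma$-twisted current algebra following the method of~\cite{FFR}, with a twisted vacuum at the origin carrying the weight~\eqref{l0def}; I sketch the first route, which is the one adapted to the framework of this paper.)

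First I would take a solution $\bm\wc$ of the cyclotomic Bethe equations~\eqref{cbe} and form the associated cyclotomic point $\bm t$ of size $\tilde m = Mm$ as in~\eqref{symt}. By Lemma~\ref{lem: cp}, $\bm t$ is a critical point of the extended master function $\widehat\Phi$, so Theorem~\ref{Gthm} produces a joint eigenvector $\widehat\psi_{\bm t}$ of the extended Gaudin Hamiltonians $\widehat{\mathcal H}^{(j)} \in \operatorname{End}\big(M_{\Lambda_0}\otimes\bigotimes_{i=1}^N\bigotimes_{k=0}^{M-1} M_{\sigma^k\Lambda_i}\big)$, $j = 0,1,\dots,NM$, with eigenvalues $\tilde E^{(j)}$ given by~\eqref{ee} evaluated at this point. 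Next I would introduce the linear operator $\Sigma$ on this tensor product that acts by (a twist of) $\sigma$ on the origin factor $M_{\Lambda_0}$ --- legitimate since $\Lambda_0\in\h^{\sigma,*}$, so $\sigma$ restricts to an automorphism of $M_{\Lambda_0}$ --- and on the $i$-th block of $M$ factors rotates them, $\sigma\colon M_{\sigma^k\Lambda_i}\to M_{\sigma^{k+1}\Lambda_i}$ with indices read mod $M$. Because the points $\tilde z_j$ and weights $\tilde\Lambda_j$ of~\eqref{symz}, and the cyclotomic point $\bm t$ itself, are all compatible with this rotation, inspection of the explicit sum~\eqref{psi} shows that $\widehat\psi_{\bm t}$ lies in an eigenspace of $\Sigma$; tracking the rotations through~\eqref{psi} is exactly what produces the twisted generators $\check\sigma(X) = \omega\sigma(X)$ and the extra summation over $(k_1,\dots,k_m)\in{\mathbb Z}_M^m$ in formula~\eqref{cycpsi}.

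Then I would identify the cyclotomic Gaudin algebra with a piece of the extended one on the $\Sigma$-invariant subspace. Under the natural identification of that subspace with $\bigotimes_{i=1}^N M_{\Lambda_i}$ --- each block of $M$ factors collapsing onto its ``$k=0$'' copy, the origin factor being integrated out --- I would show that the Hamiltonian $\widehat{\mathcal H}^{(Mi)}$ restricts to the cyclotomic Hamiltonian ${\mathcal H}^{(i)}$ of~\eqref{Hioo}. The three families of summands of $\widehat{\mathcal H}^{(Mi)}$ match up as follows: using $\sigma$-invariance of the Casimir tensor and the identity $(\sigma X,\sigma Y)=(X,Y)$, the cross-terms with factors $k'+Mi'$ ($i'\neq i$) become $\sum_{p=0}^{M-1}\sum_{j\neq i}\sum_a I^{a(i)}\sigma^p I_a^{(j)}/(z_i-\omega^{-p}z_j)$ after setting $p=-k'$; the self-interaction cross-terms with factors $k'+Mi$ ($k'\neq 0$) become $z_i^{-1}\sum_{p=1}^{M-1}\sum_a I^{a(i)}\sigma^p I_a^{(i)}/(1-\omega^p)$ after the $p\leftrightarrow -p$ symmetrization of the kind in Lemma~\ref{hl}; and the single cross-term with the origin factor becomes a scalar operator whose value is governed by the special choice~\eqref{l0def} of $\Lambda_0$. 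Under this identification $\widehat\psi_{\bm t}$ maps to $\psi_{\bm\wc}$, which is therefore a joint eigenvector of the ${\mathcal H}^{(i)}$; and the eigenvalue of ${\mathcal H}^{(i)}$ equals $\tilde E^{(Mi)}$, which a direct comparison --- splitting the sums in~\eqref{ee} according to the structure~\eqref{symz},~\eqref{symt} --- shows to coincide term by term with the expression~\eqref{eev} (in particular the $(\Lambda_i,\Lambda_0)/z_i$ contribution comes from the origin cross-term acting on highest weight vectors).

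The main obstacle is the passage through the origin factor. Unlike the factors at the $z_i$, the factor $M_{\Lambda_0}$ does not collapse to its highest weight line under $\Sigma$-invariance, and establishing cleanly that restriction of $\widehat{\mathcal H}^{(Mi)}$ to the invariant subspace, together with ``integrating out'' $M_{\Lambda_0}$, reproduces ${\mathcal H}^{(i)}$ on the nose --- forcing the background weight to be precisely~\eqref{l0def}, and producing~\eqref{cycpsi} complete with the $\omega$-twist coming from~\eqref{psi} --- is where the real work lies; this is essentially the spectral correspondence of Theorem~\ref{matchthm} run in reverse. The separate assertion that $\psi_{\bm\wc}$ is nonzero is not part of this statement and is handled by the results cited after the theorem.
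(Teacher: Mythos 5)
Your proposal is not a proof but a reverse engineering of an open problem that the paper itself flags. Theorem~\ref{VYthm} is cited from~\cite{VY1} and this paper gives no proof of it; the actual proof there is a direct Bethe-ansatz computation for the cyclotomic Gaudin model (in the spirit of~\cite{FFR}, carried out in the $\sigma$-twisted current/affine setting), which you correctly anticipate at the start but do not pursue. The route you do take --- derive the cyclotomic eigenvector and eigenvalues from Theorem~\ref{Gthm} applied to the extended master function, by passing to a $\Sigma$-invariant subspace and ``integrating out'' the origin factor so that $\mathcal H^{(Mi)}_{\rm ext}$ becomes $\mathcal H^{(i)}$ --- is precisely what the paper says is \emph{not} available: the Remark after Theorem~\ref{matchthm} states explicitly that $\mathcal H^{(i)}$ and $\mathcal H^{(i)}_{\rm ext}$ act in different spaces (compare~\eqref{Hi} and~\eqref{Hti}) and that ``it would be interesting to relate these operators by some means independent of the Bethe ansatz.'' If such a relation were known, Theorem~\ref{matchthm} would be a trivial consequence rather than a juxtaposition of two independently established facts; the whole point of Section~\ref{gsec} is that the coincidence of spectra in~\eqref{eev} is observed only because the two Bethe ansatz computations (for $\mathcal H^{(i)}$ in~\cite{VY1}, and for $\mathcal H^{(i)}_{\rm ext}$ via Theorem~\ref{Gthm} plus Lemma~\ref{lem: cp}) both land on the same answer.

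The gap in your argument is therefore concentrated exactly where you say ``the real work lies,'' and it does not get filled. Concretely: (a) you do not establish that the vector~\eqref{psi} is $\Sigma$-invariant (or a $\Sigma$-eigenvector with the claimed eigenvalue) --- this requires resumming over ordered partitions into $NM+1$ parts and showing the sum reorganizes under the block rotation; (b) you do not produce the linear map from the $\Sigma$-invariant subspace of $M_{\Lambda_0}\otimes\bigotimes_{k,i}M_{\sigma^k\Lambda_i}$ to $\bigotimes_i M_{\Lambda_i}$ that is supposed to carry~\eqref{psi} to~\eqref{cycpsi} --- in particular, $M_{\Lambda_0}$ is infinite-dimensional and does not collapse to its highest-weight line on the invariant subspace; and (c) you do not show that under this (unconstructed) map $\mathcal H^{(Mi)}_{\rm ext}$ intertwines with $\mathcal H^{(i)}$, which would in particular have to force the constant $(\Lambda_i,\Lambda_0)/z_i$ term in~\eqref{eev} out of the origin cross-term acting on a non-highest-weight subspace of $M_{\Lambda_0}$, with $\Lambda_0$ given by the trace formula~\eqref{l0def}. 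Each of these is a substantive claim, not a routine verification, and (b)--(c) together are essentially the open problem the Remark identifies. To actually prove the theorem one should instead follow~\cite{VY1}: construct the cyclotomic Gaudin Hamiltonians within a $\sigma$-twisted coinvariant/vertex-algebra framework, write the Bethe vector directly as in~\eqref{cycpsi}, and verify by the usual off-shell computation that it is an eigenvector when the cyclotomic Bethe equations~\eqref{cbe} hold, the ``unwanted terms'' cancelling; the term $\lf\Lambda_i,\Lambda_0\rf/z_i$ in~\eqref{eev} and the particular form of~\eqref{l0def} arise there from normal-ordering/trace contributions in the twisted setting, not from integrating out a tensor factor.
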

(It is an interesting open problem to determine under what circumstances the vector $\psi_{\bm \wc}$ is non-zero.)

On the other hand, consider the (usual) quadratic Gaudin Hamiltonians in the special case~\eqref{symz}. We refer to this situation as the \emph{extended Gaudin model}, and write $ \tilde{\mathcal H}^{(i)} $ as $\mathcal H^{(i)}_{\rm ext}$.  Note that
\begin{gather}
 \mathcal H^{(i)}_{\rm ext}  \in \operatorname{End}\left(M_{\Lambda_0}\otimes  \bigotimes_{k=0}^{M-1} \bigotimes_{i=1}^N M_{\sigma^k\Lambda_i}\right), \qquad i = 0,1,\dots, N M. \label{Hti}
\end{gather}
The following is then a corollary of Theorem~\ref{Gthm}.

\begin{cor}
To any  critical point of the cyclotomic master function, i.e., to any solution~$\bm\wc$ to the equations~\eqref{cbe}, there corresponds a simultaneous eigenvector of the linear opera\-tors~$\mathcal H^{(i)}_{\rm ext}$, $i=0,1,\dots,nM$, such that $\mathcal H^{(0)}_{\rm ext}$ has eigenvalue zero and, for each $k=0,\dots, M-1$ and $i=1,\dots, N$, the eigenvalue of $\mathcal H^{(k+M i)}_{\rm ext}$ is given by $\omega^{-k}E_i$ with $E_i$ as in~\eqref{eev}.
\end{cor}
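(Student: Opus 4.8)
The plan is to deduce this Corollary directly from Theorem~\ref{Gthm} applied in the specialization~\eqref{symz}, combined with Lemma~\ref{lem: cp}. First I would start from a critical point $\bm\wc = (\wc_i)_{i=1}^m$ of the cyclotomic master function, i.e.\ a solution to~\eqref{cbe}, and use Lemma~\ref{lem: cp} to produce from it a cyclotomic critical point $\bm t = (\t_i)_{i=1}^{\tilde m}$ of the extended master function $\widehat\Phi$, where $\tilde m = Mm$ and the $\t_i$ satisfy~\eqref{symt}: explicitly $\t_{i+mk} = \omega^k\t_i$, ${\mathsf c}(i+mk) = \sigma^k{\mathsf c}(i)$. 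Then Theorem~\ref{Gthm}, applied with $\tilde N = NM$ and the choices~\eqref{symz}, yields a simultaneous eigenvector $\tilde\psi_{\bm t}$ of the operators $\widetilde{\mathcal H}^{(i)} = \mathcal H^{(i)}_{\rm ext}$, $i=0,1,\dots,NM$, acting on the tensor product in~\eqref{Hti}, with eigenvalues $\tilde E^{(i)} = \partial\widehat\Phi/\partial\tilde z_i$.

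The remaining work is purely computational: evaluate $\tilde E^{(i)}$ from~\eqref{ee} at the cyclotomic critical point, using $\tilde z_0 = 0$, $\tilde z_{k+Mi} = \omega^k z_i$, $\tilde\Lambda_0 = \Lambda_0$, $\tilde\Lambda_{k+Mi} = \sigma^k\Lambda_i$, and the cyclotomic structure~\eqref{symt} of the Bethe roots. For $i=0$: the formula~\eqref{ee} for $\tilde E^{(0)}$ is, up to sign, the derivative of $\widehat\Phi$ with respect to the position of the point at the origin; since $\Lambda_0$ is $\sigma$-invariant and the whole configuration $\{\tilde z_j\}$ together with its weights is invariant under simultaneous multiplication by $\omega$ and application of $\sigma$, one checks term-by-term that the sum telescopes to zero. (Concretely, $\tilde E^{(0)} = \sum_{j\neq 0}\lf\Lambda_0,\tilde\Lambda_j\rf/(0-\tilde z_j) - \sum_j \lf\Lambda_0,\alpha_{{\mathsf c}(j)}\rf/(0-\t_j)$, and pairing up the $M$ images of each $z_i$ and of each $\wc_j$ and using $\lf\Lambda_0,\sigma^k\Lambda_i\rf = \lf\Lambda_0,\Lambda_i\rf$ scaled appropriately, the contributions cancel; the cleanest argument is that $\widehat\Phi$ is, up to an additive constant, invariant under the scaling action, so its derivative along the orbit direction vanishes.) For $i = k+Mi'$ with $i'\in\{1,\dots,N\}$: comparing~\eqref{ee} with the eigenvalue~\eqref{eev} of the cyclotomic Gaudin Hamiltonian $\mathcal H^{(i')}$ and using $\lf\sigma x,\sigma y\rf = \lf x,y\rf$ repeatedly, one finds $\tilde E^{(k+Mi')} = \omega^{-k} E^{(i')}$, the overall factor $\omega^{-k}$ coming exactly as in the proof of Lemma~\ref{lem: cp} from rescaling $\t_j - \omega^k z_{i'}$ and $\omega^k z_{i'} - \omega^l z_j$.

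The main (and only mild) obstacle is bookkeeping: one must carefully match the index conventions of the extended model (points labelled $0,1,\dots,NM$ with $\tilde z_{k+Mi}=\omega^k z_i$) against those of the cyclotomic model, and verify that the quadratic term $\sum_{j\neq i}\lf\tilde\Lambda_i,\tilde\Lambda_j\rf/(\tilde z_i - \tilde z_j)$ reorganizes correctly — in particular isolating the $j=0$ contribution $\lf\sigma^k\Lambda_{i'},\Lambda_0\rf/(\omega^k z_{i'})$ and the $j$ within the same $\omega^{\mathbb Z}$-orbit as $z_{i'}$, which give rise to the $1/z_{i'}$ term of~\eqref{eev} via Lemma~\ref{hl}. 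Finally, the statement that $\mathcal H^{(0)}_{\rm ext}$ acts on the tensor factor $M_{\Lambda_0}$ and has eigenvalue $\tilde E^{(0)}=0$ follows from the identification~\eqref{Hti} together with the computation $\tilde E^{(0)}=0$ above, and the claim about $\mathcal H^{(k+Mi)}_{\rm ext}$ having eigenvalue $\omega^{-k}E_i$ is precisely the computation $\tilde E^{(k+Mi)}=\omega^{-k}E^{(i)}$. No new ideas are needed beyond Theorem~\ref{Gthm}, Lemma~\ref{lem: cp}, the $\sigma$-invariance of the form, and Lemma~\ref{hl}.
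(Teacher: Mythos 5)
Your proposal follows the same route as the paper: Lemma~\ref{lem: cp} passes from $\bm\wc$ to a cyclotomic critical point $\bm t$ of $\widehat\Phi$ via~\eqref{symt}, Theorem~\ref{Gthm} applied with the specialization~\eqref{symz} yields the simultaneous eigenvector of the $\mathcal H^{(i)}_{\rm ext}$, and the eigenvalue identities come from substituting~\eqref{symz} and~\eqref{symt} into~\eqref{ee}. The bookkeeping you outline for the $j\neq 0$ cases (isolating $j=0$ and the $\omega^{\mathbb Z}$-orbit of $z_{i'}$, then invoking Lemma~\ref{hl} to reproduce the $1/z_{i'}$ term of~\eqref{eev}, with the $\omega^{-k}$ scaling as in the proof of Lemma~\ref{lem: cp}) is correct.

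The one thing to flag is the parenthetical claim that the ``cleanest'' way to see $\tilde E^{(0)}=0$ is scaling invariance. That argument does not work: $\tilde z_0 = 0$ is a \emph{fixed point} of the scaling $\tilde z_j\mapsto\omega\tilde z_j$, so the orbit through the origin is trivial and there is no nontrivial ``orbit direction'' at $\tilde z_0$; the Euler-type identity which scaling invariance does produce involves $\sum_j \tilde z_j\,\partial\widehat\Phi/\partial\tilde z_j$ and so carries no information about $\partial\widehat\Phi/\partial\tilde z_0$ once $\tilde z_0=0$. Your first, explicit argument in that parenthetical is the correct one, and it is exactly the paper's: since $\sigma\Lambda_0=\Lambda_0$ and $\lf\cdot,\cdot\rf$ is $\sigma$-invariant, each of $\lf\Lambda_0,\sigma^s\Lambda_i\rf$ and $\lf\Lambda_0,\sigma^s\alpha_{{\mathsf c}(j)}\rf$ is independent of $s$, so both sums in $\tilde E^{(0)}$ factor through $\sum_{s=0}^{M-1}\omega^{-s}$, which is $0$ because $M>1$.
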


\begin{proof}
Let $\bm t$ be the corresponding (by Lemma \ref{lem: cp}) cyclotomic critical point of the extended master function $\widehat\Phi$. Then the result is a special case of Theorem \ref{Gthm}, by substituting~\eqref{symz} and~\eqref{symt} into \eqref{ee}. (To see that $\mathcal H^{(0)}_{\rm ext}$ has eigenvalue zero note that
\begin{gather*}
\sum_{i=1}^N \sum_{s=0}^{M-1} \frac{\lf \Lambda_0, \sigma^s \Lambda_i\rf}{0- \omega^s z_i}
- \sum_{j=1}^m \sum_{s=0}^{M-1}  \frac{\lf \Lambda_0, \sigma^s \alpha_{{\mathsf c}(j)}\rf}{0- \omega^s \wc_j} =0,
\end{gather*}
because $\sum\limits_{s=0}^{M-1} \omega^{-s}\sigma^{-s} \Lambda_0 = \Lambda_0\sum\limits_{s=0}^{M-1} \omega^{-s} = 0$ since $\sigma\Lambda_0=\Lambda_0$ and $M>1$.)
\end{proof}

In summary, we have the following observation.
\begin{thm}\label{matchthm}
To any critical point of the cyclotomic master function
there corresponds both a~simultaneous eigenvector~\eqref{cycpsi} of the Hamiltonians~$\mathcal H^{(i)}$ of the cyclotomic Gaudin model and a~simultaneous eigenvector~\eqref{psi} of the Hamiltonians $\mathcal H^{(i)}_{\rm ext}$ of the extended Gaudin model, $i=1,\dots,n$, with the corresponding eigenvalues equal and in both cases being given by~\eqref{eev}.
\end{thm}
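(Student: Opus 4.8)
The plan is to deduce the statement by assembling results already established in this section; essentially no new computation is needed. First I would fix a critical point $\bm\wc$ of the cyclotomic master function, that is, a solution of~\eqref{cbe}. For the cyclotomic side there is nothing to do beyond invoking Theorem~\ref{VYthm} (the main result of~\cite{VY1}): it attaches to $\bm\wc$ the simultaneous eigenvector $\psi_{\bm\wc}$ displayed in~\eqref{cycpsi} of the cyclotomic Gaudin Hamiltonians $\mathcal H^{(i)}$, with eigenvalues $E^{(i)}$ given by~\eqref{eev}.

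For the extended side, the first step is to pass from $\bm\wc$ to the ``unfolded'' tuple $\bm t=(\t_j)_{j=1}^{\tilde m}$, $\tilde m=Mm$, prescribed by~\eqref{symt}, namely $\t_{i+mk}=\omega^k\t_i$ with colours ${\mathsf c}(i+mk)=\sigma^k{\mathsf c}(i)$. By Lemma~\ref{lem: cp}, $\bm t$ is a cyclotomic critical point of the extended master function $\widehat\Phi$ of~\eqref{emf}. Since $\widehat\Phi$ is an ordinary master function of the form~\eqref{mf} built from the data~\eqref{symz}, Theorem~\ref{Gthm} then supplies the simultaneous eigenvector $\tilde\psi_{\bm t}$ of~\eqref{psi} for the operators $\mathcal H^{(i)}_{\rm ext}=\tilde{\mathcal H}^{(i)}$, with eigenvalues $\tilde E^{(i)}$ read off from~\eqref{ee}.

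It remains to match the two sets of eigenvalues, and here I would quote the corollary proved just above: substituting~\eqref{symz} and~\eqref{symt} into~\eqref{ee} shows that the extended Hamiltonian at the origin has eigenvalue zero, and that the extended Hamiltonian at the point $\omega^k z_i$ has eigenvalue $\omega^{-k}E^{(i)}$. Taking $k=0$ for each $i=1,\dots,N$ identifies the eigenvalue of the extended Hamiltonian at $z_i$ with the eigenvalue $E^{(i)}$ of the cyclotomic Hamiltonian $\mathcal H^{(i)}$, both given by~\eqref{eev}. Combined with the two eigenvector constructions above, this is exactly the asserted correspondence.

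The only step demanding any care --- and it is precisely the computation carried out in the proof of that corollary --- is the specialization of~\eqref{ee} under~\eqref{symz}--\eqref{symt}: one splits each sum over the Galois orbits $\{\omega^k z_i\}_k$ and $\{\omega^k\t_j\}_k$, uses $\sigma\Lambda_0=\Lambda_0$ together with the invariance $\lf\sigma x,y\rf=\lf x,\sigma^{-1}y\rf$ of the bilinear form, and collects the geometric prefactors $\omega^{-k}$, keeping track of which extended index corresponds to the point $z_i$. There is no substantive obstacle; the entire content of the theorem is the observation that the cyclotomic and extended spectral problems are linked in this manner.
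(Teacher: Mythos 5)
Your proposal is correct and follows essentially the same route the paper takes: the theorem is stated simply as a summary (``we have the following observation''), with the cyclotomic side obtained from Theorem~\ref{VYthm}, the extended side obtained by passing through Lemma~\ref{lem: cp} and applying Theorem~\ref{Gthm} in the specialization~\eqref{symz}--\eqref{symt}, and the eigenvalue matching supplied by the corollary preceding Theorem~\ref{matchthm}. Nothing more is needed, and your assembly of the already-established ingredients is the intended argument.
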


\begin{rem} The operators $\mathcal H^{(i)}$ and $\mathcal H^{(i)}_{\rm ext}$ are acting in dif\/ferent spaces, \eqref{Hi}~and~\eqref{Hti} respectively. It would be interesting to relate these operators by some means independent of the Bethe ansatz.
\end{rem}

\section{Cyclotomic generation procedure}\label{cycgensec}
In \cite{MV1, ScV} a procedure was introduced which generates new critical points of master functions starting from a given initial critical point. There is an ``elementary generation'' step associated to each $i\in I$. The Zariski closure of the collection of all critical points obtained by recursively applying elementary generations in all possible ways is called the ``population'' to which the initial critical point belongs.

The extended master functions, \eqref{emf} above, are master functions of the standard form (unlike the cyclotomic master functions~\eqref{cmf}). Modulo subtleties coming from the fact that the weight~$\Lambda_0$ at the origin need not be dominant integral, that means the generation procedure can be applied.

In this section we describe this generation procedure and go on to show how, given a cyclotomic critical point, one can obtain new cyclotomic critical points by applying the elementary generation steps in certain carefully chosen combinations. The resulting collections of cyclotomic critical points will be called ``cyclotomic populations''.

\subsection[Conditions on $\Lambda_0$]{Conditions on $\boldsymbol{\Lambda_0}$}\label{l0sec}
In the remainder of the paper we assume that $\sigma$ is a diagram automorphism obeying the linking condition~\eqref{lc}. That means for each $i\in I$, either $L_i=1$ or $L_i=2$.

In addition, in this section, Section~\ref{cycgensec}, we place the following conditions on the weight \mbox{$\Lambda_0\in \h^{\sigma,*}$}.

For each $i\in I$ such that $L_i=1$, we suppose that
\begin{gather}
 \langle \Lambda_0,\alpha_i^\vee \rangle \in {\mathbb Z}_{\geq 0} \label{s1L1}
\end{gather}
and
\begin{gather}
 \langle \Lambda_0,\alpha_i^\vee \rangle +1 \equiv 0 \mod M/M_i. \label{s1L2}
\end{gather}

For each $i\in I$ such that $L_i=2$, we suppose that
\begin{gather}
2\langle \Lambda_0,\alpha^\vee_i \rangle + 1 \in {\mathbb Z}_{\geq 0}. \label{s2L1}
\end{gather}

\begin{rem} One can verify that these conditions are satisf\/ied by the weight $\Lambda_0$ of~\eqref{l0def} in the case of diagram automorphisms of f\/inite-type Dynkin diagrams. Our assumptions on $\Lambda_0$ in the treatment of type~$A_R$ in Section~\ref{ARsec} below are weaker.
\end{rem}

\subsection{Tuples of polynomials}\label{prsec}

To any pair $(\bm t; \bm {\mathsf c})$ with $\bm t\in {\mathbb C}^{\tilde m}$ and $\bm{\mathsf c} \in I^{\tilde m}$, we may associate a tuple of polynomials ${\bm y} = (y_1(x),\dots, y_r(x))$, given by
\begin{gather}
y_i(x) := \prod_{\substack{j=1\\ {\mathsf c}(j) = i}}^{\tilde m} (x- \t_j), \qquad i\in I.\label{ydef}
\end{gather}
We say that this tuple $\bm y$ \emph{represents the pair $(\bm t; \bm {\mathsf c})$}.
We consider each coordinate~$y_i(x)$ only up to multiplication by a non-zero complex number, since we are only concerned with their zeros. So the tuple $\bm y$ def\/ines a point in the direct product ${\mathbb P}({\mathbb C}[x])^{|I|}$ of $|I|$ copies of the projective space~${\mathbb P}({\mathbb C}[x])$, where ${\mathbb C}[x]$ is the vector space of complex polynomials in~$x$.

Conversely, given any $\bm y\in {\mathbb P}({\mathbb C}[x])^{|I|}$ we may extract the pair $(\bm\t;\bm{\mathsf c})\in {\mathbb C}^{\tilde m}\times I^{\tilde m}$ such that \eqref{ydef} holds.  This pair is unique up to permutation by an element of $S_{\tilde m}$; see~\eqref{Stildemaction}.

Def\/ine $T_i(x)$, $i\in I$, to be
\begin{gather}
T_i(x)
 :=
\prod_{s=1}^N \prod_{k=0}^{M-1}\big(x- \omega^k z_s\big)^{\langle \sigma^k\Lambda_s,\alpha^\vee_i\rangle}.\label{Tdef}
\end{gather}

We say that a tuple of polynomials ${\bm y} = (y_i(x))_{i\in I} \in {\mathbb P}({\mathbb C}[x])^{|I|}$ is \emph{generic $($with respect to~$(T_i(x))_{i\in I})$} if for each $i\in I$,
$y_i(x)$ has no root in common with $T_i(x)$, or with any $y_j(x)$, $j\in I{\setminus}\{i\}$, such that $\langle\alpha_j,\alpha^\vee_i\rangle \neq 0$.

Note that if $\bm y$ represents a critical point of the extended master function $\widehat\Phi(\bm\t;\bm{\mathsf c};\bm z;\bm \Lambda)$, \eqref{emf}, i.e., its roots obey~\eqref{ebe}, then the tuple~$\bm y$ must be generic. (Indeed, if~\eqref{ebe} holds then in particular each summand on the left hand side of~\eqref{ebe} must have non-zero denominator. By def\/inition that implies that the corresponding tuple is generic.)

\subsection[Elementary generation: the $L_i=1$ case]{Elementary generation: the $\boldsymbol{L_i=1}$ case}

Throughout this subsection, we suppose $i\in I$ is such that $L_i=1$. That means that the simple roots $\alpha_{\sigma^k i}$, $i=1,\dots, M_i$, are mutually orthogonal. Equivalently it means that the ref\/lections ${\mathsf s}_{\sigma^ki}\in W$, $i=1,\dots,M_i$, are mutually commuting.

Let $y_i^{(i)}(x)$ be of the form
\begin{gather}
 y_i^{(i)}(x) = y_i(x) \int^x\xi^{\langle \Lambda_0,\alpha^\vee_i\rangle} T_i(\xi) \prod_{j\in I} y_j(\xi)^{-\langle\alpha_j,\alpha^\vee_i\rangle}d\xi
,\label{yiidef}
\end{gather}
so that $y_i^{(i)}(x)$ is a solution to the equation
\begin{gather}
 \operatorname{Wr}(y_i(x),y_i^{(i)}(x)) =x^{\langle \Lambda_0,\alpha^\vee_i\rangle} T_i(x) \prod_{j\in I{\setminus}\{i\}} y_j(x)^{-\langle\alpha_j,\alpha^\vee_i\rangle} ,\label{wre}
\end{gather}
where $\operatorname{Wr}(f(x),g(x)) := f(x) g'(x) -f'(x) g(x)$ denotes the Wronskian determinant.

\begin{prop}\label{pprop}
If  $\bm y$ represents a critical point then $y_i^{(i)}(x)$ is a polynomial.
\end{prop}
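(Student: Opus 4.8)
The plan is to show that $y_i^{(i)}(x)$ defined by the Wronskian equation \eqref{wre} is a polynomial by analyzing its behaviour near each potential singular point. Since $y_i(x)$ is a polynomial, the only places where $y_i^{(i)}(x)$ — a priori a multivalued analytic function defined by the integral in \eqref{yiidef} — could fail to be a single-valued polynomial are (i) the origin $x=0$, (ii) the points $\omega^k z_s$ appearing in $T_i(x)$, (iii) the roots of the polynomials $y_j(x)$ for $j$ with $\langle\alpha_j,\alpha^\vee_i\rangle\neq 0$ (in particular the roots of $y_i(x)$ itself), and (iv) the point at infinity. The strategy is to examine the local exponents of the pair of solutions $(y_i(x),y_i^{(i)}(x))$ of the associated second-order differential equation at each of these points and check that, thanks to the Bethe equations, the ``second solution'' $y_i^{(i)}(x)$ has a single-valued holomorphic branch with no pole.

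First I would write down the second-order ODE with Wronskian equal to the right-hand side $W_i(x):=x^{\langle\Lambda_0,\alpha^\vee_i\rangle}T_i(x)\prod_{j\neq i}y_j(x)^{-\langle\alpha_j,\alpha^\vee_i\rangle}$ having $y_i$ as one solution, and then perform the standard local analysis at each singular point. At a root $t$ of $y_i(x)$ of multiplicity one (genericity from Section~\ref{prsec} guarantees the roots are simple and that $y_i$ shares no root with $T_i$ or with the relevant $y_j$), the Bethe equation at $t$ is precisely the condition that forces the potential logarithmic term in $y_i^{(i)}$ to vanish, so $y_i^{(i)}$ is holomorphic and nonvanishing there; this is the computation done in \cite{MV1}. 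At a point $\omega^k z_s$, since $\langle\sigma^k\Lambda_s,\alpha^\vee_i\rangle$ is a nonnegative integer (because $\Lambda_s\in{\mathcal P}_+$ and $\sigma^k$ permutes fundamental weights), $W_i$ vanishes to nonnegative integral order and the indicial exponents at that point are $0$ and $\langle\sigma^k\Lambda_s,\alpha^\vee_i\rangle+1$, both nonnegative integers differing by a positive integer; one checks (again as in \cite{MV1}, using that $\bm y$ represents a critical point, hence the roots of $y_i$ avoid $\omega^k z_s$) that no logarithm appears, so $y_i^{(i)}$ is holomorphic there. At the origin the exponent $\langle\Lambda_0,\alpha^\vee_i\rangle$ is a nonnegative integer by assumption \eqref{s1L1}, and likewise the congruence \eqref{s1L2} is not even needed for holomorphy at $0$ — it only matters for the cyclotomic symmetry later — so the same indicial-equation argument applies. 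Finally, near $x=\infty$ both $y_i$ and $y_i^{(i)}$ grow at most polynomially (the integrand in \eqref{yiidef} is a rational function times $x$ to an integer power, so the integral is again such), which combined with single-valuedness and holomorphy on all of ${\mathbb C}$ forces $y_i^{(i)}$ to be a polynomial.

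Concretely I would organize it as: (1) observe genericity of $\bm y$; (2) at each root $s$ of $y_i$, expand $y_i(x)=(x-s)u(x)$ with $u(s)\neq 0$ and write $y_i^{(i)}(x)=y_i(x)\int^x W_i/y_i^2$, noting $W_i/y_i^2$ has at worst a double pole at $s$ with residue of the simple-pole part controlled by the Bethe equation; the Bethe equation at $s$ says exactly that this residue vanishes, killing the logarithm; (3) at each $\omega^k z_s$ and at $0$, the exponent of $W_i$ is a nonnegative integer and $y_i$ is nonzero, so $W_i/y_i^2$ is meromorphic with only integer-order pole and, because the relevant partial-fraction residue again vanishes (here just because $y_i$ doesn't vanish there, so $\int^x W_i/y_i^2$ picks up no log), $y_i^{(i)}$ extends holomorphically; (4) growth at infinity plus holomorphy on ${\mathbb C}$ and single-valuedness gives a polynomial.

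The main obstacle is the bookkeeping at the points $\omega^k z_s$ and at the origin: one must verify that the ``resonant'' pair of integer exponents does not produce a logarithmic term in the second solution. In the classical (non-cyclotomic) setting this is Lemma/Proposition-level material in \cite{MV1}, and the only new feature here is that $T_i(x)$ is a product over the whole $\omega^{\mathbb Z}$-orbit and that $\Lambda_0$ contributes the extra factor $x^{\langle\Lambda_0,\alpha^\vee_i\rangle}$ at the origin; but since $\langle\Lambda_0,\alpha^\vee_i\rangle\in{\mathbb Z}_{\geq 0}$ by \eqref{s1L1}, the origin behaves exactly like one of the other regular points $z_s$ with a dominant-integral weight, and the argument goes through verbatim. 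So the real content is simply localizing the problem and invoking the residue/Bethe-equation computation at the roots of $y_i$, exactly as in the original generation procedure; I expect the proof to be short, essentially a reduction to \cite{MV1} after noting that \eqref{ebe} for the extended master function restricted to colour-$i$ variables is the relevant system of equations.
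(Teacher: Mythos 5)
Your proposal is correct and takes essentially the same route as the paper: identify that, by \eqref{s1L1} and dominant-integrality of the $\Lambda_s$, the integrand in \eqref{yiidef} is a rational function whose only poles are (double poles) at the roots of $y_i$, show that each such residue vanishes by virtue of the critical-point equations, and conclude that $y_i^{(i)}$ is entire of polynomial growth, hence a polynomial. The second-order-ODE/indicial-exponent framing at the points $\omega^k z_s$ and at the origin is a harmless detour: at those points the integrand is simply holomorphic (nonnegative exponents, $y_i$ nonvanishing by genericity), so there is no resonance to analyze and the Bethe equations are not needed there, as the paper's proof notes succinctly.
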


\begin{proof} We have $\langle \Lambda_0 ,\alpha^\vee_i \rangle \in {\mathbb Z}_{\geq 0}$ as in~\eqref{s1L1}, and for each $s\in\{1,\dots,N\}$, $\Lambda_s$ is integral dominant so $\langle \Lambda_s ,\alpha^\vee_i \rangle \in {\mathbb Z}_{\geq 0}$. So the integrand is a rational function with poles at most at the points~$\t_p$, $p\in \{1,\dots,m\}$, for which ${\mathsf c}(p)=i$. Consider such a point~$\t_p$.
Note that
\begin{gather} \frac{\partial}{\partial x} \log x^{\langle \Lambda_0,\alpha^\vee_i\rangle} T_i(x) (x-\t_p)^2 \prod_{j\in I} y_j(x)^{-\langle\alpha_j,\alpha^\vee_i\rangle}
\nonumber\\
\qquad{}=
 \sum_{k=0}^{M-1} \sum_{i=1}^N
\frac{\langle \sigma^k\Lambda_i, \alpha^\vee_{{\mathsf c}(p)} \rangle}{x-\omega^kz_i}
+ \frac{\langle  \Lambda_0,\alpha^\vee_{{\mathsf c}(p)} \rangle }x
-  \sum_{\substack{i=1\\i\neq p}}^{\tilde m}
\frac{\langle\alpha_{{\mathsf c}(i)}, \alpha_{{\mathsf c}(p)}\rangle}{x-\t_i}.\label{ld}
\end{gather}
This vanishes at $x=\t_p$ by virtue of the critical point equations \eqref{ebe}. It follows that the residue of the integrand at $\t_p$ vanishes: indeed, this residue is
\begin{gather*}
\left(\frac{\partial}{\partial x}x^{\langle \Lambda_0,\alpha^\vee_i\rangle} T_i(x) (x-\t_p)^2 \prod_{j\in I} y_j(x)^{-\langle\alpha_j,\alpha^\vee_i\rangle}\right)_{x=\t_p},
 \end{gather*}
 which vanishes if~\eqref{ld} vanishes.
This shows that $y_i^{(i)}(x)$ is an entire function. It is of polynomial growth for large $x$. Therefore it is a polynomial.
\end{proof}

If $y_{i}^{(i)}(x)$ is any solution to \eqref{wre} then so too is $y_i^{(i)}(x)+ cy_i(x)$ for any $c\in {\mathbb C}$.

Thus, given any tuple $\bm y$ representing a critical point we have,
for each value of a parameter $c\in {\mathbb C}$, a new tuple of polynomials $\bm y^{(i)}$, obtained from the tuple $\bm y$ by replacing $y_i(x)$ with $y_i^{(i)}(x)+ cy_i(x)$.
We say~$\bm y^{(i)}$ is obtained from~$\bm y$ by \emph{generation in the $i$th direction}, and we call~$\bm y^{(i)}$ the \emph{immediate descendant of~$\bm y$ in the $i$th direction}.

\begin{prop}[\cite{MV1}]\label{mv1prop}
The tuple of polynomials $\bm y^{(i)}$ is generic for almost all~$c$.
If $\bm y^{(i)}$ is generic then it represents a critical point.
\end{prop}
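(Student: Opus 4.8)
The plan is to prove Proposition~\ref{mv1prop} by reducing it to the corresponding statement for ordinary (non-cyclotomic) master functions, which is exactly Proposition~3.2 of~\cite{MV1}. Recall that the extended master function $\widehat\Phi$ is literally a master function of the standard form~\eqref{mf}, with marked points and weights given by~\eqref{symz}. A tuple $\bm y$ representing a critical point of $\widehat\Phi$ is therefore a tuple representing a critical point of an ordinary master function, and the generation step $\bm y \mapsto \bm y^{(i)}$ in direction $i\in I$ is precisely the generation step of~\cite{MV1} for that master function. Thus the genericity-for-almost-all-$c$ claim, and the claim that a generic descendant again represents a critical point, both follow verbatim. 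The only point requiring care is that in~\cite{MV1} the marked points carry integral dominant weights, whereas here the point $\tilde z_0 = 0$ carries the weight $\Lambda_0$, which is not assumed dominant integral but only satisfies~\eqref{s1L1}. So the real content is to check that the argument of~\cite{MV1} survives this weakening.

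First I would make explicit the defining equation: $y^{(i)}_i(x) + c\,y_i(x)$ is, for every $c$, a solution of the Wronskian equation~\eqref{wre}, and by Proposition~\ref{pprop} it is a polynomial (the proof of Proposition~\ref{pprop} already used only~\eqref{s1L1}, i.e.\ $\langle\Lambda_0,\alpha^\vee_i\rangle\in\mathbb{Z}_{\geq 0}$, not full dominance of $\Lambda_0$, so we are safe). Next I would verify genericity of $\bm y^{(i)}$ for almost all $c$. The colours other than $i$ are unchanged, so one only needs: (a) $y^{(i)}_i(x)+cy_i(x)$ has no common root with $T_i(x)$; (b) it has no common root with any $y_j(x)$, $j\neq i$ with $\langle\alpha_j,\alpha^\vee_i\rangle\neq 0$; and (c) it has no common root with $x$ itself if that matters for the $\tilde z_0=0$ factor — this is where $\langle\Lambda_0,\alpha^\vee_i\rangle$ enters, and since it is a non-negative integer the local analysis at $x=0$ is the same as at any ordinary marked point in~\cite{MV1}. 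Each of (a)--(c) is an algebraic condition that fails only on a proper Zariski-closed (indeed finite) subset of $c\in\mathbb{C}$: varying $c$ moves the roots of $y^{(i)}_i+cy_i$ and a shared root with a fixed polynomial forces a polynomial equation in $c$ which is not identically satisfied (one checks non-triviality using the Wronskian relation~\eqref{wre}, exactly as in~\cite{MV1}). Intersecting these finitely many bad loci gives the ``almost all $c$'' conclusion.

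Then, assuming $\bm y^{(i)}$ is generic, I would show it represents a critical point of $\widehat\Phi$, i.e.\ that its roots satisfy~\eqref{ebe}. The cleanest route, following~\cite{MV1}, is to use the second-order differential operator characterisation: the pair $(y_i, y^{(i)}_i)$ spans the kernel of the operator $\partial^2 - (\log')\big(x^{\langle\Lambda_0,\alpha^\vee_i\rangle}T_i\prod_{j\neq i}y_j^{-\langle\alpha_j,\alpha^\vee_i\rangle}\big)\partial + \dots$ reconstructed from~\eqref{wre}, and the ``generation'' is the statement that exchanging the two basis solutions (i.e.\ replacing $y_i$ by $y^{(i)}_i + cy_i$, which amounts to choosing a different cyclic vector) preserves the condition that the tuple be a solution. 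Concretely one computes the logarithmic derivative analogue of~\eqref{ld} for the colour-$i$ variables of $\bm y^{(i)}$ and checks it vanishes at each new root $t'_p$; the vanishing is forced by the fact that $y^{(i)}_i$ solves~\eqref{wre} together with the fact that the \emph{other} colours still satisfy their critical point equations (which are unchanged). For colours $j\neq i$ one checks that~\eqref{ebe} for the $j$-variables is unaffected: the only term that changes is the $y_i$-contribution, and the identity $\langle\alpha_j,\alpha^\vee_i\rangle\langle\alpha_i,\alpha^\vee_j\rangle$ bookkeeping, combined with~\eqref{wre}, shows the $j$-equations are equivalent before and after generation. This is the routine but slightly lengthy part, and it is carried out in full in~\cite{MV1}; I would simply indicate that those computations go through unchanged because the weight $\Lambda_0$ enters only through the exponent $\langle\Lambda_0,\alpha^\vee_i\rangle$ of the factor $x$ in~\eqref{wre}, which — being a non-negative integer by~\eqref{s1L1} — plays formally the same role as the exponent of any other linear factor $(x-\tilde z_s)$.

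The main obstacle I anticipate is not any single computation but rather the need to be honest about the role of~\eqref{s1L1} versus~\eqref{s1L2}: condition~\eqref{s1L1} is what makes the present case genuinely an instance of~\cite{MV1} (it guarantees $y^{(i)}_i$ is a polynomial and that the local exponents at $0$ are integral), whereas~\eqref{s1L2} plays no role in this proposition — it only becomes relevant when one later wants the \emph{cyclotomic} combination of generation steps to land back on a cyclotomic point (Theorem~\ref{s1thm}). So the cleanest write-up is: (i) observe $\widehat\Phi$ is an ordinary master function; (ii) observe the generation step here coincides with that of~\cite{MV1}; (iii) observe the only deviation from the hypotheses of~\cite{MV1} is the weight at the origin, and that deviation is neutralised by~\eqref{s1L1}; (iv) conclude by citing~\cite[Proposition~3.2 and its proof]{MV1}. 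The bulk of the work is checking that step (iii) is genuinely harmless, which is the verification I sketched above.
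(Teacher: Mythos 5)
Your approach is exactly the paper's: the paper offers no proof for this proposition, simply citing~\cite{MV1}, and your plan --- note that $\widehat\Phi$ is an ordinary master function of the form~\eqref{mf}, so the generation step $\bm y \mapsto \bm y^{(i)}$ is literally that of~\cite{MV1}, hence both claims follow --- is precisely the content of that citation. Your extra care in verifying that~\eqref{s1L1} (rather than full dominance of $\Lambda_0$) is what keeps the argument within the scope of~\cite{MV1}, and that~\eqref{s1L2} is irrelevant here, is correct and makes explicit a point the paper leaves implicit.
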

The tuples $\bm y^{(i)}$ describe a projective line in ${\mathbb P}({\mathbb C}[x])^{|I|}$.
It will be useful to have the following specif\/ic parameterization of this line.
There exists a unique solution $y_i^{(i)}(x)$ to the equation~\eqref{wre}, call it $y_{i}^{(i)}(x;0)$, such that the coef\/f\/icient of $x^{\deg y_i}$ in $y^{(i)}_i(x;0)$ is zero. Let us def\/ine
\begin{gather}
y_i^{(i)}(x;c) := y^{(i)}_i(x;0) + c y_i(x),\label{yicdef}
\end{gather}
and def\/ine $\bm y^{(i)}(c)\in {\mathbb P}({\mathbb C}[x])^{|I|}$ to be the tuple obtained from the tuple $\bm y$ by replacing $y_i(x)$ with $y^{(i)}_i(x;0)+ cy_i(x)$.

We say generation in the $i$th direction is \emph{degree-increasing} if $\deg y_i^{(i)} > \deg y_i$ for almost all~$c$.

Recall that there is a weight at inf\/inity, $\Lambda_\infty$, associated to any critical point. For the critical point represented by~$\bm y$ this weight is, cf.~\eqref{l8def},
\begin{gather}
 \Lambda_\infty(\bm y) = \Lambda_0 + \sum_{s=1}^N\sum_{k=0}^{M-1}\sigma^k\Lambda_s- \sum_{j\in I} \alpha_j\deg y_j .\label{l8y}
\end{gather}
For \looseness=-1 f\/ixed $\Lambda_0, \Lambda_1,\dots,\Lambda_N$ we can think of $\Lambda_\infty$ as encoding the degrees of the polynomials~$y_j$.
Note that $\deg y_i^{(i)}(x;0) = \deg y_i + \langle \Lambda_\infty, \alpha_i^\vee \rangle +1$. It follows that the weight at inf\/inity of~$\bm y^{(i)}(0)$ is
\begin{gather*}
 \Lambda_\infty - \alpha_i\big(\langle\Lambda_\infty,\alpha^\vee_i\rangle +1\big)
 = \Lambda_\infty - \langle \Lambda_\infty +\rho, \alpha^\vee_i\rangle \alpha_i
= {\mathsf s}_i \cdot \Lambda_\infty.
\end{gather*}
This establishes the following lemma.

\begin{lem}\label{delem}
Generation in the $i$th direction $($with $L_i=1)$ is degree-increasing if and only if~$\Lambda_\infty$ is $i$-dominant, i.e., $\langle\Lambda_\infty,\alpha^\vee_i\rangle\in {\mathbb Z}_{\geq 0}$.

If generation in the $i$th direction is degree-increasing, then the weight at infinity associated with the critical point represented by $\bm y_i^{(i)}(c)$ is ${\mathsf s}_i \cdot \Lambda_\infty$.
Otherwise it is $\Lambda_\infty$ for all $c\neq 0$ $($and ${\mathsf s}_i\cdot \Lambda_\infty$ for $c=0)$.
\end{lem}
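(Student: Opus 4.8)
The plan is to read off the two assertions directly from the formula $\deg y_i^{(i)}(x;0) = \deg y_i + \langle \Lambda_\infty, \alpha_i^\vee \rangle + 1$ established just before the statement, together with the parameterization \eqref{yicdef} of the line $\bm y^{(i)}(c)$. First I would prove the degree-increasing criterion. For almost all $c$ the top-degree term of $y_i^{(i)}(x;c) = y_i^{(i)}(x;0) + c\,y_i(x)$ is governed by whichever of the two summands has larger degree: if $\langle\Lambda_\infty,\alpha_i^\vee\rangle + 1 > 0$, i.e.\ $\langle\Lambda_\infty,\alpha_i^\vee\rangle \in {\mathbb Z}_{\geq 0}$, then $\deg y_i^{(i)}(x;c) = \deg y_i^{(i)}(x;0) > \deg y_i$ for every $c$ (the leading term of $y_i^{(i)}(x;0)$ cannot be cancelled by $c\,y_i(x)$ since the latter has strictly smaller degree); if $\langle\Lambda_\infty,\alpha_i^\vee\rangle + 1 \leq 0$, then $\deg y_i^{(i)}(x;0) \leq \deg y_i$, so for $c\neq 0$ the term $c\,y_i(x)$ dominates and $\deg y_i^{(i)}(x;c) = \deg y_i$, while generation is not degree-increasing. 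This gives the "if and only if''. One should note $\langle\Lambda_\infty,\alpha_i^\vee\rangle$ is an integer here: indeed $\Lambda_\infty = \Lambda_0 + \sum_{s,k}\sigma^k\Lambda_s - \sum_j \alpha_j \deg y_j$ by \eqref{l8y}, and $\langle\Lambda_0,\alpha_i^\vee\rangle \in {\mathbb Z}_{\geq 0}$ by \eqref{s1L1}, each $\langle\Lambda_s,\alpha_i^\vee\rangle \in {\mathbb Z}_{\geq 0}$ by dominance, and $\langle\alpha_j,\alpha_i^\vee\rangle = a_{i,j}\in{\mathbb Z}$; so the trichotomy "$>0$ / $=0$ / $<0$'' is exhaustive and $\langle\Lambda_\infty,\alpha_i^\vee\rangle + 1 \leq 0$ is the same as $\langle\Lambda_\infty,\alpha_i^\vee\rangle \notin {\mathbb Z}_{\geq 0}$.

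Next I would compute the weight at infinity of the descendant. By the definition \eqref{l8y}, replacing $y_i$ by a polynomial of degree $\deg y_i + d$ (and leaving all other coordinates fixed) changes $\Lambda_\infty$ by $-\alpha_i\, d$. In the degree-increasing case $d = \langle\Lambda_\infty,\alpha_i^\vee\rangle + 1$ for \emph{all} $c$ (by the argument above), so the new weight at infinity is $\Lambda_\infty - \alpha_i(\langle\Lambda_\infty,\alpha_i^\vee\rangle + 1) = \Lambda_\infty - \langle\Lambda_\infty + \rho,\alpha_i^\vee\rangle\alpha_i = {\mathsf s}_i\cdot\Lambda_\infty$, exactly the computation displayed before the lemma. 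In the non-degree-increasing case, for $c\neq 0$ we have $d = 0$, hence the weight at infinity is unchanged, equal to $\Lambda_\infty$; for $c = 0$ the degree of $y_i^{(i)}(x;0)$ is $\deg y_i + \langle\Lambda_\infty,\alpha_i^\vee\rangle + 1$ regardless of sign, so $d = \langle\Lambda_\infty,\alpha_i^\vee\rangle + 1$ and the same algebra gives ${\mathsf s}_i\cdot\Lambda_\infty$. This is precisely the dichotomy in the statement. One subtlety to address: the "weight at infinity of the critical point represented by $\bm y^{(i)}(c)$'' is only literally defined when $\bm y^{(i)}(c)$ represents a critical point, which by Proposition~\ref{mv1prop} holds for almost all $c$; but the formula \eqref{l8y} makes sense for any tuple of polynomials, so the degree count above applies verbatim, and for the generic values of $c$ it computes the genuine weight at infinity.

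The only real content beyond bookkeeping is the claim that, when $\langle\Lambda_\infty,\alpha_i^\vee\rangle + 1 > 0$, the leading coefficient of $y_i^{(i)}(x;0)$ is genuinely nonzero — otherwise $\deg y_i^{(i)}(x;0)$ would be strictly smaller than $\deg y_i + \langle\Lambda_\infty,\alpha_i^\vee\rangle + 1$ and the stated identity would fail. I expect this to be the main (though still minor) obstacle, and it is handled by the Wronskian relation \eqref{wre}: from $\operatorname{Wr}(y_i, y_i^{(i)}) = x^{\langle\Lambda_0,\alpha_i^\vee\rangle} T_i(x)\prod_{j\neq i} y_j(x)^{-\langle\alpha_j,\alpha_i^\vee\rangle}$ one reads off the degree of the right-hand side as $\langle\Lambda_0,\alpha_i^\vee\rangle + \sum_{s,k}\langle\sigma^k\Lambda_s,\alpha_i^\vee\rangle - \sum_{j\neq i}\langle\alpha_j,\alpha_i^\vee\rangle\deg y_j$, while $\deg\operatorname{Wr}(y_i, y_i^{(i)}) \leq \deg y_i + \deg y_i^{(i)} - 1$ with equality unless the leading terms conspire to cancel; matching these (and using $\langle\alpha_i,\alpha_i^\vee\rangle\deg y_i = 2\deg y_i$ to absorb the $j=i$ contribution into $\Lambda_\infty$) forces $\deg y_i^{(i)}(x;0) = \deg y_i + \langle\Lambda_\infty,\alpha_i^\vee\rangle + 1$, which is the already-displayed formula. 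Granting that formula — as the excerpt does — the lemma follows immediately from the two degree computations above.
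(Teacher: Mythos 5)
Your proposal is correct and follows exactly the same route as the paper: the paper's (implicit) proof is precisely the paragraph preceding the lemma, which states the degree identity $\deg y_i^{(i)}(x;0)=\deg y_i+\langle\Lambda_\infty,\alpha^\vee_i\rangle+1$ and the resulting weight shift ${\mathsf s}_i\cdot\Lambda_\infty$ for $c=0$, and your write-up simply spells out the straightforward case analysis via the parameterization \eqref{yicdef} that the paper leaves implicit. The closing paragraph about the leading coefficient via \eqref{wre} is the same Wronskian bookkeeping that underlies the paper's degree identity.
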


\subsection[Cyclotomic generation: the $L_i=1$ case]{Cyclotomic generation: the $\boldsymbol{L_i=1}$ case}
We continue to suppose that $i$ is such that $L_i=1$.

If $\bm y$ represents a cyclotomic point then its immediate descendant $\bm y^{(i)}$ in the $i$th direction generically does not. However if, starting from a cyclotomic critical point, we successively generate in \emph{each} of the directions~$\sigma^k i$, $k=1,\dots, M_i$, in turn, in any order, then we can arrange to arrive at a (new) cyclotomic critical point. This is the content of Theorem~\ref{s1thm} below.

Let $\simeq$ denote equality up to a constant (independent of~$x$) nonzero factor. Recall the def\/inition~\eqref{symt} of a cyclotomic point.
\begin{lem} \label{cycptlem}
A tuple of polynomials $\bm y$ represents a cyclotomic point if and only if
\begin{gather*}
  y_{\sigma j}( \omega x) \simeq y_j(x)
\end{gather*}
for all $j\in I$.
If $y_j(x)$ and $y_{\sigma j}(x)$ share the same leading coefficient for all $j\in I$, then the tuple~$\bm y$ represents a cyclotomic point if and only if
\begin{gather*}
  y_{\sigma j}( \omega x) = \omega^{\deg y_j} y_j(x)
\end{gather*}
for all $j\in I$.
\end{lem}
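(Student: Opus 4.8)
The idea is to translate condition~\eqref{symt} into a statement about the multisets of roots and then match it, orbit by orbit, with the relation $y_{\sigma j}(\omega x)\simeq y_j(x)$. For $i\in I$ write $R_i$ for the multiset of roots of $y_i$, so that $R_i=\{t_u:{\mathsf c}(u)=i\}$.

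\emph{Forward direction.} Suppose $\bm y$ represents a cyclotomic point, so after a relabelling $(\bm t,\bm{\mathsf c})$ satisfies \eqref{symt} with $\tilde m=Mm$. Reading off \eqref{symt}, the index $u=s+mk$ ($1\le s\le m$, $0\le k<M$) contributes the root $\omega^k t_s$ to colour $\sigma^k{\mathsf c}(s)$, so
\[
R_i=\bigl\{\omega^k t_s:\ 1\le s\le m,\ 0\le k<M,\ \sigma^k{\mathsf c}(s)=i\bigr\}.
\]
Comparing this description for $i$ and for $\sigma i$ and substituting $k\mapsto k+1$ (a bijection of ${\mathbb Z}/M{\mathbb Z}$) one gets $R_{\sigma i}=\omega R_i$ as multisets, whence $y_{\sigma i}(x)$ and $\omega^{\deg y_i}y_i(\omega^{-1}x)$ have the same roots, i.e.\ $y_{\sigma i}(\omega x)\simeq y_i(x)$. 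If moreover the representatives of $y_i$ and $y_{\sigma i}$ are chosen with equal leading coefficients then, since $\deg y_{\sigma i}=\deg y_i$, comparing leading coefficients on both sides upgrades $\simeq$ to the claimed equality $y_{\sigma i}(\omega x)=\omega^{\deg y_i}y_i(x)$.

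\emph{Converse.} Assume $y_{\sigma j}(\omega x)\simeq y_j(x)$ for all $j$. Iterating gives $y_{\sigma^k j}(\omega^k x)\simeq y_j(x)$, i.e.\ $R_{\sigma^k j}=\omega^k R_j$ for all $k\ge 0$; in particular, with $k=M_j$, each $R_j$ is invariant under multiplication by the primitive $(M/M_j)$-th root of unity $\omega^{M_j}$. Fixing a set $I_\sigma$ of $\sigma$-orbit representatives, I would decompose each $R_i$ ($i\in I_\sigma$) into $\langle\omega^{M_i}\rangle$-orbits (all free, of size $M/M_i$, once the origin is excluded --- see below), choose one element from each orbit with multiplicity, and take the resulting list of pairs (with colour the relevant element of $I_\sigma$) to be $(t_1,{\mathsf c}(1)),\dots,(t_m,{\mathsf c}(m))$; here $m=\tilde m/M$, which is an integer because $\tilde m=\sum_{i\in I_\sigma}M_i\deg y_i$ and each $\deg y_i$ is divisible by $M/M_i$. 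Extending by $t_{s+mk}:=\omega^k t_s$, ${\mathsf c}(s+mk):=\sigma^k{\mathsf c}(s)$ produces a point satisfying \eqref{symt} by construction, and one checks it represents $\bm y$: for a colour $c=\sigma^k i$ with $i\in I_\sigma$, $0\le k<M_i$, the indices of colour $c$ are $s+m(k+bM_i)$ with $b=0,\dots,M/M_i-1$ and $s$ over the fundamental roots of orbit $i$, giving values $\omega^k(\omega^{M_i})^b t_s$, whose multiset is exactly $\omega^k R_i=R_{\sigma^k i}=R_c$. The refinement is then immediate, since $y_{\sigma j}(\omega x)=\omega^{\deg y_j}y_j(x)$ trivially implies $y_{\sigma j}(\omega x)\simeq y_j(x)$.

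\textbf{Main obstacle.} The combinatorics of orbits is routine once it is set up correctly; the one delicate point is the behaviour at the origin, since a root of $y_j$ at $x=0$ is \emph{fixed} by $\omega^{M_j}$ rather than lying in a free orbit. One checks that a tuple with a root at the origin represents a cyclotomic point precisely when, for each $j$, the multiplicity of that root is divisible by $M/M_j$ --- equivalently, the part of each $R_j$ supported at $0$ can itself be arranged into the block pattern \eqref{symt}. For the tuples relevant here (which have no root at the origin) this case is vacuous, and otherwise the argument above goes through verbatim with the origin multiplicities tracked separately.
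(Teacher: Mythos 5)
The paper does not supply a proof of Lemma~\ref{cycptlem}, so there is nothing internal to compare against; your argument is correct and is surely the intended one (translating condition~\eqref{symt} into invariance of the coloured root multisets, orbit by orbit). You have also correctly spotted the one genuine subtlety, which the paper leaves implicit: the condition $y_{\sigma j}(\omega x)\simeq y_j(x)$ alone only forces $R_{\sigma j}=\omega R_j$ and hence that $R_j$ is $\omega^{M_j}$-invariant; this decomposes $R_j\setminus\{0\}$ into free $\langle\omega^{M_j}\rangle$-orbits of size $M/M_j$, but a root at the origin is fixed by $\omega^{M_j}$ and so contributes arbitrarily. Indeed, a tuple such as $(y_1,y_2,y_3)=(1,x,1)$ in type $A_3$ with $\sigma=(1\,3)$, $M=2$, $\omega=-1$ satisfies $y_{\sigma j}(\omega x)\simeq y_j(x)$ for every $j$ yet has $\tilde m=1$ odd, so it cannot represent a cyclotomic point; your supplementary divisibility condition $M/M_j\mid \operatorname{mult}_0(y_j)$ is exactly what restores the equivalence. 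In the paper this is harmless because the lemma is only applied to tuples representing (generic) critical points of~\eqref{ebe}, where the singular term $\langle\Lambda_0,\alpha^\vee_{{\mathsf c}(j)}\rangle/t_j$ excludes $t_j=0$, but explicitly flagging the hypothesis, as you do, is the more careful statement. The passage from $\simeq$ to equality via leading coefficients is also correctly justified, using $\deg y_{\sigma j}=\deg y_j$.
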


For the rest of this subsection, we suppose $\bm y$ represents a cyclotomic critical point. Hence in particular $\sigma \Lambda_\infty=\Lambda_\infty$.
Let $y^{(i)}_i(x;c) = y^{(i)}_{i,0}(x) + c y_i(x)$ be as in~\eqref{yicdef}. (So $y^{(i)}_i(x;c)$ is a pa\-ra\-meterization of the space of solutions to~\eqref{wre}.) Def\/ine $\bm y^{(i,\sigma)}(c)$ to be the tuple of polynomials given by
\begin{gather*}
y^{(i,\sigma)}_{\sigma^ki}\big(\omega^kx;c\big)  := \omega^{k\deg y^{(i)}_i} y^{(i)}_i(x;c),
\qquad k=0,1,\dots,M_i-1,
 \end{gather*}
and $y^{(i,\sigma)}_{j}(x;c) = y_j(x)$ for $j \in I {\setminus} \sigma^{\mathbb Z} i$.
Recall ${\mathsf s}_i^\sigma$ from Lemma~\ref{Wsiglem}.

\begin{thm}\label{s1thm} For almost all $c\in {\mathbb C}$, the tuple $\bm y^{(i,\sigma)}(c)$ represents a cyclotomic critical point. The exceptional values of $c$ form a finite subset of ${\mathbb C}$.

The weight at infinity of  $\bm y^{(i,\sigma)}(c)$ is ${\mathsf s}^\sigma_i \cdot\Lambda_\infty$ if $\langle \Lambda_\infty,\alpha^{\vee,\sigma}_i \rangle \in {\mathbb Z}_{\geq 0}$. Otherwise it is $\Lambda_\infty$ for all $c\neq 0$, and ${\mathsf s}^\sigma_i \cdot\Lambda_\infty$ for $c=0$.
\end{thm}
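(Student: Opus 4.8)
The plan is to establish, in order, that $\bm y^{(i,\sigma)}(c)$ represents a critical point for all but finitely many $c$, that it is a cyclotomic tuple, and that its weight at infinity behaves as stated; the substance is in the second point.

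First I would note that because $L_i=1$ the simple roots $\alpha_{\sigma^k i}$ are pairwise orthogonal, so the elementary generations in the directions $\sigma^0 i,\dots,\sigma^{M_i-1}i$ pairwise commute and, since $\langle\alpha_{\sigma^a i},\alpha^\vee_{\sigma^b i}\rangle=0$ for $a\neq b$, generation in one of these directions leaves untouched the data \eqref{wre} of generation in another; thus all $M_i$ of them can be computed directly from $\bm y$. Applying $y_{\sigma j}(\omega x)\simeq y_j(x)$ inside \eqref{wre} shows that for a suitable coupled choice of the $M_i$ generation parameters the result is exactly $\bm y^{(i,\sigma)}(c)$, so by Proposition \ref{mv1prop} this tuple represents a critical point whenever it is generic. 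Genericity is an open condition on the rational curve $c\mapsto\bm y^{(i,\sigma)}(c)$, and it is not identically violated: as $c\to\infty$ we have $y_i^{(i)}(x;c)/c\to y_i(x)$, hence $\bm y^{(i,\sigma)}(c)$ degenerates in ${\mathbb P}({\mathbb C}[x])^{|I|}$ to $\bm y$ itself (using $y_{\sigma^k i}(\omega^k x)\simeq y_i(x)$), which is generic because it represents a critical point of $\widehat\Phi$. Therefore the exceptional values of $c$ form a finite set.

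For cyclotomicity I would use Lemma \ref{cycptlem} and check $y^{(i,\sigma)}_{\sigma j}(\omega x;c)\simeq y^{(i,\sigma)}_j(x;c)$ for all $j$. This is inherited from $\bm y$ when $j\notin\sigma^{\mathbb Z}i$ and is immediate from the defining formula when $j=\sigma^k i$ with $0\le k\le M_i-2$; the only nontrivial case, $j=\sigma^{M_i-1}i$ (so $\sigma j=i$), unwinds to the single identity $y_i^{(i)}(\omega^{M_i}x;c)\simeq y_i^{(i)}(x;c)$. To prove this I would substitute $x\mapsto\omega^{M_i}x$ into \eqref{wre}, using $y_i(\omega^{M_i}x)\simeq y_i(x)$, the relation $T_i(\omega^{M_i}x)=\omega^{M_i\deg T_i}T_i(x)$ (valid since $\langle\sigma^{k+M_i}\Lambda_s,\alpha^\vee_i\rangle=\langle\sigma^k\Lambda_s,\alpha^\vee_i\rangle$), and the reindexing $\prod_{j\ne i}y_{\sigma^{-M_i}j}(x)^{-\langle\alpha_j,\alpha^\vee_i\rangle}=\prod_{j\ne i}y_j(x)^{-\langle\alpha_j,\alpha^\vee_i\rangle}$, to conclude that $y_i^{(i)}(\omega^{M_i}x;c)$ again solves \eqref{wre} up to an explicit scalar and that the normalization \eqref{yicdef} forces the attendant shift by a multiple of $y_i$ to vanish. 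Comparing leading coefficients then reduces the desired proportionality, for all $c$ at once, to the congruence $\langle\Lambda_\infty,\alpha^\vee_i\rangle+1\equiv 0\bmod M/M_i$. By \eqref{s1L2} this amounts to $\langle\Lambda_\infty-\Lambda_0,\alpha^\vee_i\rangle=\deg T_i-\sum_{j\in I}\deg y_j\,\langle\alpha_j,\alpha^\vee_i\rangle\equiv 0\bmod M/M_i$, which I would check term by term: $\deg T_i=\sum_{s,k}\langle\sigma^k\Lambda_s,\alpha^\vee_i\rangle$ is divisible by $M/M_i$ because each element of the orbit of $i$ occurs $M/M_i$ times in the sum; and $\sum_j\deg y_j\,\langle\alpha_j,\alpha^\vee_i\rangle$ is divisible by $M/M_i$ because cyclotomicity together with \eqref{s1L1} prevents a Bethe variable from lying at the origin — so that each $y_j(x)$ is a polynomial in $x^{M/M_j}$ and $(M/M_j)\mid\deg y_j$ — and then a short calculation, grouping the sum over $\sigma$-orbits and using $L_i=1$ and the orbit lengths, upgrades this to divisibility by $M/M_i$.

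For the weight at infinity I would apply Lemma \ref{delem} to each of the $M_i$ commuting generations. Since $\sigma\Lambda_\infty=\Lambda_\infty$ the numbers $\langle\Lambda_\infty,\alpha^\vee_{\sigma^k i}\rangle$ all agree, so — using $\langle\Lambda_\infty,\alpha^{\vee,\sigma}_i\rangle=M_i\langle\Lambda_\infty,\alpha^\vee_i\rangle$ — either every generation in the orbit is degree-increasing (when $\langle\Lambda_\infty,\alpha^{\vee,\sigma}_i\rangle\in{\mathbb Z}_{\geq 0}$) or none is. In the former case $\deg y^{(i,\sigma)}_{\sigma^k i}=\deg y_{\sigma^k i}+\langle\Lambda_\infty,\alpha^\vee_i\rangle+1$ for every $k$ and every $c$, so by \eqref{l8y} the new weight at infinity is $\Lambda_\infty-(\langle\Lambda_\infty,\alpha^\vee_i\rangle+1)\sum_k\alpha_{\sigma^k i}$; invoking $\alpha^{\vee,\sigma}_i=\sum_k\alpha^\vee_{\sigma^k i}$ and $\alpha^\sigma_i=\frac1{M_i}\sum_k\alpha_{\sigma^k i}$ (both valid because $L_i=1$) this is $\Lambda_\infty-\langle\Lambda_\infty+\rho,\alpha^{\vee,\sigma}_i\rangle\alpha^\sigma_i={\mathsf s}^\sigma_i\cdot\Lambda_\infty$. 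Otherwise, for $c\ne 0$ the term $cy_i$ controls the degree in the orbit and nothing changes, while at $c=0$ the degrees become $\deg y_{\sigma^k i}+\langle\Lambda_\infty,\alpha^\vee_i\rangle+1$ again and the same computation yields ${\mathsf s}^\sigma_i\cdot\Lambda_\infty$. The main obstacle is the congruence at the heart of the cyclotomicity step: identifying the exact scalar produced by the substitution $x\mapsto\omega^{M_i}x$ in \eqref{wre}, and — equivalently — verifying that \eqref{s1L1} does preclude Bethe roots at the origin and that the resulting divisibilities of $\deg T_i$ and of the $\deg y_j$ combine to give $(M/M_i)\mid\langle\Lambda_\infty,\alpha^\vee_i\rangle+1$.
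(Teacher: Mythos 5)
Your proposal follows the same architecture as the paper's: cyclotomicity is reduced to the single congruence $\omega^{M_i\langle\Lambda_\infty+\rho,\alpha^\vee_i\rangle}=1$ (equivalently $(M/M_i)\mid\langle\Lambda_\infty,\alpha^\vee_i\rangle+1$, as in the paper's equation~\eqref{l8cond}) and verified via~\eqref{s1L2}; criticality follows from Proposition~\ref{mv1prop} once $\bm y^{(i,\sigma)}(c)$ is exhibited as the composition of the $M_i$ mutually commuting elementary generations; and the statement about the weight at infinity follows from Lemma~\ref{delem} together with the folding identities of Section~\ref{foldsec}. Your $c\to\infty$ argument for finiteness of the exceptional set is a nice explicit supplement to the paper's bare citation of Proposition~\ref{mv1prop} on a one-parameter slice.

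The one step that fails is the claim that ``cyclotomicity together with~\eqref{s1L1} prevents a Bethe variable from lying at the origin.'' When $M_j=M$ and $\langle\Lambda_0,\alpha^\vee_j\rangle=0$, condition~\eqref{s1L1} holds but $\widehat\Phi$ in~\eqref{emf} contains no $\log t$ term for a variable of colour $j$, so nothing forbids a root at the origin. What is true, and suffices: if $M_j<M$, cyclotomicity alone forbids an origin root of colour $j$, because the $\omega$-orbit of the corresponding coordinate would place at least two variables of the \emph{same} colour at $0$, and then the $\lf\alpha_{{\mathsf c}},\alpha_{{\mathsf c}}\rf\log(\t_a-\t_b)$ term makes $\widehat\Phi$ singular, contradicting that $\bm y$ represents a critical point; while if $M_j=M$ the divisibility $(M/M_j)\mid\deg y_j$ is the trivial $1\mid\deg y_j$. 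With that repair your orbit-by-orbit calculation goes through. You can also avoid the detour entirely, as the paper does: for a cyclotomic tuple one has $\sum_{j\in I}\deg y_j\,\alpha_j=\sum_{p=1}^m\sum_{k=0}^{M-1}\sigma^k\alpha_{{\mathsf c}(p)}$, and the paper's inner lemma ($\sum_{k=0}^{M-1}\langle\sigma^k\Lambda,\alpha^\vee_i\rangle M_i\equiv 0\bmod M$ for any integral weight $\Lambda$) then gives $\langle\Lambda_\infty-\Lambda_0,\alpha^\vee_i\rangle\equiv 0\bmod M/M_i$ by applying it to each $\Lambda_s$ and each $\alpha_{{\mathsf c}(p)}$, with no need to determine the individual $\deg y_j$.
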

\begin{proof}
First let us show that $\bm y^{(i,\sigma)}$ represents a cyclotomic point for all $c\in {\mathbb C}$. Comparing our def\/inition of  $\bm y^{(i,\sigma)}$ with the criterion in Lemma \ref{cycptlem}, one sees that it is enough to check that
\begin{gather*}
 y^{(i)}_i\big(\omega^{M_i} x;c\big) = \omega^{M_i\deg y^{(i)}_i} y^{(i)}_i(x;c).
 \end{gather*}
Inspecting \eqref{yiidef}, we see that this equality holds for all $c\in {\mathbb C}$ if and only if
\begin{gather}
 \omega^{M_i\langle\Lambda_\infty+\rho,\alpha_i^\vee \rangle} = 1.\label{l8cond}
\end{gather}
But now, given~\eqref{l8y} and the assumption that $\Lambda_s$, $s=1,\dots,n$ are integral, the following lemma implies that~\eqref{l8cond} holds if and only if we impose the condition~\eqref{s1L2} on~$\Lambda_0$.

\begin{lem} Suppose $\Lambda\in \h^*$ is an integral weight. Then, for any $j\in I$,
\begin{gather*}
  \sum_{k=0}^{M-1} \langle\sigma^k \Lambda, \alpha^\vee_j \rangle M_j \equiv 0 \mod M.
\end{gather*}
\end{lem}

\begin{proof}
We have
\begin{gather*}
\left\langle \sum_{k=0}^{M-1} \sigma^k\Lambda,\alpha^\vee_j \right\rangle M_j = \left\langle \Lambda, \sum_{k=0}^{M-1} \sigma^{-k} \alpha^\vee_j\right\rangle M_j \\
\hphantom{\left\langle \sum_{k=0}^{M-1} \sigma^k\Lambda,\alpha^\vee_j \right\rangle M_j }{}
= \left\langle \Lambda, \frac{M}{M_j}\sum_{k=0}^{M_j-1} \alpha^\vee_j \right\rangle M_j = M \left\langle \Lambda, \sum_{k=0}^{M_j-1} \alpha^\vee_j \right\rangle \in M{\mathbb Z}. \tag*{\qed}
\end{gather*}
\renewcommand{\qed}{}
\end{proof}

Now we show that $\bm y^{(i,\sigma)}$ represents a critical point for all but f\/initely many $c\in {\mathbb C}$.
Note that from def\/inition \eqref{Tdef} we have
\begin{gather}  T_{\sigma j}(\omega x) =
 \omega^{\big\langle \sum\limits_{s=1}^N \sum\limits_{k=0}^{M-1} \sigma^k \Lambda_s, \alpha^\vee_j \big\rangle} T_j(x) , \qquad j\in I.\label{oT}
\end{gather}
Hence, in view of~\eqref{l8y},
\begin{gather*}  x^{\langle\Lambda_0,\alpha^\vee_{\sigma i}\rangle} T_{\sigma i}(\omega x) \prod_{j\in I} y_j(\omega x)^{-\langle\alpha_j,\alpha^\vee_{\sigma i}\rangle}
 =  x^{\langle\Lambda_0,\alpha^\vee_i\rangle} T_{\sigma i}(\omega x) \prod_{j\in I} y_{\sigma j}(\omega x)^{-\langle\alpha_{\sigma j},\alpha^\vee_{\sigma i}\rangle} \\
\hphantom{x^{\langle\Lambda_0,\alpha^\vee_{\sigma i}\rangle} T_{\sigma i}(\omega x) \prod_{j\in I} y_j(\omega x)^{-\langle\alpha_j,\alpha^\vee_{\sigma i}\rangle}}{}
 =  \omega^{\langle \Lambda_\infty, \alpha^\vee_i \rangle}   \left(x^{\langle\Lambda_0,\alpha^\vee_i\rangle} T_{i}(x) \prod_{j\in I} y_j( x)^{-\langle\alpha_j,\alpha^\vee_{i}\rangle}\right).
\end{gather*}
Note also that since $L_i=1$, no node $j$ in the orbit of $i$ is linked by an edge of the Dynkin diagram to~$i$. That is, no~$y_j$ for~$j$ in the orbit of $i$ appears on the right of~\eqref{wre}.
Hence, for $k=1,\dots,M_i-1$,  $y^{(i,\sigma)}_{\sigma^k i}(x;c)$ obeys the equation
\begin{gather*}
 \operatorname{Wr}(y_{\sigma^k i}(x), y^{(i,\sigma)}_{\sigma^k i}(x;c)) = x^{\langle\Lambda_0,\alpha^\vee_{\sigma^k i}\rangle} T_{\sigma^k i}(x) \prod_{j\in I{\setminus} \{\sigma^ki\}} y_j(x)^{-\langle\alpha_j,\alpha^\vee_{\sigma^ki}\rangle}.
 \end{gather*}
and the tuple $\bm y^{(i,\sigma)}$ is indeed the result of generating in each of the directions $i, \sigma i, \dots, \sigma^{M_i-1}i$ (in any order).
It follows from Proposition~\ref{mv1prop} that~$\bm y^{(i,\sigma)}$ is generic for almost all~$c$, and represents a critical point whenever it is generic.

The statements about the weight at inf\/inity follow from Lemma~\ref{delem} and Section~\ref{foldsec}.
This completes the proof of Theorem~\ref{s1thm}.
\end{proof}

\subsection[Elementary generation: the $L_i=2$ case]{Elementary generation: the $\boldsymbol{L_i=2}$ case}\label{sgen1}

For this subsection we suppose that $i\in I$ is such that $L_i=2$. That implies $M_i$ is even and the restriction of the Dynkin diagram to the nodes $\sigma^{\mathbb Z} i$ consists of $\frac{M_i}{2}\in {\mathbb Z}_{\geq 1}$ disconnected copies of the Dynkin diagram of type ${\mathrm A}_2$, as sketched below:{\samepage
\begin{gather*}
\begin{tikzpicture}[baseline =-5,scale=1,font=\scriptsize]
\foreach \i in {0,1,2} {
  \draw[very thick] (\i+1,0) -- (\i+1,1);
   \filldraw[fill=white] (\i+1,0) circle (1mm) node[below=2mm] {$\sigma^\i i$}
                         (\i+1,1) circle (1mm) node[above=2mm] {$\sigma^\i \bi$};
  \draw[-latex,shorten >=3mm,shorten <=3mm] (\i+1,0) -- (\i+2,0);
  \draw[-latex,shorten >=3mm,shorten <=3mm] (\i+1,1) -- (\i+2,1);
}
\foreach \i in {6} {
  \draw[very thick] (\i,0) -- (\i,1);
   \filldraw[fill=white] (\i,0) circle (1mm) node[below=2mm] {$\sigma^{M_i/2-1} i$}
                         (\i,1) circle (1mm) node[above=2mm] {$\sigma^{M_i/2-1} \bi$};
  \draw[-latex,shorten >=3mm,shorten <=3mm] (\i-1,0) -- (\i,0);
  \draw[-latex,shorten >=3mm,shorten <=3mm] (\i-1,1) -- (\i,1);
}
  \draw[-latex,dashed,shorten >=1mm,shorten <=1mm] (4,0) -- (4+1,0);
  \draw[-latex,dashed,shorten >=1mm,shorten <=1mm] (4,1) -- (4+1,1);
\draw[very thick] (6,0) -- (6,1); \filldraw[fill=white] (6,0) circle (1mm) (6,1) circle (1mm);
\draw[-latex,shorten >=3mm,shorten <=3mm] (6,0) .. controls (11,-2) and  (-3,-2) ..  (1,1);
\draw[-latex,shorten >=3mm,shorten <=3mm] (6,1) .. controls (11,3) and  (-3,3) ..  (1,0);
\end{tikzpicture}
\end{gather*}
Here, for brevity, we write $\bi := \sigma^{M_i/2} i$.}

\begin{rem} Among f\/inite and af\/f\/ine types, only the case $M_i/2=1$ occurs.
\end{rem}

We def\/ine $y_i^{(i)}(x)$ by
\begin{gather*} y_i^{(i)}(x) := y_i(x)  x^{-{\langle\Lambda_0,\alpha^\vee_i\rangle}-1}  \int_0^x \xi^{{\langle\Lambda_0,\alpha^\vee_i\rangle}}T_i(\xi) \prod_{j\in I} y_j(\xi)^{-\langle\alpha_j,\alpha^\vee_i\rangle}d\xi.
\end{gather*}
Here the limits $\int_0^x$ mean that $y^{(i)}_i(x)$
is holomorphic at $x=0$. This condition def\/ines the integral uniquely, since $\langle\Lambda_0,\alpha^\vee_i\rangle \notin {\mathbb Z}$ by our assumption~\eqref{s2L1}.

\begin{prop}\label{pprop2}
If  $\bm y$ represents a critical point then $y^{(i)}_i(x)$ is a polynomial. It has degree $\deg y^{(i)}_i = \deg y_i + \langle \Lambda_\infty-\Lambda_0, \alpha^\vee_i\rangle$.
\end{prop}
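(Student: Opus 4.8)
The plan is to mimic the proof of Proposition~\ref{pprop}, but with the modification forced by the fact that $\langle\Lambda_0,\alpha^\vee_i\rangle\notin{\mathbb Z}$. First I would argue that $y^{(i)}_i(x)$ is holomorphic away from $0$. The integrand $\xi^{\langle\Lambda_0,\alpha^\vee_i\rangle}T_i(\xi)\prod_{j\in I}y_j(\xi)^{-\langle\alpha_j,\alpha^\vee_i\rangle}$ is (times a power of $\xi$) a rational function whose only possible poles in ${\mathbb C}^\times$ lie at the roots $\t_p$ of $y_i(x)$, i.e.\ those with ${\mathsf c}(p)=i$. Exactly as in the proof of Proposition~\ref{pprop}, the computation~\eqref{ld} shows that the logarithmic derivative of $\xi^{\langle\Lambda_0,\alpha^\vee_i\rangle}T_i(\xi)(\xi-\t_p)^2\prod_j y_j(\xi)^{-\langle\alpha_j,\alpha^\vee_i\rangle}$ vanishes at $\xi=\t_p$ by the critical point equations~\eqref{ebe}, so the residue of the integrand at $\t_p$ vanishes and the integral produces no logarithmic singularity there. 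Hence $y_i(x)\int_0^x(\cdots)d\xi$ is holomorphic on ${\mathbb C}^\times$, and therefore so is $y^{(i)}_i(x)=y_i(x)x^{-\langle\Lambda_0,\alpha^\vee_i\rangle-1}\int_0^x(\cdots)d\xi$.

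Next I would handle the point $x=0$, which is the new feature compared to the $L_i=1$ case. Near $\xi=0$ the integrand behaves like $\xi^{\langle\Lambda_0,\alpha^\vee_i\rangle}$ times a function holomorphic and nonzero at $0$ (the exponent of $\xi$ coming from $T_i$ and the $y_j$ is $\langle\Lambda_0,\alpha^\vee_i\rangle$ plus contributions that are nonnegative integers; more precisely the full power of $\xi$ in the integrand, using $\langle\alpha^\vee_i,\cdot\rangle$ applied to~\eqref{l8y}, is $\langle\Lambda_0,\alpha^\vee_i\rangle$ shifted by the orders of vanishing of the $y_j$ at $0$). Since $\langle\Lambda_0,\alpha^\vee_i\rangle\notin{\mathbb Z}$, this exponent is non-integral, so $\int_0^x\xi^{\langle\Lambda_0,\alpha^\vee_i\rangle}(\cdots)d\xi$ is the unique branch vanishing at $0$, and it equals $x^{\langle\Lambda_0,\alpha^\vee_i\rangle+1}$ times a function holomorphic (and generically nonzero) at $0$. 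Multiplying by $x^{-\langle\Lambda_0,\alpha^\vee_i\rangle-1}$ cancels the fractional power exactly, so $y^{(i)}_i(x)$ is holomorphic at $x=0$ as well. Combined with the previous paragraph, $y^{(i)}_i$ is entire; it has polynomial growth at infinity (it is $y_i$ times $x^{-\langle\Lambda_0,\alpha^\vee_i\rangle-1}$ times an integral of a rational function, hence algebraic growth), so by Liouville-type reasoning it is a polynomial.

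For the degree count I would compute the leading behaviour at infinity. The integrand is a rational function of $x$ whose degree is, by~\eqref{l8y} applied via $\langle\cdot,\alpha^\vee_i\rangle$, equal to $\langle\Lambda_0,\alpha^\vee_i\rangle+\sum_{s,k}\langle\sigma^k\Lambda_s,\alpha^\vee_i\rangle-\sum_j\langle\alpha_j,\alpha^\vee_i\rangle\deg y_j=\langle\Lambda_\infty,\alpha^\vee_i\rangle$ (using $\langle\Lambda_\infty,\alpha^\vee_i\rangle=\langle\Lambda_0,\alpha^\vee_i\rangle+\cdots$). Assuming the relevant exponent is not $-1$ so the antiderivative does not produce a logarithm — this is the point where one must be a little careful, and it holds generically, with the degenerate case handled as in~\cite{MV1} — the integral $\int_0^x(\cdots)d\xi$ has degree $\langle\Lambda_\infty,\alpha^\vee_i\rangle+1$ at infinity. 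Then $\deg y^{(i)}_i=\deg y_i-\langle\Lambda_0,\alpha^\vee_i\rangle-1+\langle\Lambda_\infty,\alpha^\vee_i\rangle+1=\deg y_i+\langle\Lambda_\infty-\Lambda_0,\alpha^\vee_i\rangle$, as claimed.

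The main obstacle I anticipate is the bookkeeping at $x=0$: one must verify carefully that the order of vanishing at the origin of $T_i(x)\prod_j y_j(x)^{-\langle\alpha_j,\alpha^\vee_i\rangle}$, when added to $\langle\Lambda_0,\alpha^\vee_i\rangle$, gives an exponent with non-integral fractional part equal to that of $\langle\Lambda_0,\alpha^\vee_i\rangle$, so that the prefactor $x^{-\langle\Lambda_0,\alpha^\vee_i\rangle-1}$ exactly clears the branch point and leaves something holomorphic and of the asserted degree; the $T_i$ defined in~\eqref{Tdef} is a genuine polynomial (the $\Lambda_s$ are integral dominant), so its contribution at $0$ is a nonnegative integer, which makes this work, but it deserves an explicit line. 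Everything else is a routine adaptation of Proposition~\ref{pprop} and the degree computations of~\cite{MV1}.
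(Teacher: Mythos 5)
Your argument is correct and follows essentially the same route as the paper's, whose own proof of this proposition is simply a one-line reference back to the proof of Proposition~\ref{pprop} (the residue computation at the roots $t_p$ is verbatim the same, and holomorphy at the origin is what the choice of normalization $\int_0^x$ is designed to achieve). Two small imprecisions are worth ironing out, both in the direction of things being easier than you feared. First, your hedge about the exponent possibly being $-1$ and producing a logarithm, ``handled as in [MV1]~generically,'' is unnecessary: since $\langle\Lambda_0,\alpha^\vee_i\rangle\notin{\mathbb Z}$ under assumption~\eqref{s2L1}, and $\Lambda_\infty-\Lambda_0$ (see~\eqref{l8y}) pairs with $\alpha^\vee_i$ to give an integer, every exponent appearing in the expansion of the integrand at $\infty$ is a non-integer, so a $-1$ exponent simply never occurs; no genericity is needed. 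Second, at $\xi=0$ the bookkeeping is cleaner than your phrasing suggests: since the $z_s$ are non-zero and every Bethe root $t_j$ is non-zero (the $1/t_j$ term in~\eqref{ebe} forces this), neither $T_i$ nor any $y_j$ vanishes at the origin, so the integrand is exactly $\xi^{\langle\Lambda_0,\alpha^\vee_i\rangle}$ times a unit at $0$, with no extra integer shift coming from ``orders of vanishing of the $y_j$ at $0$.'' With these two points sharpened, your proof is precisely what the authors intend by their reference back to Proposition~\ref{pprop}.
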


\begin{proof}
The proof is as for Proposition~\ref{pprop}.
\end{proof}

Let $\bm y^{(i)}= (y^{(i)}_j(x))_{j\in I}$ be the tuple of polynomials whose $i$th component $y^{(i)}_i(x)$ is as above, and whose remaining components are the same as those of $\bm y$, i.e.,
\begin{gather*}
 y^{(i)}_j(x) = y_j(x)\qquad\text{ for all }\quad j\in I{\setminus} \{i\}.
 \end{gather*}
Let $(\bm \t^{(i)};\bm {\mathsf c}^{(i)})$ denote the pair represented by this tuple in the sense of Section~\ref{prsec}. It turns out that $\bm t^{(i)}$ is not in general a critical point of the extended master function $\widehat\Phi(\bm\t^{(i)};\bm{\mathsf c}^{(i)};\bm z;\bm \Lambda)$, i.e., it does not in general obey the equations \eqref{ebe}. Instead, the following result gives the analogous collection of equations that it does obey, provided~$\bm y^{(i)}$ is generic.

\begin{prop}\label{nb} If $\bm y$ represents a critical point
and $\bm y^{(i)}$ is generic, then
\begin{gather*}
 \frac{\big\langle{\mathsf s}_i\cdot\Lambda_0,\alpha^\vee_{{\mathsf c}^{(i)}(p)}\big\rangle}{\t^{(i)}_p}
+ \sum_{s=1}^N\sum_{k=0}^{M-1} \frac{\big\langle \sigma^k \Lambda_s,\alpha^\vee_{{\mathsf c}^{(i)}(p)}\big\rangle}{\t^{(i)}_p-\omega^k z_s}
- \sum_{r\colon r\neq p} \frac{\big\langle \alpha_{{\mathsf c}^{(i)}(r)},\alpha^\vee_{{\mathsf c}^{(i)}(p)} \big\rangle}{\t^{(i)}_p- \t^{(i)}_r} = 0
\end{gather*}
for each $p$.
\end{prop}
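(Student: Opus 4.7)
The plan is to verify the claimed equation case by case, according to whether $j:={\mathsf c}^{(i)}(p)$ equals $i$ or not. The central technical tool is a modified Wronskian identity relating $y_i$ and $y_i^{(i)}$: it has an extra ``mass-like'' term proportional to $y_i y_i^{(i)}$ that is absent in the $L_i=1$ setting of equation \eqref{wre}, and which precisely encodes the shift $\Lambda_0\rightsquigarrow {\mathsf s}_i\cdot\Lambda_0$.

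First I would establish the identity
\[
x\,\operatorname{Wr}\bigl(y_i(x), y_i^{(i)}(x)\bigr) + (a+1)\,y_i(x)\,y_i^{(i)}(x) = T_i(x)\prod_{j'\neq i} y_{j'}(x)^{-\langle\alpha_{j'},\alpha^\vee_i\rangle}, \qquad (\star)
\]
where $a:=\langle\Lambda_0,\alpha^\vee_i\rangle$. This is a routine calculation: writing $y_i^{(i)}(x) = y_i(x)\,x^{-(a+1)}F(x)$ with $F'(x) = x^a T_i(x)\prod_{j'} y_{j'}(x)^{-\langle\alpha_{j'},\alpha^\vee_i\rangle}$, using $\operatorname{Wr}(y_i,y_i^{(i)}) = y_i^2\,(y_i^{(i)}/y_i)'$, and simplifying with $\langle\alpha_i,\alpha^\vee_i\rangle=2$.

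In the case $j\neq i$, the variable $t^{(i)}_p = t_p$ is an original root of color $j$, so the original critical-point equation for $\bm y$ at $t_p$ holds. I would subtract that from the equation to be proved. Using ${\mathsf s}_i\cdot\Lambda_0-\Lambda_0 = -(a+1)\alpha_i$ and noting that only the color-$i$ terms in the sum over other variables change (from $\langle\alpha_i,\alpha^\vee_j\rangle\, y_i'(t_p)/y_i(t_p)$ to $\langle\alpha_i,\alpha^\vee_j\rangle\,(y_i^{(i)})'(t_p)/y_i^{(i)}(t_p)$), the difference collapses to $-\langle\alpha_i,\alpha^\vee_j\rangle$ times the bracket $(a+1)/t_p + (y_i^{(i)})'(t_p)/y_i^{(i)}(t_p) - y_i'(t_p)/y_i(t_p)$. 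If $\langle\alpha_i,\alpha^\vee_j\rangle=0$ this vanishes trivially. Otherwise $i$ and $j$ are adjacent in the Dynkin diagram, so $-\langle\alpha_j,\alpha^\vee_i\rangle\geq 1$, and $y_j(t_p)^{-\langle\alpha_j,\alpha^\vee_i\rangle}$ annihilates the right-hand side of $(\star)$ at $t_p$; dividing the resulting equation by $t_p\,y_i(t_p)\,y_i^{(i)}(t_p)$ (all nonzero by genericity) then gives precisely the vanishing of the bracket.

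Finally, in the case $j=i$, the point $t^*:=t^{(i)}_p$ is a root of $y_i^{(i)}$ rather than of $y_i$. By genericity $y_i(t^*)\neq 0$ and $t^*\neq 0$, so $y_i^{(i)}(t^*)=0$ forces $F(t^*)=0$. I would compute $(y_i^{(i)})'(t^*)$ and $(y_i^{(i)})''(t^*)$ directly using $F(t^*)=0$ together with the explicit formula $F''/F' = a/x + T_i'/T_i - \sum_{j'}\langle\alpha_{j'},\alpha^\vee_i\rangle\,y_{j'}'/y_{j'}$, to arrive at
\[
\frac{(y_i^{(i)})''(t^*)}{(y_i^{(i)})'(t^*)} = -\frac{a+2}{t^*} + \frac{T_i'(t^*)}{T_i(t^*)} - \sum_{j'\neq i}\langle\alpha_{j'},\alpha^\vee_i\rangle \frac{y_{j'}'(t^*)}{y_{j'}(t^*)}.
\]
At a simple root, the self-interaction sum in the equation to be proved equals $(y_i^{(i)})''(t^*)/(2(y_i^{(i)})'(t^*))$, so substituting the above together with $\langle{\mathsf s}_i\cdot\Lambda_0,\alpha^\vee_i\rangle = -(a+2)$ produces a clean cancellation. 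The principal obstacle throughout is bookkeeping the branch-point prefactor $x^{-(a+1)}$ (present only because $\langle\Lambda_0,\alpha^\vee_i\rangle\notin{\mathbb Z}$) and seeing that its effect on the Bethe equations is captured exactly by replacing $\Lambda_0$ with ${\mathsf s}_i\cdot\Lambda_0$; the identity $(\star)$ is the transparent mechanism that makes this work.
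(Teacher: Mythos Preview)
Your proof is correct and follows essentially the same route as the paper. Your identity $(\star)$ is exactly the paper's Wronskian relation $\operatorname{Wr}(y_i,x^{a+1}y_i^{(i)})=x^{a}T_i\prod_{j'\neq i}y_{j'}^{-\langle\alpha_{j'},\alpha^\vee_i\rangle}$ divided through by $x^{a}$, and your case split (roots of colour $j\neq i$ versus colour $i$) matches the paper's; the only cosmetic difference is that for the colour-$i$ case the paper packages the computation of $(y_i^{(i)})''/(y_i^{(i)})'$ at a root via two general Wronskian lemmas, whereas you compute it directly from $F(t^*)=0$.
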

\begin{proof}
By \eqref{ebe} for each root $\t_p$ in the tuple $\bm t$ we have
\begin{gather} \frac{\big\langle\Lambda_0,\alpha^\vee_{{\mathsf c}(p)}\big\rangle}{\t_p} + \sum_{s=1}^N\sum_{k=0}^{M-1} \frac{\big\langle \sigma^k \Lambda_s,\alpha^\vee_{{\mathsf c}(p)}\big\rangle}{\t_p-\omega^k z_s} - \sum_{r\colon r\neq p} \frac{\big\langle \alpha_{{\mathsf c}(r)},\alpha^\vee_{{\mathsf c}(p)} \big\rangle}{\t_p- \t_r} = 0.\label{tpbe}
\end{gather}
For all roots of colours $j\in I$ such that $\langle \alpha_{j}, \alpha^\vee_i\rangle =0$ this is immediately equivalent to the required equation.
So we must consider roots of colour $i$, and roots of colours $j\in I$ such that $\langle \alpha_{j}, \alpha^\vee_i\rangle <0$.

By def\/inition of $y^{(i)}_i(x)$ we have
\begin{gather}
 \operatorname{Wr}(y_i(x),x^{{\langle\Lambda_0,\alpha^\vee_i\rangle}+1}y^{(i)}_i(x)) = x^{{\langle\Lambda_0,\alpha^\vee_i\rangle}} T_i(x) \prod_{j\neq i} y_j(x)^{-\langle\alpha_j,\alpha^\vee_i\rangle}\label{wreY}
\end{gather}
or equivalently
\begin{gather}
 \frac{y_i'(x)}{y_i(x)} - \frac{y^{(i)}_i{}'(x)}{y^{(i)}_i(x)} - \frac{1+{\langle\Lambda_0,\alpha^\vee_i\rangle}}{x}
 = \frac{T_i(x)  \prod\limits_{j\neq i} y_j(x)^{-\langle\alpha_j,\alpha^\vee_i\rangle}}{x y_i(x) y^{(i)}_i(x)} .
\label{yye}
\end{gather}
By def\/inition of $(\bm \t^{(i)}, \bm{\mathsf c}^{(i)})$, the left-hand side of \eqref{yye} is
\begin{gather}
\sum_{r\colon  {\mathsf c}(r) = i} \frac 1{x-\t_r} - \sum_{r\colon {\mathsf c}^{(i)}(r) = i} \frac 1 {x- \t^{(i)}_r} - \frac{1+{\langle\Lambda_0,\alpha^\vee_i\rangle}}{x}.\label{lyye}
\end{gather}

Now suppose $j\in I$ is such that $\langle \alpha_{j}, \alpha^\vee_i\rangle \in {\mathbb Z}_{<0}$. By def\/inition $y^{(i)}_j(x) = y_j(x)$. Suppose~$\t_p$ is a root of $y_j(x)$, i.e., suppose ${\mathsf c}(p)=j$. Since $\bm y$ represents a critical point, $\bm y$ must be generic, and hence $\t_p$ is not a root of $y_i(x)$. By our assumption that $\bm y^{(i)}$ is generic, $\t_p$ is not a root of~$y^{(i)}_i(x)$ either. Hence the right-hand side of~\eqref{yye} is zero at $x=\t_p$ and so, in view of~\eqref{lyye}, we have
\begin{gather*}
\sum_{r\colon {\mathsf c}(r) = i} \frac 1{\t_p-\t_r} - \sum_{r\colon {\mathsf c}^{(i)}(r) = i} \frac 1 {\t_p- \t^{(i)}_r} - \frac{1+{\langle\Lambda_0,\alpha^\vee_i\rangle}}{\t_p}
 = 0.
 \end{gather*}
On adding this equation multiplied by $\langle \alpha_{i}, \alpha^\vee_j\rangle$ to the equation~\eqref{tpbe}, we arrive at
\begin{gather*}
\frac{\langle\Lambda_0,\alpha^\vee_{j}\rangle - \langle \alpha_i,\alpha^\vee_j\rangle  \langle\Lambda_0+\rho, \alpha^\vee_i\rangle}{\t^{(i)}_p}
+ \sum_{s=1}^N\sum_{k=0}^{M-1} \frac{\langle \sigma^k \Lambda_s,\alpha^\vee_{j}\rangle}{\t^{(i)}_p-\omega^k z_s}
- \sum_{r\colon r\neq p} \frac{\langle \alpha_{{\mathsf c}^{(i)}(r)},\alpha^\vee_{j} \rangle}{\t^{(i)}_p- \t^{(i)}_r} = 0,
\end{gather*}
which is the required equality (since ${\mathsf s}_i\cdot \Lambda_0 = \Lambda_0 - \langle \Lambda_0 + \rho, \alpha^\vee_i\rangle \alpha_i$).

It remains to consider roots of colour $i$.
First note that $y_i(x)$ and $y^{(i)}_i(x)$ have no common roots. Indeed, if $\t$ were a common root of $y_i(x)$ and $y_i^{(i)}(x)$ then the right-hand side of~\eqref{wreY} would have to vanish at $x=\t$. In other words~$y_i(x)$ would have a root in common with the right-hand side of~\eqref{wreY}. But by our def\/inition of what it means for~$\bm y$ to be generic, Section~\ref{prsec}, this is impossible.

Suppose $\t^{(i)}_p$ is any root of $y^{(i)}_i(x)$. By our assumption that $\bm y^{(i)}$ is generic, it follows from~\eqref{wreY} and Lemmas~\ref{rlem} and~\ref{wrlem} below  that
\begin{gather*}
 \frac{ 2 (1+ {\langle\Lambda_0,\alpha^\vee_i\rangle})}{\t^{(i)}_p} - \frac{\langle\Lambda_0,\alpha^\vee_i\rangle}{\t^{(i)}_p}
- \sum_{s=1}^N\sum_{k=0}^{M-1} \frac{\langle \sigma^k \Lambda_s,\alpha^\vee_i\rangle}{\t^{(i)}_p-\omega^k z_s}
+ \sum_{r:r\neq p} \frac{\langle \alpha_{{\mathsf c}^{(i)}(r)},\alpha^\vee_i \rangle}{\t^{(i)}_p- \t^{(i)}_r} = 0,
\end{gather*}
which is the required equality.
\end{proof}

\begin{rem} Propositions \ref{pprop2} and \ref{nb} also follow from Theorem~3.5 in~\cite{MV2}.\end{rem}

\begin{lem}\label{rlem}
For any $\alpha\in {\mathbb C}$, if $g(x) = x^\alpha\prod\limits_{j=1}^J (x- s_j)$, where $(s_j)_{j=1}^J$ are all distinct and non-zero, then
\begin{gather*}
 \left.\frac{g''(x)}{g'(x)}\right|_{x=s_k} = \frac{2\alpha}{s_k} + \sum_{\substack{j=1\\j\neq k}}^J \frac 2{s_k-s_j}.
\end{gather*}
\end{lem}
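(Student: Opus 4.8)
The plan is to exploit the fact that, under the hypotheses, $s_k$ is a \emph{simple} zero of $g$. Since the $s_j$ are distinct and non-zero, in a neighbourhood of $x=s_k$ the factor $x^\alpha$ is holomorphic and non-vanishing (for any choice of branch, which is irrelevant here), so I would write $g(x) = (x-s_k) h(x)$ with $h(x) := x^\alpha \prod_{j\neq k}(x-s_j)$ holomorphic near $x=s_k$ and $h(s_k)\neq 0$.

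Next I would differentiate this factorization twice using the Leibniz rule: $g'(x) = h(x) + (x-s_k)h'(x)$ and $g''(x) = 2h'(x) + (x-s_k)h''(x)$. Evaluating at $x=s_k$ gives $g'(s_k)=h(s_k)$ and $g''(s_k)=2h'(s_k)$; in particular $g'(s_k)=h(s_k)\neq 0$, so the left-hand side of the claimed identity is well defined, and dividing yields $g''(s_k)/g'(s_k) = 2\,h'(s_k)/h(s_k)$.

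Finally I would recognize $h'(s_k)/h(s_k)$ as the logarithmic derivative $\frac{d}{dx}\log h(x)\big|_{x=s_k}$ and compute it directly from the product form of $h$, namely $\frac{d}{dx}\log h(x) = \frac{\alpha}{x} + \sum_{j\neq k} \frac{1}{x-s_j}$, which at $x=s_k$ equals $\frac{\alpha}{s_k} + \sum_{j\neq k}\frac{1}{s_k-s_j}$. Multiplying by $2$ gives the statement. There is no genuine obstacle in this argument; the only point meriting a word of care is that $\alpha$ need not be an integer, but because $s_k\neq 0$ the function $x^\alpha$ is perfectly regular near $x=s_k$, so the computation goes through unchanged.
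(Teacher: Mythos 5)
Your proof is correct. The paper states this lemma without proof, and your argument — factoring out the simple zero as $g(x)=(x-s_k)h(x)$ with $h$ holomorphic and nonvanishing near $s_k$, reducing $g''(s_k)/g'(s_k)$ to $2h'(s_k)/h(s_k)$, and evaluating the logarithmic derivative of $h$ — is a clean and complete way to establish it.
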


\begin{lem}\label{wrlem} If $\operatorname{Wr}(f(x),g(x)) = W(x)$ then
\begin{gather*}
\frac{g''(x)}{g'(x)} - \frac{W'(x)}{W(x)} = \frac{g(x)\big(W(x) f''(x) - W'(x) f'(x)\big)}{f(x)g'(x)W(x)}.
\end{gather*}
\end{lem}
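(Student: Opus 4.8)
The plan is to verify this identity by direct manipulation of rational functions, all equalities being understood at points where the denominators $f(x)$, $g'(x)$ and $W(x)$ are non-zero. The one observation that makes everything go through is the following: differentiating $W = \operatorname{Wr}(f,g) = fg' - f'g$ gives
\begin{gather*}
W' = fg'' + f'g' - f''g - f'g' = fg'' - f''g,
\end{gather*}
the cross terms $f'g'$ cancelling. So $W$ and $W'$ are both expressible purely in terms of $f$, $g$ and their first two derivatives.

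Next I would bring the left-hand side over the common denominator $g'W$, writing it as $(g''W - g'W')/(g'W)$, and substitute the expressions for $W$ and $W'$ into the numerator. The terms $fg'g''$ cancel, leaving $g''W - g'W' = g(f''g' - f'g'')$, so the left-hand side equals $g(f''g' - f'g'')/(g'W)$.

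Then I would treat the right-hand side in the same way. Expanding
\begin{gather*}
Wf'' - W'f' = (fg' - f'g)f'' - (fg'' - f''g)f',
\end{gather*}
the terms $f'f''g$ cancel and one gets $Wf'' - W'f' = f(f''g' - f'g'')$. Hence the right-hand side is $g\cdot f(f''g' - f'g'')/(fg'W) = g(f''g' - f'g'')/(g'W)$, which is exactly the left-hand side, completing the proof.

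The computation is entirely routine; the only points needing a little care are the cancellation of the $f'g'$ terms when differentiating the Wronskian and the bookkeeping of signs in the two expansions, so I do not anticipate any genuine obstacle. (One could alternatively phrase the whole thing as: both sides equal $g(f''g' - f'g'')/(g'W)$, with Lemma~\ref{wrlem} then used in the proof of Proposition~\ref{nb} together with Lemma~\ref{rlem} to evaluate $g''/g'$ at a root $\t^{(i)}_p$ of $y^{(i)}_i$.)
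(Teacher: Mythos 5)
Your proof is correct and is essentially the same computation as the paper's: both hinge on the observation that $W' = fg''-f''g$ (the $f'g'$ cross-terms cancel) and then reduce the claimed identity to an algebraic rearrangement of $Wfg''-W'fg'=Wf''g-W'f'g$. The paper packages this slightly more slickly by starting from the trivial identity $W\,\operatorname{Wr}(f,g)'=W'\,\operatorname{Wr}(f,g)$ and expanding, whereas you expand each side separately, but the underlying algebra is identical.
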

\begin{proof}
We have $W \operatorname{Wr}(f,g)' = W' \operatorname{Wr}(f,g)$. Hence
\begin{gather*}
W(x) f(x) g''(x) - W'(x) f(x) g'(x) =  W(x) f''(x) g(x) - W'(x) f'(x) g(x)
\end{gather*}
and hence the result.
\end{proof}

To deal with the case in which $\bm y^{(i)}$ fails to be generic, we  shall also need the following observation, which follows from~\eqref{wreY}.
\begin{lem}\label{nblem2}
For any $j\in I$  such that $\langle\alpha_j,\alpha^\vee_i\rangle <0$, if $\t$ is a root of both~$y_j(x)$ and~$y^{(i)}_i(x)$ then it is a root of $y^{(i)}_i(x)$ with multiplicity~$2$.
In particular, if~$\t$ is a root of both~$y_\bi(x)$ and $y^{(i)}_i(x)$ then it is a root of~$y^{(i)}_i(x)$ with multiplicity~$2$.
\end{lem}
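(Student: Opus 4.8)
The plan is to deduce the statement directly from the Wronskian identity \eqref{wreY} by a local comparison of orders of vanishing at $\t$. Write $a:=\langle\Lambda_0,\alpha^\vee_i\rangle$ and set $Y(x):=x^{a+1}y^{(i)}_i(x)$, so that \eqref{wreY} becomes
\[
\operatorname{Wr}\big(y_i(x),Y(x)\big)=x^{a}\,T_i(x)\prod_{j'\in I{\setminus}\{i\}} y_{j'}(x)^{-\langle\alpha_{j'},\alpha^\vee_i\rangle}.
\]
Fix $j\in I$ with $\langle\alpha_j,\alpha^\vee_i\rangle<0$ and a point $\t$ which is a common root of $y_j(x)$ and $y^{(i)}_i(x)$. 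I would first record that $\t\neq0$ — this is part of the standing nondegeneracy conventions of the section (it is already built into Lemma~\ref{rlem}); in the case $j=\bi$ of interest it holds automatically, since $\langle\Lambda_0,\alpha^\vee_\bi\rangle=\langle\Lambda_0,\alpha^\vee_i\rangle\notin{\mathbb Z}$ by \eqref{s2L1}, so the coefficient of $\log\t_p$ attached to roots of colour $\bi$ in the extended master function is nonzero. I would also note that $y_i(\t)\neq0$: since $\bm y$ represents a critical point it is generic in the sense of Section~\ref{prsec}, and $\langle\alpha_j,\alpha^\vee_i\rangle\neq0$, so $y_i$ and $y_j$ share no root. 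Because $\t\neq0$ the prefactor $x^{a+1}$ is holomorphic and nonvanishing at $\t$, hence $\t$ is a root of $Y(x)$ of exactly the same multiplicity $k\geq1$ as it is of $y^{(i)}_i(x)$.

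Next I would read off the two orders. Writing $Y(x)=(x-\t)^{k}h(x)$ with $h(\t)\neq0$,
\[
\operatorname{Wr}(y_i,Y)(x)=y_i(x)Y'(x)-y_i'(x)Y(x)=k(x-\t)^{k-1}y_i(x)h(x)+(x-\t)^{k}\big(y_i(x)h'(x)-y_i'(x)h(x)\big),
\]
so, as $k\geq1$ and $y_i(\t)h(\t)\neq0$, the left-hand side vanishes at $\t$ to order exactly $k-1$. On the right-hand side the factor $y_j(x)^{-\langle\alpha_j,\alpha^\vee_i\rangle}$ vanishes at $\t$ to order $-\langle\alpha_j,\alpha^\vee_i\rangle\geq1$, while every remaining factor ($x^{a}$, which is nonzero at $\t$; $T_i(x)$, whose exponents are non-negative; and $y_{j'}(x)^{-\langle\alpha_{j'},\alpha^\vee_i\rangle}$ for $j'\neq i,j$, again with non-negative exponents) can only raise that order. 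Hence the right-hand side vanishes at $\t$ to order at least $1$, and comparison gives $k-1\geq1$, i.e.\ $(x-\t)^{2}$ divides $y^{(i)}_i(x)$, which is the assertion. The ``in particular'' clause is simply the case $j=\bi$: in the $L_i=2$ situation $i$ and $\bi=\sigma^{M_i/2}i$ lie at the two ends of an $\mathrm{A}_2$ component of the subdiagram on $\sigma^{\mathbb Z}i$, so $\langle\alpha_\bi,\alpha^\vee_i\rangle=a_{i,\bi}<0$ and the argument applies verbatim.

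The computation is routine; the one step that genuinely needs care is the reduction to $\t\neq0$, and I expect that to be the only real obstacle. It is essential rather than cosmetic: since $a\notin{\mathbb Z}$, passing from $y^{(i)}_i$ to $Y=x^{a+1}y^{(i)}_i$ shifts the order of vanishing at the origin by the non-integer amount $a+1$, and at $\t=0$ the same order count would only yield multiplicity $\geq-\langle\alpha_j,\alpha^\vee_i\rangle$, which is just $1$ when $j=\bi$. So the proof must open by excluding $\t=0$, invoking the nondegeneracy conventions of the section (and, for $j=\bi$, the explicit consequence of \eqref{s2L1} noted above) before running the Wronskian order argument.
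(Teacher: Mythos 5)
Your argument is correct and is exactly the order-of-vanishing comparison that the paper leaves implicit when it says the lemma ``follows from~\eqref{wreY}'': since $\bm y$ is generic you have $y_i(\t)\neq 0$, so for $\t\neq 0$ the two sides of $\operatorname{Wr}(y_i,x^{a+1}y^{(i)}_i)=x^a T_i\prod_{j'\neq i}y_{j'}^{-\langle\alpha_{j'},\alpha^\vee_i\rangle}$ vanish at $\t$ to orders $k-1$ and $\geq -\langle\alpha_j,\alpha^\vee_i\rangle\geq 1$ respectively, forcing $k\geq 2$. You are also right that the reduction to $\t\neq 0$ is not cosmetic and that for $j=\bi$ (the only case the paper actually uses) it follows from $\langle\Lambda_0,\alpha^\vee_\bi\rangle=\langle\Lambda_0,\alpha^\vee_i\rangle\notin\mathbb Z$, hence $\neq 0$, so that roots of colour $\bi$ cannot sit at the origin.
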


Now we def\/ine
\begin{gather} y^{(\bi,i)}_\bi(x)
  := y_\bi(x) \int \xi^{{\langle\Lambda_0,\alpha^\vee_i\rangle}} T_\bi(\xi) \frac{\xi^{1+{\langle\Lambda_0,\alpha^\vee_i\rangle}}y^{(i)}_i(\xi) \prod\limits_{j\neq i,\bi} y_j(\xi)^{-\langle \alpha_j,\alpha^\vee_\bi\rangle}}{y_\bi(\xi)^2} d\xi\nonumber\\
\hphantom{y^{(\bi,i)}_\bi(x)}{}
 = y_\bi(x) \int \xi^{1+2{\langle\Lambda_0,\alpha^\vee_i\rangle}} T_\bi(\xi) \frac{y^{(i)}_i(\xi) \prod\limits_{j\neq i,\bi} y_j(\xi)^{-\langle \alpha_j,\alpha^\vee_\bi\rangle}}{y_\bi(\xi)^2} d\xi.\label{ybidef}
\end{gather}

\begin{prop} If $\bm y$ represents a critical point then $y^{(\bi,i)}_\bi(x)$ is a polynomial.
\end{prop}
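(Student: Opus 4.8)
The plan is to follow the proof of Proposition~\ref{pprop}: show that the integrand in~\eqref{ybidef},
\[
 g(\xi) := \xi^{1+2\langle\Lambda_0,\alpha^\vee_i\rangle}\, T_\bi(\xi)\, y^{(i)}_i(\xi) \prod_{j\in I\setminus\{i,\bi\}} y_j(\xi)^{-\langle\alpha_j,\alpha^\vee_\bi\rangle}\big/ y_\bi(\xi)^2 ,
\]
is a rational function all of whose residues vanish; then $\int^x g(\xi)\,d\xi$ is rational with poles only at the roots of $y_\bi$, and multiplying by $y_\bi(x)$ clears them.

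First I would observe that $g$ really is rational: the prefactor $\xi^{1+2\langle\Lambda_0,\alpha^\vee_i\rangle}$ is a monomial because $2\langle\Lambda_0,\alpha^\vee_i\rangle+1\in{\mathbb Z}_{\geq 0}$ by~\eqref{s2L1}; $T_\bi(\xi)$ is a polynomial because the $\Lambda_s$ are dominant integral; $y^{(i)}_i(\xi)$ is a polynomial by Proposition~\ref{pprop2}; and $-\langle\alpha_j,\alpha^\vee_\bi\rangle\in{\mathbb Z}_{\geq 0}$ for $j\neq\bi$ since the off-diagonal Cartan entries are non-positive. Since the roots of $y_\bi$ are simple (as they must be for $\bm y$ to satisfy~\eqref{ebe}), the only singularities of $g$ are double poles at those roots.

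The heart of the argument is to show that $g$ has vanishing residue at each root $\t_p$ of $y_\bi$, and here there are two cases, according to whether $\t_p$ is also a root of $y^{(i)}_i$. If it is, then by Lemma~\ref{nblem2} it is a \emph{double} root of $y^{(i)}_i$, which cancels the double pole of $1/y_\bi(\xi)^2$, so $g$ is holomorphic at $\t_p$ and the residue vanishes trivially. If it is not, then the residue at the double pole $\t_p$ equals $\big[(\xi-\t_p)^2 g(\xi)\big]_{\xi=\t_p}$ times the logarithmic derivative of $(\xi-\t_p)^2 g(\xi)$ at $\t_p$, so it suffices to show this logarithmic derivative vanishes. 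Expanding, it is $\tfrac{1+2\langle\Lambda_0,\alpha^\vee_i\rangle}{\xi}+\tfrac{T_\bi'}{T_\bi}+\tfrac{y^{(i)}_i{}'}{y^{(i)}_i}-\sum_{j\neq i,\bi}\langle\alpha_j,\alpha^\vee_\bi\rangle\tfrac{y_j'}{y_j}+\big(\tfrac{2}{\xi-\t_p}-\tfrac{2y_\bi'}{y_\bi}\big)$. The key input is~\eqref{yye}: since $\t_p$ is a root of $y_\bi$ it is a root of the right-hand side of~\eqref{yye}, while it is a root of neither $y_i$ (genericity of $\bm y$, $i$ and $\bi$ being linked) nor $y^{(i)}_i$ (the present case); hence that right-hand side vanishes at $\t_p$, giving $\tfrac{y^{(i)}_i{}'}{y^{(i)}_i}\big|_{\t_p}=\tfrac{y_i'}{y_i}\big|_{\t_p}-\tfrac{1+\langle\Lambda_0,\alpha^\vee_i\rangle}{\t_p}$. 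Substituting this, and using $\langle\Lambda_0,\alpha^\vee_i\rangle=\langle\Lambda_0,\alpha^\vee_\bi\rangle$ (as $\Lambda_0$ is $\sigma$-invariant and $\bi\in\sigma^{\mathbb Z}i$) together with $\langle\alpha_i,\alpha^\vee_\bi\rangle=-1$, $\langle\alpha_\bi,\alpha^\vee_\bi\rangle=2$ for the $\mathrm{A}_2$ subdiagram on $\{i,\bi\}$, the logarithmic derivative at $\t_p$ collapses to exactly the left-hand side of the Bethe equation~\eqref{tpbe} for the colour-$\bi$ root $\t_p$, which is zero. (When $\bm y^{(i)}$ happens to be generic this reduction just recovers Proposition~\ref{nb}; the point of arguing directly is that Proposition~\ref{nb} is unavailable otherwise.)

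Finally, all finite residues of $g$ having been shown to vanish, the residue at infinity vanishes too, so $\int^x g(\xi)\,d\xi$ has no logarithmic terms and, $g$ having poles of order at most two, it is a polynomial plus a sum of simple-pole terms supported at the roots of $y_\bi$; multiplying by $y_\bi(x)$, which has a simple zero at each such point, produces $y^{(\bi,i)}_\bi(x)$ as a polynomial. I expect the main obstacle to be the bookkeeping in the third paragraph — verifying that the several logarithmic-derivative contributions recombine precisely into~\eqref{tpbe} — together with the need to isolate the non-generic case via Lemma~\ref{nblem2} instead of simply quoting Proposition~\ref{nb}.
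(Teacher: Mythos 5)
Your proof is correct and follows essentially the same strategy as the paper's: regularity of the integrand at the origin from~\eqref{s2L1}, Lemma~\ref{nblem2} to dispose of roots of $y_\bi$ that are also roots of $y^{(i)}_i$, and a residue-vanishing argument at the remaining roots. The one place you diverge is in handling those remaining roots. The paper simply cites Proposition~\ref{nb} to conclude the residue vanishes; but as you note, Proposition~\ref{nb} carries the blanket hypothesis that $\bm y^{(i)}$ is generic, which is not assumed in the statement you are proving. This is not a real gap in the paper, because the proof of Proposition~\ref{nb} only needs, for a given root $\t_p$, that $\t_p$ not be a root of $y^{(i)}_i$ nor of $y_i$ — and precisely those roots have already been excluded by Lemma~\ref{nblem2} and the genericity of $\bm y$. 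What your argument buys is that it makes this implicit localisation explicit: you re-run the relevant step of the proof of Proposition~\ref{nb} (the computation combining~\eqref{yye} with the Bethe equation~\eqref{tpbe}) directly at each root in question, never invoking a global genericity statement that may fail. Substantively the two arguments coincide; yours is merely more self-contained and avoids the appearance of applying a proposition outside its stated hypotheses.
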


\begin{proof} By our assumption~\eqref{s2L1} that $2{\langle\Lambda_0,\alpha^\vee_i\rangle}+1\in {\mathbb Z}_{\geq 0}$, the integrand is regular at $x=0$. Hence, by Lemma~\ref{nblem2}, it is a rational function with poles at most at those roots of~$y_\bi(x)$ that are not also roots of $y^{(i)}_i(x)$. Let $\t_p$ be any such root. The residue of the integrand at $\xi=\t_p$ is
\begin{gather*}
 \left.\frac{\partial}{\partial x} (x-\t)^2  x^{{\langle\Lambda_0,\alpha^\vee_i\rangle}} T_i(x) \frac{y^{(i)}_i(x) \prod\limits_{j\neq i,\bi} y^{(i)}_j(x)^{-\langle \alpha_j,\alpha^\vee_\bi\rangle}}{y_\bi(x)^2}\right|_{x=\t_p},
\end{gather*}
which must vanish, because according to Proposition~\ref{nb} the following vanishes:
\begin{gather*}
\left.\frac{\partial}{\partial x} \log (x-\t)^2  x^{{\langle\Lambda_0,\alpha^\vee_i\rangle}} T_i(x) \frac{y^{(i)}_i(x) \prod\limits_{j\neq i,\bi} y^{(i)}_j(x)^{-\langle \alpha_j,\alpha^\vee_\bi\rangle}}{y_\bi(x)^2}\right|_{x=\t_p}
\\
\qquad {} =
 \frac{1+ 2{\langle\Lambda_0,\alpha^\vee_i\rangle}}{\t_p}
+ \sum_{s=1}^N\sum_{k=0}^{M-1} \frac{\langle \sigma^k \Lambda_s,\alpha^\vee_{\bi}\rangle}{\t_p-\omega^k z_s}
- \sum_{r\colon r\neq p} \frac{\langle \alpha_{{\mathsf c}^{(i)}(r)},\alpha^\vee_{\bi} \rangle}{\t_p- \t_r^{(i)}} .
\end{gather*}
(Note ${\langle\Lambda_0,\alpha^\vee_i\rangle} = \langle \Lambda_0,\alpha^\vee_\bi\rangle$ since $\sigma \Lambda_0 = \Lambda_0$.)
\end{proof}

The polynomial $y^{(\bi,i)}_\bi(x)$ is def\/ined up a to the addition of a constant multiple of~$y_\bi(x)$, coming from the constant of integration in~\eqref{ybidef}.

We say generation in the $i$th direction from $\bm y$ is \emph{degree-increasing} if $\deg y^{(\bi,i)}_\bi(x) > \deg y_\bi(x)$. Generation in the $i$th direction is degree-increasing if and only if
\begin{gather}
\langle \Lambda_\infty+\rho, \alpha^\vee_i + \alpha^\vee_\bi \rangle>0.\label{pinc}
\end{gather}
Indeed, if \eqref{pinc} holds then
\begin{gather}
 \deg y^{(\bi,i)}_\bi(x) = \deg y_{\bi}(x) + \langle \Lambda_\infty+\rho, \alpha^\vee_i + \alpha^\vee_\bi \rangle > \deg y_{\bi}(x) \label{degybi}
\end{gather}
for all values of the constant of integration. If~\eqref{pinc} does not hold then $\deg y^{(\bi,i)}_\bi(x) \leq \deg y_\bi(x)$, with equality for all but one value of the constant of integration in~\eqref{ybidef}.

Let $y^{(\bi,i)}_{\bi}(x;0)$ be the unique solution to \eqref{ybidef} whose coef\/f\/icient of $x^{\deg y_{\bi}}$ is zero.  The degree of $y^{(\bi,i)}_{\bi}(x;0)$ is always given by
\begin{gather*}
 \deg y^{(\bi,i)}_{\bi}(x;0) =  \deg y_{\bi}(x) + \langle \Lambda_\infty+\rho, \alpha^\vee_i + \alpha^\vee_\bi \rangle,
 \end{gather*}
whether or not generation is degree-increasing. (Note that $\langle \Lambda_\infty+\rho, \alpha^\vee_i + \alpha^\vee_\bi \rangle$ is odd, by our assumption \eqref{s2L1}, and in particular not zero.)

Let then $\bm y^{(\bi,i)}(c)= (y^{(\bi,i)}_j(x;c))_{j\in I}$  be the tuple of polynomials whose $\bi$th component is
\begin{gather*}
y_\bi^{(\bi,i)}(x;c) := y^{(\bi,i)}_\bi(x;0) + c y_\bi(x)
\end{gather*}
and whose remaining components are the same as those of $\bm y^{(i)}$, i.e.,
\begin{gather*}
y^{(\bi,i)}_i(x;c) = y^{(i)}_i(x;c),\qquad\text{and}\qquad y^{(\bi,i)}_j(x) = y^{(i)}_j(x)=y_j(x)\qquad\text{for all}\quad j\in I{\setminus} \{i,\bi\}.
\end{gather*}
Let $(\bm \t^{(\bi.i)};\bm {\mathsf c}^{(\bi,i)})$ denote the pair represented by this tuple in the sense of Section~\ref{prsec}.

The following result says that whenever $\bm y^{(\bi,i)}(c)$ is generic, this new pair $(\bm\t^{(\bi,i)}(c),\bm{\mathsf c}^{(\bi,i)})$ obeys the same form of equations as did $(\bm\t^{(i)},\bm{\mathsf c}^{(i)})$.
\begin{prop}\label{nbprime}
If $\bm y$ represents a critical point then, for all $c\in {\mathbb C}$ such that $\bm y^{(\bi,i)}(c)$ is generic, we have
\begin{gather*}
 \frac{\big\langle{\mathsf s}_i\cdot\Lambda_0,\alpha^\vee_{{\mathsf c}^{(\bi,i)}(p)}\big\rangle}{\t^{(\bi,i)}_p(c)}
+ \sum_{s=1}^N\sum_{k=0}^{M-1} \frac{\big\langle \sigma^k \Lambda_s,\alpha^\vee_{{\mathsf c}^{(\bi,i)}(p)}\big\rangle}{\t^{(\bi,i)}_p(c)-\omega^k z_s}
- \sum_{r\colon r\neq p} \frac{\big\langle \alpha_{{\mathsf c}^{(\bi,i)}(r)},\alpha^\vee_{{\mathsf c}^{(\bi,i)}(p)} \big\rangle}{\t^{(\bi,i)}_p(c)- \t^{(\bi,i)}_r(c)} = 0
\end{gather*}
for each~$p$.
\end{prop}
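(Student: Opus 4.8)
The plan is to reduce this to a single application of the ordinary generation procedure. Reading the displayed system term by term, one sees that it is exactly the extended Bethe system \eqref{ebe} for the tuple $(\bm t^{(\bi,i)}(c),\bm{\mathsf c}^{(\bi,i)})$ with the weight at the origin $\Lambda_0$ replaced by ${\mathsf s}_i\cdot\Lambda_0$; equivalently, it asserts that $\bm y^{(\bi,i)}(c)$ represents a critical point of the master function $\widehat\Phi'$ obtained from the extended master function \eqref{emf} by the substitution $\Lambda_0\to{\mathsf s}_i\cdot\Lambda_0$ (so that the associated frame functions become $x^{\langle{\mathsf s}_i\cdot\Lambda_0,\alpha^\vee_j\rangle}T_j(x)$). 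Now Proposition~\ref{nb} says precisely that $\bm y^{(i)}$ already represents a critical point of $\widehat\Phi'$, whenever $\bm y^{(i)}$ is generic with respect to this shifted frame. So it suffices to identify the passage $\bm y^{(i)}\mapsto\bm y^{(\bi,i)}(c)$ with ordinary elementary generation in direction $\bi$ for $\widehat\Phi'$, and then invoke Propositions~\ref{pprop} and~\ref{mv1prop}.

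To that end I would first check that \eqref{ybidef} is literally the generation formula \eqref{yiidef}--\eqref{yicdef} in direction $\bi$, carried out within the frame of $\widehat\Phi'$. The needed identities are $\prod_{j\in I}y^{(i)}_j(\xi)^{-\langle\alpha_j,\alpha^\vee_\bi\rangle}=y_\bi(\xi)^{-2}\,y^{(i)}_i(\xi)\prod_{j\neq i,\bi}y_j(\xi)^{-\langle\alpha_j,\alpha^\vee_\bi\rangle}$, which uses $\langle\alpha_\bi,\alpha^\vee_\bi\rangle=2$ and $\langle\alpha_i,\alpha^\vee_\bi\rangle=-1$ (the restriction of the diagram to $\{i,\bi\}$ is of type ${\mathrm A}_2$), together with $\langle{\mathsf s}_i\cdot\Lambda_0,\alpha^\vee_\bi\rangle=2\langle\Lambda_0,\alpha^\vee_i\rangle+1$ (using $\sigma\Lambda_0=\Lambda_0$), which is exactly the power $\xi^{1+2\langle\Lambda_0,\alpha^\vee_i\rangle}$ appearing in \eqref{ybidef} and lies in ${\mathbb Z}_{\geq 0}$ by assumption \eqref{s2L1}. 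Since $\langle{\mathsf s}_i\cdot\Lambda_0,\alpha^\vee_\bi\rangle\in{\mathbb Z}_{\geq 0}$ and every $\sigma^k\Lambda_s$ is dominant integral, the hypotheses of (the analogues for $\widehat\Phi'$ and direction $\bi$ of) Propositions~\ref{pprop} and~\ref{mv1prop} are met; these then give immediately that $y^{(\bi,i)}_\bi(x;c)$ is a polynomial and that for every $c$ with $\bm y^{(\bi,i)}(c)$ generic the tuple represents a critical point of $\widehat\Phi'$, i.e.\ obeys the stated equations. The weight at the origin stays ${\mathsf s}_i\cdot\Lambda_0$ rather than being reflected again, precisely because this is ordinary (integral) generation, unlike the modified step used for direction $i$ in the $L_i=2$ case.

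The point requiring care — and, I expect, the main obstacle — is the genericity bookkeeping: Proposition~\ref{nb} was stated under the hypothesis that $\bm y^{(i)}$ is generic, whereas Proposition~\ref{nbprime} only assumes that $\bm y^{(\bi,i)}(c)$ is generic. The tuple $\bm y^{(i)}$ can fail to be generic for the shifted frame only at a common root of $y^{(i)}_i$ and $y_\bi$; by Lemma~\ref{nblem2} each such root is a double root of $y^{(i)}_i$ and is cancelled by the factor $y_\bi(\xi)^2$ in the denominator of \eqref{ybidef} — just as in the proof that $y^{(\bi,i)}_\bi$ is a polynomial — so it contributes no root to $(\bm t^{(\bi,i)},\bm{\mathsf c}^{(\bi,i)})$ and does not enter the equations. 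A self-contained alternative is to mirror the proof of Proposition~\ref{nb} directly: start from the equations of Proposition~\ref{nb} for $(\bm t^{(i)},\bm{\mathsf c}^{(i)})$, use the Wronskian relation $\operatorname{Wr}\big(y_\bi(x),y^{(\bi,i)}_\bi(x;c)\big)=x^{\langle{\mathsf s}_i\cdot\Lambda_0,\alpha^\vee_\bi\rangle}T_\bi(x)\prod_{j\neq\bi}y^{(\bi,i)}_j(x;c)^{-\langle\alpha_j,\alpha^\vee_\bi\rangle}$ in place of \eqref{wreY}, and split the roots by colour: colours $j$ with $\langle\alpha_j,\alpha^\vee_\bi\rangle=0$ (the corresponding equation carries over verbatim), the colour $j=i$ with $\langle\alpha_i,\alpha^\vee_\bi\rangle<0$ (combine with the logarithmic-derivative identity read off from the above Wronskian, exactly as the colour-$\bi$ roots were treated in Proposition~\ref{nb}), and colour $\bi$ (apply Lemmas~\ref{rlem} and~\ref{wrlem}); collecting terms produces the stated equations with ${\mathsf s}_i\cdot\Lambda_0$ at the origin.
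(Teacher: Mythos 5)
Your "self-contained alternative" is exactly the route the paper takes (the paper's own proof is the single line ``The proof is analogous to that of Proposition~\ref{nb}''), and your worked-out details --- the Wronskian relation
$\operatorname{Wr}\bigl(y_\bi,\,y^{(\bi,i)}_\bi(\cdot;c)\bigr)=x^{1+2\langle\Lambda_0,\alpha^\vee_i\rangle}T_\bi\,y^{(i)}_i\prod_{j\neq i,\bi}y_j^{-\langle\alpha_j,\alpha^\vee_\bi\rangle}$ in place of~\eqref{wreY}, and the case split by colour --- are the right fleshing out. Your first, more conceptual framing (that $\bm y^{(i)}\mapsto\bm y^{(\bi,i)}(c)$ is nothing but ordinary $L=1$ generation in direction $\bi$ against the shifted frame $\Lambda_0\to{\mathsf s}_i\cdot\Lambda_0$, hinging on the identity $\langle{\mathsf s}_i\cdot\Lambda_0,\alpha^\vee_\bi\rangle=2\langle\Lambda_0,\alpha^\vee_i\rangle+1\in{\mathbb Z}_{\geq 0}$) is a genuinely different and illuminating way to organize the argument: it makes the ladder $\Lambda_0\to{\mathsf s}_i\cdot\Lambda_0\to\Lambda_0$ across the three steps $i,\bi,i$ transparent and leans on the already-proved Propositions~\ref{pprop} and~\ref{mv1prop} instead of redoing the residue computation. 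Its price is exactly what you flagged: it needs $\bm y^{(i)}$ itself to be generic (for the shifted frame), which is not a hypothesis of Proposition~\ref{nbprime}.

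One small inaccuracy in your handling of that genericity point: if $\tau$ is a common root of $y^{(i)}_i$ and $y_\bi$, Lemma~\ref{nblem2} makes $\tau$ a double root of $y^{(i)}_i$, and since $y^{(\bi,i)}_i=y^{(i)}_i$ that double root \emph{does} persist in the tuple $(\bm t^{(\bi,i)},\bm{\mathsf c}^{(\bi,i)})$ --- it is not ``cancelled''. What the factor $y_\bi(\xi)^2$ in~\eqref{ybidef} actually buys is the regularity of the integrand at $\tau$ (hence polynomiality of $y^{(\bi,i)}_\bi$), not the removal of $\tau$ from the root data. The honest statement is that the hypothesis ``$\bm y^{(\bi,i)}(c)$ generic'' rules out $\tau$ being a root of $y^{(\bi,i)}_\bi$, and that the residue argument for colour-$\bi$ roots therefore only sees simple roots of $y^{(\bi,i)}_\bi$ away from $y^{(i)}_i$; any lingering double root of $y^{(i)}_i$ would make the colour-$i$ Bethe equations ill-posed, which is excluded implicitly whenever one speaks of a tuple ``representing a critical point''. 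The paper's one-line proof is no more explicit about this edge case than you are, so this is a shared informality rather than a gap unique to your argument.
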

\begin{proof}
The proof is analogous to that of Proposition~\ref{nb}.
\end{proof}

Finally, we def\/ine $y_i^{(i,\bi,i)}(x;c)$ by
\begin{gather*} y_i^{(i,\bi,i)}(x;c)
 =y^{(i)}_i(x)  x^{{\langle\Lambda_0,\alpha^\vee_i\rangle}+1}  \int_0^x \xi^{{\langle\Lambda_0,\alpha^\vee_i\rangle}} T_i(\xi) \frac{y^{(\bi,i)}_\bi(\xi;c)\prod\limits_{j\in I{\setminus}\{i,\bi\}} y_j(\xi)^{-\langle\alpha_j,\alpha^\vee_i\rangle}}{\left(\xi^{{\langle\Lambda_0,\alpha^\vee_i\rangle}+1} y^{(i)}_i(\xi)\right)^2}    d\xi\\
 \hphantom{y_i^{(i,\bi,i)}(x;c)}{}
 =y^{(i)}_i(x)  x^{{\langle\Lambda_0,\alpha^\vee_i\rangle}+1}  \int_0^x \xi^{-{\langle\Lambda_0,\alpha^\vee_i\rangle}-2} T_i(\xi) \frac{y^{(\bi,i)}_\bi(\xi;c)\prod\limits_{j\in I{\setminus}\{i,\bi\}} y_j(\xi)^{-\langle\alpha_j,\alpha^\vee_i\rangle}}{ y^{(i)}_i(\xi)^2}    d\xi.
\end{gather*}
Here the limits $\int_0^x$ mean that $y^{(i,\bi,i)}_i(x;c)$ is holomorphic at $x=0$. This condition def\/ines the integral uniquely.

\begin{prop}
For all $c\in {\mathbb C}$,
if $\bm y$ represents a critical point then $y^{(i,\bi,i)}_i(x;c)$ is a polynomial.
\end{prop}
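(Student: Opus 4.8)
The plan is to follow the pattern of Propositions~\ref{pprop} and~\ref{pprop2} and of the proposition just above showing that $y^{(\bi,i)}_\bi(x)$ is a polynomial: one shows that $y^{(i,\bi,i)}_i(x;c)$ is holomorphic on all of ${\mathbb C}$ and of polynomial growth at infinity, and hence is a polynomial. It is convenient to set $u(x):=x^{\langle\Lambda_0,\alpha^\vee_i\rangle+1}y^{(i)}_i(x)$ and $W(x):=x^{\langle\Lambda_0,\alpha^\vee_i\rangle}T_i(x)\,y^{(\bi,i)}_\bi(x;c)\prod_{j\in I{\setminus}\{i,\bi\}}y_j(x)^{-\langle\alpha_j,\alpha^\vee_i\rangle}$, so that, by construction, $y^{(i,\bi,i)}_i(x;c)=u(x)\int_0^x W(\xi)u(\xi)^{-2}\,d\xi$ and $\operatorname{Wr}\big(u(x),y^{(i,\bi,i)}_i(x;c)\big)=W(x)$; here $W$ is a polynomial times the branch factor $x^{\langle\Lambda_0,\alpha^\vee_i\rangle}$, the exponents in $T_i$ being non-negative integers since the $\Lambda_s$ are dominant integral.

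First I would handle the origin. Near $\xi=0$ one has $W(\xi)u(\xi)^{-2}=\xi^{-\langle\Lambda_0,\alpha^\vee_i\rangle-2}$ times a function holomorphic at $0$ (using $T_i(0)\neq0$ and $y^{(i)}_i(0)\neq0$, the latter because no colour-$i$ root of a critical point can sit at the origin, as $\langle\Lambda_0,\alpha^\vee_i\rangle\neq0$). Since $\langle\Lambda_0,\alpha^\vee_i\rangle\notin{\mathbb Z}$ by~\eqref{s2L1}, there is a unique choice of the integration constant for which $u(x)$ times the antiderivative carries no $x^{\langle\Lambda_0,\alpha^\vee_i\rangle+1}$ term and is therefore holomorphic at $x=0$; this is precisely the meaning of $\int_0^x$, and with this choice $y^{(i,\bi,i)}_i(x;c)$ is holomorphic at $0$ for every $c$.

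Next, away from $0$ the integrand is meromorphic with poles only at the zeros of $y^{(i)}_i$, and at such a zero $\t\neq0$ I must check that the residue of $Wu^{-2}$ vanishes, so that no logarithm --- hence no branch point of $u\int$ --- appears. This is where Proposition~\ref{nbprime} enters: writing $g(x):=(x-\t)^2 W(x)u(x)^{-2}$, when $g(\t)\neq0$ the residue equals $g(\t)\,\partial_x\log g(x)|_{x=\t}$, and $\partial_x\log g(x)|_{x=\t}$ is, after the identity $\langle{\mathsf s}_i\cdot\Lambda_0,\alpha^\vee_i\rangle=\langle\Lambda_0,\alpha^\vee_i\rangle-2\langle\Lambda_0+\rho,\alpha^\vee_i\rangle=-\langle\Lambda_0,\alpha^\vee_i\rangle-2$, exactly the left-hand side of the equation of Proposition~\ref{nbprime} for the colour-$i$ root $\t$ of $y^{(\bi,i)}_i=y^{(i)}_i$, hence $0$. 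The reduction of the residue to this log-derivative proceeds via Lemmas~\ref{rlem} and~\ref{wrlem}, exactly as at the end of the proof of Proposition~\ref{nb}; since then $u$ itself vanishes at $\t$, the simple pole of $\int W u^{-2}$ left over there is absorbed, and $y^{(i,\bi,i)}_i(x;c)$ is regular at $\t$.

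There remain two loose ends. Proposition~\ref{nbprime} is asserted only for $c$ with $\bm y^{(\bi,i)}(c)$ generic; but for such $c$ genericity already forces $y^{(i)}_i$ to have no root in common with $T_i$, with $y^{(\bi,i)}_\bi(\cdot;c)$, or with the $y_j$ linked to $i$, so that $g(\t)\neq0$ and the preceding step applies. Since the zeros of $y^{(i)}_i$ are independent of $c$ while $y^{(\bi,i)}_\bi(\xi;c)=y^{(\bi,i)}_\bi(\xi;0)+c\,y_\bi(\xi)$, the residue of the integrand at any fixed zero of $y^{(i)}_i$ is affine in $c$; vanishing for all generic $c$, it vanishes for every $c$. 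Coincidences of roots (a multiple root of $y^{(i)}_i$, or one shared with the numerator $W$) are absorbed just as in the proof that $y^{(\bi,i)}_\bi(x)$ is a polynomial, using Lemma~\ref{nblem2} and its obvious analogue. Putting this together, $y^{(i,\bi,i)}_i(x;c)$ is entire for every $c$ and, being of polynomial growth at infinity, is a polynomial. The one point I expect to require genuine care is this final bookkeeping of degenerate root configurations; the rest transcribes the $L_i=1$ case and the earlier $L_i=2$ steps.
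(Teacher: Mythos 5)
Your argument follows the same core path as the paper's proof: one shows that the residue of the integrand at each root of $y^{(i)}_i$ vanishes because the log-derivative there equals the left-hand side of the equation of Proposition~\ref{nbprime}. The paper's version is terser---it does not spell out the behaviour at the origin, the extension from generic $c$ (where Proposition~\ref{nbprime} is stated) to all $c$, or the coincident-root bookkeeping---so your extra care on those points, in particular noting that the residue is affine in $c$ and hence its vanishing for generic $c$ propagates to all $c$, tightens up what the paper leaves implicit without changing the method.
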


\begin{proof} Pick any root $\t^{(\bi,i)}_p$ of $y_i^{(i)}(x)=y_i^{(\bi,i)}(x)$. The residue of the integrand at $\xi=\t^{(\bi,i)}$ is zero. Indeed, we have
\begin{gather*} \left.\frac{\partial}{\partial x} \log \big(x-\t^{(i)}_p\big)^2 x^{-{\langle\Lambda_0,\alpha^\vee_i\rangle}-2} T_i(x) \frac{y^{(\bi,i)}_\bi(x;c)\prod\limits_{j\in I{\setminus}\{i,\bi\}} y_j(x)^{-\langle\alpha_j,\alpha^\vee_i\rangle}}{ y^{(i)}_i(x)^2}\right|_{x=\t^{(\bi,i)}_p}
\\
\qquad{}=
 \frac{-{\langle\Lambda_0,\alpha^\vee_i\rangle}-2}{\t^{(\bi,i)}_p}
+ \sum_{s=1}^N\sum_{k=0}^{M-1} \frac{\langle \sigma^k \Lambda_s,\alpha^\vee_{\bi}\rangle}{\t^{(\bi,i)}_p-\omega^k z_s}
- \sum_{r\colon r\neq p} \frac{\langle \alpha_{{\mathsf c}^{(\bi,i)}(r)},\alpha^\vee_{i} \rangle}{\t^{(\bi,i)}_p- \t_r^{(\bi,i)}},
\end{gather*}
and this vanishes by Proposition \ref{nbprime}.
\end{proof}

Let $\bm y^{(i,\bi,i)}(c)\!=\! (y^{(i,\bi,i)}_j(x;c))_{j\in I}$ be the tuple of polynomials whose $i$th component is $y^{(i,\bi,i)}(x;c)$ as abo\-ve and whose remaining components are those of $\bm y^{(\bi,i)}(c)$, i.e.,
\begin{gather*}
 y^{(i,\bi,i)}_\bi(x;c) = y^{(\bi,i)}_\bi(x;c),\qquad\text{and}\qquad y^{(i,\bi,i)}_j(x) =
y_j(x)\qquad\text{for all}\quad j\in I{\setminus} \{i,\bi\}.
\end{gather*}
Let $(\bm \t^{(i,\bi.i)};\bm {\mathsf c}^{(i,\bi,i)})$ denote the pair represented by this tuple in the sense of Section~\ref{prsec}.

\begin{prop} \label{critprop} If $\bm y$ represents a critical point and $\bm y^{(i,\bi,i)}(c)$ is generic, then $\bm y^{(i,\bi,i)}(c)$ represents a critical point. That is, the pair $(\bm \t^{i,\bi,i}(c),\bm{\mathsf c}^{i,\bi,i})$ obeys the equations
\begin{gather*}
 \frac{\big\langle\Lambda_0,\alpha^\vee_{{\mathsf c}(p)}\big\rangle}{\t^{(i,\bi,i)}_p(c)} + \sum_{s=1}^N\sum_{k=0}^{M-1} \frac{\big\langle \sigma^k \Lambda_s,\alpha^\vee_{{\mathsf c}(p)}\big\rangle}{\t^{(i,\bi,i)}_p(c)-\omega^k z_s} - \sum_{r\colon r\neq p} \frac{\big\langle \alpha_{{\mathsf c}(r)},\alpha^\vee_{{\mathsf c}(p)} \big\rangle}{\t^{(i,\bi,i)}_p(c)- \t^{(i,\bi,i)}_r(c)} = 0
\end{gather*}
for each $p$.
\end{prop}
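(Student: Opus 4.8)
The plan is to regard the passage $\bm y^{(\bi,i)}(c)\mapsto\bm y^{(i,\bi,i)}(c)$ as one more application of generation in direction~$i$ and to rerun the argument of Proposition~\ref{nb}. By Proposition~\ref{nbprime} the pair $(\bm\t^{(\bi,i)}(c),\bm{\mathsf c}^{(\bi,i)})$ satisfies the deformed Bethe system of Proposition~\ref{nb}, namely~\eqref{ebe} with $\Lambda_0$ replaced by ${\mathsf s}_i\cdot\Lambda_0$; indeed the integrand exponent $\xi^{-\langle\Lambda_0,\alpha^\vee_i\rangle-2}$ in the definition of $y^{(i,\bi,i)}_i(x;c)$ is nothing but $\xi^{\langle{\mathsf s}_i\cdot\Lambda_0,\alpha^\vee_i\rangle}$, the exponent appropriate to generation in direction~$i$ at the shifted weight~${\mathsf s}_i\cdot\Lambda_0$. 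I would show that generating in direction~$i$ from such a pair produces a pair obeying~\eqref{ebe} with origin weight ${\mathsf s}_i\cdot({\mathsf s}_i\cdot\Lambda_0)$, which equals $\Lambda_0$ since the shifted action is an involution; this is exactly the system asserted in the proposition.

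The computational backbone is a Wronskian identity for $y^{(i,\bi,i)}_i(x;c)$ analogous to~\eqref{wreY}. Writing $u(x):=x^{\langle\Lambda_0,\alpha^\vee_i\rangle+1}y^{(i)}_i(x)$ --- the object that already appears on the left of~\eqref{wreY} --- and using $y^{(i)}_i(\xi)^{-2}=\xi^{2\langle\Lambda_0,\alpha^\vee_i\rangle+2}u(\xi)^{-2}$ to rewrite the defining integral, the prefactor becomes $u(x)$ and the integrand collapses, giving
\[
\operatorname{Wr}\big(u(x),y^{(i,\bi,i)}_i(x;c)\big)=x^{\langle\Lambda_0,\alpha^\vee_i\rangle}\,T_i(x)\prod_{j\in I{\setminus}\{i\}}y^{(\bi,i)}_j(x;c)^{-\langle\alpha_j,\alpha^\vee_i\rangle},
\]
where on the right $y^{(\bi,i)}_\bi(x;c)$ appears in place of $y_\bi(x)$. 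Dividing by $u(x)\,y^{(i,\bi,i)}_i(x;c)$ and taking logarithmic derivatives yields, as in the passage~\eqref{yye}--\eqref{lyye}, an identity whose left-hand side is $\sum_{r:{\mathsf c}^{(i,\bi,i)}(r)=i}(x-\t^{(i,\bi,i)}_r(c))^{-1}-\sum_{r:{\mathsf c}^{(\bi,i)}(r)=i}(x-\t^{(\bi,i)}_r(c))^{-1}-\langle\Lambda_0+\rho,\alpha^\vee_i\rangle x^{-1}$ and whose right-hand side is a ratio of polynomials that vanishes at every root of a numerator factor.

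I would then verify the equations of the proposition case by case, mirroring the proof of Proposition~\ref{nb} with $(y_i,y^{(i)}_i)$ replaced by $(u,y^{(i,\bi,i)}_i)$ and the input equations taken from Proposition~\ref{nbprime}. For a root $\t_p$ of a colour $j$ with $\langle\alpha_j,\alpha^\vee_i\rangle=0$ nothing changes in passing from $\bm y^{(\bi,i)}(c)$ to $\bm y^{(i,\bi,i)}(c)$, and since $\langle{\mathsf s}_i\cdot\Lambda_0,\alpha^\vee_j\rangle=\langle\Lambda_0,\alpha^\vee_j\rangle$ the Proposition~\ref{nbprime} equation is already the required one. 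For a root $\t_p$ of a colour $j$ with $\langle\alpha_j,\alpha^\vee_i\rangle<0$ (including $j=\bi$) I would add $\langle\alpha_i,\alpha^\vee_j\rangle$ times the logarithmic-derivative identity, evaluated at $\t_p$, to the Proposition~\ref{nbprime} equation for $\t_p$; because $\langle{\mathsf s}_i\cdot\Lambda_0-\Lambda_0,\alpha^\vee_j\rangle=-\langle\Lambda_0+\rho,\alpha^\vee_i\rangle\langle\alpha_i,\alpha^\vee_j\rangle$, the terms over the origin recombine and one arrives at exactly the equation for $\t_p$ with weight $\Lambda_0$. For a root $\t_p$ of colour $i$, i.e.\ a root of $y^{(i,\bi,i)}_i(x;c)$, I would apply Lemmas~\ref{rlem} and~\ref{wrlem} to the Wronskian identity above, just as in the final part of the proof of Proposition~\ref{nb}, after checking --- as there --- that $y^{(i)}_i$ and $y^{(i,\bi,i)}_i(\cdot;c)$ have no root in common.

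The step I expect to be the main obstacle is the genericity-and-multiplicity bookkeeping supporting the middle case: one must know that at a root of a colour $j$ linked to~$i$ the right-hand side of the logarithmic-derivative identity genuinely vanishes, rather than developing a pole from a coincidence of that root with a root of $y^{(i)}_i(x)$. When $\bm y^{(\bi,i)}(c)$ is itself generic this is immediate, exactly as in Proposition~\ref{nb}; in general it requires an argument in the spirit of Lemma~\ref{nblem2}, in which the Wronskian relations are used to show that such coincidences force zeros of high, controlled order that can be played off against the standing hypothesis that $\bm y^{(i,\bi,i)}(c)$ is generic, together with the observation --- already used in proving that $y^{(\bi,i)}_\bi$ is a polynomial --- that a common root of $y^{(i)}_i$ and $y^{(\bi,i)}_\bi(\cdot;c)$ is a double root of the latter. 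Once this is dealt with, the remaining residue manipulations are routine and entirely parallel to those carried out for Propositions~\ref{pprop2} and~\ref{nb}.
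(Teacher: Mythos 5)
Your proof is correct and takes essentially the same approach as the paper, whose own proof of Proposition~\ref{critprop} is just the one-line declaration ``The proof is analogous to that of Proposition~\ref{nb}.'' You have in effect spelled out what that analogy means: feed in the shifted Bethe system of Proposition~\ref{nbprime}, observe that $\xi^{-\langle\Lambda_0,\alpha^\vee_i\rangle-2}=\xi^{\langle{\mathsf s}_i\cdot\Lambda_0,\alpha^\vee_i\rangle}$ so that the third step is generation in direction $i$ at the shifted weight, derive the analogue of~\eqref{wreY} (your identity for $\operatorname{Wr}(u,y^{(i,\bi,i)}_i)$ with $u=x^{\langle\Lambda_0,\alpha^\vee_i\rangle+1}y^{(i)}_i$ is the correct one, as one checks by differentiating under the integral), and run the same three-way residue analysis, ending with ${\mathsf s}_i\cdot({\mathsf s}_i\cdot\Lambda_0)=\Lambda_0$. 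The genericity point you flag is real --- Proposition~\ref{nbprime} is stated for $c$ such that $\bm y^{(\bi,i)}(c)$ is generic, whereas Proposition~\ref{critprop} only hypothesizes genericity of $\bm y^{(i,\bi,i)}(c)$ --- but this is a gap the paper shares, and in the application (Theorem~\ref{s2thm}) both genericity conditions hold for all but finitely many~$c$ by~\eqref{fee} and Lemma~\ref{VIL}; your remark that Lemma~\ref{nblem2}-style multiplicity arguments could close the gap locally is a reasonable way to tighten it.
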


\begin{proof}
The proof is analogous to that of Proposition \ref{nb}.
\end{proof}

We say $\bm y^{(i,\bi,i)}(c)$ is obtained from $\bm y$ by \emph{generation in the $i$th direction}, and we call $\bm y^{(i,\bi,i)}(c)$ the \emph{immediate descendant of $\bm y$ in the $i$th direction}.
We have the following; cf.\ Lemma~\ref{delem}.

\begin{lem}\label{delemii}
Generation in the $i$th direction $($with $L_i=2)$ is degree-increasing if and only if $\langle\Lambda_\infty+\rho ,\alpha^\vee_i+\alpha^\vee_\bi\rangle\in {\mathbb Z}_{> 0}$.

If generation in the $i$th direction is degree-increasing, then the weight at inf\/inity associated with the critical point represented by $\bm y_i^{(i,\bi,i)}(c)$ is $({\mathsf s}_i{\mathsf s}_\bi{\mathsf s}_i) \cdot \Lambda_\infty$.
Otherwise it is $\Lambda_\infty$ for all $c\neq 0$ $($and $({\mathsf s}_i{\mathsf s}_\bi{\mathsf s}_i)\cdot \Lambda_\infty$ for $c=0)$.
\end{lem}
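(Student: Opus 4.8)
The plan is as follows. The first assertion is merely a restatement of \eqref{pinc}: by definition, generation in the $i$th direction is degree-increasing exactly when $\deg y^{(\bi,i)}_\bi(x)>\deg y_\bi(x)$, and we have already observed that this happens if and only if $\langle\Lambda_\infty+\rho,\alpha^\vee_i+\alpha^\vee_\bi\rangle>0$; recall that this integer is odd, hence never zero.

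For the weight at infinity I would track the degrees of the components of the tuple $\bm y^{(i,\bi,i)}(c)$ and then read off $\Lambda_\infty$ from \eqref{l8y}. Only $y_i$ and $y_\bi$ are altered. We already know that $\deg y^{(\bi,i)}_\bi(x;0)=\deg y_\bi+\langle\Lambda_\infty+\rho,\alpha^\vee_i+\alpha^\vee_\bi\rangle$ regardless of whether generation is degree-increasing; and since $y^{(\bi,i)}_\bi(x;c)=y^{(\bi,i)}_\bi(x;0)+cy_\bi(x)$, when generation is \emph{not} degree-increasing this shift is negative, so that $\deg y^{(\bi,i)}_\bi(x;c)=\deg y_\bi$ for $c\neq 0$. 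For the $i$-component, unwinding the integral defining $y_i^{(i,\bi,i)}(x;c)$ yields the Wronskian identity
\begin{gather*}
\operatorname{Wr}\big(x^{\langle\Lambda_0,\alpha^\vee_i\rangle+1}y^{(i)}_i(x),\,y_i^{(i,\bi,i)}(x;c)\big)=x^{\langle\Lambda_0,\alpha^\vee_i\rangle}T_i(x)\,y^{(\bi,i)}_\bi(x;c)\prod_{j\in I\setminus\{i,\bi\}}y_j(x)^{-\langle\alpha_j,\alpha^\vee_i\rangle},
\end{gather*}
entirely parallel to \eqref{wreY}. Comparing top-order terms on the two sides, and substituting $\deg y^{(i)}_i=\deg y_i+\langle\Lambda_\infty-\Lambda_0,\alpha^\vee_i\rangle$ (Proposition~\ref{pprop2}) and $\deg T_i=\langle\Lambda_\infty-\Lambda_0,\alpha^\vee_i\rangle+\sum_{j\in I}\langle\alpha_j,\alpha^\vee_i\rangle\deg y_j$ (immediate from \eqref{Tdef} and \eqref{l8y}), one arrives at the clean relation $\deg y_i^{(i,\bi,i)}(x;c)-\deg y_i=\deg y^{(\bi,i)}_\bi(x;c)-\deg y_\bi$: the degree of $y_i$ shifts by the same amount as the degree of $y_\bi$.

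Assembling this: when generation is degree-increasing, or whenever $c=0$, both $\deg y_i$ and $\deg y_\bi$ increase by $\langle\Lambda_\infty+\rho,\alpha^\vee_i+\alpha^\vee_\bi\rangle$ and no other degree moves, so by \eqref{l8y} the weight at infinity of $\bm y^{(i,\bi,i)}(c)$ is $\Lambda_\infty-\langle\Lambda_\infty+\rho,\alpha^\vee_i+\alpha^\vee_\bi\rangle(\alpha_i+\alpha_\bi)$; while in the non-degree-increasing case with $c\neq 0$ no degree changes, so the weight at infinity is $\Lambda_\infty$. It then remains to identify $\Lambda_\infty-\langle\Lambda_\infty+\rho,\alpha^\vee_i+\alpha^\vee_\bi\rangle(\alpha_i+\alpha_\bi)$ with $({\mathsf s}_i{\mathsf s}_\bi{\mathsf s}_i)\cdot\Lambda_\infty$. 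This is a rank-two computation ($\,{\mathsf s}_i{\mathsf s}_\bi{\mathsf s}_i$ being the longest element of the Weyl group of the ${\mathrm A}_2$ subsystem with simple roots $\alpha_i,\alpha_\bi$): iterating ${\mathsf s}_i(\mu)=\mu-\langle\mu,\alpha^\vee_i\rangle\alpha_i$ three times and using $\langle\alpha_i,\alpha^\vee_\bi\rangle=\langle\alpha_\bi,\alpha^\vee_i\rangle=-1$, one finds ${\mathsf s}_i{\mathsf s}_\bi{\mathsf s}_i(\mu)=\mu-\langle\mu,\alpha^\vee_i+\alpha^\vee_\bi\rangle(\alpha_i+\alpha_\bi)$ for every $\mu\in\h^*$; applying this with $\mu=\Lambda_\infty+\rho$ and subtracting $\rho$ gives precisely the claim.

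The one step I expect to need genuine care is the top-order comparison in the Wronskian identity: since $x^{\langle\Lambda_0,\alpha^\vee_i\rangle+1}$ is not a polynomial, one must rule out an accidental cancellation of leading coefficients on the left-hand side. Writing that side as $x^{\langle\Lambda_0,\alpha^\vee_i\rangle}\big(xy^{(i)}_i(y_i^{(i,\bi,i)})'-(\langle\Lambda_0,\alpha^\vee_i\rangle+1)y^{(i)}_iy_i^{(i,\bi,i)}-x(y^{(i)}_i)'y_i^{(i,\bi,i)}\big)$, one sees that such a cancellation would force $\deg y_i^{(i,\bi,i)}(x;c)=\langle\Lambda_0,\alpha^\vee_i\rangle+1+\deg y^{(i)}_i$, which is impossible because this number is non-integral by \eqref{s2L1} whereas a degree is an integer. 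Everything else is routine bookkeeping, entirely parallel to Lemma~\ref{delem}. (Equivalently, one may organise the whole argument as a composition of three elementary weight-shifts: by Proposition~\ref{nb} the first step shifts the origin weight to ${\mathsf s}_i\cdot\Lambda_0$ and the weight at infinity by ${\mathsf s}_i$; the second step, now an ordinary generation in the integral direction $\bi$, shifts it further by ${\mathsf s}_\bi$; and the third step returns the origin weight to $\Lambda_0$ and shifts the weight at infinity by ${\mathsf s}_i$ once more, for the total ${\mathsf s}_i{\mathsf s}_\bi{\mathsf s}_i$.)
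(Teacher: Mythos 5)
Your proof is correct and takes the same approach as the paper: degree bookkeeping from the defining Wronskian relation, together with the direct identity ${\mathsf s}_i{\mathsf s}_\bi{\mathsf s}_i(\mu)=\mu-\langle\mu,\alpha^\vee_i+\alpha^\vee_\bi\rangle(\alpha_i+\alpha_\bi)$. You supply more detail than the published proof, which simply asserts $\deg y^{(i,\bi,i)}_i=\deg y_i+\langle\Lambda_\infty+\rho,\alpha^\vee_i+\alpha^\vee_\bi\rangle$; your non-cancellation argument via non-integrality of $\langle\Lambda_0,\alpha^\vee_i\rangle$ is exactly the right way to justify that step.
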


\begin{proof}
Recall that \eqref{degybi} holds if and only if  \eqref{pinc} holds.
Note also that
\begin{gather*}
 \deg y^{(i,\bi,i)}_i = \deg y_i + \langle \Lambda_\infty+\rho, \alpha^\vee_i+\alpha^\vee_\bi\rangle.
\end{gather*}
By direct calculation, one verif\/ies that
\begin{gather*}
({\mathsf s}_i{\mathsf s}_\bi {\mathsf s}_i) \cdot \Lambda_\infty  = \Lambda_\infty -  (\alpha_\bi+\alpha_i) \langle \Lambda_\infty + \rho, \alpha^\vee_i + \alpha^\vee_\bi \rangle,
\end{gather*}
so we have the result.
\end{proof}

\subsection[Cyclotomic generation: the $L_i=2$ case]{Cyclotomic generation: the $\boldsymbol{L_i=2}$ case}\label{sgen2}
We continue to suppose that $i\in I$ is such that $L_i=2$.

Suppose for the rest of this subsection that $\bm y$ represents a cyclotomic critical point.
Def\/ine  $\bm y^{(i,\sigma)}(c)$ to be the tuple of polynomials given by
\begin{gather}
y^{(i,\sigma)}_{\sigma^ki}(\omega^kx;c) := y^{(i,\bi,i)}_{i}(x;c), \nonumber\\
y^{(i,\sigma)}_{\sigma^k\bi}(\omega^kx;c) := y^{(i,\bi,i)}_{\bi}(x;c), \qquad k=0,1,\dots, M_i/2-1,\label{yis}
\end{gather}
and $y^{(i,\sigma)}_{j}(x;c) = y_j(x)$ for $j \in I {\setminus} \sigma^{\mathbb Z} i$.
\begin{thm}\label{s2thm}
For almost all $c\in {\mathbb C}$, the tuple $\bm y^{(i,\sigma)}(x;c)$ represents a cyclotomic critical point. The exceptional values of~$c$ form a f\/inite subset of~${\mathbb C}$.

The weight at inf\/inity of $\bm y^{(i,\sigma)}(x;c)$ is ${\mathsf s}^\sigma_i \cdot\Lambda_\infty$ if $\langle \Lambda_\infty+\rho,\alpha^{\sigma}_i+\alpha^\sigma_\bi \rangle \in {\mathbb Z}_{\geq 1}$. Otherwise it is~$\Lambda_\infty$ for all $c\neq 0$, and ${\mathsf s}^\sigma_i \cdot\Lambda_\infty$ for $c=0$.
\end{thm}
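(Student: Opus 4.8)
The plan is to imitate the proof of Theorem~\ref{s1thm}: first show that $\bm y^{(i,\sigma)}(c)$ represents a cyclotomic point for \emph{every} $c\in{\mathbb C}$, and then that it represents a critical point of the extended master function for all but finitely many $c$; the statement about the weight at infinity then follows from the folding machinery of Section~\ref{foldsec}. For the first part I would verify the criterion of Lemma~\ref{cycptlem} directly from the definition~\eqref{yis}. For colours $j\notin\sigma^{\mathbb Z}i$ the components of $\bm y^{(i,\sigma)}(c)$ coincide with those of $\bm y$, so the relations $y^{(i,\sigma)}_{\sigma j}(\omega x;c)\simeq y^{(i,\sigma)}_j(x;c)$ are inherited. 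For $j$ in the orbit, the relations internal to each of the two half-orbits $i,\sigma i,\dots,\sigma^{M_i/2-1}i$ and $\bi,\sigma\bi,\dots,\sigma^{M_i/2-1}\bi$ are immediate from~\eqref{yis} (they assert that a rescaled copy of $y^{(i,\bi,i)}_i$, resp.\ of $y^{(i,\bi,i)}_\bi$, equals itself), while at the two ``seams''~-- where the first half-orbit joins $\bi$ and where the second joins $i$~-- the required relations reduce to two identities comparing $y^{(i,\bi,i)}_i$ and $y^{(i,\bi,i)}_\bi$ under a rescaling of $x$ by a power of $\omega$. I would prove these by showing that the entire three-step procedure $i,\bi,i$ of Section~\ref{sgen1} is equivariant under the simultaneous substitution $x\mapsto\omega x$, $\sigma$: this holds because every integral defining $y^{(i)}_i$, $y^{(\bi,i)}_\bi$ and $y^{(i,\bi,i)}_i$ is anchored at the origin~-- a $\sigma$-fixed point, in contrast with the indefinite integral~\eqref{yiidef} of the $L_i=1$ case~-- and because $\langle\Lambda_0,\alpha^\vee_i\rangle=\langle\Lambda_0,\alpha^\vee_\bi\rangle$ (as $\sigma\Lambda_0=\Lambda_0$), the $T_j$ transform by~\eqref{oT}, and $\bm y$ is itself cyclotomic. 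Consequently the three-step procedure applied to the same ${\mathrm A}_2$ pair in the order $\bi,i,\bi$ is the $\sigma^{M_i/2}$-image of $\bm y^{(i,\bi,i)}$, which is exactly the content of the two seam identities. Since $\langle\Lambda_0,\alpha^\vee_i\rangle\notin{\mathbb Z}$ by~\eqref{s2L1}, the origin-anchoring is unambiguous and, unlike in the $L_i=1$ case, no congruence on $\Lambda_0$ of the type~\eqref{s1L2} is needed.

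For the second part I would identify $\bm y^{(i,\sigma)}(c)$ with the tuple obtained by performing the three-step generation of Section~\ref{sgen1} successively in each of the $M_i/2$ copies of ${\mathrm A}_2$ making up the orbit $\sigma^{\mathbb Z}i$, in any order: the equivariance just discussed identifies the three-step generation in the pair $(\sigma^k i,\sigma^k\bi)$ with the $\omega^k$-rescaling of the one in $(i,\bi)$, and since $L_i=2$ forces these ${\mathrm A}_2$ copies to be mutually disconnected in the Dynkin diagram, generations in distinct pairs act on disjoint coordinates and their defining Wronskian equations do not interact, so they commute. Each individual three-step generation passes through the modified intermediate equations of Propositions~\ref{nb} and~\ref{nbprime}, the weight at the origin being temporarily shifted to ${\mathsf s}_i\cdot\Lambda_0$ and then restored; applying Proposition~\ref{critprop} in each pair shows that, whenever $\bm y^{(i,\sigma)}(c)$ is generic, it satisfies the extended critical point equations~\eqref{ebe} with the original weight $\Lambda_0$. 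Genericity holds for all but finitely many $c$ by the genericity assertions of Section~\ref{sgen1} (the argument being as for Proposition~\ref{mv1prop}), so the exceptional set is finite; combined with the first part and Lemma~\ref{lem: cp}, $\bm y^{(i,\sigma)}(c)$ then represents a cyclotomic critical point of the cyclotomic master function. For the weight at infinity, Lemma~\ref{delemii} gives that generation in the $i$th direction shifts $\Lambda_\infty$ by $({\mathsf s}_i{\mathsf s}_\bi{\mathsf s}_i)\cdot\Lambda_\infty$ exactly when it is degree-increasing; performing this in all $M_i/2$ mutually orthogonal pairs, the commuting reflections ${\mathsf s}_{\sigma^k i}{\mathsf s}_{\sigma^k\bi}{\mathsf s}_{\sigma^k i}$, $k=0,\dots,M_i/2-1$, multiply to ${\mathsf s}^\sigma_i$ by Lemma~\ref{Wsiglem}, the composite being degree-increasing precisely under the stated condition, with the $c\neq0$ versus $c=0$ dichotomy as in Lemma~\ref{delemii}, which is the translation into the folded setting of Section~\ref{foldsec}.

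I expect the main obstacle to be the pair of seam identities used in the first part: proving that the origin-anchored three-step generation is genuinely $\sigma$-equivariant and that the $i$- and $\bi$-outputs of $\bm y^{(i,\bi,i)}$ are interchanged, up to the correct power of $\omega$, by $\sigma^{M_i/2}$. This amounts to carefully tracking the powers of $\omega$ produced by each of the three nested integrals and using~\eqref{oT} together with the $\sigma$-invariance of $\Lambda_\infty$; it is the $L_i=2$ analogue of the condition~\eqref{l8cond} in the proof of Theorem~\ref{s1thm}, the key point being that here the requisite periodicity comes for free from the anchoring at the $\sigma$-fixed origin. A secondary technical point is to justify that the composite three-step generations in distinct ${\mathrm A}_2$ copies commute as operations on tuples of polynomials~-- not merely on the associated weights at infinity~-- given that each is built from elementary steps obeying the non-standard equations of Propositions~\ref{nb} and~\ref{nbprime}.
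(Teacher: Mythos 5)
Your argument for the critical-point part and for the weight at infinity matches the paper's (Proposition~\ref{critprop}, Lemma~\ref{delemii}, Lemma~\ref{Wsiglem}, and the mutual disconnectedness of the ${\mathrm A}_2$ copies). But your attack on the cyclotomicity is genuinely different from the paper's, and it has a gap. The paper proves cyclotomicity by reducing to the single identity~\eqref{fee}, $y^{(i,\bi,i)}_\bi(-x;c)=(-1)^{\deg y^{(i,\bi,i)}_i}y^{(i,\bi,i)}_i(x;c)$, and then establishes~\eqref{fee} by a forward reference to Theorem~\ref{flowthm} and Lemma~\ref{f2lem}: there the three-step output $\bm y^{(i,\bi,i)}(c)$ is identified with the flow $\beta(e^{-cX_n}{\mathcal F})$ inside the $A_2$ variety of isotropic flags in a cyclotomically self-dual space of quasi-polynomials, where cyclotomicity is automatic because isotropic flags map to cyclotomic tuples (Theorem~\ref{cycisothm}). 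You instead try to get~\eqref{fee} directly from $\sigma$-equivariance of the three-step procedure, which is essentially Lemma~\ref{flem1}. That lemma is a real fact, but it asserts a relation between \emph{two a priori different} one-parameter families, $\bm y^{(\bi,i,\bi)}(c)$ and $\bm y^{(i,\bi,i)}(-c)$: it says the $(\bi,i,\bi)$-output is the $\sigma$-image of the $(i,\bi,i)$-output. It does not, by itself, assert that either family is $\sigma$-stable, which is what cyclotomicity of $\bm y^{(i,\bi,i)}(c)$ requires. To close the gap from equivariance you would additionally need the commutation/reflection identity that $\bm y^{(\bi,i,\bi)}(c)$ and $\bm y^{(i,\bi,i)}(c')$ coincide \emph{as tuples} for a suitable $c'(c)$ — precisely the nontrivial $A_2$ braid-type statement that the flag-variety machinery was introduced to handle. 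Said differently: the two ``seam'' relations you identify are exactly~\eqref{fee}, which is an \emph{internal} relation within the single tuple $\bm y^{(i,\bi,i)}(c)$, not a relation between the $(i,\bi,i)$- and $(\bi,i,\bi)$-orderings; equivariance alone does not bridge the two.

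A secondary inaccuracy: your claim that ``every integral defining $y^{(i)}_i$, $y^{(\bi,i)}_\bi$ and $y^{(i,\bi,i)}_i$ is anchored at the origin'' is not correct. In~\eqref{ybidef} the middle integral defining $y^{(\bi,i)}_\bi$ is an indefinite integral with a free constant of integration, which is where the parameter $c$ enters (only the first and third integrals are $\int_0^x$, since $\langle\Lambda_0,\alpha^\vee_i\rangle\notin{\mathbb Z}$ makes those anchorings unambiguous). This asymmetry between the anchored outer steps and the free middle step is itself one of the reasons that cyclotomicity of $\bm y^{(i,\bi,i)}(c)$ is not a formal consequence of equivariance, and is why the paper routes the $L_i=2$ cyclotomicity through Section~\ref{ARsec} rather than arguing it directly as in the $L_i=1$ case.
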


\begin{proof}
First let us show that $\bm y^{(i,\sigma)}(x;c)$ represents a critical point for all but f\/initely many $c\in {\mathbb C}$. As in the proof of Theorem~\ref{s1thm}, we f\/irst observe that~$\bm y^{(i,\sigma)}$ is indeed the result of generating in each of the directions $i, \sigma i, \dots, \sigma^{M_i/2-1}i$ (in any order). By Proposition~\ref{critprop} it is enough to check that $\bm y^{(i,\bi,i)}(c)$ is generic for all but f\/initely many $c\in {\mathbb C}$.
This follows from~\eqref{fee} and Lemma~\ref{VIL}, below.

The statements about the weight at inf\/inity follow from Lemma~\ref{delemii} and Section~\ref{foldsec}.

Finally we must check that $\bm y^{(i,\sigma)}(x;c)$ represents a cyclotomic point. Given Lemma~\ref{cycptlem} and the def\/inition~\eqref{yis}, it is enough to check that
\begin{gather}
y^{(i,\bi,i)}_\bi(- x;c) = (-1)^{\deg y^{(i,\bi,i)}_i} y^{(i,\bi,i)}_i(x;c).\label{fee}
\end{gather}
This is ef\/fectively a statement about the case of type $A_2$ and we are in the setting of Section~\ref{ARsec} below, with $R=2n$, $n=1$, $p=1$. The statement~\eqref{fee} follows from Theorem~\ref{flowthm} and Lemma~\ref{f2lem}.
\end{proof}

\begin{lem}\label{flem1} We have
\begin{gather*}
 y^{(\bi,i,\bi)}_j(-x;c) = (-1)^{\deg y^{(i,\bi,i)}_{\bar\jmath}} y^{(i,\bi,i)}_{\bar\jmath}(x;-c)
\end{gather*}
for all $j\in I$.
\end{lem}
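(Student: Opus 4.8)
The plan is to reduce, exactly as in the proof of Theorem~\ref{s2thm}, to the case of type $\mathrm A_2$ with its diagram automorphism, so that the relevant pair of nodes is $\{i,\bi\}$ with $\bar\imath=\bi$, $\bar{\bi}=i$, and the cyclotomic involution acting on functions of $x$ is $x\mapsto -x$. For $j\notin\{i,\bi\}$ neither $\bm y^{(i,\bi,i)}$ nor $\bm y^{(\bi,i,\bi)}$ differs from $\bm y$ in the $j$th (resp.\ $\bar\jmath$th) slot, so the asserted identity there is just the cyclotomic relation for $\bm y$ furnished by Lemma~\ref{cycptlem}; the real content is the case $j\in\{i,\bi\}$. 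For this, the idea is to show that the substitution $x\mapsto -x$ composed with the transposition $i\leftrightarrow\bi$ carries the whole three-step chain $\bm y\to\bm y^{(i)}\to\bm y^{(\bi,i)}(c)\to\bm y^{(i,\bi,i)}(c)$ into the analogous chain $\bm y\to\bm y^{(\bi)}\to\bm y^{(i,\bi)}(-c)\to\bm y^{(\bi,i,\bi)}(-c)$, with the roles of $i$ and $\bi$ interchanged and the parameter reflected, picking up in each slot an overall constant of the form $(-1)^{\deg(\cdot)}$.

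First I would settle the base step. Since $\bm y$ is cyclotomic, Lemma~\ref{cycptlem} gives $y_{\bar\jmath}(-x)=(-1)^{\deg y_j}y_j(x)$ after matching leading coefficients, and \eqref{oT} gives $T_{\bi}(-x)\simeq T_i(x)$; using also $\langle\Lambda_0,\alpha^\vee_i\rangle=\langle\Lambda_0,\alpha^\vee_{\bi}\rangle$ (as $\sigma\Lambda_0=\Lambda_0$) and the diagram-automorphism identity $\langle\alpha_j,\alpha^\vee_i\rangle=\langle\alpha_{\bar\jmath},\alpha^\vee_{\bi}\rangle$, the change of variables $\xi\mapsto-\eta$, $x\mapsto-x$ turns the defining integral of $y^{(i)}_i$ term by term into that of $y^{(\bi)}_{\bi}$: the non-integer powers of $-1$ produced by $\xi^{\langle\Lambda_0,\alpha^\vee_i\rangle}$, by the prefactor $x^{-\langle\Lambda_0,\alpha^\vee_i\rangle-1}$ and by the Jacobian all cancel, the normalization ``holomorphic at $x=0$'' is preserved, and one is left with $y^{(i)}_i(-x)=(\text{const})\,y^{(\bi)}_{\bi}(x)$. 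Invoking the degree formula $\deg y^{(i)}_i=\deg y_i+\langle\Lambda_\infty-\Lambda_0,\alpha^\vee_i\rangle$ of Proposition~\ref{pprop2} together with \eqref{l8y} and $\deg y_i=\deg y_{\bi}$, the constant evaluates to $(-1)^{\deg y^{(i)}_i}$, and $\deg y^{(i)}_i=\deg y^{(\bi)}_{\bi}$. I would then push this identity through steps two and three by repeating the same computation. In step two, $y^{(\bi,i)}_{\bi}(x;c)=y^{(\bi,i)}_{\bi}(x;0)+c\,y_{\bi}(x)$; the substitution $x\mapsto-x$ preserves the property ``coefficient of $x^{\deg y_{\bi}}$ vanishes'', so the $c=0$ normalization is respected, and feeding the base-step identity into the integrand of \eqref{ybidef} yields $y^{(\bi,i)}_{\bi}(-x;c)=(-1)^{\deg y^{(\bi,i)}_{\bi}}y^{(i,\bi)}_i(x;-c)$. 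The reflection $c\mapsto-c$ is forced: $\deg y^{(\bi,i)}_{\bi}$ and $\deg y_{\bi}$ differ by $\langle\Lambda_\infty+\rho,\alpha^\vee_i+\alpha^\vee_{\bi}\rangle$, which is odd by~\eqref{s2L1}, so the sign attached to $y^{(\bi,i)}_{\bi}(x;0)$ is the opposite of the one attached to $c\,y_{\bi}(x)$. Running the identical argument a third time through the defining integral of $y^{(i,\bi,i)}_i(x;c)$ — now using the step-two identity inside the integrand and the degree formula of Lemma~\ref{delemii} — gives $y^{(i,\bi,i)}_i(-x;c)=(-1)^{\deg y^{(i,\bi,i)}_i}y^{(\bi,i,\bi)}_{\bi}(x;-c)$. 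Since $x\mapsto-x$ is an involution and $\deg y^{(i,\bi,i)}_{\bar\jmath}=\deg y^{(\bi,i,\bi)}_j$, solving these two identities (the step-two one for $j=i$, the step-three one for $j=\bi$) for $y^{(\bi,i,\bi)}_j(-x;c)$ is precisely the statement of the lemma.

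The main difficulty is purely one of bookkeeping. One must track with care: (i) the non-integer powers of $-1$ produced by the branch-point factors $\xi^{\langle\Lambda_0,\alpha^\vee_i\rangle}$ and $x^{\pm\langle\Lambda_0,\alpha^\vee_i\rangle\pm1}$ under $\xi\mapsto-\eta$, $x\mapsto-x$, which must cancel for the identity even to make sense; (ii) the sign $\operatorname{Wr}(f(-x),g(-x))=-\operatorname{Wr}(f,g)(-x)$ implicit in the Wronskian relations; (iii) the compatibility of the two normalizations used — ``coefficient of the top power vanishes'' and ``holomorphic at the origin'' — with $x\mapsto-x$; and (iv) the verification, via \eqref{l8y} and the degree formulas of Proposition~\ref{pprop2} and Lemma~\ref{delemii} (and, in $\mathrm A_2$, $\langle\alpha_i,\alpha^\vee_{\bi}\rangle=-1$), that the exponent of the accumulated $-1$ collapses exactly to $\deg y^{(i,\bi,i)}_{\bar\jmath}$. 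Note that no genericity hypothesis is needed: each of the identities above holds as an identity of the explicitly defined (quasi-)polynomials, whether or not the tuples in question represent critical points.
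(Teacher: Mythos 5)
Your proposal is correct and follows essentially the same route as the paper: both arguments begin from the $T$-reflection identity \eqref{oT}, deduce step by step that the involution $x\mapsto -x$ together with $i\leftrightarrow\bi$ carries the chain $\bm y\to\bm y^{(i)}\to\bm y^{(\bi,i)}(c)\to\bm y^{(i,\bi,i)}(c)$ into the chain $\bm y\to\bm y^{(\bi)}\to\bm y^{(i,\bi)}(-c)\to\bm y^{(\bi,i,\bi)}(-c)$, and derive the sign flip $c\mapsto -c$ from the oddness of $\langle\Lambda_\infty+\rho,\alpha^\vee_i+\alpha^\vee_\bi\rangle$, which rests on \eqref{s2L1}. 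The only differences are that the paper leaves the third step and the case $j\notin\{i,\bi\}$ implicit, whereas you spell them out, and your proposal correctly cites \eqref{s2L1} (and the degree label $\deg y^{(\bi,i)}_\bi$) where the printed proof appears to refer to \eqref{s1L1} (and to $y^{(i,\bi)}_\bi$) by apparent typographical slips.
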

\begin{proof}
Note f\/irst that from~\eqref{oT} we have
\begin{gather*}
 T_{\bar\jmath}(-x) = (-1)^{\big\langle \sum\limits_{k=0}^{M-1} \sum\limits_{s=1}^N \omega^k\Lambda_s, \alpha^\vee_j \big\rangle} T_j(x)
\end{gather*}
for all $j\in I$. It follows that
\begin{gather*}
 y^{(\bi)}_\bi(-x) = (-1)^{\deg y_i + \langle \Lambda_\infty - \Lambda_0, \alpha^\vee_i\rangle} y^{(i)}_i(x).
 \end{gather*}
Then, from the def\/inition of $y^{(\bi,i)}_\bi(x;c)$ and~\eqref{degybi} we have that
\begin{gather*}
 y^{(i,\bi)}_i(-x;c) = (-1)^{\deg y^{(i,\bi)}_\bi} y^{(\bi,i)}_\bi(x;\bar c)
\end{gather*}
if and only if $c$ and $\bar c$ are related by $c = (-1)^{2+\langle \Lambda_\infty, \alpha^\vee_\bi + \alpha^\vee_i \rangle} \bar c$. Since the~$\Lambda_s$, $s=1,\dots,N$, are integral, we have
\begin{gather*}
 (-1)^{\langle \Lambda_\infty, \alpha^\vee_i+\alpha^\vee_\bi \rangle}
   =(-1)^{\langle \Lambda_0, \alpha^\vee_i+\alpha^\vee_\bi \rangle}
   =(-1)^{2\langle \Lambda_0, \alpha^\vee_i \rangle}
   = -1,
\end{gather*}
using $\sigma\Lambda_0=\Lambda_0$ and the property~\eqref{s1L1}.
\end{proof}

\begin{lem}\label{VIL}
For all but finitely many $c\in {\mathbb C}$, $y^{(i,\bi,i)}_\bi(x;c)$ and $y^{(i,\bi,i)}_\bi(-x;c)$ have no root in common.
\end{lem}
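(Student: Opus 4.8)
The plan is to reduce the statement to the non-vanishing of a single polynomial in~$c$ and then to verify that by letting $|c|\to\infty$. Write $q_c(x):=y^{(i,\bi,i)}_\bi(x;c)=y^{(\bi,i)}_\bi(x;0)+c\,y_\bi(x)$ and set $\delta:=\langle\Lambda_\infty+\rho,\alpha^\vee_i+\alpha^\vee_\bi\rangle$, so that $\deg y^{(\bi,i)}_\bi(x;0)=\deg y_\bi+\delta$ with $\delta$ an \emph{odd} integer by~\eqref{s2L1}. Away from the finitely many $c$ at which the $x$-degree of $q_c$ drops, the set $B\subseteq{\mathbb C}$ of $c$ for which $q_c(x)$ and $q_c(-x)$ have a common root is the zero set of $\operatorname{Res}_x\big(q_c(x),q_c(-x)\big)\in{\mathbb C}[c]$; so it is enough to exhibit a single $c\notin B$, and I would produce one for every $c$ of sufficiently large modulus.

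The large-$c$ analysis rests on two points. First, $y_\bi(x)$ and $y_\bi(-x)$ are themselves coprime: since $\bm y$ represents a cyclotomic point, iterating Lemma~\ref{cycptlem} gives $y_\bi(-x)\simeq y_i(x)$ (the cyclotomic reflection on the ${\mathrm A}_2$-block $\{i,\bi\}$ being $x\mapsto-x$, as in the reduction to Section~\ref{ARsec}), so a common root of $y_\bi(x)$ and $y_\bi(-x)$ would be a common root of $y_\bi(x)$ and $y_i(x)$; but $\bm y$ represents a critical point, hence is generic, and the nodes $i$ and $\bi$ are joined by an edge of the Dynkin diagram because $L_i=2$, so no such root exists, and in particular $y_\bi(0)\neq0$. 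Second, as $c\to\infty$ one has $q_c/c\to y_\bi$ coefficientwise, so exactly $\deg y_\bi$ of the roots of $q_c$ stay bounded and tend, with multiplicities, to the roots of $y_\bi$, while the remaining $\max(\delta,0)$ roots escape to infinity; comparing leading terms, these escaping roots solve the dominant balance $a\,x^\delta+c\,b\approx0$, where $a,b$ are the leading coefficients of $y^{(\bi,i)}_\bi(x;0)$ and $y_\bi$, so they are $\zeta^k\,(-cb/a)^{1/\delta}\,(1+o(1))$, $k=0,\dots,\delta-1$, with $\zeta:=e^{2\pi i/\delta}$.

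Combining the two: if $q_c(x)$ and $q_c(-x)$ share a root $\alpha$, then $\alpha$ and $-\alpha$ are both roots of $q_c$. Along a sequence $c_n\to\infty$ with such common roots $\alpha_n$, either $\alpha_n$ is bounded along a subsequence, in which case so is $-\alpha_n$, and passing to the limit in $y_\bi(\alpha_n)+c_n^{-1}\,y^{(\bi,i)}_\bi(\alpha_n;0)=0$ and in the analogous identity at $-\alpha_n$ yields $y_\bi(\alpha^\ast)=y_\bi(-\alpha^\ast)=0$ for the limit point~$\alpha^\ast$, contradicting the coprimality above; or $|\alpha_n|\to\infty$, in which case $\alpha_n$ and $-\alpha_n$ are both among the escaping roots, which forces $\zeta^k+\zeta^j=o(1)$, hence $\zeta^{k-j}=-1$ for some $k,j$, impossible since $\delta$ is odd. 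Hence $q_c(x)$ and $q_c(-x)$ are coprime for all $c$ of large modulus, the resultant is not identically zero, and $B$ is finite. I expect the escaping-roots case to be the real obstacle: one genuinely has to control the roots of $q_c$ running off to infinity and rule out a $\pm$-pair among them, and it is exactly there that the parity hypothesis~\eqref{s2L1} (the oddness of~$\delta$) enters. The only other point needing a word is that $q_c(-x)$ is the correct cyclotomic partner of $q_c(x)$, i.e.\ that the relevant reflection on the block is $x\mapsto-x$, which is automatic in the situations where this lemma is applied.
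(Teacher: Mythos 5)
Your proposal is correct and follows essentially the same route as the paper: analyze the behaviour of the roots of $y^{(\bar\imath,i)}_{\bar\imath}(x;c)$ in a limiting regime of $c$, split them into a bounded group converging to the roots of $y_{\bar\imath}$ and an escaping group, rule out $\pm$-pairs in the first group by genericity of $\bm y$ (so that $y_{\bar\imath}(x)$ and $y_{\bar\imath}(-x)\simeq y_i(x)$ are coprime), and rule out $\pm$-pairs in the escaping group because the exponent $\delta=\langle\Lambda_\infty+\rho,\alpha^\vee_i+\alpha^\vee_{\bar\imath}\rangle$ governing the leading balance is odd, so the set of $\delta$-th roots of a point never meets its own $-1$-multiple. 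You work at $c\to\infty$ where the paper writes ``small $c$'' (in the paper's own parametrization $y^{(\bar\imath,i)}_{\bar\imath}(x;c)=y^{(\bar\imath,i)}_{\bar\imath}(x;0)+c\,y_{\bar\imath}(x)$ the limit picking out $y_{\bar\imath}$ is $|c|\to\infty$, and modulo the $c\mapsto 1/c$ reparametrization that appears downstream this is the same analysis); you also make explicit two steps the paper leaves implicit — packaging ``all but finitely many $c$'' as non-vanishing of a resultant, and the subsequence dichotomy for a hypothetical sequence of common roots — which tightens the argument without changing its substance.
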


\begin{proof}
Recall $y^{(i,\bi,i)}_\bi(x;c)=y^{(\bi,i)}_\bi(x;c)$. Consider the leading behaviour in small $c$. As $c\to 0$, $\deg y_\bi$ of the roots of $y^{(\bi,i)}_\bi(x;c)$ tend to the $\deg y_\bi$ roots of~$y_\bi(x)$. By the assumption that~$\bm y$ was generic and cyclotomic, none of these are roots of $y_\bi(-x)\simeq y_i(x)$.

Recall~\eqref{degybi} and the fact that  $\langle \Lambda_\infty+\rho, \alpha^\vee_i + \alpha^\vee_\bi \rangle$ is odd, by the assumption~\eqref{s2L1}.

If $\langle \Lambda_\infty+\rho, \alpha^\vee_i + \alpha^\vee_\bi \rangle < 0$, then these are all the roots of  $y^{(\bi,i)}_\bi(x;c)$.

If $\langle \Lambda_\infty+\rho, \alpha^\vee_i + \alpha^\vee_\bi \rangle \not< 0$ then the remaining
 $\langle \Lambda_\infty+\rho, \alpha^\vee_i + \alpha^\vee_\bi \rangle > 0$ roots of  $y^{(\bi,i)}_\bi(x;c)$ tend to the roots of the equation $cx^{\langle \Lambda_\infty+\rho, \alpha^\vee_i + \alpha^\vee_\bi \rangle} + 1 = 0$. This limiting set of roots multiplied by~$-1$ does not intersect itself. This implies the lemma.
\end{proof}

\subsection{Def\/inition of the cyclotomic population}\label{cycpopdefsec}

Suppose $\bm y\in {\mathbb P}({\mathbb C}[x])^{|I|}$ is a tuple of polynomials representing a cyclotomic critical point.

Recall the def\/inition of  $\bm y^{(i,\sigma)}(c)$, from Section~\ref{sgen1} when $L_i=1$ and from Section~\ref{sgen2} when $L_i=2$. We say  $\bm y^{(i,\sigma)}(c)$ is obtained from $\bm y$ by \emph{cyclotomic generation} in the direction~$i$.

Let us def\/ine the \emph{cyclotomic population} originated at $\bm y$ to be the Zariski closure of the set of all tuples of polynomials obtained from $\bm y$ by repeated cyclotomic generation, in all directions $i\in I$.

\section[The case of type $A_{R}$: vector spaces of quasi-polynomials]{The case of type $\boldsymbol{A_{R}}$: vector spaces of quasi-polynomials}\label{ARsec}
\subsection[Type $A$ data]{Type $\boldsymbol{A}$ data}\label{Asetup}

Throughout this section we specialise to $\g={\mathfrak{sl}}_{R+1}$. We shall treat in parallel the cases where $R = 2n-1$ and $R=2n$, $n\in {\mathbb Z}_{\geq 0}$.
We have the usual identif\/ication of $\h\cong \h^*$ with a subspace of $(R+1)$-dimensional Euclidean space, given by $\alpha_i = \alpha^\vee_i = \epsilon_{i+1} - \epsilon_i$, $i=1,\dots, R$,  where $(\epsilon_i)_{i=1}^{R+1}$ is the standard orthonormal basis.

Let $\sigma \colon \g \to \g$ be the unique non-trivial diagram automorphism, whose order is~2. The nodes~$I$ of the Dynkin diagram, and the action of $\sigma$ on these nodes, are as shown below:
\begin{gather*}
\begin{tikzpicture}[baseline =-5,scale=1,font=\scriptsize]
\draw[semithick] (-.5,0) -- (2.5,0);
\draw[semithick,dashed] (-.5,0) -- (-1.5,0);
\draw[semithick,dashed] (2.5,0) -- (3.5,0);
\draw[semithick] (-2,0) -- (-1.5,0);
\draw[semithick] (3.5,0) -- (4,0);
\filldraw[fill=white] (0,0) circle (1mm) node [below=2mm] {$n-1$};
\filldraw[fill=white] (1,0) circle (1mm) node [below=2mm] {$\phantom1n\phantom1$};
\filldraw[fill=white] (2,0) circle (1mm)  node [below=2mm] {$n+1$};
\filldraw[fill=white] (4,0) circle (1mm)  node [below=2mm] {$2n-1$};
\filldraw[fill=white] (-2,0) circle (1mm)  node [below=2mm] {$1$};
\draw[<->,shorten >=2mm,shorten <=2mm] (0,0) .. controls (0,1) and (2,1) .. (2,0);
\draw[<->,shorten >=2mm,shorten <=2mm] (-2,0) .. controls (-2,1.5) and (4,1.5) .. (4,0);
\end{tikzpicture}
\\
\begin{tikzpicture}[baseline =-5,scale=1,font=\scriptsize]
\draw[semithick] (-.5,0) -- (3.5,0);
\draw[semithick,dashed] (-.5,0) -- (-1.5,0);
\draw[semithick,dashed] (3.5,0) -- (4.5,0);
\draw[semithick] (-2,0) -- (-1.5,0);
\draw[semithick] (4.5,0) -- (5,0);
\filldraw[fill=white] (0,0) circle (1mm) node [below=2mm] {$n-1$};
\filldraw[fill=white] (1,0) circle (1mm) node [below=2mm] {$\phantom1n\phantom1$};
\filldraw[fill=white] (2,0) circle (1mm)  node [below=2mm] {$n+1$};
\filldraw[fill=white] (3,0) circle (1mm)  node [below=2mm] {$n+2$};
\filldraw[fill=white] (5,0) circle (1mm)  node [below=2mm] {$\,\,2n\,\,$};
\filldraw[fill=white] (-2,0) circle (1mm)  node [below=2mm] {$1$};
\draw[<->,shorten >=2mm,shorten <=2mm] (1,0) .. controls (1,.75) and (2,.75) .. (2,0);
\draw[<->,shorten >=2mm,shorten <=2mm] (0,0) .. controls (0,1) and (3,1) .. (3,0);
\draw[<->,shorten >=2mm,shorten <=2mm] (-2,0) .. controls (-2,1.5) and (5,1.5) .. (5,0);
\end{tikzpicture}
\end{gather*}
When $R=2n-1$, then $L_i=1$ for all $i\in I$, and $M_i = \begin{cases} 1, & i= n, \\ 2, & i\neq n.\end{cases}$
When $R=2n$ then $L_i=\begin{cases} 2, & i = n, n+1 \\ 1, & \text{otherwise}
\end{cases}$ and $M_i =2$ for all $i\in I$.

Let $(z_i)_{i=1}^N$ be nonzero points $z_i\in {\mathbb C}^\times$ such that $z_i \pm z_j \neq 0$ whenever $i\neq j$.
Let $\Lambda_1,\dots,\Lambda_N$ be dominant integral weights.

We suppose the weight at the origin, $\Lambda_0\in \h^*$, obeys $\sigma\Lambda_0 = \Lambda_0$ (as always). That is,
\begin{gather*}
\langle\Lambda_0,\alpha^\vee_i \rangle = \langle\Lambda_0,\alpha^\vee_{R+1-i}\rangle,\qquad i=1,\dots,R.
\end{gather*}
In addition, we pick and f\/ix an integer $p\in \{0,1,\dots,n\}$, and suppose that
\begin{subequations}\label{l0a}
\begin{gather}
 \langle \Lambda_0, \alpha^\vee_i\rangle
\in 2{\mathbb Z}_{\geq 0}/M_i \qquad
\text{if $i\notin\{p,R+1-p\}$}
\end{gather}
and
\begin{gather}
 \langle \Lambda_0, \alpha^\vee_p \rangle
\in
\begin{cases}
\half (2{\mathbb Z}_{\geq 0}-1)  = \{-\half,\half,\frac 3 2,\dots\} & \text{if $p\leq R/2$},\\
2{\mathbb Z}_{\geq 0}+1  = \{1,3,\dots\} & \text{if $p=n$ and $R=2n-1$.}\end{cases}
\end{gather}
\end{subequations}
Note the following particular cases:
\begin{itemize}\itemsep=0pt
\item If $R=2n$ is even and $p=0$ then \eqref{l0a} just says that $\Lambda_0$ is dominant integral.
\item If $R=2n-1$ is odd and $p=0$ then $\Lambda_0$ is dominant integral and $\langle\Lambda_0,\alpha^\vee_n\rangle$ is even.
\item If $R=2n-1$ is odd and $p=n$ then $\Lambda_0$ is dominant integral and $\langle\Lambda_0,\alpha^\vee_n\rangle$ is odd.
\end{itemize}

In the case $p=n$ (and any $R$) our choice of $\Lambda_0$ obeys the assumptions set out in Section~\ref{l0sec}.

\subsection{Vector spaces of quasi-polynomials}\label{slrevsec}
Let
\begin{gather*}
 {\tilde T}_i(x)
= x^{\langle\Lambda_0,\alpha^\vee_i\rangle}
\prod_{s=1}^{N}(x- z_s)^{\langle \Lambda_s,\alpha^\vee_i\rangle}(x+ z_s)^{\langle \Lambda_{R+1-s},\alpha^\vee_i\rangle},\qquad i \in I.
\end{gather*}
Thus ${\tilde T}_i(x) = x^{\langle\Lambda_0,\alpha^\vee_i\rangle} T_i(x)$ with $T_i(x)$ as in~\eqref{Tdef}.

In view of \eqref{l0a}, ${\tilde T}_i(x)\in {\mathbb C}[x]$ for all $i\notin \{p,R+1-p\}$. If $0<p<R+1-p<R$ then  ${\tilde T}_p(x)$ and ${\tilde T}_{R+1-p}(x)$ belong to $x^{-\half} {\mathbb C}[x]$. If $p=R+1-p$ then ${\tilde T}_p(x)\in {\mathbb C}[x]$.

We def\/ine the \emph{degree}, $\deg p$, of a Laurent polynomial $p(x)\in {\mathbb C}[x^{\pm\half}]$ to be the leading power of~$x$ (for large $x$) that appears in~$p(x)$ with non-zero coef\/f\/icient.

We will call any polynomial in~$x^\half$ a \emph{quasi-polynomial}.

A vector space $V\subset {\mathbb C}\big[x^\half\big]$ of quasi-polynomials is \emph{decomposable} if
\begin{gather*}
 V = V\cap {\mathbb C}[x] \oplus V \cap x^\half {\mathbb C}[x].
 \end{gather*}
A tuple of quasi-polynomials is \emph{decomposable} if each element lies in either~${\mathbb C}[x]$ or~$x^\half{\mathbb C}[x]$. In particular, a \emph{decomposable basis} of a decomposable vector space $V\subset {\mathbb C}\big[x^\half\big]$ is one in which each basis vector lies in either~${\mathbb C}[x]$ or $x^\half{\mathbb C}[x]$.

Def\/ine the \emph{divided Wronksian determinant} of quasi-polynomials $u_1,\dots,u_k \in {\mathbb C}\big[x^\half\big]$ by
\begin{gather*} \operatorname{Wr}^\dag(u_1,\dots,u_k) := \frac{\operatorname{Wr}(u_1,\dots,u_k)}{{\tilde T}_1^{k-1} {\tilde T}_2^{k-2} \cdots {\tilde T}_{k-1}}
,\qquad \operatorname{Wr}(u_1,\dots,u_k):= \det\left( \frac{d^{j-1} u_i}{dx^{j-1}} \right)_{i,j=1}^k,
\end{gather*}
for $k=1,\dots,R+1$.

Def\/ine
\begin{gather}
\Lambda := \Lambda_0 + \sum_{s=1}^N (\Lambda_s+ \sigma\Lambda_s),\label{lambdadef}
\end{gather}
and suppose $\tilde\Lambda_\infty\in \h^*$ is a dominant weight such that $\Lambda-\tilde\Lambda_\infty= \sum\limits_{i\in I} k_i \alpha_i$ for some $k_i\in {\mathbb Z}_{\geq 0}$.
Such a weight def\/ines numbers $d_1,\dots,d_{R+1} \in {\mathbb Z}/2$, $0\leq d_1 < \dots < d_{R+1}$,
by
\begin{gather}
 d_1 := \langle\Lambda- \tilde\Lambda_\infty , \epsilon_1 \rangle ,\qquad d_k := \langle\Lambda- ({\mathsf s}_1\cdots{\mathsf s}_{k-1}) \cdot \tilde\Lambda_\infty , \epsilon_1 \rangle,\qquad k=2,\dots, R+1. \label{ddef}
\end{gather}
\begin{lem}\label{deglem} We have
\begin{gather}
 d_k = d_1 +  \langle \tilde\Lambda_\infty + \rho, \alpha^\vee_1 + \dots +\alpha^\vee_{k-1} \rangle, \qquad k=2,\dots,R+1. \label{du}
\end{gather}
Hence, for all $p>0$, $d_1,\dots, d_p$ and $d_{R+2-p},\dots,d_{R+1}$ are integers while $d_{p+1},\dots,d_{R+1-p}$ are half odd integers, i.e., have the form $m+\half$ for $m\in {\mathbb Z}$. If $p=0$ then $d_1,\dots, d_{R+1}$ are all integers.
\end{lem}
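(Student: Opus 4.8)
The plan is to prove the displayed identity \eqref{du} first, by a direct computation with the shifted Weyl action, and then to read off the integrality pattern for the $d_k$ purely from \eqref{du} together with the congruence classes modulo $1$ of the numbers $\langle\tilde\Lambda_\infty+\rho,\alpha^\vee_j\rangle$, which are pinned down by the hypotheses \eqref{l0a} on $\Lambda_0$.

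For \eqref{du}, I would set $w_k := {\mathsf s}_1{\mathsf s}_2\cdots{\mathsf s}_{k-1}$ (so $w_1=\operatorname{id}$). By definition of the shifted action, $({\mathsf s}_1\cdots{\mathsf s}_{k-1})\cdot\tilde\Lambda_\infty = w_k(\tilde\Lambda_\infty+\rho)-\rho$, so that
\[ d_k = \langle\Lambda+\rho,\epsilon_1\rangle - \langle w_k(\tilde\Lambda_\infty+\rho),\epsilon_1\rangle. \]
In type $A_R$ each ${\mathsf s}_i$ is the transposition interchanging $\epsilon_i$ and $\epsilon_{i+1}$ and fixing the other $\epsilon_j$, so a one-line induction identifies $w_k$ with the cyclic permutation $\epsilon_1\mapsto\epsilon_2\mapsto\cdots\mapsto\epsilon_k\mapsto\epsilon_1$ (fixing $\epsilon_j$ for $j>k$); in particular $w_k^{-1}(\epsilon_1)=\epsilon_k$, and since $w_k$ is orthogonal, $\langle w_k(\mu),\epsilon_1\rangle=\langle\mu,\epsilon_k\rangle$ for every $\mu$. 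Thus $d_k = \langle\Lambda+\rho,\epsilon_1\rangle - \langle\tilde\Lambda_\infty+\rho,\epsilon_k\rangle$ for all $k$, the case $k=1$ included. Subtracting the $k=1$ instance and collapsing the resulting telescoping sum of simple coroots gives \eqref{du}. Applying the same identity $\langle w_k(\mu),\epsilon_1\rangle=\langle\mu,\epsilon_k\rangle$ instead to $\mu=\Lambda-\tilde\Lambda_\infty=\sum_i k_i\alpha_i$ (with $k_i\in{\mathbb Z}_{\geq 0}$) shows $d_1 = k_1\in{\mathbb Z}_{\geq 0}$; together with \eqref{du} and the dominance of $\tilde\Lambda_\infty$ (which forces $\langle\tilde\Lambda_\infty+\rho,\alpha^\vee_j\rangle>0$) this also recovers $0\le d_1<d_2<\cdots<d_{R+1}$.

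For the integrality statement I would argue modulo $1$. Because $\Lambda-\tilde\Lambda_\infty$ lies in the root lattice and the Cartan matrix is integral, $\langle\tilde\Lambda_\infty,\alpha^\vee_j\rangle\equiv\langle\Lambda,\alpha^\vee_j\rangle\pmod 1$; because each $\Lambda_s$ is integral dominant, $\langle\Lambda_s+\sigma\Lambda_s,\alpha^\vee_j\rangle\in{\mathbb Z}$, so by \eqref{lambdadef} this is $\equiv\langle\Lambda_0,\alpha^\vee_j\rangle\pmod 1$; and $\langle\rho,\alpha^\vee_j\rangle=1$. Hence $\langle\tilde\Lambda_\infty+\rho,\alpha^\vee_j\rangle\equiv\langle\Lambda_0,\alpha^\vee_j\rangle\pmod 1$. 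Now I would invoke \eqref{l0a}: using $M_j\in\{1,2\}$ in type $A_R$ gives $\langle\Lambda_0,\alpha^\vee_j\rangle\in{\mathbb Z}$ for every $j\notin\{p,R+1-p\}$, while for $j\in\{p,R+1-p\}$ (which by $\sigma$-invariance carry the same pairing) $\langle\Lambda_0,\alpha^\vee_j\rangle$ is a half odd integer when $0<p\le R/2$, and an odd integer in the remaining degenerate case $p=n$, $R=2n-1$ (where $p=R+1-p$). So $\langle\tilde\Lambda_\infty+\rho,\alpha^\vee_j\rangle\in{\mathbb Z}$ unless $j\in\{p,R+1-p\}$ with $0<p\le R/2$, in which case it lies in $\half+{\mathbb Z}$.

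It then remains a counting argument. By \eqref{du}, $d_k-d_1=\sum_{j=1}^{k-1}\langle\tilde\Lambda_\infty+\rho,\alpha^\vee_j\rangle$ and $d_1\in{\mathbb Z}$, so the class of $d_k$ modulo $1$ is governed by the parity of the number of $j\in\{1,\dots,k-1\}$ lying in $\{p,R+1-p\}$: that number is $0$ for $1\le k\le p$, is $1$ (namely $j=p$) for $p+1\le k\le R+1-p$, and is $2$ (namely $j=p$ and $j=R+1-p$) for $R+2-p\le k\le R+1$. An even count forces $d_k\in{\mathbb Z}$, an odd count forces $d_k\in\half+{\mathbb Z}$, which is exactly the asserted pattern; when $p=0$ no index is special and every $d_k\in{\mathbb Z}$. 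The only place needing care is the coincidence $\{p,R+1-p\}=\{n\}$ when $R=2n-1$ and $p=n$: there the "middle" range $\{p+1,\dots,R+1-p\}$ is empty, the ranges $\{1,\dots,p\}$ and $\{R+2-p,\dots,R+1\}$ together exhaust $\{1,\dots,R+1\}$, and the single special pairing is an integer, so all $d_k$ are integers, consistent with the statement. I expect this bookkeeping, and carefully fixing the orthonormal-basis and sign conventions used in the first step, to be the only mildly delicate points; everything else is routine.
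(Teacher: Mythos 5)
Your derivation of \eqref{du} is the same argument the paper gives: compute $d_k-d_1$ via the shifted action and use that $({\mathsf s}_{k-1}\cdots{\mathsf s}_1)\epsilon_1=\epsilon_k$ with $W$ orthogonal, then telescope $\epsilon_1-\epsilon_k$ into a sum of simple coroots. The paper treats the integrality pattern as an immediate consequence of \eqref{du} ("Hence\dots") and gives no further argument, so your elaboration — reducing $\langle\tilde\Lambda_\infty+\rho,\alpha^\vee_j\rangle$ to $\langle\Lambda_0,\alpha^\vee_j\rangle$ modulo $1$ via \eqref{lambdadef}, noting $d_1\in{\mathbb Z}$, and counting how many $j\in\{1,\dots,k-1\}$ fall in $\{p,R+1-p\}$ — is a correct filling-in of that gap rather than a departure, and it correctly handles both the degenerate case $p=n$, $R=2n-1$ (where the single special pairing is already an odd integer) and $p=0$.
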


\begin{proof} We have  $d_k - d_1 = \langle \tilde \Lambda_\infty+\rho - ({\mathsf s}_1\cdots{\mathsf s}_{k-1})(\tilde\Lambda_\infty+\rho), \epsilon_1\rangle = \langle \tilde\Lambda_\infty  + \rho,\epsilon_1 - ({\mathsf s}_{k-1}\cdots {\mathsf s}_1) \epsilon_1\rangle = \langle \tilde\Lambda_\infty + \rho, \epsilon_1- \epsilon_k \rangle$ and hence~\eqref{du}.
\end{proof}

\begin{defn}\label{Kfdef}
We say a vector space of quasi-polynomials ${\mathcal K}\!\subset \! {\mathbb C}\big[x^\half\big]$ has \emph{frame} $\tilde T_1,\dots ,\tilde T_R;\tilde \Lambda_\infty$ if the following conditions hold:
\begin{enumerate}[(i)]\itemsep=0pt
\item\label{expcon}
There is a basis $(u_k(x))_{k=1}^{R+1}$ of ${\mathcal K}$ such that $\deg u_k = d_k$ for each $k=1,\dots, R+1$.
\item\label{ramcon}
For any $z\in {\mathbb C}{\setminus}\{0\}$  and $v_1,\dots, v_k\in {\mathcal K}$, $k=1,\dots,R+1$,  the divided Wronskian $\operatorname{Wr}^\dag(v_1$, $\dots,v_k)$ is regular at $z$, and moreover, $\operatorname{Wr}^\dag(v_1,\dots,v_k)$ is nonzero at $z$ for suitable $v_1,\dots,v_k$.
\item\label{zerocon}
For all $v_1,\dots, v_k\in {\mathcal K}$, $k=1,\dots,R+1$, the divided Wronskian $\operatorname{Wr}^\dag(v_1,\dots,v_k)$ has at $x=0$ an expansion of the form $\sum\limits_{m\in {\mathbb Z}_{\geq 0}/2} a_m x^m$ and moreover this expansion has nonzero~$a_0$ for suitable  $v_1,\dots,v_k$.
\end{enumerate}
\end{defn}

\emph{In the remainder of this section, ${\mathcal K}$ will denote a decomposable vector space of quasi-po\-ly\-no\-mials with frame $\tilde T_1,\dots ,\tilde T_R;\tilde \Lambda_\infty$.}

Conditions (\ref{ramcon}) and~(\ref{zerocon}) specify the \emph{ramification conditions} of ${\mathcal K}$ at every point $z\in {\mathbb C}$. Condition~(\ref{expcon}) specif\/ies the ramif\/ication conditions at $\infty$. See \cite[Section~5.5]{MV1}. The degrees $0\leq d_1 < d_2 < \dots < d_{R+1}$ will be called the \emph{exponents of ${\mathcal K}$ at infinity}.

Note that conditions~(\ref{ramcon}) and~(\ref{zerocon}) together imply in particular that~${\mathcal K}$ has no~\emph{base points}. That is, there is no $z\in {\mathbb C}$ such that $u(z) = 0$ for all $u\in {\mathcal K}$. They also imply the following important lemma.

\begin{lem}\label{dWlem}
For all $v_1,\dots, v_k\in {\mathcal K}$, $k=1,\dots,R+1$, the divided Wronskian  $\operatorname{Wr}^\dag(v_1,\dots,v_k)$ is a~quasi-polynomial.
\end{lem}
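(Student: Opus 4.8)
The plan is to exhibit $\operatorname{Wr}^\dag(v_1,\dots,v_k)$ as a rational function of $x^{\half}$ all of whose poles in the affine line are forbidden by the frame conditions of Definition~\ref{Kfdef}; since a rational function of $x^{\half}$ with no finite poles is a polynomial in $x^{\half}$, i.e.\ a quasi-polynomial, this will suffice.

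First one sets $y := x^{\half}$, so that each quasi-polynomial $v_i$ lies in ${\mathbb C}[y]$. Because $\frac{d}{dx} = \frac{1}{2y}\frac{d}{dy}$ carries ${\mathbb C}[y]$ into ${\mathbb C}\big[y,y^{-1}\big]$, every entry $\frac{d^{j-1}v_i}{dx^{j-1}}$ of the Wronskian matrix, and hence $\operatorname{Wr}(v_1,\dots,v_k)$ itself, is a Laurent polynomial in $y$. On the other hand, by~\eqref{l0a} each exponent $\langle\Lambda_0,\alpha^\vee_i\rangle$ lies in $\half{\mathbb Z}$ while each $\langle\Lambda_s,\alpha^\vee_i\rangle$ and each $\langle\Lambda_{R+1-s},\alpha^\vee_i\rangle$ is a non-negative integer, so $\tilde T_i = y^{e_i}\prod_{s=1}^{N}\big(y^2-z_s\big)^{b_{s,i}}\big(y^2+z_s\big)^{c_{s,i}}$ for some $e_i\in{\mathbb Z}$ and $b_{s,i},c_{s,i}\in{\mathbb Z}_{\geq 0}$. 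Dividing out $\tilde T_1^{k-1}\tilde T_2^{k-2}\cdots\tilde T_{k-1}$ therefore produces a rational function of $y$ whose denominator, in lowest terms, is built only from a (possibly negative) power of $y$ and from factors $y^2\mp z_s$; hence the only possible poles of $\operatorname{Wr}^\dag(v_1,\dots,v_k)$ in the $y$-plane lie at $y=0$ or at the nonzero points $y$ with $y^2\in\{z_1,\dots,z_N,-z_1,\dots,-z_N\}$.

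Next one removes these poles. For $z\in\{\pm z_s : s=1,\dots,N\}$ one has $z\neq 0$, and the two solutions of $y^2=z$ are exactly the two branches near $x=z$ of $y\mapsto x=y^2$; hence ``$\operatorname{Wr}^\dag$ regular at $z$'' in the sense of condition~(\ref{ramcon}) is precisely the statement that the $y$-rational function $\operatorname{Wr}^\dag(v_1,\dots,v_k)$ is regular at both points over $z$. Applying this at each of $z_1,\dots,z_N,-z_1,\dots,-z_N$ kills all poles with $y^2=\pm z_s$. Finally, condition~(\ref{zerocon}) asserts that at $x=0$ the divided Wronskian admits an expansion $\sum_{m\in{\mathbb Z}_{\geq 0}/2}a_m x^m=\sum_{j\geq 0}a_{j/2}\,y^{j}$, which is a power series in $y$ with no negative powers; thus $\operatorname{Wr}^\dag(v_1,\dots,v_k)$ is regular at $y=0$ as well. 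Having no finite poles, the rational function $\operatorname{Wr}^\dag(v_1,\dots,v_k)$ is a polynomial in $y=x^{\half}$.

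There is no real difficulty here beyond bookkeeping; the one point deserving care is the passage between the two descriptions — Definition~\ref{Kfdef} speaks of the variable $x$, whereas the object of interest is manifestly rational in $y=x^{\half}$ — and in particular the observation that the negative powers of $x^{\half}$ which may occur in $\operatorname{Wr}(v_1,\dots,v_k)$ or in the $\tilde T_i$ can only create a pole over $x=0$, precisely the locus governed by condition~(\ref{zerocon}).
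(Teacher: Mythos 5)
Your argument is correct and is precisely the routine elaboration the paper leaves implicit (the paper states that conditions~(\ref{ramcon}) and~(\ref{zerocon}) ``imply the following important lemma'' without spelling out the verification). The key observation — that $\operatorname{Wr}^\dag(v_1,\dots,v_k)$ is a priori a rational function of $y=x^{\half}$ with poles only over $0$ and over the $\pm z_s$, and that conditions~(\ref{ramcon}) and~(\ref{zerocon}) rule those out — is exactly what is intended.
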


Since ${\mathcal K}$ is decomposable it follows from condition~(\ref{expcon}) that ${\mathcal K}$ admits a decomposable basis $(u_k)_{k=1}^{R+1}$ such that $\deg u_k=d_k$ for each $k$.
We call any such basis a \emph{special} basis.

\begin{lem}\label{sblem} Any two special bases $(u_k)_{k=1}^{R+1}$ and $(u'_k)_{k=1}^{R+1}$ are related by a triangular change of basis, $u'_k= \sum\limits_{j\leq k} a_{kj} u_j$, such that $a_{kj}=0$ whenever $d_k-d_j\notin {\mathbb Z}$.
\end{lem}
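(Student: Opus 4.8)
The plan is to combine two elementary facts: the strict monotonicity of the exponent sequence $d_1 < d_2 < \dots < d_{R+1}$, which forces any change between special bases to be triangular, and the directness of the splitting ${\mathbb C}[x^\half] = {\mathbb C}[x] \oplus x^\half{\mathbb C}[x]$, which forces the off-parity coefficients to vanish.

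First I would fix two special bases $(u_k)_{k=1}^{R+1}$ and $(u'_k)_{k=1}^{R+1}$ (such bases exist, as noted just before the lemma) and write $u'_k = \sum_{j=1}^{R+1} a_{kj} u_j$. Since $\deg u_j = d_j$ and the $d_j$ are pairwise distinct, the leading terms of $u_1,\dots,u_{R+1}$ sit in distinct degrees, so no cancellation of the top-degree term is possible in a nonzero linear combination: $\deg\big(\sum_j a_{kj} u_j\big) = \max\{d_j : a_{kj}\neq 0\}$. But the left-hand side has degree $\deg u'_k = d_k$, so $\max\{d_j : a_{kj}\neq 0\} = d_k$, and since $j\mapsto d_j$ is strictly increasing this gives $a_{kj}=0$ for all $j>k$ (and $a_{kk}\neq 0$). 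This is the triangularity.

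Next I would use decomposability. Because $(u_j)_j$ and $(u'_k)_k$ are decomposable bases, each $u_j$ and each $u'_k$ lies in ${\mathbb C}[x]$ or in $x^\half{\mathbb C}[x]$; and since every monomial occurring in an element of ${\mathbb C}[x]$ (resp.\ $x^\half{\mathbb C}[x]$) has integer (resp.\ half-odd-integer) exponent, the leading exponent already determines which: $u_j\in {\mathbb C}[x]$ iff $d_j\in{\mathbb Z}$, and $u'_k\in{\mathbb C}[x]$ iff $d_k\in{\mathbb Z}$, this dichotomy being exhaustive by Lemma~\ref{deglem}. Now project the identity $u'_k = \sum_{j\le k} a_{kj} u_j$ onto the summand of ${\mathbb C}[x^\half] = {\mathbb C}[x]\oplus x^\half{\mathbb C}[x]$ not containing $u'_k$: the left side becomes $0$, and the right side becomes $\sum a_{kj} u_j$, summed over those $j$ with $d_j$ of the opposite parity to $d_k$. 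Since the $u_j$ are linearly independent, all these $a_{kj}$ vanish. As $d_k$ and $d_j$ both lie in $\half{\mathbb Z}$, having opposite parity is precisely the condition $d_k - d_j\notin{\mathbb Z}$, which is the assertion.

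I do not anticipate a genuine obstacle; the only point requiring any care is the implication ``$\deg u_j = d_j$ with $d_j\in{\mathbb Z}\Rightarrow u_j\in{\mathbb C}[x]$'', which uses that the basis (and not merely ${\mathcal K}$) is decomposable together with the clean integer/half-odd-integer dichotomy of Lemma~\ref{deglem}.
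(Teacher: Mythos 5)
Your proof is correct. The paper states Lemma~\ref{sblem} without supplying a proof, so there is no ``paper argument'' to compare against; but your reasoning fills the gap with exactly the natural argument one would write. Both halves are sound: (i)~since the exponents $d_1<\cdots<d_{R+1}$ are strictly increasing and distinct, leading terms cannot cancel, so matching $\deg u'_k=d_k$ forces $a_{kj}=0$ for $j>k$ and $a_{kk}\ne 0$; (ii)~because both bases are \emph{decomposable} and Lemma~\ref{deglem} gives the clean integer/half-odd-integer dichotomy for the $d_j$, the parity of $d_j$ determines whether $u_j$ lies in ${\mathbb C}[x]$ or $x^{\half}{\mathbb C}[x]$, and projecting onto the complement of the summand containing $u'_k$ kills precisely those coefficients $a_{kj}$ with $d_k-d_j\notin{\mathbb Z}$. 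You also correctly flag the one place where decomposability of the \emph{basis}, not merely of ${\mathcal K}$, is used. No gaps.
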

\begin{lem}\label{Wlem} Let $m\in {\mathbb Z}_{\geq 1}$. Let $n_1,\dots, n_m$ be  non-negative integers. Then
\begin{gather*}
 \operatorname{Wr}(x^{n_1},\dots,x^{n_m}) = x^{\sum\limits_{i=1}^m n_i- \frac{m(m-1)}2} \prod _{1\leq j<i\leq m} (n_i-n_j).
\end{gather*}
\end{lem}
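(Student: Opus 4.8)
The plan is to compute the Wronskian determinant $\operatorname{Wr}(x^{n_1},\dots,x^{n_m})$ directly and recognize the result as a Vandermonde-type determinant. First I would write out the $(i,j)$ entry of the matrix whose determinant is this Wronskian. Differentiating $x^{n_i}$ a total of $j-1$ times gives
\begin{gather*}
\frac{d^{j-1}}{dx^{j-1}} x^{n_i} = n_i(n_i-1)\cdots(n_i-j+2)\, x^{n_i-j+1}
= \frac{n_i!}{(n_i-j+1)!}\, x^{n_i-j+1},
\end{gather*}
with the convention that the falling factorial $n_i(n_i-1)\cdots(n_i-j+2)$ is interpreted as a polynomial in $n_i$ of degree $j-1$ (so it is correctly zero when $n_i < j-1$ and $n_i$ a non-negative integer). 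Thus the matrix entry is a product of $x^{n_i-j+1}$ with the falling factorial $(n_i)_{j-1} := n_i(n_i-1)\cdots(n_i-j+2)$.

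Next I would factor the powers of $x$ out of the determinant. From row $i$ one can extract $x^{n_i-m+1}$ (pulling out the smallest power appearing in that row, namely the one in column $m$), and then from column $j$ one extracts $x^{m-j}$. The total power of $x$ pulled out is
\begin{gather*}
\sum_{i=1}^m (n_i - m + 1) + \sum_{j=1}^m (m-j)
= \sum_{i=1}^m n_i - m(m-1) + \frac{m(m-1)}{2}
= \sum_{i=1}^m n_i - \frac{m(m-1)}{2},
\end{gather*}
which matches the exponent in the claimed formula. What remains is the determinant of the matrix with $(i,j)$ entry the falling factorial $(n_i)_{j-1}$, a matrix depending only on the integers $n_1,\dots,n_m$.

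The final step is to evaluate this determinant of falling factorials. Since $(n_i)_{j-1}$ is, for each fixed $j$, a monic polynomial in $n_i$ of degree $j-1$, the matrix $\big((n_i)_{j-1}\big)_{i,j=1}^m$ is obtained from the Vandermonde matrix $\big(n_i^{j-1}\big)_{i,j=1}^m$ by right-multiplication by a unipotent upper-triangular matrix (the change of basis between the monomial basis $1, x, x^2, \dots$ and the falling-factorial basis $1, (x)_1, (x)_2, \dots$ of the space of polynomials of degree $<m$). That transformation has determinant $1$, so the determinant of the falling-factorial matrix equals the Vandermonde determinant $\prod_{1\leq j<i\leq m}(n_i-n_j)$, completing the proof. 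I expect no real obstacle here; the only point requiring a little care is the bookkeeping of the powers of $x$ and the observation that the falling-factorial matrix and the Vandermonde matrix differ by a unimodular triangular transformation, which makes the final determinant evaluation immediate.
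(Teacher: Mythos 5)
Your argument is correct, and the paper offers no proof of its own for this lemma (it is stated as a standard fact), so there is nothing to compare against. The computation of the exponent of $x$ is right, the reduction of the remaining determinant $\det\big((n_i)_{j-1}\big)_{i,j=1}^{m}$ to the Vandermonde determinant via a unipotent upper-triangular change of basis is the standard and cleanest route, and the edge cases (repeated $n_i$, or some $n_i<m-1$, where the row extraction formally involves a negative power of $x$ but the corresponding falling-factorial entry is $0$) are all handled consistently, since these determinant manipulations are valid over the Laurent polynomial ring ${\mathbb C}[x,x^{-1}]$.
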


\begin{lem}\label{uwrlem} Let $(u_i(x))_{i=1}^{R+1}$ be a special basis of ${\mathcal K}$. Then
\begin{gather*}
 \operatorname{Wr}^\dag(u_1,\dots,u_{R+1}) = \prod_{1\leq j<i\leq R+1} (d_i - d_j).
 \end{gather*}
\end{lem}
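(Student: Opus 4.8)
The claim is that for a special basis $(u_i)_{i=1}^{R+1}$ of $\mathcal K$, the top divided Wronskian $\operatorname{Wr}^\dag(u_1,\dots,u_{R+1})$ equals the constant $\prod_{1\le j<i\le R+1}(d_i-d_j)$. The strategy is to compute this quasi-polynomial in two steps: first determine that it is a \emph{constant} (degree zero), and then pin down that constant by looking at the leading coefficient.

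First I would record that, by definition, $\operatorname{Wr}^\dag(u_1,\dots,u_{R+1}) = \operatorname{Wr}(u_1,\dots,u_{R+1})/(\tilde T_1^R \tilde T_2^{R-1}\cdots \tilde T_R)$, and by Lemma~\ref{dWlem} this is a genuine quasi-polynomial. To find its degree, I would compute the degree of the numerator and the degree of the denominator separately. Since $\deg u_i = d_i$ and the $d_i$ are distinct, the Wronskian of $u_1,\dots,u_{R+1}$ has degree $\sum_{i=1}^{R+1} d_i - \binom{R+1}{2}$ — this is the content of Lemma~\ref{Wlem} applied to the leading monomials $x^{d_i}$ (the lower-order terms contribute nothing to the leading behaviour, and the product $\prod_{j<i}(d_i-d_j)$ is nonzero precisely because the $d_i$ are distinct, so there is no accidental cancellation of the top term). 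For the denominator, $\deg \tilde T_i = \langle\Lambda,\alpha_i^\vee\rangle$ by the definition of $\tilde T_i$ together with~\eqref{lambdadef} (each factor $x^{\langle\Lambda_0,\alpha_i^\vee\rangle}$, $(x-z_s)^{\langle\Lambda_s,\alpha_i^\vee\rangle}$, $(x+z_s)^{\langle\Lambda_{R+1-s},\alpha_i^\vee\rangle}$ contributes its exponent, summing to $\langle\Lambda_0 + \sum_s(\Lambda_s+\sigma\Lambda_s),\alpha_i^\vee\rangle = \langle\Lambda,\alpha_i^\vee\rangle$). Hence $\deg(\tilde T_1^R\cdots\tilde T_R) = \sum_{i=1}^R (R+1-i)\langle\Lambda,\alpha_i^\vee\rangle = \langle\Lambda,\sum_{i=1}^R(R+1-i)\alpha_i^\vee\rangle = \langle\Lambda,\epsilon_{R+1}+\dots-\text{(const)}\rangle$; more cleanly, $\sum_{i=1}^R(R+1-i)\alpha_i^\vee = \sum_{i=1}^R(R+1-i)(\epsilon_{i+1}-\epsilon_i)$, which telescopes to $(R+1)\sum_{k=1}^{R+1}\epsilon_k/(\cdots)$ — in any case I would show this equals $\sum_{i=1}^{R+1} d_i - \binom{R+1}{2}$ using the degree formula~\eqref{du} of Lemma~\ref{deglem}: writing $d_k = d_1 + \langle\tilde\Lambda_\infty+\rho,\alpha_1^\vee+\dots+\alpha_{k-1}^\vee\rangle$ and summing, one matches it against $\langle\Lambda,\cdot\rangle$ via the relation $\Lambda - \tilde\Lambda_\infty = \sum k_i\alpha_i$ and the definition~\eqref{ddef} of the $d_k$. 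The upshot is that numerator and denominator have equal degree, so $\operatorname{Wr}^\dag(u_1,\dots,u_{R+1})$ is a constant.

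Second, I would identify the constant by comparing leading coefficients. Normalising each $u_i$ so that its leading term is $x^{d_i}$ (a special basis is determined only up to the triangular changes of Lemma~\ref{sblem}, but those do not affect the top Wronskian, which is multilinear and alternating, so I may assume the monic normalisation), the leading coefficient of $\operatorname{Wr}(u_1,\dots,u_{R+1})$ is, by Lemma~\ref{Wlem} with $n_i = d_i$, exactly $\prod_{1\le j<i\le R+1}(d_i-d_j)$ (this formula is stated for integer $n_i$, but the Vandermonde-type computation $\operatorname{Wr}(x^{a_1},\dots,x^{a_m}) = x^{\sum a_i - \binom m2}\prod_{j<i}(a_i-a_j)$ holds verbatim for the half-integer exponents $a_i=d_i$, since it only uses $\frac{d^{j-1}}{dx^{j-1}}x^{a} = a(a-1)\cdots(a-j+2)x^{a-j+1}$ and a determinant identity; I would either cite this or note the trivial extension). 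The leading coefficient of the denominator $\tilde T_1^R\cdots\tilde T_R$ is $1$, since each $\tilde T_i$ is monic (all the factors $x$, $x-z_s$, $x+z_s$ are monic). Dividing, the constant value of $\operatorname{Wr}^\dag(u_1,\dots,u_{R+1})$ is $\prod_{1\le j<i\le R+1}(d_i-d_j)$, as claimed.

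\textbf{Main obstacle.} The only genuinely delicate point is the degree bookkeeping: verifying that $\deg(\tilde T_1^R\cdots\tilde T_R) = \sum_i d_i - \binom{R+1}{2}$, which is what forces $\operatorname{Wr}^\dag$ to be a constant rather than a nonconstant quasi-polynomial. This requires carefully combining the definition of $\tilde T_i$ (degree $\langle\Lambda,\alpha_i^\vee\rangle$), the weight $\Lambda$ from~\eqref{lambdadef}, the exponents $d_k$ from~\eqref{ddef}, and the telescoping identity~\eqref{du} of Lemma~\ref{deglem}, together with the relation $\Lambda-\tilde\Lambda_\infty\in\sum{\mathbb Z}_{\ge0}\alpha_i$. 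I expect this to be a short but fiddly computation in the type $A_R$ root system, essentially a rank-$(R+1)$ version of the identity $\sum_{i=1}^R(R+1-i)(\epsilon_{i+1}-\epsilon_i) = \sum_{k=1}^{R+1}\epsilon_k \cdot(\text{something}) - \rho$-type manipulation. Everything else (Lemma~\ref{Wlem} for leading terms, monicity of the $\tilde T_i$, multilinearity of the Wronskian) is routine.
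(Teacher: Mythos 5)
Your proof is correct and follows essentially the same route as the paper: appeal to Lemma~\ref{dWlem} for quasi-polynomiality, use the degree identity $\sum_{i=1}^{R+1} d_i - \binom{R+1}{2} = \langle\Lambda, R\alpha_1^\vee + (R-1)\alpha_2^\vee + \dots + \alpha_R^\vee\rangle$ (the paper's~\eqref{dde}, derived via~\eqref{d1e} from $\Lambda-\tilde\Lambda_\infty\in{\mathbb Z}_{\geq0}[\alpha_i]$ and~\eqref{du}) to show the Wronskian and the product $\tilde T_1^R\cdots\tilde T_R$ have equal degree, and then read off the Vandermonde constant from Lemma~\ref{Wlem}. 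Your remarks about extending Lemma~\ref{Wlem} to half-integer exponents, the monicity of the $\tilde T_i$, and the basis-independence via Lemma~\ref{sblem} are all sound supplementary observations that the paper leaves implicit.
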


\begin{proof} By Lemma~\ref{dWlem}, $\operatorname{Wr}^\dag(u_1,\dots,u_{R+1})\in {\mathbb C}\big[x^\half\big]$. We must show that it has degree zero and compute the constant term.
From the condition that $\Lambda-\tilde\Lambda_\infty\in {\mathbb Z}_{\geq 0}[\alpha_i]_{i\in I}$ it follows that
$0=\langle \Lambda-\tilde\Lambda_\infty, -(R+1) \epsilon_1 +  R\alpha^\vee_1 +(R-1) \alpha^\vee_2 + \dots + 2 \alpha^\vee_{R-1} + \alpha^\vee_R \rangle$
and therefore
\begin{gather} (R+1) d_1 = \langle \Lambda- \tilde \Lambda_\infty, R\alpha^\vee_1 +(R-1) \alpha^\vee_2 + \dots + 2 \alpha^\vee_{R-1} + \alpha^\vee_R \rangle.\label{d1e}
\end{gather}
Then \eqref{du} implies
\begin{gather}
\sum_{i=1}^{R+1} d_i - \frac{(R+1)R}2 = \langle \Lambda, R\alpha^\vee_1 +(R-1) \alpha^\vee_2 + \dots + 2 \alpha^\vee_{R-1} + \alpha^\vee_R \rangle.\label{dde}
\end{gather}
The result follows by Lemma~\ref{Wlem}.
\end{proof}

\begin{cor}\label{constlem} $\operatorname{Wr}^\dag(v_1,\dots,v_{R+1})$ is a constant $($independent of~$x)$ for all $v_1,\dots, v_{R+1}\in {\mathcal K}$.
\end{cor}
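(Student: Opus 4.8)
The statement is an immediate consequence of Lemma~\ref{uwrlem} together with the multilinear, alternating nature of the Wronskian determinant. The plan is as follows. By Definition~\ref{Kfdef}\,(\ref{expcon}), the space ${\mathcal K}$ has a basis of $R+1$ quasi-polynomials, so $\dim_{\mathbb C}{\mathcal K}=R+1$. Fix a special basis $(u_k)_{k=1}^{R+1}$ of ${\mathcal K}$, which exists since ${\mathcal K}$ is decomposable. Given arbitrary $v_1,\dots,v_{R+1}\in {\mathcal K}$, write $v_i = \sum_{k=1}^{R+1} c_{ik}\,u_k$ with $c_{ik}\in {\mathbb C}$.

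The key step is the observation that for the ordinary Wronskian one has
\begin{gather*}
 \operatorname{Wr}(v_1,\dots,v_{R+1}) = \det\bigl(c_{ik}\bigr)_{i,k=1}^{R+1}\cdot \operatorname{Wr}(u_1,\dots,u_{R+1}).
\end{gather*}
Indeed, $v_i^{(j-1)} = \sum_k c_{ik}\,u_k^{(j-1)}$, so the matrix $\bigl(v_i^{(j-1)}\bigr)_{i,j}$ is the product of the constant matrix $(c_{ik})$ with $\bigl(u_k^{(j-1)}\bigr)_{k,j}$, and the claim follows by multiplicativity of $\det$. Dividing both sides by $\tilde T_1^{R}\tilde T_2^{R-1}\cdots \tilde T_R$ — the same denominator for either tuple — gives
\begin{gather*}
 \operatorname{Wr}^\dag(v_1,\dots,v_{R+1}) = \det\bigl(c_{ik}\bigr)\cdot \operatorname{Wr}^\dag(u_1,\dots,u_{R+1}).
\end{gather*}
By Lemma~\ref{uwrlem}, $\operatorname{Wr}^\dag(u_1,\dots,u_{R+1}) = \prod_{1\leq j<i\leq R+1}(d_i-d_j)$, which is a constant independent of $x$ (and in fact nonzero, since the $d_i$ are pairwise distinct). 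Hence $\operatorname{Wr}^\dag(v_1,\dots,v_{R+1})$ is a constant, namely $\det(c_{ik})\prod_{j<i}(d_i-d_j)$.

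There is essentially no obstacle here: the only points needing care are that $\dim {\mathcal K} = R+1$ (so that any $R+1$ elements are expressible in a fixed basis), which is part of the frame conditions, and that the divided Wronskian differs from the ordinary one by a factor depending only on the frame and on the number of arguments, not on the particular elements chosen — so the scaling by $\det(c_{ik})$ passes through unchanged. Everything else is the standard linear-algebra behaviour of Wronskians under change of functions, already used implicitly in the proof of Lemma~\ref{uwrlem}.
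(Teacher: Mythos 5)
Your proof is correct and is precisely the argument implicit in the paper's labelling of this statement as a corollary of Lemma~\ref{uwrlem}: the $(R+1)$-fold divided Wronskian is an alternating multilinear form on the $(R+1)$-dimensional space ${\mathcal K}$ with a denominator that depends only on the frame, so its value on arbitrary $v_1,\dots,v_{R+1}$ is the value on a special basis scaled by the determinant of the change-of-basis matrix, and Lemma~\ref{uwrlem} shows that value is the constant $\prod_{j<i}(d_i-d_j)$.
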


Let $(u_k)_{k=1}^{R+1}$ be a special basis of ${\mathcal K}$. Introduce the subspaces
\begin{gather*} {\mathcal K}_{\textup{Sp}}  := \operatorname{span}_{\mathbb C}(u_1,\dots,u_p) \oplus \operatorname{span}_{\mathbb C}(u_{R+2-p},\dots,u_{R+1}),\\
 {\mathcal K}_{\textup{O}}  := \operatorname{span}_{\mathbb C}(u_{p+1},\dots,u_{R+1-p}),
 \end{gather*}
so that
\begin{gather*}
 {\mathcal K} = {\mathcal K}_{\textup{Sp}} \oplus {\mathcal K}_{\textup{O}}.
\end{gather*}
By Lemma~\ref{sblem}, these def\/initions of do not depend on the choice of special basis $(u_k)_{k=1}^{R+1}$.
By Lemma~\ref{deglem} we have that, whenever $p>0$, then
\begin{gather}
 {\mathcal K}_{\textup{Sp}} = {\mathcal K} \cap {\mathbb C}[x],\qquad {\mathcal K}_{\textup{O}} = {\mathcal K}\cap x^\half {\mathbb C}[x]. \label{kpmpoly}
\end{gather}
Exceptionally, when $p=0$, we have ${\mathcal K}_{\textup{Sp}} = \{0\}$, ${\mathcal K}_{\textup{O}} = {\mathcal K}\subset {\mathbb C}[x]$.

Given a decomposable subspace $V$, we write $\operatorname{sdim} V$ for the pair of numbers
\begin{gather*}
 \operatorname{sdim} V:= ( \dim V\cap {\mathcal K}_{\textup{Sp}}| \dim V\cap {\mathcal K}_{\textup{O}} ).
\end{gather*}

\subsection[ Flags in ${\mathcal K}$]{Flags in $\boldsymbol{{\mathcal K}}$}\label{flagsec}
Let $\FL({\mathcal K})$ denote the
space of full (i.e., $R+1$-step) f\/lags in~${\mathcal K}$.

We say an $r$-step f\/lag ${\mathcal F} = \{0= F_0 \subset F_1\subset F_2 \subset \dots \subset F_{r} = {\mathcal K}\}$ in ${\mathcal K}$ is \emph{decomposable} if each~$F_k$ is decomposable.

The space of decomposable full f\/lags in ${\mathcal K}$ has $\binom {R+1}{2p}$ connected components.
These connected components are labeled by $2p$-element subsets $Q\subset \{1,\dots,R+1\}$.  Def\/ine  $\FL_Q({\mathcal K})$ to be the subset consisting of the f\/lags ${\mathcal F}= \{0=F_0\subset F_1\subset \dots\subset F_{R+1} = {\mathcal K}\}$ such that for each $k$,
\begin{gather*}
 \operatorname{sdim} F_k - \operatorname{sdim} F_{k-1} =\begin{cases} (1|0) & \text{if}\quad  k\in Q,\\
      (0|1) & \text{if} \quad k\notin Q.\end{cases}
\end{gather*}
We call elements of $\FL_Q({\mathcal K})$ \emph{flags of type $Q$}.

For each $Q$ the variety $\FL_Q({\mathcal K})$ is isomorphic to the direct product of full f\/lag spaces $\FL({\mathcal K}_{\textup{Sp}})\times \FL({\mathcal K}_{\textup{O}})$. The isomorphism
\begin{gather}
\eta_Q \colon \ \FL({\mathcal K}_{\textup{Sp}})\times \FL({\mathcal K}_{\textup{O}}) \to \FL_Q({\mathcal K})\label{etaqdef}
\end{gather}
sends a pair of f\/lags $F_{1,+}\subset \dots\subset F_{2p,+}$, $F_{1,-}\subset \dots\subset F_{R+1-2p,-}$ to the f\/lag $F_{1}\subset \dots\subset F_{R}$, where
$F_k = F_{k_1,+}\oplus  F_{k_2,-}$,  $k_1=  |Q\cap \{1,\dots,k\} |$,  $k_2=k-k_1$.

Call a $2p$-element subset $Q\subset \{1,\dots,R+1\}$ \emph{symmetric} if $Q$ is
invariant with respect  to the involution $k\mapsto R+2-k$.
In particular, the following subset $S$ is symmetric
\begin{gather}
 S:=\{1,\dots, p, R+2-p,\dots,R+1\}. \label{sh}
\end{gather}
If $(u_k)_{k=1}^{R+1}$ is a special basis of ${\mathcal K}$ then the full f\/lag ${\mathcal F} = \{0= F_0 \subset F_1\subset F_2 \subset \dots \subset F_{R+1} = {\mathcal K}\}$ def\/ined by
\begin{gather}
 F_k = \operatorname{span}_{\mathbb C}(u_1,\dots,u_k), \qquad k=1,\dots,R+1,\label{smallcell}
\end{gather}
belongs to $\FL_S({\mathcal K})$. By Lemma~\ref{sblem} this f\/lag is independent of the choice of special basis.

To any full f\/lag ${\mathcal F} = \{  0 \subset F_1 \subset F_2 \subset \dots \subset F_{R+1} = {\mathcal K}\}$ in $\FL({\mathcal K})$ one can associate a tuple $\bm y^{\mathcal F} = (y_i(x))_{i=1}^{R}\in {\mathbb P}\big({\mathbb C}\big[x^\half\big]\big)^{R}$. Namely, let $(u^{\mathcal F}_k(x))_{k=1}^{R+1}$ be any basis of~${\mathcal K}$ such that
\begin{gather*}
F_k = \operatorname{span}_{\mathbb C}\big(u^{\mathcal F}_1,\dots,u^{\mathcal F}_k\big), \qquad k=1,\dots,R+1.
\end{gather*}
(we say such a basis is \emph{adjusted to} ${\mathcal F}$)
and then let
\begin{gather} y^{\mathcal F}_k := \operatorname{Wr}^\dag\big(u^{\mathcal F}_1,\dots,u^{\mathcal F}_k\big), \qquad k=1,\dots,R.
\label{yfdef}
\end{gather}
By Lemma \ref{dWlem}, these are quasi-polynomials.

We have the shifted action of the Weyl group of type $A_R$ on weights as in Section~\ref{KMa}. The weight at inf\/inity $\Lambda_\infty(\bm y^{\mathcal F})$, as in~\eqref{l8y}, belongs to the shifted Weyl orbit of $\tilde\Lambda_\infty$ \cite[Section~3.6]{MV1}. It is equal to $\tilde\Lambda_\infty$ if and only if~${\mathcal F}$ is the f\/lag given in~\eqref{smallcell}.

The map ${\mathcal F} \mapsto \bm y^{\mathcal F}$ def\/ines a morphism of varieties,
\begin{gather*}
 \beta\colon \ \FL({\mathcal K}) \to {\mathbb P}\big({\mathbb C}\big[x^{1/2}\big]\big)^{R}.
 \end{gather*}
This morphism $\beta$ def\/ines an isomorphism of $\FL({\mathcal K})$ onto its image, as in Lemmas 5.14--5.16 of~\cite{MV1}.

\begin{lem}\label{Simlem}
The image $\beta(\FL_S({\mathcal K}))$ of the variety of flags of type $S$ lies in ${\mathbb P}({\mathbb C}[x])^{R}$, i.e., consists of tuples of polynomials.
\end{lem}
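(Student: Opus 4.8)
The plan is to produce an involution $\iota$ of the ring of Laurent quasi-polynomials whose fixed subring inside ${\mathbb C}\big[x^{1/2}\big]$ is exactly ${\mathbb C}[x]$, and then to show that every component $y^{\mathcal F}_k$ of $\bm y^{\mathcal F}$, for ${\mathcal F}\in\FL_S({\mathcal K})$, is $\iota$-invariant. Since the $y^{\mathcal F}_k$ are quasi-polynomials by Lemma~\ref{dWlem}, invariance under $\iota$ will force them into ${\mathbb C}[x]$, which is the assertion.

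First I would dispose of the degenerate cases $p=0$ and the case $R=2n-1$, $p=n$. In both situations Lemma~\ref{deglem} makes all the exponents $d_1,\dots,d_{R+1}$ integral, so that (using decomposability together with~\eqref{kpmpoly}, noting ${\mathcal K}_{\textup{O}}=\{0\}$ resp.\ ${\mathcal K}_{\textup{Sp}}=\{0\}$) one gets ${\mathcal K}\subset{\mathbb C}[x]$; and by~\eqref{l0a} every $\tilde T_i$ then also lies in ${\mathbb C}[x]$. Hence $\operatorname{Wr}\big(u^{\mathcal F}_1,\dots,u^{\mathcal F}_k\big)$ and $\tilde T_1^{k-1}\cdots\tilde T_{k-1}$ are ordinary polynomials, so $y^{\mathcal F}_k=\operatorname{Wr}^\dag\big(u^{\mathcal F}_1,\dots,u^{\mathcal F}_k\big)$ is a rational function of $x$ which, being a quasi-polynomial by Lemma~\ref{dWlem}, must be a polynomial. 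So from now on I may assume $0<p$ and $2p\le R$, i.e.\ $0<p<R+1-p\le R$.

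Next I would let $\iota$ be the ${\mathbb C}$-algebra automorphism of ${\mathbb C}\big[x^{\pm1/2}\big]$ (extended to its fraction field) determined by $\iota\big(x^{1/2}\big)=-x^{1/2}$; within ${\mathbb C}\big[x^{1/2}\big]$ its $+1$-eigenspace is ${\mathbb C}[x]$ and its $-1$-eigenspace is $x^{1/2}{\mathbb C}[x]$. Two formal facts are needed. (i) A check on monomials $x^{n/2}$ shows $\iota$ commutes with $d/dx$, and since $\iota$ is a ring homomorphism it follows that $\iota\big(\operatorname{Wr}(v_1,\dots,v_k)\big)=\operatorname{Wr}(\iota v_1,\dots,\iota v_k)$ for all $v_1,\dots,v_k$. (ii) From the definition of $\tilde T_i$, the relation $\langle\Lambda_0,\alpha^\vee_i\rangle=\langle\Lambda_0,\alpha^\vee_{R+1-i}\rangle$, and~\eqref{l0a} (whereby $\langle\Lambda_0,\alpha^\vee_i\rangle\in{\mathbb Z}_{\geq0}$ for $i\notin\{p,R+1-p\}$, while $2\langle\Lambda_0,\alpha^\vee_p\rangle$ is an odd integer), one gets $\iota(\tilde T_i)=\tilde T_i$ for $i\notin\{p,R+1-p\}$ and $\iota(\tilde T_p)=-\tilde T_p$, $\iota(\tilde T_{R+1-p})=-\tilde T_{R+1-p}$.

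Finally, fix ${\mathcal F}\in\FL_S({\mathcal K})$. Since ${\mathcal F}$ is decomposable and, by~\eqref{kpmpoly}, $F_k\cap{\mathbb C}[x]=F_k\cap{\mathcal K}_{\textup{Sp}}$ and $F_k\cap x^{1/2}{\mathbb C}[x]=F_k\cap{\mathcal K}_{\textup{O}}$, I would build step by step a \emph{decomposable} adjusted basis $\big(u^{\mathcal F}_k\big)_{k=1}^{R+1}$: at step $k$ one enlarges $F_{k-1}$ inside $F_k\cap{\mathbb C}[x]$ if $k\in S$ and inside $F_k\cap x^{1/2}{\mathbb C}[x]$ if $k\notin S$, which is possible precisely because the $\operatorname{sdim}$-jump is $(1|0)$ or $(0|1)$ accordingly. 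Then $\iota u^{\mathcal F}_k=\varepsilon_k u^{\mathcal F}_k$ with $\varepsilon_k=-1$ exactly when $k\notin S$, i.e.\ when $p<k\le R+1-p$. Applying $\iota$ to $y^{\mathcal F}_k=\operatorname{Wr}\big(u^{\mathcal F}_1,\dots,u^{\mathcal F}_k\big)\big/\big(\tilde T_1^{k-1}\cdots\tilde T_{k-1}\big)$ and using (i) and (ii) gives $\iota\big(y^{\mathcal F}_k\big)=(-1)^{a_k-e_k}\,y^{\mathcal F}_k$, where $a_k=\#\{i\le k:i\notin S\}$ and $e_k=\sum_{i\in\{p,\,R+1-p\},\ i\le k-1}(k-i)$. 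The last step is the elementary identity $e_k-a_k=2\max(0,\,k-(R+1-p))$, valid for $k=1,\dots,R$, so that $a_k\equiv e_k\pmod 2$; hence $\iota\big(y^{\mathcal F}_k\big)=y^{\mathcal F}_k$, and being a quasi-polynomial fixed by $\iota$ it lies in ${\mathbb C}[x]$. Thus $\bm y^{\mathcal F}\in{\mathbb P}({\mathbb C}[x])^R$. I expect the only real work to be the sign/parity bookkeeping — verifying that the decomposable adjusted basis has the stated parities $\varepsilon_k$, pinning down the exponents $a_k,e_k$, and confirming the identity $e_k-a_k=2\max(0,k-(R+1-p))$; everything else is formal manipulation of $\iota$.
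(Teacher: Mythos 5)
Your proof is correct and follows essentially the same route as the paper's: the paper asserts ``by inspection'' that $\operatorname{Wr}(u^{\mathcal F}_1,\dots,u^{\mathcal F}_k)$ and $\tilde T_1^{k-1}\cdots\tilde T_{k-1}$ always lie in the same $\iota$-parity class and then invokes Lemma~\ref{dWlem}, which is precisely your claim that $a_k\equiv e_k\pmod 2$ together with the same appeal to Lemma~\ref{dWlem}. You have simply made the inspection explicit via the involution $\iota$ and the identity $e_k-a_k=2\max(0,k-(R+1-p))$, which is a nice way to organize the bookkeeping.
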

\begin{proof}
In the exceptional case $p=0$ no fractional powers are present at all and the result is clear. Suppose $p>0$.
Let ${\mathcal F}\in \FL_S({\mathcal K})$ and let $(u_k^{\mathcal F})_{k=1}^{R+1}$ be a basis of ${\mathcal K}$ adjusted to ${\mathcal F}$. By inspection one sees that because ${\mathcal F}\in \FL_S({\mathcal K})$, $\operatorname{Wr}(u^{\mathcal F}_1,\dots,u^{\mathcal F}_k)$ lies in ${\mathbb C}[x]$ (resp.\ $x^\half{\mathbb C}[x]$) for precisely those $k$ such that the product ${\tilde T}_1^{k-1} \cdots {\tilde T}_{k-1}$ lies in ${\mathbb C}[x^{\pm 1}]$ (resp.\ $x^{\half}{\mathbb C}[x^{\pm 1}]$). For each~$k$, Lemma~\ref{dWlem} guarantees that $y^{\mathcal F}_k\in {\mathbb C}\big[x^\half\big]$. Hence in fact $y^{\mathcal F}_k\in {\mathbb C}[x]$.
\end{proof}

\begin{lem}\label{QSlem} The tuple $\beta({\mathcal F})=\bm y^{\mathcal F}$ is decomposable if and only if ${\mathcal F}$ is a decomposable flag.
If~${\mathcal F}$ is a decomposable flag of type $Q$ then
\begin{gather*} y_k^{\mathcal F} \in \begin{cases}\phantom{x^\half} {\mathbb C}[x] & \text{if} \quad \left| S\triangle Q \cap \{1,\dots,k\} \right| \in 2{\mathbb Z},\\
 x^\half {\mathbb C}[x] & \text{if} \quad \left| S\triangle Q \cap \{1,\dots,k\} \right| \in 2{\mathbb Z}+1,
\end{cases}
\end{gather*}
where $S\triangle Q := (S{\setminus} Q)\cup (Q {\setminus} S)$ denotes the symmetric difference of~$S$ and~$Q$.
In particular~$\bm y^{\mathcal F}$ is a tuple of polynomials if and only if $Q=S$.
\end{lem}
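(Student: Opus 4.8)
The plan is to track the natural $\mathbb{Z}/2$-grading of quasi-polynomials through the defining formula~\eqref{yfdef} for $\bm y^{\mathcal F}$. Give $\mathbb{C}[x^{1/2}]$ and its Laurent extension $\mathbb{C}[x^{\pm1/2}]$ the $\mathbb{Z}/2$-grading in which $x^{r/2}$ ($r\in\mathbb{Z}$) has parity $r$, so that the even part of $\mathbb{C}[x^{1/2}]$ is $\mathbb{C}[x]$ and its odd part is $x^{1/2}\mathbb{C}[x]$; call a nonzero homogeneous element \emph{pure}, with parity $\operatorname{par}(\cdot)\in\{0,1\}$. Two elementary facts underlie the argument. (a) $\frac{d}{dx}$ preserves the grading, since $\frac{d}{dx}x^{r/2}=\frac r2 x^{r/2-1}$; hence for pure, linearly independent $u_1,\dots,u_k$ every entry in row $i$ of $(d^{j-1}u_i/dx^{j-1})_{i,j}$ has parity $\operatorname{par}(u_i)$, and therefore $\operatorname{Wr}(u_1,\dots,u_k)$ is pure of parity $\sum_{i=1}^k\operatorname{par}(u_i)$. (b) Each $\tilde T_i$ is pure — a genuine polynomial times $x^{\langle\Lambda_0,\alpha^\vee_i\rangle}$ — and by~\eqref{l0a} it is even unless $0<p<R+1-p$ and $i\in\{p,R+1-p\}$, in which case it is odd; consequently $D_k:=\tilde T_1^{k-1}\tilde T_2^{k-2}\cdots\tilde T_{k-1}$ is pure, and counting the occurrences of $p$ and $R+1-p$ in $D_k$ — equivalently, reading it off the proof of Lemma~\ref{Simlem} — gives $\operatorname{par}(D_k)\equiv|\{1,\dots,k\}\setminus S|\pmod{2}$, with $S$ as in~\eqref{sh}. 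I would set aside the trivial case $p=0$ (where $\mathcal{K}\subset\mathbb{C}[x]$ and $S=\varnothing=Q$) at the outset.

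Next I would establish the ``if'' half of the first assertion together with the explicit dichotomy. Let $\mathcal F$ be a decomposable flag of type $Q$. Since each $F_j$ is decomposable, a straightforward induction on $j$ produces a basis $(u_j^{\mathcal F})_{j=1}^{R+1}$ adjusted to $\mathcal F$ with every $u_j^{\mathcal F}$ pure; by the definition of ``type $Q$'' and~\eqref{kpmpoly} one can arrange $u_j^{\mathcal F}\in\mathbb{C}[x]$ for $j\in Q$ and $u_j^{\mathcal F}\in x^{1/2}\mathbb{C}[x]$ for $j\notin Q$. By~(a), $\operatorname{Wr}(u_1^{\mathcal F},\dots,u_k^{\mathcal F})$ is pure of parity $|\{1,\dots,k\}\setminus Q|$; by Lemma~\ref{dWlem} the ratio $y_k^{\mathcal F}=\operatorname{Wr}(u_1^{\mathcal F},\dots,u_k^{\mathcal F})/D_k$ is a quasi-polynomial, and it is nonzero (a divided Wronskian of linearly independent functions), hence pure of parity $|\{1,\dots,k\}\setminus Q|+|\{1,\dots,k\}\setminus S|\equiv|(S\triangle Q)\cap\{1,\dots,k\}|\pmod{2}$ (using $|A\triangle B|\equiv|A|+|B|\pmod{2}$). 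This is the stated case distinction, so $\bm y^{\mathcal F}$ is in particular decomposable, proving the ``if'' direction; and $\bm y^{\mathcal F}$ consists of polynomials precisely when $|(S\triangle Q)\cap\{1,\dots,k\}|$ is even for all $k$, which — taking $k$ to be the least element of $S\triangle Q$ if that set is nonempty — forces $S\triangle Q=\varnothing$, i.e.\ $Q=S$.

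For the converse of the first assertion I would argue by contraposition. Assume $\mathcal F$ is not decomposable and let $k$ be minimal with $F_k$ not decomposable (so $1\le k\le R$ and $F_{k-1}$ is decomposable). A dimension count shows that, $F_k$ failing to split, $F_k\cap\mathbb{C}[x]=F_{k-1}\cap\mathbb{C}[x]$ and $F_k\cap x^{1/2}\mathbb{C}[x]=F_{k-1}\cap x^{1/2}\mathbb{C}[x]$, since any pure vector of $F_k$ not already in $F_{k-1}$ would make $F_k$ decomposable. Choose a basis $(u_j^{\mathcal F})$ adjusted to $\mathcal F$ with $u_1^{\mathcal F},\dots,u_{k-1}^{\mathcal F}$ pure, and write $u_k^{\mathcal F}=v+w$ with $0\neq v\in\mathbb{C}[x]$ and $0\neq w\in x^{1/2}\mathbb{C}[x]$ — both summands nonzero because $u_k^{\mathcal F}\in F_k\setminus F_{k-1}$ is not pure. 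Then $v\notin F_{k-1}$ (else $w=u_k^{\mathcal F}-v$ would be a pure vector of $F_k$ outside $F_{k-1}$), and likewise $w\notin F_{k-1}$; hence $\operatorname{Wr}(u_1^{\mathcal F},\dots,u_{k-1}^{\mathcal F},v)$ and $\operatorname{Wr}(u_1^{\mathcal F},\dots,u_{k-1}^{\mathcal F},w)$ are nonzero and, by~(a), pure of opposite parities. By multilinearity their sum is $\operatorname{Wr}(u_1^{\mathcal F},\dots,u_k^{\mathcal F})$, a quasi-polynomial with nonzero components of both parities; dividing by the pure $D_k$, the same is true of $y_k^{\mathcal F}$. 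Thus $\bm y^{\mathcal F}$ is not decomposable.

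The step I expect to be the main obstacle is this converse direction: pinning down that, with $F_1,\dots,F_{k-1}$ decomposable, a non-decomposable $F_k$ must contain a vector over $F_{k-1}$ whose $\mathbb{C}[x]$- and $x^{1/2}\mathbb{C}[x]$-components are both nonzero and both lie outside $F_{k-1}$ — this is exactly what guarantees that the two Wronskians above are nonzero and of opposite pure parities, so that their sum genuinely fails to be homogeneous. Everything else is $\mathbb{Z}/2$-graded bookkeeping, and the forward direction specialised to $Q=S$ re-proves Lemma~\ref{Simlem} along the way.
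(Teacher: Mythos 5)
The paper gives no proof of Lemma~\ref{QSlem} (it is stated just before Section~\ref{Dsec} without an argument attached), so there is no authored proof to compare against; your argument is correct and supplies exactly the ${\mathbb Z}/2$-parity-tracking reasoning the authors use for the special case $Q=S$ in their proof of Lemma~\ref{Simlem}. Your handling of the ``only if'' direction---isolating the minimal non-decomposable $F_k$ and showing both pure components $v,w$ of $u^{\mathcal F}_k$ lie outside $F_{k-1}$, so that the two Wronskian summands are nonzero of opposite parities and $y^{\mathcal F}_k$ fails to be homogeneous---correctly closes the one substantive step that the paper leaves implicit.
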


\subsection{Fundamental dif\/ferential operator and the recovery theorem}\label{Dsec}

To any given a tuple $\bm y= (y_i(x))_{i=1}^{R}\in {\mathbb P}\big({\mathbb C}\big[x^\half\big]\big)^{R}$ of quasi-polynomials, we may associate a~dif\/ferential operator ${\mathcal D}(\bm y)$, def\/ined by
\begin{gather*} {\mathcal D}(\bm y)  := \left(\partial - \log'\frac{{\tilde T}_1{\tilde T}_2\cdots {\tilde T}_{R}}{y_{R}}\right)
 \left(\partial - \log'\frac{y_{R}{\tilde T}_1{\tilde T}_2\cdots {\tilde T}_{R-1}}{y_{R-1}}\right)
\cdots
 \\
\hphantom{{\mathcal D}(\bm y) =}{}\times
\left(\partial - \log'\frac{y_2 {\tilde T}_1}{y_1}\right) \big(\partial - \log' y_1\big)=  \overset{\longrightarrow}{\prod_{i=0}^{R}}
 \left(\partial - \log'\frac{y_{R+1-i}\prod\limits_{j=1}^{R-i} {\tilde T}_j}{y_{R-i}}\right),
  \end{gather*}
with the understanding that $y_0=y_{R+1} = 1$. Here $\partial:= \partial/\partial x$ and $\log'f := f'(x)/f(x)$.

\begin{thm}[\protect{\cite[Lemma~5.6]{MV1}}] Let $\bm y\in \beta(\FL({\mathcal K}))$. Then ${\mathcal K}= \ker {\mathcal D}$.
\end{thm}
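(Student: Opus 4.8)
\emph{Proof idea.} By definition $\bm y=\beta({\mathcal F})$ for some full flag ${\mathcal F}=\{0\subset F_1\subset\dots\subset F_{R+1}={\mathcal K}\}$ in ${\mathcal K}$. The plan is to fix a basis $(u_k)_{k=1}^{R+1}$ of ${\mathcal K}$ adjusted to ${\mathcal F}$, so that $F_k=\operatorname{span}_{\mathbb C}(u_1,\dots,u_k)$ and, by \eqref{yfdef}, $y_k=\operatorname{Wr}^\dag(u_1,\dots,u_k)$ for $k=1,\dots,R$, and then to set $W_k:=\operatorname{Wr}(u_1,\dots,u_k)$ for $k=0,1,\dots,R+1$ (with $W_0:=1$) and $P_k:=\prod_{j=1}^{k-1}{\tilde T}_j^{\,k-j}={\tilde T}_1^{k-1}\cdots{\tilde T}_{k-1}$ (with $P_0=P_1:=1$), so that $y_k=W_k/P_k$. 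Because the $u_k$ are linearly independent, every $W_k$ is a nonzero quasi-polynomial (indeed $y_k$ is, by Lemma~\ref{dWlem}), and $W_{R+1}=c\,P_{R+1}$ for a nonzero constant $c$ by Corollary~\ref{constlem}.

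The core of the argument will be to identify ${\mathcal D}(\bm y)$ with the monic order-$(R+1)$ differential operator attached to $(u_k)$ by the classical Frobenius (P\'olya) factorization, namely
\begin{gather*}
{\mathcal L}:=\left(\partial-\log'\frac{W_{R+1}}{W_R}\right)\left(\partial-\log'\frac{W_R}{W_{R-1}}\right)\cdots\left(\partial-\log'\frac{W_1}{W_0}\right).
\end{gather*}
The only bookkeeping needed is the telescoping identity $P_k/P_{k-1}=\prod_{j=1}^{k-1}{\tilde T}_j$, which is immediate from $P_k=\prod_{j=1}^{k-1}{\tilde T}_j^{\,k-j}$. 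Combined with $y_k=W_k/P_k$ it gives, for $k=1,\dots,R+1$ and with the conventions $y_0=y_{R+1}=1$, $W_{R+1}=c\,P_{R+1}$,
\begin{gather*}
\frac{y_k\prod_{j=1}^{k-1}{\tilde T}_j}{y_{k-1}}=\frac{W_k\,P_{k-1}}{W_{k-1}\,P_k}\prod_{j=1}^{k-1}{\tilde T}_j=\frac{W_k}{W_{k-1}},
\end{gather*}
where for $k=R+1$ the constant $c$ drops out under $\log'$. Reindexing the ordered product defining ${\mathcal D}(\bm y)$ by $k=R+1-i$, its $i$-th factor becomes $\partial-\log'\big(y_k\prod_{j=1}^{k-1}{\tilde T}_j/y_{k-1}\big)=\partial-\log'(W_k/W_{k-1})$, with $k=R+1$ leftmost and $k=1$ rightmost, whence ${\mathcal D}(\bm y)={\mathcal L}$.

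It then remains to recall why ${\mathcal L}$ recovers ${\mathcal K}$. Writing ${\mathcal L}_k:=\big(\partial-\log'\frac{W_k}{W_{k-1}}\big)\cdots\big(\partial-\log'\frac{W_1}{W_0}\big)$, I would show by induction on $k$ that ${\mathcal L}_k$ is monic of order $k$ and annihilates $u_1,\dots,u_k$: by the inductive hypothesis ${\mathcal L}_{k-1}$ is the unique monic order-$(k-1)$ operator killing $u_1,\dots,u_{k-1}$, hence ${\mathcal L}_{k-1}(f)=\operatorname{Wr}(u_1,\dots,u_{k-1},f)/W_{k-1}$ for all $f$ (the right side being monic of order $k-1$ in $f$ and vanishing on $u_1,\dots,u_{k-1}$), so ${\mathcal L}_{k-1}(u_k)=W_k/W_{k-1}$, which is killed by $\partial-\log'\frac{W_k}{W_{k-1}}$. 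Taking $k=R+1$: ${\mathcal D}(\bm y)={\mathcal L}={\mathcal L}_{R+1}$ is monic of order $R+1$ and annihilates the $(R+1)$-dimensional space $\operatorname{span}_{\mathbb C}(u_1,\dots,u_{R+1})={\mathcal K}$, so $\ker{\mathcal D}={\mathcal K}$. Finally I would note independence of the choices: a change of adjusted basis is upper triangular, so it rescales each $y_k$ by a nonzero constant, as does rescaling the projective representatives of $\bm y$, and neither affects the logarithmic derivatives entering ${\mathcal D}(\bm y)$.

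The main (and essentially only) non-routine point is the telescoping identity $P_k/P_{k-1}=\prod_{j=1}^{k-1}{\tilde T}_j$ together with the matching $y_k=W_k/P_k$, which converts the divided Wronskians of ${\mathcal D}(\bm y)$ into ordinary ones and thereby exhibits ${\mathcal D}(\bm y)$ as the Frobenius factorization of a linear ODE from a basis of its solution space; the passage from polynomials to quasi-polynomials is harmless, since all the Wronskians in play are honest nonzero quasi-polynomials by Lemmas~\ref{dWlem} and~\ref{uwrlem} and Corollary~\ref{constlem}.
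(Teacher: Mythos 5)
Your proof is correct, and it supplies the standard Wronskian/Frobenius-factorization argument that underlies the cited result [MV1, Lemma~5.6]: the telescoping identity $P_k/P_{k-1}=\prod_{j=1}^{k-1}\tilde T_j$ converts the factors of ${\mathcal D}(\bm y)$ into $\partial-\log'(W_k/W_{k-1})$, after which the identity ${\mathcal L}_{k-1}(f)=\operatorname{Wr}(u_1,\dots,u_{k-1},f)/W_{k-1}$ and a dimension count give $\ker{\mathcal D}={\mathcal K}$. The paper itself cites this theorem without proof, and your argument is essentially the one in the reference; your remarks on well-definedness (nonvanishing of the $W_k$, constancy of $W_{R+1}/P_{R+1}$ from Corollary~\ref{constlem}, invariance under triangular change of adjusted basis) close the only small gaps one might worry about.
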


\subsection[The dual space ${\mathcal K}^\dag$]{The dual space $\boldsymbol{{\mathcal K}^\dag}$}\label{Kdagsec}

Let ${\mathcal K}^\dag$ be the complex vector space
\begin{gather*}
 {\mathcal K}^\dag := \operatorname{span}_{\mathbb C}\big\{ {\operatorname{Wr}}^\dag(v_1,\dots,v_{R}) \colon v_1,\dots,v_{R}\in {\mathcal K} \big\} \subset {\mathbb C}\big[x^\half\big].
\end{gather*}
The space ${\mathcal K}^\dag$ is a space of quasi-polynomials by Lemma \ref{dWlem}.
The spaces ${\mathcal K}^\dag$ and ${\mathcal K}$ are dual with respect to the pairing
\begin{gather*}
 ( \cdot, \cdot) \colon \  {\mathcal K}^\dag \times {\mathcal K} \to {\mathbb C}
\end{gather*}
def\/ined by
\begin{gather*}
 \big(v_1,\operatorname{Wr}^\dag(v_2,\dots,v_{R+1}) \big) := \operatorname{Wr}^\dag(v_1,v_2,\dots,v_{R+1}).
\end{gather*}
Given any basis $(u_i(x))_{i=1}^{R+1}$ of ${\mathcal K}$ there is a basis $(W_i(x))_{i=1}^{R+1}$ of ${\mathcal K}^\dag$ def\/ined by
\begin{gather*}
 W_i := \operatorname{Wr}^\dag(u_1,\dots,\widehat u_i,\dots,u_{R+1}) \in {\mathcal K}^\dag,\qquad i=1,\dots,R+1, \label{Wdef}
\end{gather*}
where $\widehat u_i$ denotes omission. We have
\begin{gather*}
 (u_i, W_j) = 0 \qquad\text{if}\quad i\neq j, \qquad (u_i,W_i) \neq 0.\label{Wd}
\end{gather*}

Let  $d^\dag_1> \dots >d^\dag_{R+1}$ be the numbers given by
\begin{gather*}
 d^\dag_{R+1} :=  - \langle \Lambda - \tilde\Lambda_\infty,\epsilon_{R+1} \rangle,\qquad  d^\dag_k := - \langle \Lambda- ({\mathsf s}_R \cdots {\mathsf s}_{k})\cdot \tilde\Lambda_\infty, \epsilon_{R+1} \rangle, \qquad k=1,\dots,R,
\end{gather*}
cf.~\eqref{ddef}. We have
\begin{gather}
 d^\dag_{k} = d^\dag_{R+1} + \langle \tilde\Lambda_\infty + \rho, \alpha^\vee_k + \dots + \alpha^\vee_R \rangle, \qquad k=1,\dots, R,\label{du2}
\end{gather}
by an argument as for Lemma~\ref{deglem}.

\begin{lem}\label{Wdlem} Let $(u_i(x))_{i=1}^{R+1}$ be a special basis of ${\mathcal K}$. Then $\deg W_k = d^\dag_k$, $k=1,\dots,R+1$, and the basis $(W_k)_{k=1}^{R+1}$ is decomposable.
\end{lem}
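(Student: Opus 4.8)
The plan is to compute $\deg W_k$ explicitly, recognise the answer as $d^\dag_k$, and then obtain decomposability from a parity count.

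For the degree, note first that by Lemma~\ref{dWlem} the quasi-polynomial $W_k=\operatorname{Wr}^\dag(u_1,\dots,\widehat u_k,\dots,u_{R+1})$ has a well-defined leading power of $x$, so it is enough to find it. Since $(u_j)$ is a special basis, $\deg u_j=d_j$ and the $d_j$ are pairwise distinct, so the top-degree part of the ordinary Wronskian $\operatorname{Wr}(u_1,\dots,\widehat u_k,\dots,u_{R+1})$ is the Wronskian of the leading monomials of the $u_j$, $j\neq k$; by a computation as in Lemma~\ref{Wlem} (which applies verbatim to arbitrary pairwise distinct exponents, not only to non-negative integers) this has degree $\sum_{j\neq k}d_j-\frac{R(R-1)}{2}$ and nonzero leading coefficient proportional to $\prod_{i<j,\ i,j\neq k}(d_j-d_i)$. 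Subtracting $\deg(\tilde T_1^{R-1}\tilde T_2^{R-2}\cdots\tilde T_{R-1})=\sum_{i=1}^{R-1}(R-i)\langle\Lambda,\alpha^\vee_i\rangle$ — where $\deg\tilde T_i=\langle\Lambda,\alpha^\vee_i\rangle$ because $\tilde T_i=x^{\langle\Lambda_0,\alpha^\vee_i\rangle}T_i$ and by~\eqref{lambdadef} — yields
\[
\deg W_k=\sum_{j\neq k}d_j-\frac{R(R-1)}{2}-\sum_{i=1}^{R-1}(R-i)\langle\Lambda,\alpha^\vee_i\rangle .
\]

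Since $\sum_{j\neq k}d_j=\big(\sum_j d_j\big)-d_k$ and $\sum_j d_j$ is determined by $\Lambda$ and $\tilde\Lambda_\infty$ via~\eqref{dde}, the quantity $\deg W_k+d_k$ is independent of $k$. On the other hand~\eqref{du} and~\eqref{du2} give $d_{k+1}-d_k=\langle\tilde\Lambda_\infty+\rho,\alpha^\vee_k\rangle=-\big(d^\dag_{k+1}-d^\dag_k\big)$, so $d^\dag_k+d_k$ is also independent of $k$. It therefore suffices to verify $\deg W_k=d^\dag_k$ for a single value of $k$, say $k=R+1$, which is a short direct computation from the formula above together with $d_1=\langle\Lambda-\tilde\Lambda_\infty,\epsilon_1\rangle$, $d^\dag_{R+1}=-\langle\Lambda-\tilde\Lambda_\infty,\epsilon_{R+1}\rangle$ and~\eqref{dde}. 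I expect this reconciliation of the two exponent bookkeepings to be the only step requiring real care; conceptually it is forced, because ${\mathcal K}^\dag$ is a twist of the kernel of the formal adjoint of the fundamental operator ${\mathcal D}$ of Section~\ref{Dsec}, whose exponents at infinity are obtained from those of ${\mathcal D}$ by $d\mapsto(\mathrm{const})-d$.

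For decomposability: in the exceptional case $p=0$ every element of ${\mathcal K}$ lies in ${\mathbb C}[x]$ and there is nothing to prove, so assume $p>0$. A special basis is decomposable by definition, and by Lemma~\ref{deglem} $\deg u_j=d_j$ is an integer precisely when $j\in S$; hence $u_j\in{\mathbb C}[x]$ for $j\in S$ and $u_j\in x^{1/2}{\mathbb C}[x]$ for $j\notin S$. Expanding $\operatorname{Wr}(u_1,\dots,\widehat u_k,\dots,u_{R+1})$ as a sum of products of $x$-derivatives, one factor coming from each row $u_j$ with $j\neq k$, and using that $x$-derivatives of elements of ${\mathbb C}[x]$ (resp.\ $x^{1/2}{\mathbb C}[x]$) lie in ${\mathbb C}[x^{\pm1}]$ (resp.\ $x^{1/2}{\mathbb C}[x^{\pm1}]$), one sees this Wronskian lies in $x^{e_k/2}{\mathbb C}[x^{\pm1}]$ with $e_k=|\{\,j\neq k:j\notin S\,\}|$. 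Likewise $\tilde T_1^{R-1}\cdots\tilde T_{R-1}$ lies in $x^{g/2}{\mathbb C}[x^{\pm1}]$ for a fixed $g\in{\mathbb Z}$ whose parity is read off from~\eqref{l0a} (only $\tilde T_p$ and $\tilde T_{R+1-p}$ can contribute half-integral powers). Hence $W_k\in x^{(e_k-g)/2}{\mathbb C}[x^{\pm1}]$, so all its monomials have exponents of one parity; being a quasi-polynomial by Lemma~\ref{dWlem}, it therefore lies in ${\mathbb C}[x]$ if $e_k-g$ is even and in $x^{1/2}{\mathbb C}[x]$ if $e_k-g$ is odd. Thus each $W_k$ is decomposable, and $(W_k)_{k=1}^{R+1}$ is a decomposable basis of ${\mathcal K}^\dag$, which completes the proof.
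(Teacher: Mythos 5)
Your proof is correct and follows essentially the same route as the paper's: both compute $\deg W_k$ from the Wronskian degree formula, reduce matching $\deg W_k = d^\dag_k$ to verifying the single case $k=R+1$ via the telescoping identity $d_k - d_{R+1} = d^\dag_{R+1} - d^\dag_k$ coming from~\eqref{du} and~\eqref{du2}, and obtain decomposability from the observation that both the special basis and the $\tilde T_i$ have definite parity in $x^{1/2}$. The only difference is presentational: you phrase the reduction as "$\deg W_k + d_k$ and $d^\dag_k + d_k$ are both $k$-independent," and you spell out the parity count underlying the decomposability, which the paper leaves terse.
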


\begin{proof}
From $\Lambda-\tilde\Lambda_\infty\in {\mathbb Z}_{\geq 0}[\alpha_i]_{i\in I}$ we have
$0=\langle \Lambda-\tilde\Lambda_\infty, (R+1) \epsilon_{R+1} +  \alpha^\vee_1 + 2 \alpha^\vee_2 + \dots + (R-1) \alpha^\vee_{R-1} + R\alpha^\vee_R \rangle$, and hence
\begin{gather}
 (R+1)d^\dag_{R+1} = \langle \Lambda - \tilde \Lambda_\infty , \alpha^\vee_1 + 2 \alpha^\vee_2 + \dots + R \alpha^\vee_R \rangle.\label{dRe}
\end{gather}
Now
\begin{gather*} \deg W_{R+1}  = \deg \operatorname{Wr}^\dag(u_1,\dots, u_{R})
 = \sum_{i=1}^{R} d_i - \frac{R(R-1)}2 - \langle \Lambda, (R-1) \alpha^\vee_1 + \dots + \alpha^\vee_{R-1} \rangle \\
\hphantom{\deg W_{R+1}}{}
  = R d_1 + \langle \tilde \Lambda_\infty + \rho, (R-1)\alpha^\vee_1  + \dots + \alpha^\vee_{R-1} \rangle- \frac{R(R-1)}2 \\
\hphantom{\deg W_{R+1}=}{}
  - \langle \Lambda, (R-1) \alpha^\vee_1 + \dots + \alpha^\vee_{R-1} \rangle\\
\hphantom{\deg W_{R+1}}{}
= R d_1 + \langle \tilde \Lambda_\infty - \Lambda,  (R-1) \alpha^\vee_1 + \dots + \alpha^\vee_{R-1} \rangle,
\end{gather*}
where we used \eqref{du}. Hence, using \eqref{d1e}, we have
\begin{gather*}
\begin{split}
&(R+1) \deg W_{R+1}  = R \langle \Lambda- \tilde \Lambda_\infty, R\alpha^\vee_1 +(R-1) \alpha^\vee_2 + \dots + 2 \alpha^\vee_{R-1}
+ \alpha^\vee_R \rangle\\
& \hphantom{(R+1) \deg W_{R+1}  =}{}
  - (R+1) \langle \Lambda - \tilde \Lambda_\infty ,  (R-1) \alpha^\vee_1 + \dots + \alpha^\vee_{R-1} \rangle\\
&\hphantom{(R+1) \deg W_{R+1} }{}
  =   \langle \Lambda - \tilde \Lambda_\infty , \alpha^\vee_1 + 2\alpha^\vee_2 + \dots + R \alpha^\vee_R \rangle
\end{split}
\end{gather*}
since $R(R+1-k) - (R+1)(R-k) = k$.
Comparing this with~\eqref{dRe} we see that $d^\dag_{R+1} = \deg W_{R+1}$.
Then for the remaining $W_k$, we note that $\deg W_{R+1} - \deg W_k = d_k - d_{R+1}$ for $k=1,\dots,R$.
And  by~\eqref{du} and~\eqref{du2},
\begin{gather*}
 d_k - d_{R+1} = - \langle \tilde\Lambda_\infty + \rho, \alpha^\vee_k + \dots + \alpha^\vee_{R} \rangle = d^\dag_{R+1} - d^\dag_{k}.
 \end{gather*}
Thus $d^\dag_{k} = \deg W_{k}$ for $k=1,\dots, R+1$. Finally, since the basis $(u_k)_{k=1}^{R+1}$ is decomposable and each  ${\tilde T}_k$ lies in either ${\mathbb C}[x^{\pm 1}]$ or $x^\half {\mathbb C}[x^{\pm 1}]$, it follows that $(W_k)_{k=1}^{R+1}$ is decomposable.
\end{proof}

\subsection{Cyclotomic points and cyclotomic self-duality} \label{cycdualsec}

Let us f\/ix $(-1)^{m} := e^{m\pi i}$ for $m\in {\mathbb Z}/2$.
Then given a monomial  $q(x)= x^{m}$, $m\in {\mathbb Z}/2$, we def\/ine $q(-x) := (-1)^m x^m$. We extend the transformation~$q(x) \mapsto q(-x)$ to Laurent polynomials in~$x^\half$ by linearity.

We say that ${\mathcal K}$ is \emph{cyclotomically self-dual} if
\begin{gather*}
u(x)\in {\mathcal K} \ \Leftrightarrow \ u(-x)\in {\mathcal K}^\dag.
\end{gather*}

\begin{lem}\label{cd2lem}
If ${\mathcal K}$ is cyclotomically self-dual then
\begin{gather*}
 d_k + d_{R+2-k} =R+\langle \Lambda, \alpha^\vee_1 + \dots + \alpha^\vee_{R}\rangle, \qquad k=1,\dots,R+1.
\end{gather*}
 \end{lem}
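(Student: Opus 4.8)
The plan is to translate cyclotomic self-duality into an identity between the exponents $0\le d_1<\dots<d_{R+1}$ of ${\mathcal K}$ at infinity and the numbers $d^\dag_1>\dots>d^\dag_{R+1}$ attached to ${\mathcal K}^\dag$ in Section~\ref{Kdagsec}, and then to evaluate both sides using the already-established linear relations \eqref{du}, \eqref{du2}, \eqref{d1e} and the degree count from the proof of Lemma~\ref{Wdlem}.

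First I would note that the substitution $q(x)\mapsto q(-x)$ is an involutive linear bijection of ${\mathbb C}\big[x^\half\big]$ which only rescales the leading coefficient of a quasi-polynomial by a nonzero scalar, and hence preserves degrees. If ${\mathcal K}$ is cyclotomically self-dual, then applying the defining equivalence in both directions shows that this substitution restricts to a bijection ${\mathcal K}\to{\mathcal K}^\dag$. Therefore the set of degrees of the nonzero elements of ${\mathcal K}$ coincides with that of the nonzero elements of ${\mathcal K}^\dag$. Since the exponents $d_k$ are pairwise distinct (Lemma~\ref{deglem}), a special basis shows the former set is exactly $\{d_1,\dots,d_{R+1}\}$ --- no leading term can cancel, since the top-degree basis vector involved in any combination is unique; likewise, by Lemma~\ref{Wdlem} and distinctness of the $d^\dag_k$, the latter set is $\{d^\dag_1,\dots,d^\dag_{R+1}\}$. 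Matching the two ordered sequences term by term yields
\[ d^\dag_k = d_{R+2-k}, \qquad k=1,\dots,R+1, \]
and in particular $d^\dag_{R+1}=d_1$. (In the exceptional case $p=0$ the same argument runs verbatim inside ${\mathbb C}[x]$.)

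Next I would substitute $d^\dag_{R+1}=d_1$ into \eqref{du2} to get $d_{R+2-k}=d^\dag_k=d_1+\langle\tilde\Lambda_\infty+\rho,\alpha^\vee_k+\dots+\alpha^\vee_R\rangle$, and add this to \eqref{du}, obtaining $d_k+d_{R+2-k}=2d_1+\langle\tilde\Lambda_\infty+\rho,\alpha^\vee_1+\dots+\alpha^\vee_R\rangle$, which is manifestly independent of $k$. It then remains to compute $2d_1$. For this I would combine $d^\dag_{R+1}=d_1$ with the value $\deg W_{R+1}=Rd_1+\langle\tilde\Lambda_\infty-\Lambda,(R-1)\alpha^\vee_1+(R-2)\alpha^\vee_2+\dots+\alpha^\vee_{R-1}\rangle$ computed in the proof of Lemma~\ref{Wdlem}; this gives $(R-1)d_1=\langle\Lambda-\tilde\Lambda_\infty,(R-1)\alpha^\vee_1+(R-2)\alpha^\vee_2+\dots+\alpha^\vee_{R-1}\rangle$, and subtracting it from \eqref{d1e} leaves $2d_1=\langle\Lambda-\tilde\Lambda_\infty,\alpha^\vee_1+\dots+\alpha^\vee_R\rangle$. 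Plugging this in and using $\langle\rho,\alpha^\vee_i\rangle=1$ gives $d_k+d_{R+2-k}=\langle\Lambda,\alpha^\vee_1+\dots+\alpha^\vee_R\rangle+R$, which is the assertion.

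I expect the first step to be the only delicate point: one must verify that the degree spectra of ${\mathcal K}$ and ${\mathcal K}^\dag$ are precisely their exponent sets --- which relies on the strict monotonicity of the exponents to exclude any lowering of degree through cancellation of leading terms --- and that $q(x)\mapsto q(-x)$ genuinely preserves degree. After that, the argument is a short manipulation of the relations \eqref{du}, \eqref{du2}, \eqref{d1e} together with the degree count from the proof of Lemma~\ref{Wdlem}. Note that one cannot shorten the last step by using only $\sum_k d_k=\sum_k d^\dag_k$ and \eqref{dde}: that merely rewrites the (constant) quantity $d_k+d_{R+2-k}$ as $R+\frac{2}{R+1}\langle\Lambda,R\alpha^\vee_1+(R-1)\alpha^\vee_2+\dots+\alpha^\vee_R\rangle$, which does not equal $R+\langle\Lambda,\alpha^\vee_1+\dots+\alpha^\vee_R\rangle$ for general $\Lambda$, so the relation $d^\dag_{R+1}=d_1$ (i.e.\ the full strength of term-by-term matching) really is needed.
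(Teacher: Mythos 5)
Your proof is correct, but reaches the target by a noticeably different route from the paper's. You and the paper both begin from the self-duality identity $d^\dag_k = d_{R+2-k}$ (you justify it in more detail, via the degree-preserving bijection ${\mathcal K}\to{\mathcal K}^\dag$; the paper simply asserts it). Where you diverge is in the two remaining steps. To show $d_k + d_{R+2-k}$ is independent of $k$, you write $d_{R+2-k}=d^\dag_k$ and add \eqref{du} and \eqref{du2} directly, getting an exact telescoping $(\alpha^\vee_1+\dots+\alpha^\vee_{k-1})+(\alpha^\vee_k+\dots+\alpha^\vee_R)=\alpha^\vee_1+\dots+\alpha^\vee_R$; the paper instead first extracts from the same comparison the symmetry $\langle\tilde\Lambda_\infty,\alpha^\vee_k\rangle=\langle\tilde\Lambda_\infty,\alpha^\vee_{R+1-k}\rangle$ and then uses it, together with two applications of \eqref{du}, to reach the same constancy. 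More importantly, to evaluate the constant, you use the identity $d^\dag_{R+1}=d_1$ together with the formula $\deg W_{R+1}=Rd_1+\langle\tilde\Lambda_\infty-\Lambda,(R-1)\alpha^\vee_1+\dots+\alpha^\vee_{R-1}\rangle$ from the proof of Lemma~\ref{Wdlem} and subtract from \eqref{d1e} to isolate $2d_1$, obtaining $2d_1=\langle\Lambda-\tilde\Lambda_\infty,\alpha^\vee_1+\dots+\alpha^\vee_R\rangle$; the paper instead evaluates $\frac{2}{R+1}\sum_j d_j$ via \eqref{dde} and then invokes the $\sigma$-symmetry of $\Lambda$ (i.e. $\langle\Lambda,\alpha^\vee_i\rangle=\langle\Lambda,\alpha^\vee_{R+1-i}\rangle$, which holds by the standing assumption $\sigma\Lambda_0=\Lambda_0$). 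Your closing remark --- that $\sum_k d_k=\sum_k d^\dag_k$ and \eqref{dde} alone are not enough --- is precisely a diagnosis of what the paper's route needs on top of the sum identity, namely that $\sigma$-symmetry; your own route trades that symmetry for a second use of the equality $d_1=d^\dag_{R+1}$ and the explicit value of $\deg W_{R+1}$. Both arguments are valid; yours is somewhat more self-contained within the relations \eqref{du}, \eqref{du2}, \eqref{d1e}, while the paper's makes explicit use of the structural symmetry of $\Lambda$, which is worth knowing is available.
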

\begin{proof} If ${\mathcal K}$ is cyclotomically self-dual then we must have $d_k = d^\dag_{R+2-k}$, $k=1,\dots, R+1$.
Comparing \eqref{du} and \eqref{du2} we see that this implies that
\begin{gather*}
 \langle \tilde\Lambda_\infty + \rho, \alpha^\vee_1 + \dots \alpha^\vee_k \rangle
= d_{k+1} - d_1 = d^\dag_{R+1-k} - d^\dag_{R+1}\\
\hphantom{\langle \tilde\Lambda_\infty + \rho, \alpha^\vee_1 + \dots \alpha^\vee_k \rangle}{}
=
 \langle \tilde\Lambda_\infty + \rho, \alpha^\vee_{R+1-k} + \dots +\alpha^\vee_R \rangle, \qquad k=1,\dots, R,
\end{gather*}
and hence
\begin{gather*}
 \langle \tilde\Lambda_\infty,\alpha^\vee_k \rangle = \langle \tilde\Lambda_\infty,\alpha^\vee_{R+1-k}\rangle, \qquad k=1,\dots,R.
\end{gather*}
Therefore
\begin{gather*}
 d_k + d_{R+2-k} = 2d_1 + \langle \tilde \Lambda_\infty + \rho, \alpha^\vee_1+\dots+\alpha^\vee_{R}\rangle,\qquad k=1,\dots,R+1,
\end{gather*}
and so, because the right-hand side here does not depend on $k$,
\begin{gather}
 d_k + d_{R+2-k} = \frac 2 {R+1} \sum_{j=1}^{R+1} d_j, \qquad k=1,\dots,R+1.\label{cd}
\end{gather}

Recall \eqref{dde} and the def\/inition \eqref{lambdadef} of $\Lambda$. Using now the fact that $\langle\Lambda,\alpha^\vee_i\rangle = \langle\Lambda,\alpha^\vee_{R+1-i}\rangle$, $i=1,\dots,R$, we have
\begin{gather*}
\sum_{j=1}^{R+1} d_j - \frac{(R+1)R}2 = \frac{R+1}{2} \langle \Lambda, \alpha^\vee_1 + \dots + \alpha^\vee_{R}\rangle.
\end{gather*}
Thus, given \eqref{cd}, we have the result.
\end{proof}

If ${\mathcal K}$ is cyclotomically self-dual then there is a non-degenerate bilinear form~$B$ on~${\mathcal K}$ def\/ined by
\begin{gather*}
 B(u(x),v(x)) := (u(x), v(-x)), 
\end{gather*}
i.e.,
\begin{gather*}
 B(u,v)
=\operatorname{Wr}^\dag(u,v_1,\dots,v_{R}), \qquad \text{where} \quad v(-x) = \operatorname{Wr}^\dag(v_1,\dots,v_{R}).
\end{gather*}

Let us call a tuple of quasi-polynomials $\bm y\in {\mathbb P}\big({\mathbb C}\big[x^\half\big]\big)^R$ \emph{cyclotomic} if
\begin{gather*}
 y_k(-x) \simeq y_{R+1-k}(x),\qquad k=1,\dots, R .
\end{gather*}

\begin{prop}\label{cycdec}
Let ${\mathcal F}\in \FL({\mathcal K})$. If the tuple $\beta({\mathcal F}) \in {\mathbb P}\big({\mathbb C}\big[x^\half\big]\big)^R$ is cyclotomic then ${\mathcal F}$ is a~decomposable flag.
\end{prop}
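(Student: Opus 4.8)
The plan is to reduce the statement to a property of the tuple $\bm y^{\mathcal F}=\beta({\mathcal F})$ and then exploit the order-two nature of the substitution $x\mapsto -x$. By Lemma~\ref{QSlem}, ${\mathcal F}$ is a decomposable flag as soon as the tuple $\bm y^{\mathcal F}$ is decomposable, i.e.\ as soon as each $y^{\mathcal F}_k$ lies in ${\mathbb C}[x]$ or in $x^\half{\mathbb C}[x]$; and by Lemma~\ref{dWlem} we already know each $y^{\mathcal F}_k$ is a quasi-polynomial. So it suffices to prove: if $\bm y^{\mathcal F}$ is cyclotomic then every $y^{\mathcal F}_k$ is homogeneous for the ${\mathbb Z}_2$-grading ${\mathbb C}\big[x^\half\big] = {\mathbb C}[x] \oplus x^\half{\mathbb C}[x]$.

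First I would record the elementary fact that, with the convention $(-1)^m := e^{m\pi i}$, the linear map $\tau\colon v(x)\mapsto v(-x)$ on ${\mathbb C}\big[x^\half\big]$ preserves each graded piece and satisfies $\tau^2 = \operatorname{id}$ on ${\mathbb C}[x]$ and $\tau^2 = -\operatorname{id}$ on $x^\half{\mathbb C}[x]$; in particular ${\mathbb C}[x]$ and $x^\half{\mathbb C}[x]$ are precisely the $(+1)$- and $(-1)$-eigenspaces of $\tau^2$. Hence a nonzero element of ${\mathbb C}\big[x^\half\big]$ is an eigenvector of $\tau^2$ if and only if it is homogeneous.

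Now, since ${\mathcal F}\in\FL({\mathcal K})$ is a full flag, each $y^{\mathcal F}_k$, $k=1,\dots,R$, is a nonzero quasi-polynomial, and cyclotomicity of $\bm y^{\mathcal F}$ supplies nonzero constants $c_k$ with $\tau\big(y^{\mathcal F}_k\big) = c_k\, y^{\mathcal F}_{R+1-k}$ for all $k$; applying the same relation with $k$ replaced by $R+1-k$ (which again lies in $\{1,\dots,R\}$) we also get $\tau\big(y^{\mathcal F}_{R+1-k}\big) = c_{R+1-k}\, y^{\mathcal F}_k$. Composing the two, $\tau^2\big(y^{\mathcal F}_k\big) = c_k c_{R+1-k}\, y^{\mathcal F}_k$, so each $y^{\mathcal F}_k$ is an eigenvector of $\tau^2$ and therefore lies in ${\mathbb C}[x]$ or in $x^\half{\mathbb C}[x]$. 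Thus $\bm y^{\mathcal F}$ is decomposable, and Lemma~\ref{QSlem} finishes the proof. (In the exceptional case $p=0$ everything in sight is a genuine polynomial and there is nothing to prove.)

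There is no serious obstacle here; the one point that needs care is the root-of-unity bookkeeping, namely that the chosen branch makes $\big((-1)^\half\big)^2 = -1$ rather than $+1$ — this is exactly what forces the dichotomy, since an element with both a nonzero integer-power part and a nonzero half-odd-integer-power part cannot be a $\tau^2$-eigenvector. One should also note explicitly that applying $\tau$ twice is legitimate because the involution $k\mapsto R+1-k$ preserves the index range $\{1,\dots,R\}$, so the cyclotomic relations for $k$ and for $R+1-k$ are both available.
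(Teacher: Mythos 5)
Your proof is correct and is essentially the paper's own argument, merely repackaged. Where the paper writes $y^{\mathcal F}_k(x) = x^\half a_k(x) + b_k(x)$ and uses the cyclotomic relation twice to derive $a_k = -c_{R+1-k}c_k\,a_k$ and $b_k = +c_{R+1-k}c_k\,b_k$ (so one of them must vanish), you phrase the same computation as the statement that $y^{\mathcal F}_k$ is a $\tau^2$-eigenvector, where $\tau\colon v(x)\mapsto v(-x)$ acts as $+\operatorname{id}$ on ${\mathbb C}[x]$ and $-\operatorname{id}$ on $x^\half{\mathbb C}[x]$; the two presentations are identical in content, and both then appeal to Lemma~\ref{QSlem} to pass from decomposability of the tuple to decomposability of the flag.
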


\begin{proof}
Let $\bm y^{\mathcal F}= \beta({\mathcal F})$. To prove that ${\mathcal F}$ is decomposable it is enough to show that each entry~$y^{\mathcal F}_k$ of this tuple lies in ${\mathbb C}[x]$ or in $x^{1/2}{\mathbb C}[x]$. For each $k=1,\dots,R$ we have $y^{\mathcal F}_k(x) = x^\half a_k(x) + b_k(x)$ for some polynomials $a_k(x)$ and $b_k(x)$ in $x$. If $\bm y^{\mathcal F}$ is cyclotomic then $y_{R+1-k}(x) \simeq y^{\mathcal F}_k(-x) = (-1)^\half x^\half a_k(-x) + b_k(-x)$ for each $k$. That is, $a_{R+1-k}(x) = (-1)^\half c_k  a_k(-x)$ and $b_{R+1-k}(x) = c_k b_k(-x)$ for some non-zero constants $c_k$. But that means
\begin{gather*} a_k(x)  = (-1)^\half c_{R+1-k} a_{R+1-k}(-x)  = - c_{R+1-k} c_k a_k(x),\\
b_k(x)  =  c_{R+1-k} b_{R+1-k}(-x)  = +c_{R+1-k} c_k b_k(x),
\end{gather*}
from which we conclude that for each $k$ at least one of $a_k(x)$ and $b_k(x)$ must vanish.
\end{proof}

\begin{thm}\label{cptthm}  Suppose $\beta(\FL({\mathcal K}))$ contains a cyclotomic tuple.
Then ${\mathcal K}$ is cyclotomically self-dual.
\end{thm}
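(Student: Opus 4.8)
The strategy is to realise both ${\mathcal K}^\dag$ and the reflected space $\{u(-x):u\in{\mathcal K}\}$ as kernels of explicit order-$(R+1)$ differential operators and to check that these operators agree; the input is the recovery theorem of \cite[Lemma~5.6]{MV1} recalled in Section~\ref{Dsec}. So fix a full flag ${\mathcal F}\in\FL({\mathcal K})$ with $\bm y:=\bm y^{\mathcal F}=(y_i)_{i=1}^R$ cyclotomic, i.e.\ $y_k(-x)\simeq y_{R+1-k}(x)$ for $k=1,\dots,R$ (with $y_0=y_{R+1}=1$). By the recovery theorem ${\mathcal K}=\ker{\mathcal D}(\bm y)$, and, writing $f_i:=y_{R+1-i}\bigl(\prod_{j=1}^{R-i}\tilde T_j\bigr)y_{R-i}^{-1}$, the operator ${\mathcal D}(\bm y)=(\partial-\log' f_0)(\partial-\log' f_1)\cdots(\partial-\log' f_R)$ is monic of order $R+1$. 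Set $T:=\tilde T_1\tilde T_2\cdots\tilde T_R$. I would first identify ${\mathcal K}^\dag$ with $\ker\bigl(T\,{\mathcal D}(\bm y)^\dag\,T^{-1}\bigr)$, where $\dag$ is the formal adjoint: choosing any basis $(u_i)_{i=1}^{R+1}$ of ${\mathcal K}$, the classical description of the Lagrange adjoint gives that $\ker{\mathcal D}(\bm y)^\dag$ is spanned by the ratios $\operatorname{Wr}(u_1,\dots,\widehat u_i,\dots,u_{R+1})/\operatorname{Wr}(u_1,\dots,u_{R+1})$; since $\operatorname{Wr}(u_1,\dots,\widehat u_i,\dots,u_{R+1})=\tilde T_1^{R-1}\cdots\tilde T_{R-1}\,W_i$ with $W_i$ as in Section~\ref{Kdagsec}, while $\operatorname{Wr}(u_1,\dots,u_{R+1})$ is a nonzero constant times $\tilde T_1^{R}\cdots\tilde T_R$ by Corollary~\ref{constlem}, these ratios span $T^{-1}\operatorname{span}_{\mathbb C}\{W_i\}=T^{-1}{\mathcal K}^\dag$, and conjugating by $T$ gives the claim. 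Note also, using $(\partial-\log' f)^\dag=-(\partial+\log' f)=-f^{-1}\partial f$, that ${\mathcal D}(\bm y)^\dag=(-1)^{R+1}(f_R^{-1}\partial f_R)(f_{R-1}^{-1}\partial f_{R-1})\cdots(f_0^{-1}\partial f_0)$.

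Next I would compute the reflected space. Let $\iota$ denote the substitution $x\mapsto-x$ on ${\mathbb C}\big[x^{1/2}\big]$ (so $\iota(x^m)=(-1)^m x^m$, $m\in{\mathbb Z}/2$, with $(-1)^m:=e^{m\pi i}$); then $\{u(-x):u\in{\mathcal K}\}=\iota({\mathcal K})=\ker\bigl(\iota\,{\mathcal D}(\bm y)\,\iota^{-1}\bigr)$. Since $\iota\partial\iota^{-1}=-\partial$, conjugation by $\iota$ carries each factor $\partial-\log' f$ to $-(\partial-\log' f^\iota)$ with $f^\iota(x)=f(-x)$. Invoking the hypothesis $y_k(-x)\simeq y_{R+1-k}(x)$ together with the symmetry $\tilde T_j(-x)\simeq\tilde T_{R+1-j}(x)$ --- which follows from~\eqref{oT}, the identity $\tilde T_i=x^{\langle\Lambda_0,\alpha^\vee_i\rangle}T_i$, and $\sigma\Lambda_0=\Lambda_0$, recalling that $\sigma$ exchanges nodes $j$ and $R+1-j$ --- and reindexing $j\mapsto R+1-j$ in the $\tilde T$-products, one obtains $f_i^\iota\simeq g_i:=y_i\bigl(\prod_{l=i+1}^R\tilde T_l\bigr)y_{i+1}^{-1}$, the proportionality constants being invisible after applying $\log'$. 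Hence $\iota{\mathcal D}(\bm y)\iota^{-1}$ equals, up to its leading sign, $(\partial-\log' g_0)\cdots(\partial-\log' g_R)=(g_0\partial g_0^{-1})\cdots(g_R\partial g_R^{-1})$; telescoping this and likewise $T{\mathcal D}(\bm y)^\dag T^{-1}$, both become $g_0\,\partial\,(g_0^{-1}g_1)\,\partial\cdots\partial\,(g_{R-1}^{-1}g_R)\,\partial\,g_R^{-1}$ once one verifies the slot identities $f_{R+1-k}f_{R-k}^{-1}=g_{k-1}^{-1}g_k=y_k^2\bigl(y_{k-1}y_{k+1}\tilde T_k\bigr)^{-1}$ for $1\le k\le R$, $T f_R^{-1}=g_0=T y_1^{-1}$, and $f_0 T^{-1}=g_R^{-1}=y_R^{-1}$.

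Combining the two identifications yields $\iota({\mathcal K})=\ker\bigl(\iota{\mathcal D}(\bm y)\iota^{-1}\bigr)=\ker\bigl(T{\mathcal D}(\bm y)^\dag T^{-1}\bigr)={\mathcal K}^\dag$, that is, $u(x)\in{\mathcal K}$ if and only if $u(-x)\in{\mathcal K}^\dag$, which is cyclotomic self-duality. I expect the main obstacle to be the bookkeeping of the transformation $x\mapsto-x$: establishing $\tilde T_j(-x)\simeq\tilde T_{R+1-j}(x)$ cleanly, and tracking the branch-cut scalars produced by $\iota$ on the half-integer powers --- the saving grace being that these are constants, hence drop out of every $\log'$ and are in any case irrelevant for membership in a vector space. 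The conceptual content, already visible in the self-dual case treated in~\cite{MV1}, is that the existence of a single cyclotomic point forces the fundamental operator ${\mathcal D}(\bm y)$ to coincide, up to the conjugation by $T$ that intertwines ${\mathcal K}$ with ${\mathcal K}^\dag$, with its own formal adjoint read backwards in the variable $x$.
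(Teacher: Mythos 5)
Your argument is correct, and it takes a genuinely different route from the paper's. The paper proves the theorem via Lemma~\ref{spanlem}, an induction on $k$ establishing the chain of span equalities $\operatorname{span}_{\mathbb C}(u_1(-x),\dots,u_k(-x)) = \operatorname{span}_{\mathbb C}(W_{R+1},\dots,W_{R+2-k})$, with the inductive step powered by the Wronskian identity of Lemma~\ref{MVlem}; the theorem is the case $k=R+1$. You instead work at the level of the fundamental differential operator: invoking ${\mathcal K}=\ker{\mathcal D}(\bm y)$ from Section~\ref{Dsec}, you identify ${\mathcal K}^\dag$ with $\ker\bigl(T\,{\mathcal D}(\bm y)^\dag\,T^{-1}\bigr)$ via the classical Wronskian description of the Lagrange adjoint's kernel together with Corollary~\ref{constlem}, then show that conjugating ${\mathcal D}(\bm y)$ by $\iota\colon x\mapsto -x$ produces exactly that operator, using $y_k(-x)\simeq y_{R+1-k}(x)$ and $\tilde T_j(-x)\simeq\tilde T_{R+1-j}(x)$ and the telescoping/slot identities you wrote down (all of which check out; and you are right that the $\simeq$-constants drop under $\log'$, and that the fractional powers of $x$ in the $f_i$ contribute only rational terms to $\log' f_i$, so conjugation by $\iota$ is harmless). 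Your version is arguably more conceptual --- it says the fundamental operator is, after reflection in $x$, conjugate by $T$ to its formal adjoint, which is precisely the oper-level statement of cyclotomic self-duality --- but it leans explicitly on the recovery theorem for the factored form of ${\mathcal D}(\bm y)$. The paper's inductive proof is longer but has a side benefit your route does not provide: it simultaneously establishes Lemma~\ref{spanlem} and, by the same induction run the other way, Lemma~\ref{spanlemconverse}, both of which the paper reuses in the proof of Theorem~\ref{cycisothm}. If you wanted to base the whole section on your operator-theoretic argument you would still need some substitute for those two lemmas.
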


\begin{proof}
We shall need the following identity among Wronskian determinants.

\begin{lem}[\cite{MV1}]\label{MVlem} Given integers $0\leq k\leq s$ and functions $f_1,\dots,f_{s+1}$, we have
\begin{gather*}   \operatorname{Wr}\big( \operatorname{Wr}(f_1,\dots,f_{s-k},\dots,f_{s},\widehat f_{s+1}),
 \operatorname{Wr}(f_1,\dots,f_{s-k},\dots,\widehat f_{s},f_{s+1}),\dots,  \\
      \qquad\operatorname{Wr}(f_1,\dots,f_{s-k},\widehat f_{s-k+1},\dots, f_{s+1}) \big)= \operatorname{Wr}( f_1,\dots,f_{s-k}) \big(\operatorname{Wr}(f_1,\dots,f_{s+1})\big)^k,
 \end{gather*}
 where $\widehat f$ denotes omission.
\end{lem}

To prove Theorem \ref{cptthm} we argue as for Theorem~6.8 in \cite{MV1}.
Let ${\mathcal F}\in \FL({\mathcal K})$ be a full f\/lag in~${\mathcal K}$ and $(u_i(x))_{i=1}^{R+1}$ a basis of ${\mathcal K}$ adjusted to this f\/lag.
Let $\bm y = \bm y^{\mathcal F}$ be the corresponding tuple of quasi-polynomials as in~\eqref{yfdef}, and $(W_i(x))_{i=1}^{R+1}$  the corresponding basis of ${\mathcal K}^\dag$ as in~\eqref{Wdef}.
Then Theorem~\ref{cptthm} follows from the case $k=R+1$ of the following lemma.
\begin{lem}\label{spanlem} If $\bm y$ is cyclotomic then
\begin{gather*}
 \operatorname{span}_{\mathbb C}(u_1(-x),\dots,u_k(-x))= \operatorname{span}_{\mathbb C}(W_{R+1},W_{R},\dots,W_{R+2-k}),\qquad k=1,\dots,R+1.
\end{gather*}
\end{lem}

\begin{proof}
Let us prove the lemma by induction on $k$. For $k=1$ we have
\begin{gather*}
 u_1(-x) = y_1(-x) \simeq  y_{R}(x) = \operatorname{Wr}^\dag(u_1,\dots,u_{R}) =  W_{R+1}
\end{gather*}
as required. Assume the statement holds for all values up to some~$k$. For the inductive step
it is enough to show that
\begin{gather}
 \operatorname{Wr}(u_1(-x),\dots,u_k(-x),W_{R+1-k}) \simeq \operatorname{Wr}(u_1(-x),\dots,u_k(-x),u_{k+1}(-x)).\label{Wue}
\end{gather}
Indeed, \eqref{Wue} is an inhomogeneous dif\/ferential equation in~$W_{R+1-k}(x)$ and if it holds then it must be that~$W_{R+1-k}(x)$ is proportional to $u_{k+1}(-x)$ modulo $\operatorname{span}_{\mathbb C}(u_1(-x),\dots,u_k(-x))$, which is suf\/f\/icient given the inductive assumption.

By the inductive assumption, we have
\begin{gather*}  \operatorname{Wr}(u_1(-x),\dots,u_k(-x),W_{R+1-k}) \\
\qquad{} \simeq \operatorname{Wr}(W_{R+1},W_{R},\dots,W_{R+1-k+1}, W_{R+1-k}) \\
\qquad{} = \operatorname{Wr}\big( \operatorname{Wr}(u_1,\dots,u_{R+1-k-1},\dots,u_{R},\widehat u_{R+1}), \\
 \qquad\quad{} \ \operatorname{Wr}(u_1,\dots,u_{R+1-k-1},\dots,\widehat u_{R},u_{R+1}),\dots,  \\
  \qquad\quad{} \ \operatorname{Wr}(u_1,\dots,u_{R+1-k-1},\widehat u_{R+1-k},\dots, u_{R+1}) \big)\big/ \big({\tilde T}_1^{R-1} {\tilde T}_2^{R+1-3} \cdots {\tilde T}_{R-1}^1\big)^{k+1} \\
\qquad{} = \operatorname{Wr}( u_1,\dots,u_{R-k}) (\operatorname{Wr}(u_1,\dots,u_{R+1}) )^k\big/ \big({\tilde T}_1^{R-1} {\tilde T}_2^{R+1-3} \cdots {\tilde T}_{R-1}^1\big)^{k+1},
 \end{gather*}
the f\/inal equality by Lemma \ref{MVlem}.
Since $\operatorname{Wr}^\dag(u_1,\dots,u_{R+1}) = \operatorname{Wr}(u_1,\dots,u_{R+1})/{\tilde T}_1^{R} {\tilde T}_2^{R-1} \cdots {\tilde T}_{R}^1$ is a nonzero constant by Lemma~\ref{uwrlem} we therefore have
\begin{gather*}  \operatorname{Wr}(u_1(-x),\dots,u_k(-x),W_{R+1-k})
\simeq \operatorname{Wr}(u_1,\dots, u_{R-k}) \frac{\big({\tilde T}_1^{R}\cdots {\tilde T}_{R}^1\big)^k}{\big({\tilde T}_1^{R-1}\cdots {\tilde T}_{R-1}\big)^{k+1}}  \\
\hphantom{\operatorname{Wr}(u_1(-x),\dots,u_k(-x),W_{R+1-k}) }{}
= \frac{\operatorname{Wr}(u_1,\dots, u_{R-k})}{{\tilde T}_1^{R+1-k-2} \cdots {\tilde T}_{R+1-k-2}^1}{\tilde T}_{R}^k \cdots {\tilde T}_{R+1-k}^1.
\end{gather*}
Now we may use again the fact that $\bm y$ is cyclotomic, so $y_k(-x) \simeq y_{R+1-k}(x)$.
In view of~\eqref{yfdef}, that implies
\begin{gather}
 \frac{\operatorname{Wr}(u_1,\dots, u_{R-k})}{{\tilde T}_1^{R+1-k-2} \cdots {\tilde T}_{R+1-k-2}^1} \simeq
    \frac{\operatorname{Wr}(u_1(-x),\dots, u_{k+1}(-x))}{{\tilde T}_1^{k}(-x) \cdots {\tilde T}_{k}^1(-x)}.\label{ucyc}
\end{gather}
Recall that ${\tilde T}_{R+1-k}(x) \simeq {\tilde T}_k(-x)$. Hence we have indeed that
\begin{gather*} \operatorname{Wr}(u_1(-x),\dots,u_k(-x),W_{R+1-k})
\simeq \operatorname{Wr}(u_1(-x),\dots, u_{k+1}(-x)),
\end{gather*}
as required.
\end{proof}

This completes the proof of Theorem \ref{cptthm}.
\end{proof}

Given a subspace $U\subset {\mathcal K}$, let
\begin{gather*}
U^\perp:= \{v\in {\mathcal K}\colon B(u,v) = 0 \text{ for all } u\in U\}
\end{gather*}
denote its orthogonal complement in ${\mathcal K}$ with respect to the bilinear form~$B$. Recall that a full f\/lag ${\mathcal F}= \{  0 =F_0 \subset F_1 \subset F_2 \subset \dots \subset F_{R} \subset F_{R+1} = {\mathcal K}\}\in \FL({\mathcal K})$ is called \emph{isotropic} with respect to $B$ if $F_k= F^\perp_{R+1-k}$ for $k=1,\dots,R$.
\begin{thm} \label{cycisothm}
Suppose ${\mathcal K}$ is cyclotomically self-dual. A full flag ${\mathcal F}\in \FL({\mathcal K})$ is isotropic if and only if the associated tuple $\bm y^{\mathcal F}$ is cyclotomic.
\end{thm}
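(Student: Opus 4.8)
The plan is to connect the isotropy condition $F_k = F_{R+1-k}^\perp$ with the cyclotomy condition $y_k^{\mathcal F}(-x)\simeq y_{R+1-k}^{\mathcal F}(x)$ by passing through the dual space ${\mathcal K}^\dag$ and the basis $(W_i)_{i=1}^{R+1}$ of ${\mathcal K}^\dag$ associated to a basis $(u_i)_{i=1}^{R+1}$ of ${\mathcal K}$ adjusted to ${\mathcal F}$. The key observation is that the bilinear form $B$ is, by definition, $B(u,v)=(u(x),v(-x))$, so that $B(u_i,u_j)=\operatorname{Wr}^\dag(u_i,v_1,\dots,v_R)$ where $u_j(-x)=\operatorname{Wr}^\dag(v_1,\dots,v_R)$. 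Using $(u_i,W_j)=0$ for $i\neq j$ and $(u_i,W_i)\neq 0$ from Section~\ref{Kdagsec}, the essential point is that $F_{R+1-k}^\perp = \operatorname{span}_{\mathbb C}(u_1,\dots,u_k)^{\perp_B}$ can be rewritten in terms of those $W_j(x)$ for which $W_j(-x)\in\operatorname{span}_{\mathbb C}(u_1,\dots,u_k)^{\perp}$, i.e.\ in terms of $W_{k+1}(x),\dots,W_{R+1}(x)$ evaluated at $-x$. Concretely, $F_{R+1-k}^\perp = \{v\in{\mathcal K}\colon (v(-x),u_j)=0,\ j=1,\dots,R+1-k\}$, and since $(\,\cdot\,,u_j)$ annihilates exactly $\operatorname{span}_{\mathbb C}(W_i\colon i\neq j)$ inside ${\mathcal K}^\dag$, we get $v(-x)\in\operatorname{span}_{\mathbb C}(W_{R+2-k},\dots,W_{R+1})$, i.e.\ $F_{R+1-k}^\perp = \{v\in{\mathcal K}\colon v(-x)\in\operatorname{span}_{\mathbb C}(W_{R+1},W_R,\dots,W_{R+2-k})\}$.

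With this reformulation, ${\mathcal F}$ is isotropic if and only if for every $k$ we have the equality of subspaces
\begin{gather*}
\operatorname{span}_{\mathbb C}(u_1(x),\dots,u_k(x)) = \{v\in{\mathcal K}\colon v(-x)\in\operatorname{span}_{\mathbb C}(W_{R+1},\dots,W_{R+2-k})\},
\end{gather*}
equivalently $\operatorname{span}_{\mathbb C}(u_1(-x),\dots,u_k(-x)) = \operatorname{span}_{\mathbb C}(W_{R+1},W_R,\dots,W_{R+2-k})$ for all $k=1,\dots,R+1$. This is precisely the statement of Lemma~\ref{spanlem}. So one direction is essentially done: if $\bm y^{\mathcal F}$ is cyclotomic then (by Theorem~\ref{cptthm}) ${\mathcal K}$ is cyclotomically self-dual so $B$ exists, and Lemma~\ref{spanlem} gives the span equalities, hence ${\mathcal F}$ is isotropic. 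For the converse, I would argue that isotropy of ${\mathcal F}$ forces exactly those span equalities, and then read off cyclotomy of $\bm y^{\mathcal F}$ from the $k$ and $k+1$ cases: comparing $\operatorname{Wr}^\dag(u_1(-x),\dots,u_k(-x))$ against $\operatorname{Wr}^\dag(W_{R+1},\dots,W_{R+2-k})$ and using the Wronskian compound identity (Lemma~\ref{MVlem}) together with $\operatorname{Wr}^\dag(u_1,\dots,u_{R+1})=\text{const}$ (Corollary~\ref{constlem}) and ${\tilde T}_{R+1-k}(x)\simeq{\tilde T}_k(-x)$, one recovers $y_k^{\mathcal F}(-x)\simeq y_{R+1-k}^{\mathcal F}(x)$. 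This is the same computation that appears inside the proof of Lemma~\ref{spanlem}, run in reverse.

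The main obstacle is the converse direction, specifically making rigorous the claim that isotropy of ${\mathcal F}$ implies the full chain of span equalities $\operatorname{span}_{\mathbb C}(u_1(-x),\dots,u_k(-x))=\operatorname{span}_{\mathbb C}(W_{R+1},\dots,W_{R+2-k})$ for \emph{all} $k$ simultaneously, rather than just a dimension count. The clean way is: isotropy says $F_k^\perp=F_{R+1-k}$ for all $k$, which by the reformulation above says $v\in F_k \Leftrightarrow v(-x)\in\operatorname{span}_{\mathbb C}(W_{R+1},\dots,W_{R+2-k})$; since $F_k=\operatorname{span}_{\mathbb C}(u_1,\dots,u_k)$ this is literally the desired equality. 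One must be slightly careful that the orthogonal complement is taken inside ${\mathcal K}$ and that the pairing $({\mathcal K}^\dag,{\mathcal K})$ is a perfect pairing (stated in Section~\ref{Kdagsec}), so that the annihilator computations are exact; and one should note that the map $v\mapsto v(-x)$ is a linear isomorphism ${\mathcal K}\to{\mathcal K}^\dag$ precisely because ${\mathcal K}$ is cyclotomically self-dual, which is the standing hypothesis of the theorem. Granting these, the proof reduces to invoking Lemma~\ref{spanlem} in the forward direction and the reverse Wronskian computation in the backward direction, so I would present it as: state the reformulation of $F_{R+1-k}^\perp$ as a lemma, observe that isotropy $\Leftrightarrow$ the span equalities of Lemma~\ref{spanlem}, then cite Lemma~\ref{spanlem} for ($\Leftarrow$) and give the short reverse argument for ($\Rightarrow$).
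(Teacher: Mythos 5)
Your proposal follows the paper's own proof almost step for step: you reformulate isotropy as the chain of span equalities $\operatorname{span}_{\mathbb C}(u_1(-x),\dots,u_k(-x))=\operatorname{span}_{\mathbb C}(W_{R+1},\dots,W_{R+2-k})$ using the duality pairing and~\eqref{Wd}, invoke Lemma~\ref{spanlem} for the direction (cyclotomic~$\Rightarrow$~isotropic), and for the converse run the Wronskian computation of Lemma~\ref{spanlem} in reverse~--- which is exactly what the paper isolates as Lemma~\ref{spanlemconverse}. The argument is correct; the only superfluous remark is the invocation of Theorem~\ref{cptthm} in the forward direction, since cyclotomic self-duality is already the standing hypothesis.
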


\begin{proof}
Let $(u_i(x))_{i=1}^{R+1}$ be a basis of ${\mathcal K}$ adjusted to ${\mathcal F}$, so that we have \eqref{yfdef}.

{\sloppy For the ``only if'' direction, suppose $\bm y^{\mathcal F}$ is cyclotomic.
By Lemma \ref{spanlem},
\begin{gather*}
 F_k =  \operatorname{span}_{\mathbb C}(u_1,\dots,u_k) = \operatorname{span}_{\mathbb C}(W_{R+1}(-x),\dots,W_{R+2-k}(-x)).
\end{gather*}
We also have $F_{R+1-k}^\perp = \operatorname{span}_{\mathbb C}(u_1,\dots,u_{R+1-k})^\perp = \operatorname{span}_{\mathbb C}(W_{R+1}(-x),\dots,W_{R+2-k}(-x))$ by~\eqref{Wd}. Therefore $F_k= F_{R+1-k}^\perp$.

}

For the ``if'' direction, suppose ${\mathcal F}=\{F_k\}$ is isotropic. Since $F_k=F_{R+1-k}^\perp$, and given~\eqref{Wd}, we have two bases for~$F_k$, namely $(u_1,\dots,u_k)$ and $(W_{R+1}(-x),\dots,W_{R+2-k}(-x))$. So to prove that~$\bm y$ is cyclotomic it suf\/f\/ices to establish the following lemma, which is the converse of Lemma~\ref{spanlem}.

\begin{lem}\label{spanlemconverse}
If
\begin{gather*}
 \operatorname{span}_{\mathbb C}(u_1(-x),\dots,u_k(-x))= \operatorname{span}_{\mathbb C}(W_{R+1},W_{R},\dots,W_{R+2-k}),\qquad k=1,\dots,R+1,
\end{gather*}
then $\bm y$ is cyclotomic.
\end{lem}

\begin{proof} Examining the induction in the proof of Lemma~\ref{spanlem}, one sees that we also have, by a~similar induction, that if $\operatorname{span}_{\mathbb C}(u_1(-x),\dots,u_k(-x))= \operatorname{span}_{\mathbb C}(W_{R+1},W_{R},\dots,W_{R+2-k})$ for each~$k$ then~\eqref{ucyc} must hold for each~$k$, which says that~$\bm y$ is cyclotomic.
\end{proof}

This completes the proof of Theorem \ref{cycisothm}.
\end{proof}

In view of Proposition \ref{cycdec} we have the following corollary.
\begin{cor}\label{isodecomp} If ${\mathcal F}\in \FL({\mathcal K})$ is isotropic then ${\mathcal F}$ is decomposable.
\end{cor}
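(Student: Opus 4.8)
The plan is to observe that this is an immediate consequence of the two results just established, so there is essentially nothing to prove beyond chaining them together. First I would note that the notion of an isotropic flag in ${\mathcal K}$ only makes sense once the bilinear form $B$ is defined, and $B$ is defined precisely when ${\mathcal K}$ is cyclotomically self-dual; hence that hypothesis is in force throughout the corollary.

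Granting that, the argument runs as follows. Let ${\mathcal F}\in\FL({\mathcal K})$ be isotropic with respect to $B$. By Theorem~\ref{cycisothm}, since ${\mathcal K}$ is cyclotomically self-dual, the flag ${\mathcal F}$ is isotropic if and only if the associated tuple $\bm y^{\mathcal F}=\beta({\mathcal F})$ is cyclotomic; in particular $\beta({\mathcal F})$ is a cyclotomic tuple. Now apply Proposition~\ref{cycdec}: if $\beta({\mathcal F})\in{\mathbb P}\big({\mathbb C}\big[x^\half\big]\big)^R$ is cyclotomic, then ${\mathcal F}$ is a decomposable flag. Combining the two implications gives that ${\mathcal F}$ is decomposable, which is the claim.

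There is no real obstacle here: the corollary is purely a matter of composing Theorem~\ref{cycisothm} with Proposition~\ref{cycdec}, both of which have already been proved. The only point worth flagging for the reader is the logical dependency just mentioned — that isotropy presupposes cyclotomic self-duality of ${\mathcal K}$ — so that the hypotheses of Theorem~\ref{cycisothm} are indeed met. I would therefore keep the write-up to a single sentence of the form ``By Theorem~\ref{cycisothm} the tuple $\bm y^{\mathcal F}$ is cyclotomic, and then ${\mathcal F}$ is decomposable by Proposition~\ref{cycdec}.''
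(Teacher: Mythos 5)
Your argument is correct and is exactly the paper's intended proof: the corollary is stated immediately after Theorem~\ref{cycisothm} with the remark ``In view of Proposition~\ref{cycdec} we have the following corollary,'' so the chain isotropic $\Rightarrow$ cyclotomic tuple (Theorem~\ref{cycisothm}) $\Rightarrow$ decomposable flag (Proposition~\ref{cycdec}) is precisely what the authors have in mind. Your remark that isotropy presupposes cyclotomic self-duality is a sensible clarification of the implicit hypothesis.
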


\subsection[Witt bases and the symmetries of the bilinear form $B$]{Witt bases and the symmetries of the bilinear form $\boldsymbol{B}$}

We say that $(r_k)_{k=1}^{R+1}$ is a \emph{Witt basis} of
the cyclotomically self-dual space ${\mathcal K}$ if
\begin{gather}\operatorname{Wr}^\dag(r_1,\dots,\widehat r_k,\dots,r_{R+1})
\simeq r_{R+2-k}(-x), \qquad k=1,\dots,R+1. \label{wittbasis}
\end{gather}

The following lemma gives a useful alternative characterization of Witt bases.
\begin{lem}\label{adiaglem}
The basis $(r_k)_{k=1}^{R+1}$ is a Witt basis if and only if
\begin{gather}
 B(r_i,r_j) = 0 \qquad\text{whenever}\quad i+j\neq R+2.\label{adiag}
\end{gather}
\end{lem}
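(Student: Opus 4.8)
The plan is to reduce everything to the dual-basis formalism of Section~\ref{Kdagsec}. Since ${\mathcal K}$ is cyclotomically self-dual, the map $\iota\colon{\mathcal K}\to{\mathcal K}^\dag$, $\iota(v)(x):=v(-x)$, is a linear isomorphism and $B(u,v)=(u,\iota(v))$. Taking the basis $(r_k)_{k=1}^{R+1}$ in the role of the $(u_k)$ of Section~\ref{Kdagsec}, I would set $W_j:=\operatorname{Wr}^\dag(r_1,\dots,\widehat r_j,\dots,r_{R+1})$, recall that $(W_j)_{j=1}^{R+1}$ is the dual basis of ${\mathcal K}^\dag$ (so $(r_i,W_j)=0$ for $i\neq j$ and $(r_i,W_i)=\pm\operatorname{Wr}^\dag(r_1,\dots,r_{R+1})\neq 0$, the latter a nonzero constant by Corollary~\ref{constlem} and Lemma~\ref{uwrlem}), and observe that the Witt condition~\eqref{wittbasis} is exactly the assertion $W_k\simeq \iota(r_{R+2-k})$ for all $k$, i.e.\ $\iota^{-1}(W_k)\simeq r_{R+2-k}$.

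For the direction ``Witt basis $\Rightarrow$ \eqref{adiag}'' I would argue directly. If $(r_k)$ is a Witt basis then $\iota(r_j)=r_j(-x)\simeq W_{R+2-j}$, so $B(r_i,r_j)=(r_i,\iota(r_j))$ is a nonzero scalar multiple of $(r_i,W_{R+2-j})=\operatorname{Wr}^\dag(r_i,r_1,\dots,\widehat r_{R+2-j},\dots,r_{R+1})$. When $i+j\neq R+2$, i.e.\ $i\neq R+2-j$, the list on the right also contains $r_i$, so this divided Wronskian has a repeated argument and vanishes; hence $B(r_i,r_j)=0$. This direction is essentially immediate.

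For ``\eqref{adiag} $\Rightarrow$ Witt basis'' I would fix $k$, expand $\iota^{-1}(W_k)=W_k(-x)=\sum_l c_l r_l$ in the basis $(r_l)$, and pair with each $r_i$. On one side, $B(r_i,\iota^{-1}(W_k))=(r_i,W_k)=\delta_{ik}\,(r_i,W_i)$; on the other, bilinearity of $B$ together with \eqref{adiag} gives $B(r_i,\iota^{-1}(W_k))=\sum_l c_l B(r_i,r_l)=c_{R+2-i}\,B(r_i,r_{R+2-i})$. I would then note that $B(r_i,r_{R+2-i})\neq 0$ for every $i$: otherwise, combined with \eqref{adiag}, the functional $B(r_i,\cdot)$ would vanish identically, contradicting non-degeneracy of $B$. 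Hence for $i\neq k$ one gets $c_{R+2-i}=0$; letting $i$ range over $\{1,\dots,R+1\}\setminus\{k\}$ forces $c_l=0$ for all $l\neq R+2-k$, so $W_k(-x)=c_{R+2-k}r_{R+2-k}$ with $c_{R+2-k}\neq 0$ (since $W_k\neq 0$). Thus $W_k\simeq r_{R+2-k}(-x)$ for every $k$, which is \eqref{wittbasis}.

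I do not expect any serious obstacle: the argument is pure linear algebra over the duality pairing. The points needing care are the routine identifications around $\iota$ — that $\iota(\iota^{-1}(W_k))=W_k$ and that $B(u,v)=(u,\iota(v))$ is bilinear with the claimed values on pairs $(r_i,W_j)$, both following from Sections~\ref{Kdagsec}--\ref{cycdualsec} — together with the one genuine input, the use of non-degeneracy of $B$ to upgrade \eqref{adiag} to $B(r_i,r_{R+2-i})\neq 0$ in the second direction; that step is really the heart of the proof.
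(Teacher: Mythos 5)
Your proof is correct and follows essentially the same approach as the paper's: both proofs reduce to pairing $r_j(-x)$ (or $W_k(-x)$) against the dual basis $(W_j)$ of ${\mathcal K}^\dag$ and observe that $B(r_i,r_j)\neq 0$ is equivalent to a nonvanishing entry in the change-of-basis matrix between $(W_j)$ and $(r_j(-x))$; the paper packages both directions into the single identity $B(u_i,u_j)=(-1)^{i-1}C_{ji}\operatorname{Wr}^\dag(u_1,\dots,u_{R+1})$ with $C$ invertible, while you spell out the two implications separately and invoke non-degeneracy of $B$ in place of invertibility of $C$, which are equivalent inputs here.
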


\begin{proof}
Suppose $(u_k)_{k=1}^{R+1}$ is a basis of ${\mathcal K}$ and let $(W_k)_{k=1}^{R+1}$ be as in~\eqref{Wdef}. Then $(W_i(x))_{i=1}^{R+1}$ and $(u_i(-x))_{i=1}^{R+1}$ are two bases of ${\mathcal K}^\dag$ and so $u_i(-x) = \sum\limits_{j=1}^{R+1} C_{ij} W_j(x)$, for some invertible matrix $C_{ij}$. We have $B(u_i,u_{j}) = \sum\limits_{k=1}^{R+1}C_{jk} \operatorname{Wr}^\dag(u_i,u_1,u_2,\dots,\widehat u_k,\dots, u_{R+1}) = (-1)^{i-1} C_{ji} \operatorname{Wr}^\dag(u_1,\dots,u_{R+1})$. Hence~\eqref{wittbasis} is equivalent to~\eqref{adiag}.
\end{proof}

\begin{thm}\label{wittthm}
Every cyclotomically self-dual space ${\mathcal K}$ has a special basis $(r_k)_{k=1}^{R+1}$ which is also Witt basis, and in which in fact
\begin{gather}\label{rwitt}
\operatorname{Wr}^\dag(r_1,\dots,\widehat r_k,\dots,r_{R+1})
= (-1)^{-\deg  r_{R+2-k}}
r_{R+2-k}(-x), \qquad k=1,\dots,R+1.
\end{gather}
\end{thm}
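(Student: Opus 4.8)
The plan is to build the basis by orthogonalising an arbitrary special basis with respect to $B$, carried out inside the upper-triangular, ${\mathbb Z}_2$-graded structure of ${\mathcal K}$ so that the result is again special, and then to pin down the constants in \eqref{rwitt} by a degree and sign count. I would start from a special basis $(u_k)_{k=1}^{R+1}$ of ${\mathcal K}$ (which exists because ${\mathcal K}$ is decomposable and satisfies condition~(\ref{expcon})) together with the associated basis $(W_k)_{k=1}^{R+1}$ of ${\mathcal K}^\dag$ from~\eqref{Wdef}, which by Lemma~\ref{Wdlem} is decomposable with $\deg W_k=d^\dag_k$. Cyclotomic self-duality makes $v(x)\mapsto v(-x)$ an isomorphism ${\mathcal K}\to{\mathcal K}^\dag$, so $u_i(-x)=\sum_l C_{il}W_l$ for an invertible matrix $C$. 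Comparing degrees (using $\deg u_i(-x)=d_i$, $\deg W_l=d^\dag_l=d_{R+2-l}$ via Lemma~\ref{cd2lem}, and that the $d_k$ increase strictly) gives $C_{il}=0$ for $i+l<R+2$ and $C_{i,R+2-i}\neq 0$; comparing ${\mathbb Z}_2$-gradations (using the decomposability of $(u_i)$ and $(W_l)$ and Lemma~\ref{deglem}) gives $C_{il}=0$ unless $d_i-d_l\in{\mathbb Z}$. Since $B(u_i,u_j)=C_{ji}\,(u_i,W_i)$ by the duality relations~\eqref{Wd}, the Gram matrix $G_{ij}:=B(u_i,u_j)$ vanishes for $i+j\leq R+1$, vanishes unless $d_i-d_j\in{\mathbb Z}$, and has $G_{i,R+2-i}\neq 0$ for all $i$. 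In particular the flag~\eqref{smallcell} is isotropic, $B$ annihilates ${\mathcal K}_{\textup{Sp}}\times{\mathcal K}_{\textup{O}}$ and ${\mathcal K}_{\textup{O}}\times{\mathcal K}_{\textup{Sp}}$, and $B=B|_{{\mathcal K}_{\textup{Sp}}}\oplus B|_{{\mathcal K}_{\textup{O}}}$ with both summands non-degenerate.

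Next I would orthogonalise. By Lemma~\ref{adiaglem} it is enough to find a basis $(r_k)_{k=1}^{R+1}$ with $B(r_i,r_j)=0$ for all $i+j\neq R+2$, obtained from $(u_k)$ by a grading-preserving upper-triangular change of basis $r_k=\sum_{j\leq k}a_{kj}u_j$ with $a_{kk}\neq 0$ and $a_{kj}=0$ whenever $d_k-d_j\notin{\mathbb Z}$; such a basis is again special by Lemma~\ref{sblem}. Working inside each of the $B$-orthogonal blocks ${\mathcal K}_{\textup{Sp}}$ and ${\mathcal K}_{\textup{O}}$ separately, I would run the standard orthogonalisation of a non-degenerate bilinear form, peeling off one hyperbolic pair $(r_k,r_{R+2-k})$ at a time (plus a single central vector when a block has odd dimension): at each step the non-vanishing of the relevant anti-diagonal entry of the current Gram matrix, together with the already established anti-triangular shape, lets one subtract suitable multiples of the vectors chosen so far, keeping the change of basis upper-triangular and graded, until $G$ is brought to anti-diagonal form. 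The one substantive point is that this orthogonalisation needs $B$ to be skew-symmetric on ${\mathcal K}_{\textup{Sp}}$ and symmetric on ${\mathcal K}_{\textup{O}}$ — equivalently, that the $2\times 2$ corner forms met in the recursion be hyperbolic — which one derives from the involutivity of $v(x)\mapsto v(-x)$ together with the identity $\tilde T_k(-x)\simeq\tilde T_{R+1-k}(x)$ and the Wronskian identity of Lemma~\ref{MVlem}, in the same spirit as the proof of Lemma~\ref{spanlem}. This is the main obstacle; the rest is routine. The outcome is a special basis with $\operatorname{Wr}^\dag(r_1,\dots,\widehat r_k,\dots,r_{R+1})=\lambda_k\,r_{R+2-k}(-x)$ for some nonzero constants $\lambda_k$.

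Finally I would normalise the $\lambda_k$. Since the passage $(u_k)\to(r_k)$ is triangular, $\operatorname{Wr}^\dag(r_1,\dots,r_{R+1})$ equals the constant computed in Lemma~\ref{uwrlem} times $\prod_k a_{kk}$, so each $\lambda_k$ is determined by the leading coefficients of the $r_j$, and $\lambda_k$ and $\lambda_{R+2-k}$ are linked through the values $B(r_k,r_{R+2-k})$ and $B(r_{R+2-k},r_k)$. Using that $\deg r_k+\deg r_{R+2-k}=R+\langle\Lambda,\alpha^\vee_1+\dots+\alpha^\vee_R\rangle$ is independent of $k$ (Lemma~\ref{cd2lem}) together with the convention $(-1)^m=e^{m\pi i}$, one checks that a further rescaling $r_k\mapsto\mu_k r_k$ — still upper-triangular and grading-preserving, hence preserving specialness — can be chosen, consistently for each pair $\{k,R+2-k\}$, so that every $\lambda_k$ becomes $(-1)^{-\deg r_{R+2-k}}$. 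This yields~\eqref{rwitt} and completes the proof.
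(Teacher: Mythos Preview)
Your first stage—set up a special basis, use degrees and the ${\mathbb Z}_2$-grading to see that the Gram matrix of $B$ is anti-upper-triangular with non-zero anti-diagonal and block-diagonal with respect to ${\mathcal K}_{\textup{Sp}}\oplus{\mathcal K}_{\textup{O}}$—matches the paper. After that the two routes diverge, and the divergence is exactly at your ``main obstacle''.

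The paper normalises \emph{before} orthogonalising. From a special basis $(u_k)$ with unit leading coefficients it computes explicitly the leading coefficient
\[
D_k=\prod_{\substack{1\le i<j\le R+1\\ i,j\neq k}}(d_i-d_j)
\]
of $\operatorname{Wr}^\dag(u_1,\dots,\widehat u_k,\dots,u_{R+1})$, proves the purely combinatorial identity $D_k=D_{R+2-k}$ (from $d_i-d_j=d_{R+2-j}-d_{R+2-i}$, a consequence of~\eqref{du}), and then rescales to $q_k:=u_k\,D_k^{1/2}\prod_j D_j^{-1/(2R-2)}$. After this rescaling the desired relation~\eqref{rwitt} already holds \emph{to leading order}:
\[
\operatorname{Wr}^\dag(q_1,\dots,\widehat q_k,\dots,q_{R+1})=(-1)^{-d_{R+2-k}}q_{R+2-k}(-x)+(\text{lower-degree terms}).
\]
The triangular orthogonalisation is then run directly on these Wronskian equations, absorbing the lower-degree corrections into the definition of each successive $r_j$. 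The (skew-)symmetry of $B$ is not used as an input; on the contrary, in the paper's logical order Theorem~\ref{Bthm} is \emph{derived from} Theorem~\ref{wittthm}, via the explicit value $B(r_k,r_{R+2-k})=(-1)^{d_{R+2-k}+k+1}$ that~\eqref{rwitt} yields.

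Your plan inverts this dependency. Both your hyperbolic-pair orthogonalisation and your final rescaling $r_k\mapsto\mu_k r_k$ require the block-wise (skew-)symmetry of $B$ as input: the former because an anti-upper-triangular Gram matrix for a \emph{general} non-degenerate bilinear form cannot always be brought to anti-diagonal form by a triangular change of basis (already in dimension two, $\bigl(\begin{smallmatrix}0&1\\-1&1\end{smallmatrix}\bigr)$ cannot), and the latter because the two normalisation constraints attached to a pair $\{k,R+2-k\}$ are compatible precisely when $\lambda_k(-1)^{d_{R+2-k}}=\lambda_{R+2-k}(-1)^{d_k}$, which is again a symmetry statement. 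You flag this correctly, but ``involutivity of $v(x)\mapsto v(-x)$ together with $\tilde T_k(-x)\simeq\tilde T_{R+1-k}(x)$ and Lemma~\ref{MVlem}, in the same spirit as Lemma~\ref{spanlem}'' is not a proof; it is a restatement of the problem, and amounts to proving Theorem~\ref{Bthm} from scratch. That is the gap. If you can supply an independent proof of Theorem~\ref{Bthm} at this point your route would go through, but the paper's device of doing the explicit $D_k$ rescaling first—so that the exact constant $(-1)^{-d_{R+2-k}}$ is already fixed at leading order and the recursion is phrased in terms of Wronskians rather than $B$—is precisely what lets it avoid that detour.
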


\begin{proof}
Let $(u_k(x))_{k=1}^{R+1}$ be a special basis of~${\mathcal K}$. We may suppose that the $u_k(x)$ all have leading coef\/f\/icient~1. Let $(W_k(x))_{k=1}^{R+1}$ be the basis of ${\mathcal K}^\dag$ as in~\eqref{Wdef}. By Lemma~\ref{Wdlem}, $\deg W_k = d^\dag_k$.
By Lemma \ref{Wlem}, we have
\begin{gather*}
 W_k = \operatorname{Wr}^\dag(u_1,\dots,\widehat u_k,\dots, u_{R+1}) = D_k x^{d^\dag_{k}}  + \cdots,
 \end{gather*}
where the ellipsis indicates terms of lower degree in $x$ and where
\begin{gather*} D_k :=
\prod_{\substack{1\leq j<i\leq R+1 \\i\neq k,\,j\neq k}}
( d_i - d_j), \qquad k=1,\dots, R+1.
\end{gather*}
Since ${\mathcal K}$ is cyclotomically self-dual we must have
\begin{gather*}
 d_k = d^\dag_{R+2-k}, \qquad k=1,\dots, R+1,
 \end{gather*}
and
\begin{gather*}
 W_k = \operatorname{Wr}^\dag(u_1,\dots, \hat u_k,\dots, u_{R+1}) = D_k (-1)^{-d_{R+2-k}} u_{R+2-k}(-x) +\cdots.
\end{gather*}

Now from~\eqref{du} we have
\begin{gather*}
 d_k - d_l
    = d_{R+2-l} - d_{R+2-k}, \qquad 1\leq k< l \leq R+1,
\end{gather*}
using which one verif\/ies that
\begin{gather*}
 D_k = D_{R+2-k}, \qquad k=1,\dots,R+1.
\end{gather*}
Given this equality, if we set
\begin{gather*}
 q_k := u_k D_k^{\frac 1 2} \prod_{j=1}^{R+1} D_j^{-\frac 1 {2R-2}}
\end{gather*}
then we have
\begin{gather*}
 \operatorname{Wr}^\dag(q_1,\dots,\widehat q_k,\dots, q_{R+1}) = (-1)^{-d_{R+2-k}} q_{R+2-k}(-x) + \cdots .
\end{gather*}

In this way, we arrive at
\begin{gather}
 \operatorname{Wr}^\dag(q_1,\dots,\widehat q_k,\dots, q_{R+1})\nonumber \\
 \qquad {} = (-1)^{-d_{R+2-k}} q_{R+2-k}(-x) +  \sum_{j=2}^{R+1} q_{R+2-j}(-x) c^j_k,\qquad k=1,\dots,R+1,\label{lteq}
\end{gather}
for some constants  $c^j_k$. That is, we have
\begin{gather*} \operatorname{Wr}^\dag(q_1,\dots,q_{R-1},q_{R},\widehat q_{R+1})  = (-1)^{-d_1} q_1(-x), \\
 \operatorname{Wr}^\dag(q_1,\dots,q_{R-1},\widehat q_{R},q_{R+1})  = (-1)^{-d_2} q_2(-x) + c_{R}^1 q_1(-x),\\
\operatorname{Wr}^\dag(q_1,\dots,\widehat q_{R-1},q_{R},q_{R+1})  = (-1)^{-d_3} q_3(-x) + c_{R-1}^2 q_2(-x) + c_{R-1}^1 q_1(-x),\\
\cdots \cdots \cdots \cdots \cdots \cdots \cdots \cdots \cdots \cdots \cdots \cdots \cdots \cdots \cdots. \end{gather*}
We def\/ine
\begin{gather*} (-1)^{-d_1} r_{1}  := (-1)^{-d_1} q_{1}, \qquad
 (-1)^{-d_2} r_{2}  := (-1)^{-d_2} q_{2} + c_{R}^1 r_1,
 \end{gather*}
so that
\begin{gather*} \operatorname{Wr}^\dag(r_1,r_2,q_3,\dots,q_{R-1},q_{R},\widehat q_{R+1})  = (-1)^{-d_1} r_1(-x),\\
 \operatorname{Wr}^\dag(r_1,r_2,q_3,\dots,q_{R-1},\widehat q_{R},q_{R+1})  = (-1)^{-d_2} r_2(-x),\\
\operatorname{Wr}^\dag(r_1,r_2,q_3,\dots,\widehat q_{R-1},q_{R},q_{R+1})  = (-1)^{-d_3} q_3(-x) + \tilde c_{R-1}^2 r_2(-x) + \tilde c_{R-1}^1 r_1(-x),\\
\cdots \cdots \cdots \cdots \cdots \cdots \cdots \cdots \cdots \cdots \cdots \cdots \cdots \cdots  \cdots
\end{gather*}
for some new constants $\tilde c^j_k$, and we then def\/ine
\begin{gather*}
  (-1)^{-d_3} r_{3} := (-1)^{-d_3} q_{3} + \tilde c_{R-1}^1 r_2 + \tilde c_{R-1}^1 r_1,
  \end{gather*}
and so on. By an obvious induction, we arrive at a Witt basis  $(r_k)_{k=1}^{R+1}$. By construction $\deg r_k = d_k$.
Finally, note in~\eqref{lteq} that~$c^j_k$ can be non-zero only when $d_k-d_j\in {\mathbb Z}$ since both sides lie in either~${\mathbb C}[x]$ or $x^\half{\mathbb C}[x]$. Therefore this Witt basis  $(r_k)_{k=1}^{R+1}$ is special.
\end{proof}

\begin{lem}\label{wr1lem}
Let $(r_k)_{k=1}^{R+1}$ be the Witt basis of Theorem~{\rm \ref{wittthm}}. Then
$\operatorname{Wr}^\dag(r_1,\dots,r_{R+1}) = 1$.
\end{lem}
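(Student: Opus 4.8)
The plan is to compare $\operatorname{Wr}^\dag(r_1,\dots,r_{R+1})$ with $\operatorname{Wr}^\dag(u_1,\dots,u_{R+1})$, where $(u_k)_{k=1}^{R+1}$ is the special basis with all leading coefficients equal to $1$ out of which the Witt basis $(r_k)_{k=1}^{R+1}$ is constructed in the proof of Theorem~\ref{wittthm}. Both divided Wronskians are constants by Corollary~\ref{constlem}. Since $\operatorname{Wr}$ is multilinear and alternating in its arguments it satisfies $\operatorname{Wr}(Av_1,\dots,Av_{R+1}) = (\det A)\operatorname{Wr}(v_1,\dots,v_{R+1})$ for any change of basis $A$, and $\operatorname{Wr}^\dag$ differs from $\operatorname{Wr}$ only by a fixed factor independent of the arguments; so the first step is just to identify the transition matrix from $(u_k)$ to $(r_k)$. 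Recalling from the proof of Theorem~\ref{wittthm} that $q_k = \lambda_k u_k$ with $\lambda_k := D_k^{1/2}\prod_{j=1}^{R+1}D_j^{-1/(2R-2)}$ and $D_k := \prod_{1\le j<i\le R+1,\ i,j\ne k}(d_i-d_j)$, and that $r_k$ is built recursively as $r_k = q_k + (\text{a linear combination of }r_1,\dots,r_{k-1})$, an easy induction gives $r_k-\lambda_ku_k\in\operatorname{span}_{\mathbb C}(u_1,\dots,u_{k-1})$. Thus the transition matrix is lower triangular with diagonal $(\lambda_1,\dots,\lambda_{R+1})$, and
\[
\operatorname{Wr}^\dag(r_1,\dots,r_{R+1}) = \Big(\prod_{k=1}^{R+1}\lambda_k\Big)\operatorname{Wr}^\dag(u_1,\dots,u_{R+1}).
\]

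The second step is the elementary computation that $\prod_{k=1}^{R+1}\lambda_k$ is exactly the inverse of $P := \prod_{1\le j<i\le R+1}(d_i-d_j)$, which by Lemma~\ref{uwrlem} equals $\operatorname{Wr}^\dag(u_1,\dots,u_{R+1})$. Writing $E_k := \prod_{i>k}(d_i-d_k)\prod_{j<k}(d_k-d_j)$, one has $D_kE_k = P$ for each $k$ and $\prod_{k=1}^{R+1}E_k = P^2$, since each of the two families of products over $k$ reassembles into a single copy of $P$. Hence $\prod_k D_k = P^{R+1}/P^2 = P^{R-1}$, and since $\tfrac12 - \tfrac{R+1}{2R-2} = -\tfrac1{R-1}$, we get $\prod_k\lambda_k = (\prod_kD_k)^{-1/(R-1)} = P^{-1}$. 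Combining this with the display above yields $\operatorname{Wr}^\dag(r_1,\dots,r_{R+1}) = P^{-1}\cdot P = 1$.

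The main point requiring care is the handling of the fractional powers: the roots $D_k^{1/2}$ and the powers $(\prod_jD_j)^{-1/(2R-2)}$ and $(\prod_kD_k)^{-1/(R-1)}$ must be chosen compatibly so that $\prod_k\lambda_k = P^{-1}$ holds exactly rather than only up to a root of unity. This is genuinely necessary, not a cosmetic issue: rescaling $r_k\mapsto\mu_kr_k$ preserves the normalized Witt relations~\eqref{rwitt} precisely when $\prod_k\mu_k$ is an $(R-1)$-th root of unity, so the particular normalization built into the construction of the Witt basis is what pins the Wronskian down to the value~$1$. Once these powers are fixed once and for all as in the proof of Theorem~\ref{wittthm}, the identity is forced, and the rest is just the determinant/multilinearity property of the Wronskian together with the combinatorics of the $D_k$ and $E_k$ above.
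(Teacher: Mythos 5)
Your proof is correct and takes essentially the same route as the paper: both reduce to the computation $\prod_k D_k = \prod_{j<i}(d_i-d_j)^{R-1}$ together with Lemma~\ref{uwrlem}, the only cosmetic difference being that you fold the scaling $u_k\mapsto q_k$ and the unitriangular step $q_k\mapsto r_k$ into a single lower-triangular change of basis whereas the paper computes $\operatorname{Wr}^\dag(q_1,\dots,q_{R+1})$ and then invokes the triangularity implicitly with ``and hence the result''. Your remark about the compatible choice of fractional powers is a fair observation that the paper glosses over (though the parenthetical ``precisely when $\prod_k\mu_k$ is an $(R-1)$-th root of unity'' slightly overstates the case; one also needs $\mu_k\mu_{R+2-k}$ independent of $k$).
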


\begin{proof}
We have
\begin{gather*} \operatorname{Wr}^\dag(q_1,\dots,q_{R+1})
 = \operatorname{Wr}^\dag(u_1,\dots,u_{R+1}) \prod_{k=1}^{R+1} \left(D_k^{\frac 1 2} \prod_{j=1}^{R+1} D_j^{-\frac 1{2R-2}}\right)  \\
 \hphantom{\operatorname{Wr}^\dag(q_1,\dots,q_{R+1})}{}
 = \operatorname{Wr}^\dag(u_1,\dots,u_{R+1}) \prod_{k=1}^{R+1} D_k^{-\frac 1 {R-1}} .
 \end{gather*}
But then noting that
\begin{gather*}
 \prod_{k=1}^{R+1} D_k = \prod_{1\leq j<i\leq R+1}(d_i - d_j)^{R-1}
 \end{gather*}
and recalling Lemma \ref{uwrlem}, one f\/inds
\begin{gather*}
  \operatorname{Wr}^\dag(q_1,\dots,q_{R+1})  =1
\end{gather*}
and hence the result.
\end{proof}

\begin{thm}\label{Bthm} The subspaces ${\mathcal K}_{\textup{Sp}}$ and ${\mathcal K}_{\textup{O}}$ are mutually orthogonal with respect to~$B$.
  The bilinear form $B$ is skew-symmetric on ${\mathcal K}_{\textup{Sp}}$ and symmetric on ${\mathcal K}_{\textup{O}}$.
\end{thm}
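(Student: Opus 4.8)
The plan is to work in the special Witt basis $(r_k)_{k=1}^{R+1}$ of ${\mathcal K}$ supplied by Theorem~\ref{wittthm}, in which $B$ is anti-diagonal. Recall that this basis has $\deg r_k = d_k$, satisfies $\operatorname{Wr}^\dag(r_1,\dots,\widehat r_k,\dots,r_{R+1}) = (-1)^{-d_{R+2-k}}r_{R+2-k}(-x)$, and has $\operatorname{Wr}^\dag(r_1,\dots,r_{R+1}) = 1$ by Lemma~\ref{wr1lem}. Since it is a special basis, Lemma~\ref{sblem} together with the definitions of ${\mathcal K}_{\textup{Sp}}$ and ${\mathcal K}_{\textup{O}}$ shows that $(r_k)_{k\in S}$ is a basis of ${\mathcal K}_{\textup{Sp}}$ and $(r_k)_{k\notin S}$ a basis of ${\mathcal K}_{\textup{O}}$, with $S$ the symmetric set~\eqref{sh}; and by Lemma~\ref{deglem}, $d_k\in{\mathbb Z}$ for $k\in S$ while $d_k\in\half+{\mathbb Z}$ for $k\notin S$ (when $p=0$ one has ${\mathcal K}_{\textup{Sp}}=0$ and all $d_k$ integral, so the assertions involving ${\mathcal K}_{\textup{Sp}}$ are vacuous, and I suppress this degenerate case in what follows). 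By bilinearity everything reduces to evaluating $B$ on the basis pairs $(r_i,r_j)$, and by the Witt property (Lemma~\ref{adiaglem}) $B(r_i,r_j)=0$ whenever $i+j\neq R+2$. Since $S$ is stable under $i\mapsto R+2-i$, this already gives ${\mathcal K}_{\textup{Sp}}\perp{\mathcal K}_{\textup{O}}$.

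It remains to compare $B(r_i,r_{R+2-i})$ with $B(r_{R+2-i},r_i)$. Rewriting the Witt relation as $r_{R+2-i}(-x) = (-1)^{d_{R+2-i}}\operatorname{Wr}^\dag(r_1,\dots,\widehat r_i,\dots,r_{R+1})$, then applying the definition of the pairing $(\,\cdot\,,\,\cdot\,)\colon {\mathcal K}^\dag\times{\mathcal K}\to{\mathbb C}$, moving $r_i$ back into position (which introduces the sign $(-1)^{i-1}$), and using Lemma~\ref{wr1lem}, I expect
\begin{gather*}
 B(r_i,r_{R+2-i}) = (-1)^{\,d_{R+2-i}+i-1},\qquad i=1,\dots,R+1.
\end{gather*}
Interchanging $i$ and $R+2-i$ gives $B(r_{R+2-i},r_i) = (-1)^{\,d_i+R+1-i}$, hence
\begin{gather*}
 B(r_i,r_{R+2-i}) = (-1)^{\,d_{R+2-i}-d_i+R}\,B(r_{R+2-i},r_i),
\end{gather*}
the exponent being an integer because $d_i+d_{R+2-i}\in{\mathbb Z}$ by Lemma~\ref{cd2lem}. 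Substituting $d_i+d_{R+2-i}=R+c$, with $c:=\langle\Lambda,\alpha^\vee_1+\dots+\alpha^\vee_R\rangle$ as in Lemma~\ref{cd2lem}, the exponent becomes $\equiv c+2d_i\pmod 2$, so that
\begin{gather*}
 B(r_i,r_{R+2-i}) = (-1)^{c}\,(-1)^{2d_i}\,B(r_{R+2-i},r_i),
\end{gather*}
where $(-1)^{2d_i}=+1$ if $r_i\in{\mathcal K}_{\textup{Sp}}$ and $(-1)^{2d_i}=-1$ if $r_i\in{\mathcal K}_{\textup{O}}$.

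Finally one needs the parity of $c$. Since $\langle\Lambda_s+\sigma\Lambda_s,\alpha^\vee_1+\dots+\alpha^\vee_R\rangle = 2\langle\Lambda_s,\alpha^\vee_1+\dots+\alpha^\vee_R\rangle$ is even for every $s$, \eqref{lambdadef} gives $c\equiv\langle\Lambda_0,\alpha^\vee_1+\dots+\alpha^\vee_R\rangle\pmod 2$; grouping the summands by the symmetry $\langle\Lambda_0,\alpha^\vee_i\rangle=\langle\Lambda_0,\alpha^\vee_{R+1-i}\rangle$, reading off the allowed values of $\langle\Lambda_0,\alpha^\vee_i\rangle$ from~\eqref{l0a}, and using that twice a half-odd integer is odd, one finds that $c$ is odd exactly when $p>0$. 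Granting this, for $p>0$ the factor $(-1)^c=-1$ makes $B$ skew-symmetric on the vectors $r_k$ with $k\in S$ spanning ${\mathcal K}_{\textup{Sp}}$, and together with the extra sign $(-1)^{2d_i}=-1$ it makes $B$ symmetric on the vectors $r_k$ with $k\notin S$ spanning ${\mathcal K}_{\textup{O}}$; for $p=0$, $(-1)^c=+1$ gives $B$ symmetric on ${\mathcal K}_{\textup{O}}={\mathcal K}$. The main obstacle I anticipate is exactly this last parity computation for $c$: it calls for a short case split on the parity of $R$ and the location of $p$ (strictly below $R/2$, at the central node when $R$ is odd, or $p=0$), and one must keep careful track of the half-integer exponents. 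Everything else is routine reordering of Wronskian determinants and sign bookkeeping.
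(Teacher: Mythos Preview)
Your proof is correct and follows essentially the same approach as the paper: work in the special Witt basis of Theorem~\ref{wittthm}, use Lemma~\ref{wr1lem} to evaluate $B(r_i,r_{R+2-i})$ explicitly, invoke Lemma~\ref{cd2lem} to relate $d_i$ and $d_{R+2-i}$, and reduce everything to the parity of $c=\langle\Lambda,\alpha^\vee_1+\dots+\alpha^\vee_R\rangle$. The only organizational difference is that the paper computes the \emph{product} $B(r_k,r_{R+2-k})\,B(r_{R+2-k},r_k)=(-1)^c$ and then uses that $B(r_k,r_{R+2-k})\in\{\pm1\}$ or $\{\pm(-1)^{1/2}\}$ according as $d_k$ is integral or half-integral to extract the sign, whereas you compute the \emph{ratio} $(-1)^{c+2d_i}$ directly; your route is marginally cleaner since it avoids the square-root ambiguity argument.
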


\begin{proof}
Let $(r_k)_{k=1}^{R+1}$ be the special Witt basis constructed in Theorem \ref{wittthm}. From~\eqref{rwitt} and Lemma~\ref{wr1lem}, we have
\begin{gather}
 B(r_k,r_{R+2-k}) = (-1)^{d_{R+2-k}+k+1}, \qquad k=1,\dots, R+1,\label{Be}
 \end{gather}
and $B(r_i,r_j)=0$ if $i+j \neq R+2$. This implies in particular that ${\mathcal K}_{\textup{Sp}}$ and ${\mathcal K}_{\textup{O}}$ are mutually orthogonal.
By Lemma \ref{cd2lem} it also gives
\begin{gather*}
 B(r_k,r_{R+2-k}) B(r_{R+2-k},r_k) = (-1)^{\langle \Lambda, \alpha^\vee_1 + \dots + \alpha^\vee_{R}\rangle},\qquad k = 1,\dots, R+1.
\end{gather*}

Recall the def\/inition of $\Lambda$, \eqref{lambdadef}. Now $\langle \Lambda_s+ \sigma\Lambda_s  ,\alpha^\vee_1 + \dots + \alpha^\vee_{R}\rangle \in 2{\mathbb Z}$ for each $s=1,\dots,N$, since $\Lambda_s$ is integral. Therefore it follows from~\eqref{l0a} that
\begin{gather*}
\langle \Lambda, \alpha^\vee_1 + \dots + \alpha^\vee_{R}\rangle \in \begin{cases} 2{\mathbb Z}+1, & p>0,\\ 2{\mathbb Z}, & p=0.\end{cases}
\end{gather*}

Consider the case $p>0$. Then we have
\begin{gather}
B(r_k,r_{R+2-k}) B(r_{R+2-k},r_k) = -1, \qquad k=1,\dots,R+1.\label{m1}
\end{gather}
Recall from \eqref{kpmpoly} that $\deg r_k$ and $\deg r_{R+2-k}$ are both half odd integers if $k=p+1,\dots,R+1-p$, and are integers otherwise. Hence, by~\eqref{Be}, $B(r_k,r_{R+2-k})$ and $B(r_{R+2-k},r_k)$ lie in $\{(-1)^{\half},(-1)^{-\half}\}$ if $k=p+1,\dots,R+1-p$ and in $\{1,-1\}$ otherwise. Combining this statement with~\eqref{m1} we f\/ind
\begin{gather*}
  B(r_k,r_{R+2-k}) = \begin{cases} - B(r_{R+2-k},r_k),  & k=1,\dots,p,R+2-p,\dots,R+1, \\
                                        + B(r_{R+2-k},r_k),  & k= p+1,\dots,R+1-p, \end{cases}
\end{gather*}
which is the required result.

Finally, consider the case $p=0$. Then
\begin{gather*}
 B(r_k,r_{R+2-k}) B(r_{R+2-k},r_k) = 1, \qquad k=1,\dots,R+1,
 \end{gather*}
and since in this case $\deg r_k$ is integral for all $k$, this implies
\begin{gather*}
 B(r_k,r_{R+2-k}) = B(r_{R+2-k},r_k), \qquad k=1,\dots,R+1
 \end{gather*}
as required.
\end{proof}

The following are corollaries of Theorem \ref{Bthm} together with Lemma~\ref{adiaglem}.
\begin{cor}\label{wittdecomp}
Every Witt basis $(r_k)_{k=1}^{R+1}$  of ${\mathcal K}$ is decomposable.
\end{cor}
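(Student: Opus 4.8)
The plan is to read the Witt condition through Lemma~\ref{adiaglem}: for a Witt basis $(r_k)_{k=1}^{R+1}$ the Gram matrix $M_{ij}:=B(r_i,r_j)$ satisfies $M_{ij}=0$ whenever $i+j\neq R+2$, and since $B$ is non-degenerate $M$ is invertible, so $B(r_k,r_{R+2-k})\neq0$ for every $k$. Because ${\mathcal K}$ is decomposable, write $r_k=p_k+q_k$ with $p_k\in{\mathcal K}_{\textup{Sp}}$ and $q_k\in{\mathcal K}_{\textup{O}}$; the goal is to show that for each $k$ one of $p_k,q_k$ vanishes, so that $r_k$ lies in ${\mathbb C}[x]$ or in $x^\half{\mathbb C}[x]$ by~\eqref{kpmpoly} (when $p=0$ already ${\mathcal K}\subset{\mathbb C}[x]$ and there is nothing to do).

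First I would set $P_{ij}:=B(p_i,p_j)$ and $Q_{ij}:=B(q_i,q_j)$. By Theorem~\ref{Bthm} the subspaces ${\mathcal K}_{\textup{Sp}}$ and ${\mathcal K}_{\textup{O}}$ are mutually orthogonal, hence $M=P+Q$; moreover $P$ is skew-symmetric, $Q$ is symmetric, and (orthogonality together with non-degeneracy of $B$) both restrict to non-degenerate forms on their respective subspaces. The first key observation is that $P$ and $Q$ are \emph{each} anti-diagonal: for $i+j\neq R+2$ the identity $M_{ij}=0$ gives $P_{ij}=-Q_{ij}$, while applying it with $i,j$ interchanged and using skew-/symmetry gives $P_{ij}=Q_{ij}$; hence $P_{ij}=Q_{ij}=0$.

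The second key step is a rank count. An anti-diagonal matrix has rank equal to the number of its non-zero anti-diagonal entries, so $\operatorname{rank}P=|A|$ and $\operatorname{rank}Q=|A'|$, where $A:=\{k:B(p_k,p_{R+2-k})\neq0\}$ and $A':=\{k:B(q_k,q_{R+2-k})\neq0\}$ (both stable under $k\mapsto R+2-k$, by skew-/symmetry). Since the $p_i$ lie in the $2p$-dimensional space ${\mathcal K}_{\textup{Sp}}$ we have $|A|=\operatorname{rank}P\leq 2p$, and similarly $|A'|\leq R+1-2p$. On the other hand $B(p_k,p_{R+2-k})+B(q_k,q_{R+2-k})=M_{k,R+2-k}\neq0$, so every $k$ lies in $A\cup A'$ and $|A|+|A'|\geq R+1$. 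The two bounds force $|A|=2p$, $|A'|=R+1-2p$ and $A\cap A'=\varnothing$, so $A,A'$ partition $\{1,\dots,R+1\}$. In particular the Gram matrix $(B(q_i,q_j))_{i,j\in A'}$ is anti-diagonal with non-vanishing anti-diagonal, so $(q_k)_{k\in A'}$ is a basis of ${\mathcal K}_{\textup{O}}$, and symmetrically $(p_k)_{k\in A}$ is a basis of ${\mathcal K}_{\textup{Sp}}$.

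Finally, for $k\in A$ I would write $q_k=\sum_{l\in A'}\gamma_l q_l$ and pair with $q_j$, $j\in A'$: the left side $B(q_k,q_j)$ vanishes (it would need $j=R+2-k$, but $R+2-k\in A$, not $A'$), while the right side equals $\gamma_{R+2-j}B(q_{R+2-j},q_j)$ with a non-zero factor, forcing all $\gamma_l=0$; thus $q_k=0$ and $r_k=p_k\in{\mathcal K}_{\textup{Sp}}$. Symmetrically $p_k=0$ and $r_k=q_k\in{\mathcal K}_{\textup{O}}$ for $k\in A'$. I expect the only real work to be in the two ``key steps'' above — deriving that $P$ and $Q$ are individually anti-diagonal, and turning the rank inequalities into equalities; the rest is routine. (Alternatively one can note that the anti-diagonal Gram matrix makes both opposite flags $\operatorname{span}(r_1,\dots,r_k)$ and $\operatorname{span}(r_k,\dots,r_{R+1})$ isotropic, hence decomposable by Corollary~\ref{isodecomp}; since ${\mathbb C}r_k$ is their intersection it is decomposable, i.e.\ $r_k$ is.)
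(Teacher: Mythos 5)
Your proof is correct, and it fills in exactly what the paper leaves implicit when it records this statement as a consequence of Theorem~\ref{Bthm} and Lemma~\ref{adiaglem}: from the mutual orthogonality of ${\mathcal K}_{\textup{Sp}},{\mathcal K}_{\textup{O}}$ and the opposite symmetries of $B$ on them you deduce that the Gram matrices $P$ and $Q$ are each anti-diagonal, and the rank count then forces a partition of $\{1,\dots,R+1\}$ into the indices carried by ${\mathcal K}_{\textup{Sp}}$ and those carried by ${\mathcal K}_{\textup{O}}$. One small point worth keeping visible is that $A$ and $A'$ are stable under $k\mapsto R+2-k$, which automatically resolves the fixed point $k=n+1$ when $R=2n$ (there $P_{n+1,n+1}=0$ by skew-symmetry, so $n+1\in A'$). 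Your closing alternative, reading the anti-diagonal Gram matrix as saying that both $\operatorname{span}(r_1,\dots,r_k)$ and $\operatorname{span}(r_k,\dots,r_{R+1})$ sit in isotropic flags and then invoking Corollary~\ref{isodecomp} plus the fact that an intersection of decomposable subspaces is decomposable, is slicker and equally valid; it trades the rank count for the already established equivalence between isotropy and decomposability.
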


A basis  $(r_k)_{k=1}^{R+1}$ of $K$  such that
\begin{gather*}
 B_{ij} := B(r_i,r_j) = \delta_{R+2-i,j} b_i
 \end{gather*}
with
\begin{gather*} b_k := \begin{cases} (-1)^{k}, & k= 1,\dots,p,\\
                         \,\,+1, & k= p+1,\dots,R+1-p,\\
                        (-1)^{R+1-k}, & k= R+2-p,\dots,R+1 \end{cases}
\end{gather*}
is called  a \emph{reduced Witt basis}. By Lemma \ref{adiaglem}, reduced Witt bases are Witt bases.

\begin{cor} Any Witt basis can be transformed to a reduced Witt basis by a suitable diagonal transformation followed by a suitable permutation of the basis vectors.
\end{cor}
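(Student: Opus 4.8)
The plan is to bring an arbitrary Witt basis to the standard normal form in two independent moves — a permutation that places the ${\mathcal K}_{\textup{Sp}}$-vectors at the indices prescribed by $S$, and a diagonal rescaling that normalises the anti-diagonal entries of $B$ to the signs $b_k$ — using only the structural facts already established: the decomposability of Witt bases (Corollary~\ref{wittdecomp}), the orthogonality and (skew-)symmetry statements of Theorem~\ref{Bthm}, and the anti-diagonal characterisation of Lemma~\ref{adiaglem}.

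First I would take a Witt basis $(r_k)_{k=1}^{R+1}$; by Corollary~\ref{wittdecomp} it is decomposable, and by Lemma~\ref{adiaglem} one has $B(r_i,r_j)=0$ unless $i+j=R+2$, with $\beta_k:=B(r_k,r_{R+2-k})$ nonzero for every $k$ by non-degeneracy of $B$. Write $\tau$ for the involution $k\mapsto R+2-k$ of $\{1,\dots,R+1\}$ and set $P:=\{k: r_k\in{\mathcal K}_{\textup{Sp}}\}$. Because ${\mathcal K}_{\textup{Sp}}\perp{\mathcal K}_{\textup{O}}$ (Theorem~\ref{Bthm}) and $\beta_k\neq 0$, the vectors $r_k$ and $r_{R+2-k}$ always lie in a common summand, so $P$ is $\tau$-invariant; it has cardinality $\dim{\mathcal K}_{\textup{Sp}}=2p$. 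Since $2p$ is even, $P$ cannot contain the (unique, when $R$ is even) $\tau$-fixed index, so $P$ is a disjoint union of $p$ two-element $\tau$-orbits; the same holds for $S$ (see~\eqref{sh}), and the complements $\{1,\dots,R+1\}\setminus P$ and $\{1,\dots,R+1\}\setminus S$ have matching $\tau$-orbit structure. Hence there is a permutation $\pi$ with $\pi\tau=\tau\pi$ and $\pi(S)=P$. Relabelling $(r_k)\mapsto(r_{\pi(k)})$ preserves the property that $B$ vanishes off the anti-diagonal — because $\pi$ commutes with $\tau$ — so by Lemma~\ref{adiaglem} the result is again a Witt basis, now with $r_k\in{\mathcal K}_{\textup{Sp}}$ precisely for $k\in S$, i.e.\ for $k=1,\dots,p$ and $k=R+2-p,\dots,R+1$.

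Next I would rescale: the goal is to find nonzero constants $\lambda_1,\dots,\lambda_{R+1}$ with $\lambda_k\lambda_{R+2-k}\beta_k=b_k$ for all $k$, after which $B(\lambda_ir_i,\lambda_jr_j)=\delta_{R+2-i,j}b_i$ and $(\lambda_kr_k)$ is a reduced Witt basis by Lemma~\ref{adiaglem}. The only point to verify is compatibility of the two equations attached to a $\tau$-orbit $\{k,R+2-k\}$. For $k\le p$ the vector $r_k$ lies in ${\mathcal K}_{\textup{Sp}}$, where $B$ is skew, so $\beta_{R+2-k}=-\beta_k$; and from the explicit formula $b_{R+2-k}=(-1)^{k-1}=-b_k$; hence both equations read $\lambda_k\lambda_{R+2-k}\beta_k=b_k$. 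For $p<k\le R+1-p$ the vector $r_k$ lies in ${\mathcal K}_{\textup{O}}$, where $B$ is symmetric, so $\beta_{R+2-k}=\beta_k$ and $b_{R+2-k}=b_k=1$, and again the two equations coincide; the range $k\ge R+2-p$ is symmetric to the first. Thus one equation per orbit suffices: on a two-element orbit take $\lambda_k=1$, $\lambda_{R+2-k}=b_k/\beta_k$, and on the fixed orbit (present only when $R=2n$, where $b_{n+1}=1$) take $\lambda_{n+1}$ to be a square root of $\beta_{n+1}^{-1}$. Finally I would note that a permutation followed by a diagonal transformation generates the same family of operations as a diagonal transformation followed by a permutation (one conjugates the diagonal matrix by the permutation), matching the wording of the statement.

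The part needing the most care — though it is bookkeeping rather than a genuine difficulty — is the combinatorial step: confirming that $P$ is $\tau$-invariant of cardinality $2p$ with no $\tau$-fixed member, so that a $\tau$-equivariant permutation carrying $S$ to $P$ exists, and that such a permutation preserves the Witt property. Everything else is a short consistency check that feeds directly off Theorem~\ref{Bthm}.
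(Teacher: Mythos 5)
Your argument is correct. The paper states this corollary without proof, as an immediate consequence of Theorem~\ref{Bthm} and Lemma~\ref{adiaglem}, and your reconstruction supplies exactly the right bookkeeping: decomposability (Corollary~\ref{wittdecomp}) together with the orthogonality ${\mathcal K}_{\textup{Sp}}\perp{\mathcal K}_{\textup{O}}$ forces each anti-diagonal pair $\{r_k,r_{R+2-k}\}$ into a single summand, so $P:=\{k\colon r_k\in{\mathcal K}_{\textup{Sp}}\}$ is a $\tau$-invariant set of cardinality $2p$ avoiding any $\tau$-fixed index and hence $\tau$-equivariantly matchable to $S$, while the (skew-)symmetry of $B$ on each summand is precisely what makes the two rescaling conditions on each $\tau$-orbit collapse to one. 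One small simplification you could make: when $R=2n$, the exclusion $n+1\notin P$ follows directly from $B(r_{n+1},r_{n+1})=\beta_{n+1}\neq 0$ together with skew-symmetry (which forces $B(v,v)=0$ on ${\mathcal K}_{\textup{Sp}}$), rather than via the parity count; but your cardinality argument is also valid. The closing observation about commuting the diagonal past the permutation correctly reconciles your order of operations with the wording of the statement.
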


\begin{cor}\label{flaglem}
For any Witt basis $(r_k)_{k=1}^{R+1}$ of ${\mathcal K}$, the full flag ${\mathcal F}=\{F_1\!\subset F_2\! \subset \!\cdots\! \subset F_{R+1} = {\mathcal K}\}$ given by $F_k = \operatorname{span}_{\mathbb C}(r_1,\dots,r_k)$, $k=1,\dots, R+1$, is isotropic $($and hence the corresponding tuple~$\bm y^{\mathcal F}$ is cyclotomic by Theorem~{\rm \ref{cycisothm})}.

Conversely, given any isotropic full flag ${\mathcal F}=\{F_1\subset F_2\subset \dots \subset F_{R+1} = {\mathcal K}\}$ there is a Witt basis $(r_k)_{k=1}^{R+1}$ such that  $F_k = \operatorname{span}_{\mathbb C}(r_1,\dots,r_k)$, $k=1,\dots, R+1$. If in addition~${\mathcal F}$ is of type~$S$ then this basis can be chosen to be a reduced Witt basis.
\end{cor}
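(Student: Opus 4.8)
The plan is to deduce everything from Lemma~\ref{adiaglem}, which identifies Witt bases as exactly those bases $(r_k)_{k=1}^{R+1}$ whose Gram matrix $B(r_i,r_j)$ is supported on the anti-diagonal $i+j=R+2$; since $B$ is non-degenerate, for such a basis each anti-diagonal entry $B(r_i,r_{R+2-i})$ is automatically nonzero (the functional $B(r_i,\cdot)$ is nonzero and vanishes on every $r_l$ with $l\neq R+2-i$).

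For the first assertion, given a Witt basis and $F_k=\operatorname{span}_{\mathbb C}(r_1,\dots,r_k)$, I would simply compute the orthogonal complement: $v=\sum_l c_l r_l$ lies in $F_k^\perp$ iff $c_{R+2-i}B(r_i,r_{R+2-i})=0$ for all $i\le k$, i.e.\ iff $c_l=0$ for $l=R+2-k,\dots,R+1$, so $F_k^\perp=F_{R+1-k}$; applying this with $k$ replaced by $R+1-k$ gives $F_{R+1-k}^\perp=F_k$, which is precisely the isotropy condition. Cyclotomicity of $\bm y^{\mathcal F}$ then follows directly from Theorem~\ref{cycisothm}.

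For the converse, starting from an isotropic full flag ${\mathcal F}=\{F_k\}$ I would first invoke Corollary~\ref{isodecomp} to know ${\mathcal F}$ is decomposable, and then use the orthogonal splitting ${\mathcal K}={\mathcal K}_{\textup{Sp}}\oplus{\mathcal K}_{\textup{O}}$ of Theorem~\ref{Bthm}. Intersecting the isotropy relations with ${\mathcal K}_{\textup{Sp}}$ and ${\mathcal K}_{\textup{O}}$ gives $F_k\cap{\mathcal K}_{\textup{Sp}}=(F_{R+1-k}\cap{\mathcal K}_{\textup{Sp}})^\perp$ inside ${\mathcal K}_{\textup{Sp}}$, and similarly for ${\mathcal K}_{\textup{O}}$; comparing dimensions shows that the type $Q$ of ${\mathcal F}$ is symmetric and that ${\mathcal F}$ induces \emph{complete} flags on the two summands which are isotropic for the restrictions of $B$ (skew-symmetric and non-degenerate on ${\mathcal K}_{\textup{Sp}}$, symmetric and non-degenerate on ${\mathcal K}_{\textup{O}}$, the latter split since we are over ${\mathbb C}$). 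On each summand I would then run the classical recursion — pick a generator $r$ of the first step, a hyperbolic partner for $r$ lying outside the penultimate step, split off this hyperbolic pair, and recurse on its orthogonal complement, which inherits a complete isotropic flag — to build a hyperbolic basis adapted to the induced flag. Interleaving the two bases according to $Q$ yields a basis $(r_k)_{k=1}^{R+1}$ with $F_k=\operatorname{span}_{\mathbb C}(r_1,\dots,r_k)$ and anti-diagonal Gram matrix, i.e.\ a Witt basis adapted to ${\mathcal F}$ (the off-block entries $B(r_i,r_j)$ vanish because ${\mathcal K}_{\textup{Sp}}\perp{\mathcal K}_{\textup{O}}$, and the within-block anti-diagonal pattern translates into $i+j=R+2$ precisely because $Q$ is symmetric).

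Finally, if ${\mathcal F}$ is of type $S$ then in this construction $r_k\in{\mathcal K}_{\textup{Sp}}$ for $k\in S$ and $r_k\in{\mathcal K}_{\textup{O}}$ otherwise, so each pair $\{r_i,r_{R+2-i}\}$ lies in a single graded summand. I would conclude by rescaling $r_k\mapsto\lambda_k r_k$, which alters neither the flag nor the Witt property and multiplies both $B(r_i,r_{R+2-i})$ and $B(r_{R+2-i},r_i)$ by $\lambda_i\lambda_{R+2-i}$: since $B(r_{R+2-i},r_i)=-B(r_i,r_{R+2-i})$ on ${\mathcal K}_{\textup{Sp}}$ and $=+B(r_i,r_{R+2-i})$ on ${\mathcal K}_{\textup{O}}$, matching the relations $b_{R+2-i}=-b_i$ for $i\le p$ and $b_{R+2-i}=b_i$ for $p<i\le R+1-p$, one can solve for scalars $\lambda_k$ (taking a square root for the self-paired middle index when $R$ is even) so that $B(r_i,r_j)=\delta_{R+2-i,j}b_i$; this is a reduced Witt basis adapted to ${\mathcal F}$. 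I expect the only genuine work to be the bookkeeping in the third paragraph — checking symmetry of the type $Q$ and completeness/isotropy of the induced flags — where one must be careful that $B$ is not symmetric, so left and right orthogonal complements differ in general (they agree when restricted to each graded summand, $B$ being $\pm$-symmetric there); the recursion and the final rescaling are routine.
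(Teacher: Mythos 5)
Your proof is correct and follows essentially the route the paper has in mind: the paper states this Corollary, together with its neighbours, as an immediate consequence of Theorem~\ref{Bthm} and Lemma~\ref{adiaglem}, and you have supplied the routine linear algebra that the authors left implicit. In the first part, computing $F_k^\perp$ directly from the anti-diagonal Gram matrix (using non-degeneracy to see each anti-diagonal entry is nonzero) and then replacing $k$ by $R+1-k$ cleanly avoids any appeal to a double-perp identity, which is the right move since $B$ is not globally $\pm$-symmetric. In the converse direction, passing through Corollary~\ref{isodecomp} and the orthogonal splitting of Theorem~\ref{Bthm} to reduce to independent $\mathrm{Sp}$ and $\mathrm{SO}$ flag problems is what makes the recursion and the $Q$-symmetric interleaving work, and the caveats you flagged (left/right complements differ globally but agree on each graded summand; ${\mathcal K}_{\textup{O}}$ is automatically split over ${\mathbb C}$; one needs a square root at a self-paired middle index only for $R$ even) are precisely the points that need care. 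The rescaling step for the reduced Witt basis is also sound: by the $\pm$-symmetry of $B$ on each summand, $B(r_{R+2-i},r_i)$ is already $\mp B(r_i,r_{R+2-i})$, matching the sign pattern of $b_i b_{R+2-i}$, so each pair contributes only one normalisation condition, which a single scalar $\lambda$ solves.
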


\begin{lem} The full flag ${\mathcal F}$ given in~\eqref{smallcell} is isotropic  and hence the corresponding tuple $\bm y^{\mathcal F}$ is cyclotomic.
\end{lem}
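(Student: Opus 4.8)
The plan is to deduce this lemma from Corollary~\ref{flaglem} by recognising the flag \eqref{smallcell} as the flag attached to a Witt basis. Note first that in this part of the section the form $B$ is available, so ${\mathcal K}$ is cyclotomically self-dual; Theorem~\ref{wittthm} then provides a special basis $(r_k)_{k=1}^{R+1}$ of ${\mathcal K}$ which is simultaneously a Witt basis, with $\deg r_k = d_k$ for each $k$. On the other hand, \eqref{smallcell} is by construction the flag $F_k = \operatorname{span}_{\mathbb C}(u_1,\dots,u_k)$ built from \emph{some} special basis $(u_k)_{k=1}^{R+1}$.

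The key step is to compare these two special bases via Lemma~\ref{sblem}: any two special bases of ${\mathcal K}$ are related by an invertible triangular change of basis. Triangularity forces $\operatorname{span}_{\mathbb C}(u_1,\dots,u_k) = \operatorname{span}_{\mathbb C}(r_1,\dots,r_k)$ for every $k=1,\dots,R+1$. Hence the flag \eqref{smallcell} is exactly the flag $F_k = \operatorname{span}_{\mathbb C}(r_1,\dots,r_k)$ associated with the Witt basis $(r_k)$. Corollary~\ref{flaglem} then says this flag is isotropic with respect to $B$, and (by the same corollary, or by Theorem~\ref{cycisothm}) the corresponding tuple $\bm y^{\mathcal F}$ is cyclotomic. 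Since, moreover, \eqref{smallcell} has type $S$, the basis $(r_k)$ may even be taken to be a reduced Witt basis, though this is not needed.

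There is essentially no genuine obstacle here; the only point requiring a moment's care is that the change-of-basis matrix from Lemma~\ref{sblem} is invertible and lower-triangular, so that the partial spans genuinely coincide. As an alternative, slightly more hands-on route avoiding Theorem~\ref{wittthm}, one could verify isotropy of \eqref{smallcell} directly: expanding $u_i(-x) = \sum_j C_{ij} W_j(x)$ in the dual basis $(W_j)$ of ${\mathcal K}^\dag$ from \eqref{Wdef}, one has $\deg W_j = d^\dag_j = d_{R+2-j}$ by Lemma~\ref{Wdlem} together with cyclotomic self-duality, and since $\deg u_i(-x) = d_i$ is strictly increasing in $i$, this forces $C_{ij} = 0$ unless $d_{R+2-j}\le d_i$, i.e.\ unless $i+j\ge R+2$; as $B(u_i,u_j)$ is a nonzero constant multiple of $C_{ji}$ (cf.\ the computation in the proof of Lemma~\ref{adiaglem}), it follows that $B(u_i,u_j)=0$ whenever $i+j\le R+1$, which is precisely the isotropy of the flag \eqref{smallcell}; then Theorem~\ref{cycisothm} again gives that $\bm y^{\mathcal F}$ is cyclotomic. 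Either route is short; I would present the first.
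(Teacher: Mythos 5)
Your primary route is exactly the paper's proof: choose the special basis defining the flag \eqref{smallcell} to be the Witt basis of Theorem~\ref{wittthm} (which is legitimate because, as the paper notes immediately after \eqref{smallcell} via Lemma~\ref{sblem}, that flag is independent of the choice of special basis) and then cite Corollary~\ref{flaglem}. Your alternative degree-counting argument, showing $C_{ij}=0$ for $i+j\le R+1$ and hence $B(u_i,u_j)=0$ there, is also correct, but you rightly identify the Witt-basis route as the one to present.
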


\begin{proof} We can choose the special basis $(u_k)_{k=1}^{R+1}$ def\/ining ${\mathcal F}$ to be the Witt basis of Theo\-rem~\ref{wittthm}. Then the result follows from Corollary~\ref{flaglem}.
\end{proof}

\subsection{Isotropic f\/lags}
Recall from Section~\ref{flagsec} the notion of a symmetric subset of $\{1,\dots,R+1\}$.
\begin{lem} Let $Q\subset \{1,\dots, R+1\}$ be a $2p$-element subset. The variety $\FL_Q({\mathcal K})$ contains an isotropic flag if and only if~$Q$ is symmetric.
\end{lem}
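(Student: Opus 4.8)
We may assume throughout that ${\mathcal K}$ is cyclotomically self-dual, so that the form $B$ is defined. The plan is to reduce to the product decomposition $\eta_Q\colon \FL({\mathcal K}_{\textup{Sp}})\times\FL({\mathcal K}_{\textup{O}})\to\FL_Q({\mathcal K})$ of~\eqref{etaqdef}, combined with Theorem~\ref{Bthm}: ${\mathcal K}_{\textup{Sp}}\perp{\mathcal K}_{\textup{O}}$, and $B$ restricts to a non-degenerate skew-symmetric (resp.\ symmetric) form on ${\mathcal K}_{\textup{Sp}}$ (resp.\ ${\mathcal K}_{\textup{O}}$); in particular $\dim U^\perp=\dim {\mathcal K}_{\textup{Sp}}-\dim U$ for $U\subseteq {\mathcal K}_{\textup{Sp}}$, and similarly in ${\mathcal K}_{\textup{O}}$. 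From now on $\perp$ inside ${\mathcal K}_{\textup{Sp}}$ (resp.\ ${\mathcal K}_{\textup{O}}$) denotes the orthogonal complement taken within that subspace.

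First I would reformulate isotropy factor by factor. Any ${\mathcal F}=\{F_k\}\in\FL_Q({\mathcal K})$ is decomposable, so $F_k=A_k\oplus B_k$ with $A_k=F_k\cap{\mathcal K}_{\textup{Sp}}$, $B_k=F_k\cap{\mathcal K}_{\textup{O}}$; since the two summands are $B$-orthogonal one gets $F_k^\perp=A_k^\perp\oplus B_k^\perp$, whence ${\mathcal F}$ is isotropic if and only if $A_k=A_{R+1-k}^\perp$ in ${\mathcal K}_{\textup{Sp}}$ and $B_k=B_{R+1-k}^\perp$ in ${\mathcal K}_{\textup{O}}$ for all $k$. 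Writing ${\mathcal F}=\eta_Q({\mathcal F}_{\textup{Sp}},{\mathcal F}_{\textup{O}})$ with ${\mathcal F}_{\textup{Sp}}=\{V_j\}$, ${\mathcal F}_{\textup{O}}=\{W_j\}$, one has $A_k=V_{a_k}$, $B_k=W_{b_k}$, where $a_k:=|Q\cap\{1,\dots,k\}|$ and $b_k:=k-a_k$. Because $a_k$ runs over every value of $\{0,1,\dots,2p\}$ as $k$ runs over $\{0,1,\dots,R+1\}$, the condition $A_k=A_{R+1-k}^\perp$ for all $k$ is equivalent to the conjunction of $a_k+a_{R+1-k}=2p$ for all $k$ and $V_j=V_{2p-j}^\perp$ for all $j$, i.e.\ ${\mathcal F}_{\textup{Sp}}$ isotropic; likewise for ${\mathcal F}_{\textup{O}}$. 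Thus $\eta_Q({\mathcal F}_{\textup{Sp}},{\mathcal F}_{\textup{O}})$ is isotropic exactly when $a_k+a_{R+1-k}=2p$ for all $k$ and both ${\mathcal F}_{\textup{Sp}}$ and ${\mathcal F}_{\textup{O}}$ are isotropic full flags in ${\mathcal K}_{\textup{Sp}}$ and ${\mathcal K}_{\textup{O}}$ respectively.

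Next I would dispose of the combinatorics. Put $f(k)=a_k+a_{R+1-k}$; since $a_k-a_{k-1}$ equals the indicator of $k\in Q$, a one-line computation gives that $f(k)-f(k-1)$ equals the indicator of $k\in Q$ minus the indicator of $R+2-k\in Q$, for $k=1,\dots,R+1$, while $f(0)=f(R+1)=2p$ always. Hence $f\equiv 2p$ on $\{0,\dots,R+1\}$ if and only if $k\in Q\Leftrightarrow R+2-k\in Q$ for all $k$, that is, if and only if $Q$ is symmetric. Combining with the previous paragraph: $\FL_Q({\mathcal K})$ contains an isotropic flag if and only if $Q$ is symmetric and both ${\mathcal K}_{\textup{Sp}}$ and ${\mathcal K}_{\textup{O}}$ admit isotropic full flags. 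The latter always holds over ${\mathbb C}$: taking the special Witt basis $(r_k)_{k=1}^{R+1}$ of Theorem~\ref{wittthm}, for which $B(r_i,r_j)=0$ unless $i+j=R+2$ by Lemma~\ref{adiaglem}, the subspaces ${\mathcal K}_{\textup{Sp}}=\operatorname{span}(r_1,\dots,r_p)\oplus\operatorname{span}(r_{R+2-p},\dots,r_{R+1})$ and ${\mathcal K}_{\textup{O}}=\operatorname{span}(r_{p+1},\dots,r_{R+1-p})$ inherit $B$-orthonormal bases with anti-diagonal Gram matrix once ordered as $r_1,\dots,r_p,r_{R+2-p},\dots,r_{R+1}$ and as $r_{p+1},\dots,r_{R+1-p}$; the flags of successive spans of such bases are isotropic. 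For symmetric $Q$ one then sets ${\mathcal F}_{\textup{Sp}}$, ${\mathcal F}_{\textup{O}}$ to be these flags and takes ${\mathcal F}=\eta_Q({\mathcal F}_{\textup{Sp}},{\mathcal F}_{\textup{O}})$, proving the ``if'' direction; the ``only if'' direction is immediate from the displayed equivalence.

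The only step requiring genuine care is the equivalence, in the second paragraph, between the subspace identities $A_k=A_{R+1-k}^\perp$ (all $k$) and the pair ``$a_k+a_{R+1-k}=2p$ for all $k$ together with ${\mathcal F}_{\textup{Sp}}$ isotropic'': the point is that the step function $k\mapsto a_k$ is surjective onto $\{0,\dots,2p\}$, so no member $V_j$ of the flag ${\mathcal F}_{\textup{Sp}}$ escapes the constraint, and dually for ${\mathcal F}_{\textup{O}}$. Everything else is routine linear algebra over ${\mathbb C}$, and no analytic property of the quasi-polynomial space ${\mathcal K}$ enters beyond the already-established structure of $B$ and the existence of a (special, hence reduced) Witt basis.
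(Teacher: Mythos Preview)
Your proof is correct. The paper states this lemma without proof, but your argument is precisely the natural one suggested by the surrounding machinery: it uses exactly the ingredients the paper has set up for this purpose (the product decomposition $\eta_Q$ of~\eqref{etaqdef}, the orthogonality and symmetry properties of $B$ from Theorem~\ref{Bthm}, and the special Witt basis of Theorem~\ref{wittthm}), and the combinatorial identification of the condition $a_k+a_{R+1-k}=2p$ with symmetry of $Q$ is the expected one.
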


\begin{lem} If $Q$ is symmetric then the variety $\FL_Q^\perp({\mathcal K})$ of isotropic flags is isomorphic to the direct product of spaces of isotropic flags $\FL^\perp({\mathcal K}_{\textup{Sp}})\times \FL^\perp({\mathcal K}_{\textup{O}})$ and the isomorphism of these varieties is given by the map~$\eta_Q$ defined in~\eqref{etaqdef}.
\end{lem}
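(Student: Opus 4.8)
The plan is to upgrade the isomorphism $\eta_Q\colon \FL(\mathcal{K}_{\textup{Sp}})\times\FL(\mathcal{K}_{\textup{O}})\to\FL_Q(\mathcal{K})$ of Section~\ref{flagsec} to an isomorphism of the isotropic loci. Since $\eta_Q$ and $\eta_Q^{-1}$ are already morphisms of varieties, it suffices to prove the pointwise statement: for $F_{\bullet,+}\in\FL(\mathcal{K}_{\textup{Sp}})$ and $F_{\bullet,-}\in\FL(\mathcal{K}_{\textup{O}})$, the full flag $\mathcal{F}=\eta_Q(F_{\bullet,+},F_{\bullet,-})$ is isotropic for $B$ on $\mathcal{K}$ if and only if $F_{\bullet,+}$ is isotropic for $B|_{\mathcal{K}_{\textup{Sp}}}$ and $F_{\bullet,-}$ is isotropic for $B|_{\mathcal{K}_{\textup{O}}}$. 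The case $p=0$ is trivial, as then $\eta_Q$ is the identity $\FL(\mathcal{K}_{\textup{O}})=\FL(\mathcal{K})$; so I assume $p>0$.

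First I would record two preliminaries. (a) By Theorem~\ref{Bthm} the decomposition $\mathcal{K}=\mathcal{K}_{\textup{Sp}}\oplus\mathcal{K}_{\textup{O}}$ is $B$-orthogonal, and since $B$ is non-degenerate on $\mathcal{K}$ it follows that $B|_{\mathcal{K}_{\textup{Sp}}}$ and $B|_{\mathcal{K}_{\textup{O}}}$ are non-degenerate (this is also visible from the reduced Witt basis of Theorem~\ref{wittthm}, which pairs $\mathcal{K}_{\textup{Sp}}$ and $\mathcal{K}_{\textup{O}}$ anti-diagonally within themselves). A short direct argument then shows that for any decomposable subspace $U=U_+\oplus U_-$ with $U_\pm\subset\mathcal{K}_{\textup{Sp}}$ resp.\ $\mathcal{K}_{\textup{O}}$ one has $U^\perp = U_+^{\perp_{\textup{Sp}}}\oplus U_-^{\perp_{\textup{O}}}$, where $U^\perp$ is the orthogonal complement in $\mathcal{K}$ and $U_+^{\perp_{\textup{Sp}}}$, $U_-^{\perp_{\textup{O}}}$ are the orthogonal complements inside $\mathcal{K}_{\textup{Sp}}$, $\mathcal{K}_{\textup{O}}$; in particular $\dim U_+^{\perp_{\textup{Sp}}}=2p-\dim U_+$ and $\dim U_-^{\perp_{\textup{O}}}=(R+1-2p)-\dim U_-$. (b) Writing $k_1=|Q\cap\{1,\dots,k\}|$ and $k_2=k-k_1$ as in~\eqref{etaqdef}, the symmetry of $Q$ under $j\mapsto R+2-j$ yields $|Q\cap\{1,\dots,R+1-k\}|=|Q\cap\{k+1,\dots,R+1\}|=2p-k_1$, i.e.\ $(R+1-k)_1=2p-k_1$ and $(R+1-k)_2=(R+1-2p)-k_2$.

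Now take $\mathcal{F}=\eta_Q(F_{\bullet,+},F_{\bullet,-})$, so $F_k=F_{k_1,+}\oplus F_{k_2,-}$. Combining (a) and (b),
$F_{R+1-k}^\perp = F_{2p-k_1,+}^{\perp_{\textup{Sp}}}\oplus F_{(R+1-2p)-k_2,-}^{\perp_{\textup{O}}}$, which is decomposable. Since both $F_k$ and $F_{R+1-k}^\perp$ are compatible with the direct sum $\mathcal{K}_{\textup{Sp}}\oplus\mathcal{K}_{\textup{O}}$, the identity $F_k=F_{R+1-k}^\perp$ holds if and only if $F_{k_1,+}=F_{2p-k_1,+}^{\perp_{\textup{Sp}}}$ and $F_{k_2,-}=F_{(R+1-2p)-k_2,-}^{\perp_{\textup{O}}}$. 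As $k$ runs over $1,\dots,R$, the integer $k_1$ runs (monotonically, in unit steps) through $\{0,1,\dots,2p\}$ and $k_2$ through $\{0,1,\dots,R+1-2p\}$; the extreme values $k_1\in\{0,2p\}$ and $k_2\in\{0,R+1-2p\}$ give only the trivial identities $\{0\}=\mathcal{K}_{\textup{Sp}}^{\perp_{\textup{Sp}}}$, $\mathcal{K}_{\textup{Sp}}=\{0\}^{\perp_{\textup{Sp}}}$, and similarly for the $\mathcal{K}_{\textup{O}}$ factor. Hence $\mathcal{F}$ is isotropic precisely when $F_{j,+}=F_{2p-j,+}^{\perp_{\textup{Sp}}}$ for $j=1,\dots,2p-1$ and $F_{j,-}=F_{(R+1-2p)-j,-}^{\perp_{\textup{O}}}$ for $j=1,\dots,R-2p$, which is exactly the condition $F_{\bullet,+}\in\FL^\perp(\mathcal{K}_{\textup{Sp}})$ and $F_{\bullet,-}\in\FL^\perp(\mathcal{K}_{\textup{O}})$. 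Thus $\eta_Q$ restricts to a bijection $\FL^\perp(\mathcal{K}_{\textup{Sp}})\times\FL^\perp(\mathcal{K}_{\textup{O}})\to\FL_Q^\perp(\mathcal{K})$, and since $\eta_Q$ and its inverse are regular this is an isomorphism of varieties.

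The only genuinely structural input is Theorem~\ref{Bthm}: it makes $B|_{\mathcal{K}_{\textup{Sp}}}$ and $B|_{\mathcal{K}_{\textup{O}}}$ non-degenerate and mutually orthogonal, which is what licenses the splitting $U^\perp = U_+^{\perp_{\textup{Sp}}}\oplus U_-^{\perp_{\textup{O}}}$. The main thing to be careful about is purely combinatorial: deriving the index identity $(R+1-k)_1=2p-k_1$ from the symmetry of $Q$ and then verifying that, as $k$ sweeps $1,\dots,R$, the equalities extracted from $F_k=F_{R+1-k}^\perp$ assemble into exactly the isotropy conditions on $F_{\bullet,+}$ and on $F_{\bullet,-}$, with the steps where $k_1$ or $k_2$ hits an endpoint contributing nothing.
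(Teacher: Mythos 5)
Your proof is correct. The paper states this lemma without proof (treating it, together with the lemma preceding it and Theorem~\ref{cdt}, as a direct consequence of the structure already established), so there is no paper argument to compare against; yours is the natural one. The two structural inputs you identify are exactly right: Theorem~\ref{Bthm} gives the $B$-orthogonal decomposition ${\mathcal K}={\mathcal K}_{\textup{Sp}}\oplus{\mathcal K}_{\textup{O}}$ with each restriction non-degenerate, which yields the decomposition of orthogonal complements $U^\perp = U_+^{\perp_{\textup{Sp}}}\oplus U_-^{\perp_{\textup{O}}}$ for decomposable $U$, and the symmetry of $Q$ gives the index identity $(R+1-k)_1 = 2p - k_1$. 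One phrasing is slightly imprecise but harmless: when $1\in Q$ (and hence $R+1\in Q$ by symmetry), the index $k_1$ for $k\in\{1,\dots,R\}$ ranges only over $\{1,\dots,2p-1\}$ rather than all of $\{0,\dots,2p\}$; what matters, and what you do correctly use, is that $k_1$ always sweeps through each of $\{1,\dots,2p-1\}$ at least once (by monotonicity in unit steps from $k_1(0)=0$ to $k_1(R+1)=2p$), while any conditions one might extract at $k_1\in\{0,2p\}$ are vacuous. The final upgrade to an isomorphism of varieties, via restricting the isomorphism $\eta_Q$ of ambient varieties to the closed isotropic loci once the pointwise bijection is established, is standard and correct.
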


In view of these lemmas and Theorem~\ref{cycisothm}, we have the following description of the subspace of all cyclotomic tuples within the image $\beta(\FL({\mathcal K})) \subset {\mathbb P}\big({\mathbb C}\big[x^\half\big]\big)^{R}$.
\begin{thm}\label{cdt} The irreducible components of the space $\beta(\FL^\perp({\mathcal K}))$  of all cyclotomic tuples  are labeled by symmetric subsets $Q\subset \{1,\dots,R+1\}$. The components do not intersect and each is isomorphic to $\FL^\perp({\mathcal K}_{\textup{Sp}})\times \FL^\perp({\mathcal K}_{\textup{O}})$.
\end{thm}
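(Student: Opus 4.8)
The plan is to deduce the statement from Theorem~\ref{cycisothm}, Corollary~\ref{isodecomp}, the two preceding lemmas, and the fact (Section~\ref{flagsec}) that $\beta$ is an isomorphism of $\FL({\mathcal K})$ onto its image; the only genuinely geometric ingredient is the elementary structure of complete isotropic flag varieties of symplectic and orthogonal spaces. First I would make the phrase ``the space of all cyclotomic tuples'' precise: it denotes the set of cyclotomic tuples lying in the image $\beta(\FL({\mathcal K}))$. Since $\beta$ is injective, each such tuple is $\beta({\mathcal F})$ for a unique full flag ${\mathcal F}$, and by Theorem~\ref{cycisothm} that flag is isotropic; conversely $\beta$ sends every isotropic full flag to a cyclotomic tuple. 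Hence this set is exactly $\beta(\FL^\perp({\mathcal K}))$ and $\beta$ restricts to an isomorphism of varieties $\FL^\perp({\mathcal K}) \to \beta(\FL^\perp({\mathcal K}))$; it is therefore enough to describe $\FL^\perp({\mathcal K})$ and transport the description along $\beta$.

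The second step is to decompose $\FL^\perp({\mathcal K})$. By Corollary~\ref{isodecomp} every isotropic full flag is decomposable, so $\FL^\perp({\mathcal K})$ lies inside the disjoint union $\bigsqcup_Q \FL_Q({\mathcal K})$ over all $2p$-element subsets $Q\subset\{1,\dots,R+1\}$; because the $\FL_Q({\mathcal K})$ are distinct connected components of the space of decomposable flags (Section~\ref{flagsec}), this yields $\FL^\perp({\mathcal K}) = \bigsqcup_Q \FL_Q^\perp({\mathcal K})$, where $\FL_Q^\perp({\mathcal K}) := \FL^\perp({\mathcal K})\cap \FL_Q({\mathcal K})$, with the pieces pairwise disjoint and each open and closed in $\FL^\perp({\mathcal K})$. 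By the first of the two preceding lemmas, $\FL_Q^\perp({\mathcal K})$ is non-empty precisely when $Q$ is symmetric; by the second, for symmetric $Q$ the map $\eta_Q$ of~\eqref{etaqdef} restricts to an isomorphism $\FL_Q^\perp({\mathcal K}) \cong \FL^\perp({\mathcal K}_{\textup{Sp}}) \times \FL^\perp({\mathcal K}_{\textup{O}})$. Pushing forward by $\beta$ then gives $\beta(\FL^\perp({\mathcal K})) = \bigsqcup_Q \beta(\FL_Q^\perp({\mathcal K}))$, a disjoint union over symmetric $2p$-element subsets $Q$, with each piece isomorphic, via $\beta\circ\eta_Q$, to $\FL^\perp({\mathcal K}_{\textup{Sp}}) \times \FL^\perp({\mathcal K}_{\textup{O}})$.

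Finally I would identify these pieces as the irreducible components. Since $B$ is non-degenerate on ${\mathcal K}$ and ${\mathcal K}_{\textup{Sp}}$, ${\mathcal K}_{\textup{O}}$ are mutually $B$-orthogonal (Theorem~\ref{Bthm}), $B$ restricts to a non-degenerate skew form on ${\mathcal K}_{\textup{Sp}}$ and a non-degenerate symmetric form on ${\mathcal K}_{\textup{O}}$, so $\FL^\perp({\mathcal K}_{\textup{Sp}})$ is the variety of complete isotropic flags of a $2p$-dimensional symplectic space and $\FL^\perp({\mathcal K}_{\textup{O}})$ the variety of complete isotropic flags of an orthogonal space of dimension $R+1-2p$. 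The one point beyond bookkeeping, and the step I expect to be the main obstacle, is the irreducibility of these factors: the symplectic complete isotropic flag variety is irreducible, and the orthogonal one is irreducible in the dimensions that occur here (in particular whenever $\dim{\mathcal K}_{\textup{O}}$ is odd, hence for $R=2n$, and trivially when $\dim{\mathcal K}_{\textup{O}}=0$). Granting this, the product $\FL^\perp({\mathcal K}_{\textup{Sp}}) \times \FL^\perp({\mathcal K}_{\textup{O}})$ is irreducible, so every $\FL_Q^\perp({\mathcal K})$ with $Q$ symmetric is irreducible; being also open and closed in $\FL^\perp({\mathcal K})$, it is a connected, hence an irreducible, component. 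Therefore the irreducible components of $\beta(\FL^\perp({\mathcal K}))$ are exactly the subvarieties $\beta(\FL_Q^\perp({\mathcal K}))$ indexed by symmetric $Q$, they are pairwise disjoint, and each is isomorphic to $\FL^\perp({\mathcal K}_{\textup{Sp}}) \times \FL^\perp({\mathcal K}_{\textup{O}})$.
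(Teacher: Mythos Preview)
Your proposal is correct and follows exactly the approach the paper indicates: the paper states the theorem as an immediate consequence of the two preceding lemmas together with Theorem~\ref{cycisothm} (and implicitly Corollary~\ref{isodecomp} and the fact that $\beta$ is an isomorphism onto its image), and your argument is precisely an unpacking of that sentence. Your added discussion of the irreducibility of $\FL^\perp({\mathcal K}_{\textup{Sp}})\times\FL^\perp({\mathcal K}_{\textup{O}})$ goes beyond what the paper spells out; note that your caveat is well-placed, since for $R=2n-1$ with $0<p<n$ the space ${\mathcal K}_{\textup{O}}$ is even-dimensional orthogonal and the variety of full isotropic flags there has two components, so in that case the statement should be read at the level of the pieces $\beta(\FL_Q^\perp({\mathcal K}))$ rather than literal irreducible components.
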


\subsection[Inf\/initesimal deformation of isotropic f\/lags of type $S$]{Inf\/initesimal deformation of isotropic f\/lags of type $\boldsymbol{S}$}\label{idif}

The connected Lie group of endomorphisms of ${\mathcal K}$ preserving $B$ acts transitively on the variety of isotropic full f\/lags of type $Q$, $\FL_Q^\perp({\mathcal K})$, for each symmetric subset $Q\subset\{1,\dots,R+1\}$. In particular it acts transitively on $\FL_S^\perp({\mathcal K})$, and hence on the cyclotomic tuples of polynomials in the image $\beta(\FL_S^\perp({\mathcal K})) \subset {\mathbb P}({\mathbb C}[x])^{R}$.
We shall describe  the inf\/initesimal action of this group on~$\beta(\FL_S^\perp({\mathcal K}))$.

The connected Lie group of endomorphisms of ${\mathcal K}$ preserving $B$ preserves each of the sub\-spa\-ces~${\mathcal K}_{\textup{Sp}}$ and ${\mathcal K}_{\textup{O}}$. Thus this group is the product $\mathrm{Sp}({\mathcal K}_{\textup{Sp}}) \times   \mathrm{SO}({\mathcal K}_{\textup{O}})$ of the group of special symplectic transformations in $\operatorname{End}({\mathcal K}_{\textup{Sp}})$ and the group of special orthogonal transformations in $\operatorname{End}({\mathcal K}_{\textup{O}})$.
Its Lie algebra ${\mathfrak{sp}}({\mathcal K}_{\textup{Sp}}) \oplus {\mathfrak{so}}({\mathcal K}_{\textup{O}})$ consists of all
traceless
endomorphisms $X$ of ${\mathcal K}$ such that
\begin{gather*} B(Xu,v) + B(u, Xv) = 0
\end{gather*}
for all $u,v\in {\mathcal K}$.

Pick any isotropic full f\/lag ${\mathcal F}=\{F_1\subset F_2\subset \dots \subset F_{R+1} = {\mathcal K}\}$ of type $S$. Then $\beta({\mathcal F})=\bm y^{\mathcal F}$ is a cyclotomic tuple of polynomials by Lemma~\ref{Simlem}.  Let $(r_k)_{k=1}^{R+1}$ be a reduced Witt basis such that $F_k = \operatorname{span}_{\mathbb C}(r_1,\dots,r_k)$, $k=1,\dots, R+1$. Such a basis exists by Corollary~\ref{flaglem}.

This choice of basis gives identif\/ications ${\mathcal K}_{\textup{Sp}}\cong{\mathbb C}^{2p}$ and ${\mathcal K}_{\textup{O}} \cong {\mathbb C}^{R+1-2p}$ and hence ${\mathfrak{sp}}({\mathcal K}_{\textup{Sp}})\cong {\mathfrak{sp}}_{2p}$ and ${\mathfrak{so}}({\mathcal K}_{\textup{O}}) \cong {\mathfrak{so}}_{R+1-2p}$.
The Lie algebra ${\mathfrak{sp}}_{2p}$ has root system of type $C_p$.
The Lie algebra ${\mathfrak{so}}_{R+1-2p}$ has root system of type $D_{n-p}$ if $R=2n-1$ is odd and of type $B_{n-p}$ if $R=2n$ is even.

Let $(E_{i,j})_{i,j=1}^{R+1}$ be the basis of $\operatorname{End}({\mathcal K})$ def\/ined by
\begin{gather*} E_{i,j} r_k = \delta_{ik} r_j.
\end{gather*}

The lower-triangular subalgebra of ${\mathfrak{sp}}({\mathcal K}_{\textup{Sp}})\cong {\mathfrak{sp}}_{2p}$ is generated by
\begin{gather*} X_{k} := E_{k+1,k} + E_{R+2-k,R+1-k}, \qquad k=1,\dots, p-1,
\end{gather*}
and
\begin{gather*}
 X_{p} := E_{R+2-p,p}.
 \end{gather*}

When $R=2n-1$, the lower-triangular subalgebra of ${\mathfrak{so}}({\mathcal K}_{\textup{O}})\cong {\mathfrak{so}}_{2n-2p}$ is generated by
\begin{gather*} Y_{k} := E_{k+p,k+p-1} - E_{2n-p-k+1,2n-p-k}, \qquad k=1,\dots,n-p-1 ,
\end{gather*}
and
\begin{gather*}
 \tilde Y_{n-p-1} := E_{k+p+1,k+p-1} - E_{2n-p-k+1,2n-p-k-1}.
 \end{gather*}

When $R=2n$, the lower-triangular subalgebra of ${\mathfrak{so}}({\mathcal K}_{\textup{O}})\cong {\mathfrak{so}}_{2n-2p+1}$ is generated by
\begin{gather*}
 Z_{k} := E_{k+p,k+p-1} - E_{2n-p-k+2,2n-p-k+1}, \qquad k=1,\dots,n-p.
 \end{gather*}

These generators def\/ine linear transformations belonging to  $\operatorname{End}({\mathcal K}_{\textup{Sp}})\oplus \operatorname{End}({\mathcal K}_{\textup{O}})$.

\begin{rem} The Lie algebra ${\mathfrak{so}}({\mathcal K}_{\textup{O}}) \oplus {\mathfrak{sp}}({\mathcal K}_{\textup{Sp}})$ is contained in the simple Lie superalgeb\-ra~${\mathfrak{osp}}({\mathcal K})$ of all orthosymplectic transformations of the space~${\mathcal K}$. See~\cite{KacSuper} for the def\/inition. It would be interesting to understand whether this superalgebra plays a role here.
\end{rem}

For any $k=1,\dots, p$ and all $c\in {\mathbb C}$, the basis $e^{c X_k} \bm r$ is again a Witt basis of ${\mathcal K}$. Let $e^{cX_k}{\mathcal F}$ denote the corresponding isotropic f\/lag and $\beta(e^{cX_k}{\mathcal F})$ the corresponding tuple representing a cyclotomic
point.
Let us describe the dependence on $c$ of this tuple.

For $k=1,\dots,p-1$, we have
\begin{gather*}
 e^{cX_k} \bm r = (r_1,\dots,r_{k-1}, r_k + c r_{k+1}, r_{k+1},\dots,
                               r_{R-k}, r_{R+1-k} + cr_{R+2-k}, r_{R+2-k},\dots, r_{R+1})
\end{gather*}
and hence
\begin{gather*}
 \beta\big(e^{cX_k}{\mathcal F}\big)
= \big(y_1^{\mathcal F}, \dots, y_{k-1}^{\mathcal F}, y_k(x,c), y_{k+1}^{\mathcal F},\dots, y_{R+1-k}^{\mathcal F}, y_{R+1-k}(x,c), y_{R+2-k}^{\mathcal F},\dots, y_{R+1}^{\mathcal F}\big),
\end{gather*}
where
\begin{subequations}\label{yfe}
\begin{gather} y_k(x,c)  :=  \operatorname{Wr}^\dag(r_1,\dots,r_{k-1},r_k+c r_{k+1})  = y^{\mathcal F}_k + c \operatorname{Wr}^\dag(r_1,\dots,r_{k-1}, r_{k+1})
\end{gather}
and
\begin{gather} y_{R+1-k}(x,c)  :=  \operatorname{Wr}^\dag(r_1,\dots,r_{R-k},r_{R+1-k}+c r_{R+2-k}) \nonumber\\
\hphantom{y_{R+1-k}(x,c)}{}
 = y^{\mathcal F}_{R+1-k} + c \operatorname{Wr}^\dag(r_1,\dots,r_{R-k}, r_{R+2-k}).
 \end{gather}
\end{subequations}
Finally (for $k=p$) we have
\begin{gather*}
 e^{cX_p} \bm r = (r_1,\dots,r_{p-1}, r_p + c r_{R+2-p}, r_{p+1},\dots, r_{R+1})
 \end{gather*}
and hence
\begin{gather} \beta\big(e^{cX_p}{\mathcal F}\big)
= \big(y_1^{\mathcal F}, \dots, y_{p-1}^{\mathcal F}, y_p(x,c), y_{p+1}^{\mathcal F},\dots, \dots, y_{R+1}^{\mathcal F}\big),\nonumber\\
 y_{p}(x,c) :=  \operatorname{Wr}^\dag(r_1,\dots,r_{p-1},r_{p}+c r_{R+2-p})
= y^{\mathcal F}_{p} + c \operatorname{Wr}^\dag(r_1,\dots,r_{p-1}, r_{R+2-p}).\label{yfe2}
\end{gather}

The f\/lows in ${\mathbb P}({\mathbb C}[x])^R$ corresponding to the generators of ${\mathfrak{so}}({\mathcal K}_{\textup{O}})$ can be described similarly.

\subsection[Populations of cyclotomic critical points in type $A$]{Populations of cyclotomic critical points in type $\boldsymbol{A}$}\label{rsec}

Recall the def\/inition of the extended master function $\widehat\Phi$, \eqref{emf}. In the setting of the present section (see Section~\ref{Asetup}) it has the explicit form
\begin{gather}
\widehat\Phi(\bm t;\bm {\mathsf c};\bm z;\bm \Lambda; \Lambda_0) =
\sum_{i=1}^N \lf \Lambda_0, \Lambda_i\rf (\log (-z_i) + \log(z_i)) +\sum_{i=1}^N (\Lambda_i,\Lambda_{R+1-i}) \log 2 z_i\nonumber\\
\hphantom{\widehat\Phi(\bm t;\bm {\mathsf c};\bm z;\bm \Lambda; \Lambda_0) =}{}
+ \sum_{1\leq i<j\leq  n} \lf  \Lambda_i,\Lambda_j\rf \log (z_i- z_j)
+ \sum_{1\leq i<j\leq  n} \lf  \Lambda_{R+1-i},\Lambda_j\rf \log (-z_i- z_j)\nonumber\\
\hphantom{\widehat\Phi(\bm t;\bm {\mathsf c};\bm z;\bm \Lambda; \Lambda_0) =}{}
+ \sum_{1\leq i<j\leq  n} \lf  \Lambda_i,\Lambda_{R+1-j}\rf \log (z_i+ z_j)\nonumber\\
\hphantom{\widehat\Phi(\bm t;\bm {\mathsf c};\bm z;\bm \Lambda; \Lambda_0) =}{}
+ \sum_{1\leq i<j\leq  n} \lf  \Lambda_{R+1-i},\Lambda_{R+1-j} \rf \log (-z_i+ z_j)
- \sum_{j=1}^{\tilde m}  \lf \alpha_{{\mathsf c}(j)},\Lambda_0\rf \log (\t_j)\nonumber\\
\hphantom{\widehat\Phi(\bm t;\bm {\mathsf c};\bm z;\bm \Lambda; \Lambda_0) =}{}
- \sum_{i=1}^N\sum_{j=1}^{\tilde m} \lf \alpha_{{\mathsf c}(j)},\Lambda_i\rf \log (\t_j-z_i)
- \sum_{i=1}^N\sum_{j=1}^{\tilde m} \lf \alpha_{{\mathsf c}(j)},\Lambda_{R+1-i}\rf \log (\t_j+z_i)
\nonumber\\
\hphantom{\widehat\Phi(\bm t;\bm {\mathsf c};\bm z;\bm \Lambda; \Lambda_0) =}{}
+ \sum_{1\leq i< j \leq \tilde m} \lf \alpha_{{\mathsf c}(i)},\alpha_{{\mathsf c}(j)}\rf \log (\t_i-\t_j)\label{emfA}
\end{gather}
and the critical point equations \eqref{ebe} become
\begin{gather}
0=
\sum_{i=1}^N
\frac{\lf \alpha_{{\mathsf c}(j)},\Lambda_i\rf}{\t_j-z_i}
+
\sum_{i=1}^N
\frac{\lf \alpha_{{\mathsf c}(j)},\Lambda_{R+1-i}\rf}{\t_j+z_i}\nonumber\\
\hphantom{0=}{}
+  \frac{\lf \alpha_{{\mathsf c}(j)},\Lambda_0\rf}{\t_j}
-  \sum_{\substack{i=1\\i\neq j}}^{\tilde m}
\frac{\lf \alpha_{{\mathsf c}(j)},\alpha_{{\mathsf c}(i)}\rf}{\t_j-\t_i} , \qquad j=1,\dots,\tilde m.
\label{ebeA}
\end{gather}

Given a tuple of polynomials $\bm y\in {\mathbb P}({\mathbb C}[x])^{R}$, we have the pair $(\bm t, \bm {\mathsf c})\in {\mathbb C}^{\tilde m}\times I^{\tilde m}$ represented by~$\bm y$ in the sense of Section~\ref{prsec}. We say the tuple~$\bm y$ \emph{represents a critical point of $\widehat\Phi$} if $\bm t$ is a critical point of $\widehat\Phi(\bm t; \bm{\mathsf c}; \bm z; \bm \Lambda;\Lambda_0)$, i.e., if $(\bm t,\bm{\mathsf c})$ satisfy the equations~\eqref{ebeA}.

The following theorem says that we can go from cyclotomic critical points of the extended master function~$\widehat\Phi$,~\eqref{emfA}, to decomposable cyclotomically self-dual vector spaces of quasi-polynomials.

\begin{thm}\label{KCkerthm}
Suppose $\bm y\in {\mathbb P}({\mathbb C}[x])^{R}$ represents a cyclotomic critical point of~$\widehat\Phi$,~\eqref{emfA}.

The kernel $\ker {\mathcal D}(\bm y)$ of the fundamental differential operator ${\mathcal D}(\bm y)$, Section~{\rm \ref{Dsec}}, is a decomposable cyclotomically self-dual vector space of quasi-polynomials with frame $\tilde T_1,\dots, \tilde T_{R};\tilde \Lambda_\infty$, where~$\tilde\Lambda_\infty$ is the unique dominant weight in the orbit of~$\Lambda_\infty(\bm y)$,~\eqref{l8y}, under the shifted action of the Weyl group of type~$A_R$.

There exists an isotropic flag ${\mathcal F}\in \FL^\perp_S(\ker {\mathcal D}(\bm y))$ such that $\bm y= \beta({\mathcal F})$.
\end{thm}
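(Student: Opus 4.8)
The plan is to deduce Theorem~\ref{KCkerthm} from the recovery theory of \cite{MV1}, applied to the extended master function $\widehat\Phi$, \eqref{emfA}, which is an ordinary (non-cyclotomic) master function of type $A_R$ --- merely with a possibly fractional exponent at the origin --- and then to feed the output into the self-duality machinery of Sections~\ref{cycdualsec}--\ref{Dsec}.

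First I would show that $\mathcal{K}:=\ker\mathcal{D}(\bm y)$ is a decomposable space of quasi-polynomials with frame $\tilde T_1,\dots,\tilde T_R;\tilde\Lambda_\infty$. Since $\bm y$ represents a critical point of $\widehat\Phi$ it is generic with respect to the $\tilde T_i$; in particular no root of any $y_i$ equals $z_s$, $-z_s$ or $0$. One then reruns the argument of \cite[Section~6]{MV1} with ``polynomial'' replaced throughout by ``polynomial in $x^{1/2}$'': solutions of $\mathcal{D}(\bm y)$ are single-valued near each $z_s$ and $-z_s$ (where the local exponents are integers), they acquire no poles there by the critical point equations \eqref{ebeA}, near $0$ their indicial exponents equal the partial sums $\sum_{j<k}\langle\Lambda_0,\alpha^\vee_j\rangle$ up to integer shifts --- which by \eqref{l0a} lie in $\frac12\mathbb{Z}$ --- and they are of polynomial growth at infinity; hence they are polynomials in $x^{1/2}$. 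Conditions (i), (ii), (iii) of Definition~\ref{Kfdef} then come out exactly as in \cite{MV1}: for (i) one takes $\tilde\Lambda_\infty$ to be the dominant representative of the shifted Weyl orbit of $\Lambda_\infty(\bm y)$, using that $\Lambda-\Lambda_\infty(\bm y)=\sum_{j\in I}\alpha_j\deg y_j$ by \eqref{l8y} and \eqref{lambdadef}, so $\Lambda-\tilde\Lambda_\infty\in\mathbb{Z}_{\geq0}[\alpha_j]_{j\in I}$ as in \cite[Section~3.6]{MV1}; conditions (ii) and (iii) follow from genericity and \eqref{ebeA}. Decomposability is then immediate: since the elements of $\mathcal{K}$ are polynomials in $x^{1/2}$, the monodromy of $\mathcal{D}(\bm y)$ about the origin acts on $\mathcal{K}$ by $x^{1/2}\mapsto -x^{1/2}$, and as $\mathcal{K}$ is monodromy-invariant it is the direct sum of the $\pm1$-eigenspaces $\mathcal{K}\cap\mathbb{C}[x]$ and $\mathcal{K}\cap x^{1/2}\mathbb{C}[x]$.

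Next I would produce the flag. By construction $\mathcal{D}(\bm y)$ is an ordered product of first-order factors; let $\mathcal{F}=\{F_k\}$ be the full flag of $\mathcal{K}$ in which $F_k$ is the kernel of the product of the rightmost $k$ factors, and let $(u_k)_{k=1}^{R+1}$ be a basis adjusted to $\mathcal{F}$. One checks inductively that $\operatorname{Wr}(u_1,\dots,u_k)\simeq y_k\,\tilde T_1^{k-1}\cdots\tilde T_{k-1}$, i.e.\ $\operatorname{Wr}^\dag(u_1,\dots,u_k)\simeq y_k$, so $\bm y=\beta(\mathcal{F})$; this is \cite[Lemma~5.6]{MV1}, the recovery theorem recorded in Section~\ref{Dsec}, and it reconfirms $\mathcal{K}=\ker\mathcal{D}$. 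Now $\bm y$ represents a \emph{cyclotomic} critical point, so $y_k(x)\simeq y_{R+1-k}(-x)$ and the tuple $\bm y=\beta(\mathcal{F})$ is cyclotomic; hence by Theorem~\ref{cptthm}, $\mathcal{K}$ is cyclotomically self-dual, which settles the first two assertions of the theorem. The bilinear form $B$ of Theorem~\ref{Bthm} is then defined on $\mathcal{K}$, and by Theorem~\ref{cycisothm} the flag $\mathcal{F}$, being the $\beta$-preimage of a cyclotomic tuple, is isotropic. By Corollary~\ref{isodecomp} it is decomposable, hence of type $Q$ for some symmetric $2p$-element subset $Q\subset\{1,\dots,R+1\}$; and since $\bm y\in\mathbb{P}(\mathbb{C}[x])^R$ consists of genuine polynomials, Lemma~\ref{QSlem} forces $Q=S$. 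Thus $\mathcal{F}\in\FL^\perp_S(\mathcal{K})$ with $\beta(\mathcal{F})=\bm y$, as required.

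I expect the main obstacle to be the quasi-polynomial version of the recovery argument in the second step: one must verify that the half-integral behaviour at the origin permitted by \eqref{l0a} is exactly what makes the \cite{MV1} analysis carry over, that (via Lemma~\ref{deglem}) it pins the exponents at $0$ and $\infty$ into the pattern producing $\operatorname{sdim}\mathcal{K}=(2p\,|\,R{+}1{-}2p)$, and that no logarithmic solutions intrude. The remaining steps are then essentially bookkeeping on top of Theorems~\ref{cptthm}, \ref{cycisothm} and \ref{Bthm} and Corollary~\ref{isodecomp}.
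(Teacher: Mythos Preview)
Your proposal is correct and follows essentially the same route as the paper: invoke the \cite{MV1} recovery theory to get the frame and a flag $\mathcal F$ with $\beta(\mathcal F)=\bm y$, then apply Theorem~\ref{cptthm} for cyclotomic self-duality, Theorem~\ref{cycisothm} for isotropy, and Lemma~\ref{QSlem} to pin down the type $S$. The only cosmetic differences are that the paper builds the adjusted basis via the recursive Wronskians $y^{(i,\dots,k)}_i$ rather than as kernels of truncated products (these give the same flag), and gets decomposability of $\mathcal F$ directly from the first clause of Lemma~\ref{QSlem} rather than via Corollary~\ref{isodecomp}; your monodromy argument for decomposability of $\mathcal K$ is a welcome clarification that the paper leaves implicit.
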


\begin{proof}
Arguing as in \cite{MV1} -- see especially Lemma~5.10 -- we have that  $\ker {\mathcal D}(\bm y)$ is a vector space of quasi-polynomials with frame $\tilde T_1,\dots, \tilde T_{R};\tilde \Lambda_\infty$, and that the f\/lag ${\mathcal F}\in \FL(\ker({\mathcal D}(\bm y)))$ such that $\beta({\mathcal F}) = \bm y$ can be constructed as follows.
Def\/ine  quasi-polynomials $y^{(i,i+1,\dots,k)}_i$, $1\leq i\leq k\leq R$, recursively by
\begin{gather*} \operatorname{Wr}(y^{(k)}_k , y_k)  = y_{k-1} {\tilde T}_k y_{k+1},\qquad
 \operatorname{Wr}(y^{(i,i+1,\dots,k)}_i, y_i)  = y_{i-1} {\tilde T}_i y^{(i+1,\dots,k)}_{i+1},\qquad i<k
\end{gather*}
(recall we set $y_0 = y_{R+1} = 1$ for convenience). Set $u_1 = y_1$ and $u_k= y^{(1,\dots,k-1)}_1$ for $k=2,\dots,R+1$. Then $(u_k)_{k=1}^{R+1}$ is a basis of $\ker {\mathcal D}(\bm y)$. Moreover $\operatorname{Wr}^\dag(u_1,\dots,u_k) = y_k$, $k=1,\dots, R$. That is, $\beta({\mathcal F}) = \bm y$ for the f\/lag ${\mathcal F} = \{F_k\}$ given by $F_k = \operatorname{span}_{\mathbb C}(u_1,\dots, u_k)$, $k=1,\dots, R+1$.
Since $\bm y$ is cyclotomic, Theorem~\ref{cptthm} states that $\ker{\mathcal D}(\bm y)$ is cyclotomically self-dual. By Lemma~\ref{QSlem}, ${\mathcal F}$ is a decomposable f\/lag of type $S$, and by Theorem~\ref{cycisothm} it is isotropic.
\end{proof}

Conversely, we have the following, arguing as in Lemmas 3.1, 3.2 and 5.15 in~\cite{MV1} and using Theorem~\ref{cycisothm}.

\begin{thm}\label{KDCthm}
Let ${\mathcal K}$ be a decomposable cyclotomically self-dual vector space of quasi-po\-ly\-no\-mials with frame $\tilde T_1,\dots, \tilde T_{R};\tilde \Lambda_\infty$.

Suppose there exists an isotropic flag ${\mathcal F}\in \FL_S^\perp({\mathcal K})$ such that the tuple~$\bm y^{\mathcal F}$ is generic. Then~$\bm y^{\mathcal F}$ represents a cyclotomic critical point of~$\widehat\Phi$,~\eqref{emfA}.
\end{thm}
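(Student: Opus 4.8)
The plan is to reduce the statement to the non-cyclotomic machinery of \cite{MV1} together with Theorem~\ref{cycisothm}. Since ${\mathcal F}\in\FL_S^\perp({\mathcal K})$ is isotropic, Theorem~\ref{cycisothm} immediately gives that the tuple $\bm y^{\mathcal F}$ is cyclotomic, so the only real content to establish is that $\bm y^{\mathcal F}$ represents a critical point of $\widehat\Phi$, i.e.\ that its roots obey~\eqref{ebeA}. The key point is that $\widehat\Phi$ in the form~\eqref{emfA} is a master function of the standard shape~\eqref{mf} -- with marked points $0, z_1,\dots,z_N,-z_1,\dots,-z_N$ carrying the weights $\Lambda_0$, the $\Lambda_s$ at $z_s$, and the $\Lambda_{R+1-s}$ at $-z_s$ -- so the only departure from the setting of \cite{MV1} is that $\langle\Lambda_0,\alpha^\vee_p\rangle$ may be a half-integer, which is exactly why ${\mathcal K}$ lives in ${\mathbb C}\big[x^\half\big]$ rather than in ${\mathbb C}[x]$. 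As in Propositions~\ref{pprop}, \ref{pprop2} and~\ref{nb}, this causes no difficulty, since the relevant arguments use only that certain divided Wronskians are quasi-polynomials (Lemma~\ref{dWlem}), together with genericity, and never any integrality of $\Lambda_0$.

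Concretely, I would fix a basis $(u_k)_{k=1}^{R+1}$ of ${\mathcal K}$ adjusted to ${\mathcal F}$, so that $y^{\mathcal F}_k=\operatorname{Wr}^\dag(u_1,\dots,u_k)$ for $k=1,\dots,R$; by Lemmas~\ref{dWlem} and~\ref{Simlem} these are genuine polynomials, and by hypothesis $\bm y^{\mathcal F}=(y^{\mathcal F}_1,\dots,y^{\mathcal F}_R)$ is generic. The mechanism is the $k=1$ case of the Wronskian identity of Lemma~\ref{MVlem} (which is \cite[Lemma~3.1]{MV1} in disguise),
\begin{gather*}
\operatorname{Wr}\big(\operatorname{Wr}(u_1,\dots,u_k),\operatorname{Wr}(u_1,\dots,u_{k-1},u_{k+1})\big)=\operatorname{Wr}(u_1,\dots,u_{k-1})\operatorname{Wr}(u_1,\dots,u_{k+1}),
\end{gather*}
valid for $k=1,\dots,R$. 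Dividing through by the appropriate powers of $\tilde T_1,\dots,\tilde T_R$ and using $\operatorname{Wr}(fg,fh)=f^2\operatorname{Wr}(g,h)$, this becomes
\begin{gather*}
\operatorname{Wr}\big(y^{\mathcal F}_k,\tilde u_k\big)=\tilde T_k\,y^{\mathcal F}_{k-1}\,y^{\mathcal F}_{k+1},\qquad
\tilde u_k:=\operatorname{Wr}^\dag(u_1,\dots,u_{k-1},u_{k+1}),
\end{gather*}
with the convention $y^{\mathcal F}_0=y^{\mathcal F}_{R+1}=1$; here $\tilde u_k\in{\mathbb C}\big[x^\half\big]$ by Lemma~\ref{dWlem}.

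Now fix a root $\t_j$ of $y^{\mathcal F}_k$, so that ${\mathsf c}(j)=k$. Genericity ensures that $\t_j$ is a simple root of $y^{\mathcal F}_k$, that $\t_j\notin\{0,\pm z_1,\dots,\pm z_N\}$, and that $\t_j$ is not a root of $y^{\mathcal F}_{k-1}$ or of $y^{\mathcal F}_{k+1}$; evaluating the displayed identity at $\t_j$ then forces $\tilde u_k(\t_j)\neq 0$. Consider
\begin{gather*}
\Psi_k(x):=(x-\t_j)^2\,\frac{\tilde T_k(x)\,y^{\mathcal F}_{k-1}(x)\,y^{\mathcal F}_{k+1}(x)}{\big(y^{\mathcal F}_k(x)\big)^2}
=(x-\t_j)^2\left(\frac{\tilde u_k}{y^{\mathcal F}_k}\right)'(x).
\end{gather*}
Since $\t_j\neq 0$, the quasi-polynomial $\tilde u_k$ is holomorphic near $\t_j$, so $\tilde u_k/y^{\mathcal F}_k$ has there a simple pole with nonzero residue; hence $\Psi_k$ is regular at $\t_j$ with $\Psi_k(\t_j)\neq 0$ and $\Psi_k'(\t_j)=0$, and therefore $\big(\frac{d}{dx}\log\Psi_k\big)(\t_j)=0$. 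Expanding $\frac{d}{dx}\log\Psi_k$ using the explicit form of $\tilde T_k$ (its factor $x^{\langle\Lambda_0,\alpha^\vee_k\rangle}$ at the origin and its factors $(x-z_s)^{\langle\Lambda_s,\alpha^\vee_k\rangle}(x+z_s)^{\langle\Lambda_{R+1-s},\alpha^\vee_k\rangle}$) and the type $A_R$ identity $\prod_l \big(y^{\mathcal F}_l\big)^{-\langle\alpha_l,\alpha^\vee_k\rangle}=y^{\mathcal F}_{k-1}y^{\mathcal F}_{k+1}/\big(y^{\mathcal F}_k\big)^2$, one finds that this is precisely equation~\eqref{ebeA} for $\t_j$. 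Running over all $j$ shows that $\bm y^{\mathcal F}$ represents a critical point of $\widehat\Phi$; combined with Theorem~\ref{cycisothm}, it represents a cyclotomic critical point.

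I expect the main obstacle to be bookkeeping rather than substance: one must check that the arguments of \cite[Lemmas~3.1, 3.2 and~5.15]{MV1} transcribe verbatim to the quasi-polynomial/extended setting -- in particular that every divided Wronskian appearing along the way is a bona fide quasi-polynomial (Lemma~\ref{dWlem}), that flags of type $S$ indeed yield genuine polynomials $y^{\mathcal F}_k$ (Lemma~\ref{Simlem}), and that the half-integral exponent $\langle\Lambda_0,\alpha^\vee_p\rangle$, being attached to the point $x=0$ which genericity keeps away from every Bethe root, never interferes with the residue computation. No new idea beyond \cite{MV1} and Theorem~\ref{cycisothm} should be required.
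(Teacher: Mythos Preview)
Your proposal is correct and follows exactly the route the paper indicates: the paper's proof consists of the single sentence ``arguing as in Lemmas~3.1, 3.2 and~5.15 in~\cite{MV1} and using Theorem~\ref{cycisothm}'', and you have unpacked precisely that---the Wronskian identity of Lemma~\ref{MVlem} to obtain $\operatorname{Wr}(y^{\mathcal F}_k,\tilde u_k)=\tilde T_k\,y^{\mathcal F}_{k-1}y^{\mathcal F}_{k+1}$, the logarithmic-derivative/residue computation to extract~\eqref{ebeA} at each Bethe root, and Theorem~\ref{cycisothm} for cyclotomicity. One small remark: the claim that genericity alone forces $\t_j$ to be a simple root of $y^{\mathcal F}_k$ is not literally in the definition of generic (Section~\ref{prsec}); rather, it follows from evaluating your Wronskian identity at $\t_j$, which gives $-\,(y^{\mathcal F}_k)'(\t_j)\,\tilde u_k(\t_j)\neq 0$ and hence simultaneously $(y^{\mathcal F}_k)'(\t_j)\neq 0$ and $\tilde u_k(\t_j)\neq 0$.
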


Since being generic is an open condition, the set of generic tuples in the image  $\beta(\FL_S^\perp({\mathcal K}))$ is either empty or it is open and dense in $\beta(\FL_S^\perp({\mathcal K}))$.

Starting from an initial tuple $\bm y$ that represents a cyclotomic critical point of~$\widehat\Phi$,~\eqref{emfA}, we may let ${\mathcal K} = \ker {\mathcal D}(\bm y)$ as in Theorem~\ref{KCkerthm}.
Then we have the variety
\begin{gather}
 \beta(\FL_S^\perp({\mathcal K})) \cong \FL^\perp({\mathcal K}_{\textup{Sp}}) \times \FL^\perp({\mathcal K}_{\textup{O}}),\label{popdef}
 \end{gather}
where the isomorphism is by Theorem~\ref{cdt}.
Almost all of the tuples in $\beta(\FL_S^\perp({\mathcal K}))$ are generic and hence represent cyclotomic critical points of $\widehat\Phi$.
Call this variety $\beta(\FL_S^\perp({\mathcal K}))\subset {\mathbb P}({\mathbb C}[x])^{R}$ the \emph{cyclotomic population originated at~$\bm y$}.

\subsection[The case $p=n$]{The case $\boldsymbol{p=n}$}

Consider the case $p=n$ in \eqref{l0a}.
Namely, suppose that we are either in
\begin{itemize}\itemsep=0pt
\item  type $A_{2n-1}$ with $\Lambda_0$ integral and $\langle\Lambda_0,\alpha_n\rangle$ odd, or
\item  type $A_{2n}$ with $\langle\Lambda_0,\alpha^\vee_i\rangle\in {\mathbb Z}$ for all $i<n$ and $\langle\Lambda_0,\alpha^\vee_n\rangle \in \half (2{\mathbb Z}_{\geq 0}-1)  = \big\{{-}\half,\half,\frac 3 2,\dots\big\}$.
\end{itemize}
Then $\Lambda_0$ obeys the assumptions from Section~\ref{l0sec} and so we are in the setting of Section~\ref{cycgensec}. That means we have two notions of a cyclotomic population: the one in the previous subsection, and the one in Section~\ref{cycpopdefsec}. Let us show that these two notions coincide.

\begin{thm}\label{flowthm} Let $p=n$ in~\eqref{l0a}. Let $\bm y$ represent a cyclotomic critical point of the extended master function~$\widehat\Phi$ of~\eqref{emfA}. Then the variety $\beta(\FL_S^\perp({\mathcal K}))$ is isomorphic to the
variety of isotropic full flags in a complex symplectic vector space of dimension $2n$.
The cyclotomic population in ${\mathbb P}({\mathbb C}[x])^R$ originated at $\bm y$ in the sense of Section~{\rm \ref{cycpopdefsec}}
coincides with this va\-riety~$\beta(\FL_S^\perp({\mathcal K}))$.
\end{thm}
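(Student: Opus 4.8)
The plan is to prove the two assertions of Theorem~\ref{flowthm} in turn, exploiting that when $p=n$ the subspace ${\mathcal K}_{\textup{O}}$ is trivial, so that ${\mathcal K} = {\mathcal K}_{\textup{Sp}}$ is a $2n$-dimensional space carrying the skew-symmetric nondegenerate form $B$ of Theorem~\ref{Bthm}. First I would recall the setup of Section~\ref{rsec}: the initial tuple $\bm y$ represents a cyclotomic critical point of $\widehat\Phi$, so by Theorem~\ref{KCkerthm} the space ${\mathcal K} := \ker{\mathcal D}(\bm y)$ is a decomposable cyclotomically self-dual space of quasi-polynomials with frame $\tilde T_1,\dots,\tilde T_R;\tilde\Lambda_\infty$, and there is an isotropic flag ${\mathcal F}\in\FL_S^\perp({\mathcal K})$ with $\bm y = \beta({\mathcal F})$. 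Since $p=n$, from Lemma~\ref{deglem} the exponents $d_{p+1},\dots,d_{R+1-p}$ form an empty list, so ${\mathcal K}_{\textup{O}} = \{0\}$ and ${\mathcal K} = {\mathcal K}_{\textup{Sp}}$ has dimension $2p = 2n$. By Theorem~\ref{Bthm}, $B$ is skew-symmetric on ${\mathcal K}_{\textup{Sp}}$, hence $({\mathcal K},B)$ is a symplectic vector space of dimension $2n$. Combining Theorem~\ref{cdt} with the collapse ${\mathcal K}_{\textup{O}}=\{0\}$ (so $\FL^\perp({\mathcal K}_{\textup{O}})$ is a point), the isomorphism~\eqref{popdef} reads $\beta(\FL_S^\perp({\mathcal K}))\cong\FL^\perp({\mathcal K}_{\textup{Sp}})$, which is precisely the variety of isotropic full flags in the $2n$-dimensional symplectic space ${\mathcal K}$. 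This gives the first assertion.

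For the second assertion I would show that the cyclotomic population of Section~\ref{cycpopdefsec}, defined via repeated cyclotomic generation starting from $\bm y$, coincides with $\beta(\FL_S^\perp({\mathcal K}))$. The key point is that each elementary cyclotomic generation step corresponds, under $\beta$, to moving within $\FL_S^\perp({\mathcal K})$ along a one-parameter subgroup of the symplectic group $\mathrm{Sp}({\mathcal K}_{\textup{Sp}})$. Concretely: for a node $i$ with $L_i=1$, Theorem~\ref{s1thm} together with the recovery-type argument of Theorem~\ref{KCkerthm} (which identifies $\ker{\mathcal D}(\bm y)$ for all tuples in a population, since ${\mathcal D}$ is determined by any point — see Section~\ref{Dsec} and \cite[Lemma~5.6]{MV1}) shows that $\bm y^{(i,\sigma)}(c) = \beta({\mathcal F}')$ for a flag ${\mathcal F}'$ obtained from ${\mathcal F}$ by the elementary move in the $i$th and $(R+1-i)$th directions jointly; by Theorem~\ref{cycisothm} the tuple $\bm y^{(i,\sigma)}(c)$ is cyclotomic iff ${\mathcal F}'$ is isotropic, which holds for all $c$, so ${\mathcal F}'$ stays in $\FL_S^\perp({\mathcal K})$. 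For $i$ with $L_i=2$ (which happens for $R=2n$, $i=n$, $p=n$), Theorem~\ref{s2thm} gives the analogous statement, and the identity~\eqref{fee}, on which that theorem rests, is exactly compatible with the $p=n$ setup; here the relevant one-parameter motion is the flow $e^{cX_p}$ of~\eqref{yfe2}, with $X_p = E_{R+2-p,p}$ the long-root generator of $\mathrm{Sp}({\mathcal K}_{\textup{Sp}})$. Matching the two lists of flows — the cyclotomic generation steps indexed by $i\in I_\sigma$ on the one hand, and the generators $X_1,\dots,X_p$ of the lower-triangular subalgebra of $\mathfrak{sp}({\mathcal K}_{\textup{Sp}})$ described in Section~\ref{idif} on the other — shows that the two define the same reachable set.

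So the argument reduces to: (a) the orbit of ${\mathcal F}$ under repeated application of the elementary cyclotomic generations (with their free parameter $c$, taking Zariski closure) equals the $\mathrm{Sp}({\mathcal K}_{\textup{Sp}})$-orbit of ${\mathcal F}$ in $\FL_S^\perp({\mathcal K})$; and (b) the latter orbit is all of $\FL_S^\perp({\mathcal K})$, which is the transitivity statement already recorded at the start of Section~\ref{idif}. For (a) the inclusion ``$\subseteq$'' follows once each generation step is identified with a specific one-parameter subgroup flow as above; the reverse inclusion follows because the $X_k$, $k=1,\dots,p$, together with their transposes generate $\mathfrak{sp}_{2p}$, and the transposes are obtained by reversing the role of $\bm y$ and an already-generated point (equivalently, by generation being involutive up to the exceptional-$c$ locus, cf.\ Lemma~\ref{delem} and Lemma~\ref{delemii}), so that repeated generation produces the whole connected group $\mathrm{Sp}({\mathcal K}_{\textup{Sp}})$ acting on ${\mathcal F}$. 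Taking Zariski closures on both sides and invoking (b) then yields $\beta(\FL_S^\perp({\mathcal K})) = $ the cyclotomic population originated at $\bm y$, completing the proof.

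The main obstacle I expect is step (a): pinning down \emph{precisely} which one-parameter subgroup of $\mathrm{Sp}({\mathcal K}_{\textup{Sp}})$ each elementary cyclotomic generation realizes, and in particular showing that the $L_i=2$ generation at the node $i=n$ (the length-three sequence $n,\bar n,n$) corresponds to the long-root flow $e^{cX_n}$ rather than to some composite, and that these flows together with generation in the remaining directions generate all of $\mathfrak{sp}_{2n}$ and not merely a proper subalgebra. This requires carefully tracking the Wronskian identities in~\eqref{yfe}–\eqref{yfe2} against the recursive definitions of $y_i^{(i,\bi,i)}$ in Section~\ref{sgen1}, using the recovery construction in the proof of Theorem~\ref{KCkerthm}; the rest is bookkeeping with $\binom{R+1}{2p}=1$ reducing the component count to a single piece and with $\FL^\perp({\mathcal K}_{\textup{O}})$ being a point.
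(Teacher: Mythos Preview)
Your overall strategy matches the paper's, but there are two concrete issues to fix.

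First, your claim that ${\mathcal K}_{\textup{O}}=\{0\}$ whenever $p=n$ is wrong in the case $R=2n$: there $R+1-p=n+1=p+1$, so the list $d_{p+1},\dots,d_{R+1-p}$ consists of the single element $d_{n+1}$, and ${\mathcal K}_{\textup{O}}\cong{\mathbb C}$ is one-dimensional (spanned by the unique basis vector of half-odd-integer degree). Your conclusion that $\FL^\perp({\mathcal K}_{\textup{O}})$ is a point survives, since the isotropic flag variety of a one-dimensional orthogonal space is a point, but the reasoning must distinguish the two parities of $R$, as the paper does.

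Second, invoking Theorem~\ref{s2thm} to handle the $L_i=2$ generation is circular: the proof of Theorem~\ref{s2thm} establishes the key identity~\eqref{fee} precisely by appealing to Theorem~\ref{flowthm} (and Lemma~\ref{f2lem}) in the $A_2$ case. The paper avoids this by never citing Theorem~\ref{s2thm} in the proof of Theorem~\ref{flowthm}; instead it verifies directly, via the Wronskian computations in Lemmas~\ref{f1lem} and~\ref{f2lem}, that each cyclotomic generation step $\bm y\mapsto\bm y^{(k,\sigma)}(c)$ coincides with the flow $\beta({\mathcal F})\mapsto\beta(e^{\pm cX_k}{\mathcal F})$. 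You already identify this Wronskian matching as the crux in your final paragraph, so simply drop the appeal to Theorem~\ref{s2thm} and make that direct computation the argument.

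One smaller point on your step~(a): you work harder than needed by arguing that the transposes of the $X_k$ are reachable via involutivity of generation, so as to produce all of $\mathfrak{sp}_{2n}$. The paper's route is shorter: the one-parameter groups $e^{cX_k}$, $k=1,\dots,n$, generate the lower-triangular unipotent of $\mathrm{Sp}_{2n}$, whose orbit through ${\mathcal F}$ already contains an open dense subset of $\FL_S^\perp({\mathcal K})$; since the cyclotomic population is by definition a Zariski closure and $\beta(\FL_S^\perp({\mathcal K}))$ is closed, equality follows.
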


\begin{proof}
When $p=n$ we have either ${\mathcal K}_{\textup{O}}=\{0\}$, if $R=2n-1$, or ${\mathcal K}_{\textup{O}}\cong {\mathbb C}$, if $R=2n$. In either case $\FL^\perp({\mathcal K}_{\textup{O}})$ is a point, and \eqref{popdef} reduces to
\begin{gather*}
\beta\big(\FL_S^\perp({\mathcal K})\big) \cong \FL^\perp({\mathcal K}_{\textup{Sp}}),
\end{gather*}
i.e., $\beta(\FL_S^\perp({\mathcal K}))$ is isomorphic to the variety of isotropic full f\/lags in the vector space ${\mathcal K}_{\textup{Sp}} \cong {\mathbb C}^{2n}$ endowed with the symplectic form $B|_{{\mathcal K}_{\textup{Sp}}}$.

Starting from any such isotropic full f\/lag, ${\mathcal F}\in \FL_S^\perp({\mathcal K})$, we choose a reduced Witt basis adapted to~${\mathcal F}$ (Corollary~\ref{flaglem}). Then every other f\/lag in $\FL_S^\perp({\mathcal K})$ can be reached by an element of the lower-triangular (as in Section~\ref{idif}) unipotent subgroup of~$\mathrm{Sp}_{2n}$. This subgroup is generated by the one-parameter groups corresponding to negative simple root generators $X_k$ of Section~\ref{idif}. Lemmas~\ref{f1lem} and~\ref{f2lem} below show that the f\/lows in $\beta(\FL_S^\perp({\mathcal K}))$ generated by the $X_k$ coincide with notion of cyclotomic generation from Section~\ref{cycgensec}.  That shows that the set of all tuples of polynomials obtained from $\bm y$ by repeated cyclotomic generation, in all directions $i\in I$, contains a non-empty open subset of $\beta(\FL_S^\perp({\mathcal K}))$. Therefore it is dense in $\beta(\FL_S^\perp({\mathcal K}))$. Hence its Zariski closure is $\beta(\FL_S^\perp({\mathcal K}))$ itself.
\end{proof}

\begin{lem}\label{f1lem}
The image $\beta(e^{cX_k}{\mathcal F})\in {\mathbb P}({\mathbb C}[x])^{R}$ coincides with the tuple $\bm y^{(k,\sigma)}(1/c)$ of Theorem~{\rm \ref{s1thm}}, for every  $k=1,\dots,n-1$ $($and also for $k=n$ when we are in type $A_R=A_{2n-1})$.
\end{lem}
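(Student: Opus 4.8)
The plan is to reduce the statement to a single Wronskian identity, matching the two one-parameter families entry by entry. First I would recall the explicit descriptions of the two sides. By the computation in Section~\ref{idif} (see~\eqref{yfe}, and~\eqref{yfe2} when $k=n=p$ in type $A_{2n-1}$), the tuple $\beta(e^{cX_k}\mathcal F)$ differs from $\bm y=\beta(\mathcal F)$ only in its $k$th and $(R+1-k)$th entries, the new $k$th entry being $y^{\mathcal F}_k+c\operatorname{Wr}^\dag(r_1,\dots,r_{k-1},r_{k+1})$. On the other side, since here $L_k=1$, $M_k=2$, $\sigma k=R+1-k$ and $\omega=-1$, the definition preceding Theorem~\ref{s1thm} shows that $\bm y^{(k,\sigma)}(c')$ also differs from $\bm y$ only in its $k$th and $(R+1-k)$th entries: the $k$th entry is $y^{(k)}_k(x;c')$, a solution of~\eqref{wre} normalised as in~\eqref{yicdef}, and the $(R+1-k)$th entry is then fixed by $y^{(k,\sigma)}_{R+1-k}(-x;c')=(-1)^{\deg y^{(k)}_k}y^{(k)}_k(x;c')$. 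So it suffices to identify the $k$th entries and then to observe that the $(R+1-k)$th entries are forced to agree.

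The heart of the argument is the claim that $W:=\operatorname{Wr}^\dag(r_1,\dots,r_{k-1},r_{k+1})$ is a solution of~\eqref{wre}; in the type-$A$ form this reads $\operatorname{Wr}(y^{\mathcal F}_k,W)={\tilde T}_k\,y^{\mathcal F}_{k-1}y^{\mathcal F}_{k+1}$ (recall $\langle\alpha_j,\alpha^\vee_k\rangle$ is $-1$ for $j=k\pm1$ and $0$ otherwise, and $y_0=y_{R+1}=1$). I would derive this from the Wronskian-of-Wronskians identity of Lemma~\ref{MVlem}, applied to $f_i=r_i$ for $i=1,\dots,k+1$ with $s=k$ and the lemma's integer taken equal to $1$: it yields $\operatorname{Wr}\big(\operatorname{Wr}(r_1,\dots,r_k),\operatorname{Wr}(r_1,\dots,r_{k-1},r_{k+1})\big)=\operatorname{Wr}(r_1,\dots,r_{k-1})\operatorname{Wr}(r_1,\dots,r_{k+1})$. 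Substituting $\operatorname{Wr}(r_1,\dots,r_j)=y^{\mathcal F}_j\,{\tilde T}_1^{\,j-1}{\tilde T}_2^{\,j-2}\cdots{\tilde T}_{j-1}$ and $\operatorname{Wr}(r_1,\dots,r_{k-1},r_{k+1})=W\,{\tilde T}_1^{\,k-1}{\tilde T}_2^{\,k-2}\cdots{\tilde T}_{k-1}$, using $\operatorname{Wr}(fh,gh)=h^2\operatorname{Wr}(f,g)$, and cancelling the powers of the ${\tilde T}_i$ — which collapse to the single surviving factor ${\tilde T}_k$ — produces exactly $\operatorname{Wr}(y^{\mathcal F}_k,W)={\tilde T}_k y^{\mathcal F}_{k-1}y^{\mathcal F}_{k+1}$, i.e.~\eqref{wre} with $i=k$. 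Since $y^{\mathcal F}_j=y_j$, this shows $W$ lies on the pencil $\{y^{(k)}_k(x;c')\}_{c'}$, so the $k$th entries of $\beta(e^{cX_k}\mathcal F)$ and of $\bm y^{(k,\sigma)}(\cdot)$ trace the same projective line through $y_k$ in $\mathbb P(\mathbb C[x])$. Matching the base point $\bm y$ (reached at $c=0$ on the flow side and at $c'\to\infty$ on the generation side) and comparing leading coefficients, using the normalisation of the reduced Witt basis (Lemma~\ref{wr1lem}) together with that of $y^{(k)}_k(x;0)$ in~\eqref{yicdef}, identifies the parameter correspondence as $c\mapsto 1/c$.

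It remains to see that the $(R+1-k)$th entries agree. This can be done either by the same Wronskian identity applied to $r_1,\dots,r_{R+2-k}$ (which shows $\operatorname{Wr}^\dag(r_1,\dots,r_{R-k},r_{R+2-k})$ solves~\eqref{wre} for direction $R+1-k$), or, more quickly, by noting that both tuples are cyclotomic: $e^{cX_k}\mathcal F$ is again an isotropic full flag (because $X_k$ preserves $B$, and $\mathcal K$ is cyclotomically self-dual here since $\bm y$ is cyclotomic, by Theorem~\ref{cptthm}), hence $\beta(e^{cX_k}\mathcal F)$ is cyclotomic by Theorem~\ref{cycisothm}, while $\bm y^{(k,\sigma)}(1/c)$ is cyclotomic by Theorem~\ref{s1thm}. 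For a cyclotomic tuple the $(R+1-k)$th entry is determined up to a nonzero scalar by the $k$th entry via $y_{R+1-k}(-x)\simeq y_k(x)$; since the $k$th entries coincide and all remaining entries equal $y_j$ on both sides, the two tuples coincide as points of $\mathbb P(\mathbb C[x])^R$. When $k=n$ in type $A_{2n-1}$ one has $\sigma k=k$, $M_n=1$, so only the $n$th entry is involved — \eqref{yfe2} gives the flow, and $y^{(n,\sigma)}_n(x;c')=y^{(n)}_n(x;c')$ by Theorem~\ref{s1thm} — and the first two paragraphs already conclude the proof.

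The main obstacle I anticipate is the second paragraph: applying Lemma~\ref{MVlem} with the correct indices and carrying out the bookkeeping of powers of the ${\tilde T}_i$ so as to see that the divided Wronskian $\operatorname{Wr}^\dag(r_1,\dots,r_{k-1},r_{k+1})$ satisfies precisely~\eqref{wre}, and not that equation spoiled by spurious ${\tilde T}_i$-factors. Pinning the parameter down as exactly $c\mapsto 1/c$, rather than up to an inessential affine reparametrisation of the pencil — which would suffice for the application in the proof of Theorem~\ref{flowthm} — is a comparatively routine leading-coefficient check.
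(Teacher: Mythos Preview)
Your proposal is correct and follows essentially the same route as the paper: the key step in both is to apply Lemma~\ref{MVlem} to show that $\operatorname{Wr}^\dag(r_1,\dots,r_{k-1},r_{k+1})$ solves the Wronskian equation~\eqref{wre} in direction~$k$, which places the $k$th entry of $\beta(e^{cX_k}\mathcal F)$ on the generation pencil. The paper's proof is terser and treats the $(R+1-k)$th entry symmetrically by the same Wronskian identity, writing directly $\operatorname{Wr}(y_k^{\mathcal F},y_k(x,c))=c\,{\tilde T}_k\,y_{k-1}^{\mathcal F}y_{k+1}^{\mathcal F}$ and $\operatorname{Wr}(y_{R+1-k}^{\mathcal F},y_{R+1-k}(x,c))=c\,{\tilde T}_{R+1-k}\,y_{R-k}^{\mathcal F}y_{R+2-k}^{\mathcal F}$; your alternative cyclotomicity argument for the second entry is a valid shortcut but not what the paper does.
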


\begin{proof}
It is enough to note that, in view of~\eqref{yfe} and Lemma~\ref{MVlem}, we have
\begin{gather*} \operatorname{Wr}(y_k^{\mathcal F},y_k(x,c))  = c{\tilde T}_k y_{k-1}^{\mathcal F} y_{k+1}^{\mathcal F},\\
    \operatorname{Wr}(y_{R+1-k}^{\mathcal F},y_{R+1-k}(x,c))  = c{\tilde T}_{R+1-k} y_{R-k}^{\mathcal F} y_{R+2-k}^{\mathcal F}.\tag*{\qed}
    \end{gather*}
\renewcommand{\qed}{}
\end{proof}

It remains to consider the case $k=n$ in type $A_{2n}$.
\begin{lem}\label{f2lem}
In type $A_{2n}$, the image $\beta(e^{-cX_n}{\mathcal F})\!\in\! {\mathbb P}({\mathbb C}[x])^{2n}$ coincides with the tuple $\bm y^{(n,\sigma)}(1/c)$ of Theo\-rem~{\rm \ref{s2thm}}.
\end{lem}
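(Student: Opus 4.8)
The plan is to reduce the statement, exactly as in the proof of Lemma \ref{f1lem}, to an identity of divided Wronskians, and then to match that identity against the recursive definition of the cyclotomic generation procedure for $L_i=2$ given in Section \ref{sgen1}. First I would fix a reduced Witt basis $(r_k)_{k=1}^{R+1}$ adapted to the isotropic flag ${\mathcal F}$, so that $F_k = \operatorname{span}_{\mathbb C}(r_1,\dots,r_k)$ and $\beta({\mathcal F})=\bm y^{\mathcal F}$ with $y^{\mathcal F}_k = \operatorname{Wr}^\dag(r_1,\dots,r_k)$. Since $R=2n$ and $p=n$, the relevant generator is $X_n = E_{R+2-n,n} = E_{n+2,n}$, so $e^{-cX_n}\bm r = (r_1,\dots,r_{n-1}, r_n - c r_{n+2}, r_{n+1},\dots,r_{R+1})$, and by \eqref{yfe2} the only entry of $\beta(e^{-cX_n}{\mathcal F})$ that changes is the $n$th, namely
\begin{gather*}
 y_n(x,c) = \operatorname{Wr}^\dag(r_1,\dots,r_{n-1}, r_n - c r_{n+2}) = y^{\mathcal F}_n - c\,\operatorname{Wr}^\dag(r_1,\dots,r_{n-1},r_{n+2}).
\end{gather*}
Here I recall that $\bi = \sigma^{M_i/2}i = n+1$ in the present type $A_{2n}$ situation (since $M_n=2$, $M_i/2=1$, and $\sigma$ sends node $n$ to node $n+1$), so the relevant $A_2$ subdiagram has nodes $n$ and $\bi=n+1$; note that $y^{\mathcal F}_{n+1}$ and $y^{\mathcal F}_{2n-n+1}$ coincide as indices but here I must be careful: $R+1-n = n+1$, so in fact generation in direction $n$ with $L_n=2$ affects only components $y_n$ and $y_{n+1}$ of the tuple, which under the cyclotomic constraint $y_i(-x)\simeq y_{R+1-i}(x)$ are related by $y_{n+1}(-x)\simeq y_n(x)$.

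Next I would invoke Lemma \ref{MVlem} (the Wronskian–of–Wronskians identity) to rewrite $\operatorname{Wr}^\dag(r_1,\dots,r_{n-1},r_{n+2})$ in terms of the $y^{\mathcal F}_j$ and the frame functions $\tilde T_j$. Concretely, with $s=n+1$ and appropriate omissions one obtains expressions of the form $\operatorname{Wr}(y^{\mathcal F}_n, \cdot)$ and $\operatorname{Wr}(y^{\mathcal F}_{n+1},\cdot)$ equal to products $\tilde T_* y^{\mathcal F}_{*} y^{\mathcal F}_{*}$, paralleling the two displayed identities at the end of the proof of Lemma \ref{f1lem}. The point is then to compare these with the three-step recursion defining $y^{(n,\bi,n)}_n(x;c)$ in Section \ref{sgen1}: first $y^{(n)}_n$ solves $\operatorname{Wr}(y_n, x^{\langle\Lambda_0,\alpha^\vee_n\rangle+1} y^{(n)}_n) = x^{\langle\Lambda_0,\alpha^\vee_n\rangle}T_n \prod_{j\neq n} y_j^{-\langle\alpha_j,\alpha^\vee_n\rangle}$, i.e.\ $\operatorname{Wr}(y_n, \tilde T_n \text{-normalized primitive}) = \tilde T_n\, y_{n-1} y_{\bi}$ after absorbing the $x$-power into $\tilde T_n = x^{\langle\Lambda_0,\alpha^\vee_n\rangle}T_n$; then $y^{(\bi,n)}_\bi$ via \eqref{ybidef}; then $y^{(n,\bi,n)}_n$. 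Tracking the constants of integration, the single free parameter $c$ in the cyclotomic generation (the constant in the $y^{(\bi,n)}_\bi$ step) must be identified with $1/c$ of the flag flow; this is exactly the bookkeeping already done at the level of signs in Lemma \ref{flem1}, whose relation $c=(-1)^{2+\langle\Lambda_\infty,\alpha^\vee_\bi+\alpha^\vee_n\rangle}\bar c = -\bar c$ pins down the normalization.

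The main obstacle, and the step I would spend the most care on, is the middle one: verifying that the flag-theoretic quasi-polynomial $\operatorname{Wr}^\dag(r_1,\dots,r_{n-1},r_{n+2})$ really does satisfy the \emph{composite} differential equation obtained by chaining the three Wronskian relations $y_n \rightsquigarrow y^{(n)}_n \rightsquigarrow y^{(\bi,n)}_\bi \rightsquigarrow y^{(n,\bi,n)}_n$, rather than just one of them. This is where the Witt-basis property \eqref{rwitt} — equivalently $B(r_i,r_j)=0$ unless $i+j=R+2$ — is essential: it forces $\operatorname{Wr}^\dag(r_1,\dots,\widehat r_k,\dots,r_{R+1}) = (-1)^{-\deg r_{R+2-k}} r_{R+2-k}(-x)$, which supplies precisely the ``self-dual'' identifications $y^{(\bi)}_\bi(-x)\simeq y^{(n)}_n(x)$ etc.\ that make the chained Wronskians collapse correctly, and which yield \eqref{fee}, i.e.\ $y^{(n,\bi,n)}_\bi(-x;c) = (-1)^{\deg y^{(n,\bi,n)}_n} y^{(n,\bi,n)}_n(x;c)$, so that the output tuple is again cyclotomic. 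Once the composite equation and the normalization of $c$ are matched, uniqueness of the solution holomorphic at the origin (guaranteed by $\langle\Lambda_0,\alpha^\vee_n\rangle\notin{\mathbb Z}$ under \eqref{s2L1}, here in the guise of the half-integer exponent condition in \eqref{l0a} for $p=n$) forces $y_n(x,-c) = y^{(n,\bi,n)}_n(x;1/c)$ up to scalar, and hence $\beta(e^{-cX_n}{\mathcal F}) = \bm y^{(n,\sigma)}(1/c)$ as claimed. I would close by remarking that Theorem \ref{flowthm} then follows, since these lemmas together show the $X_k$-flows exhaust the cyclotomic generation steps.
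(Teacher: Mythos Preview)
Your proposal contains a genuine error that prevents the argument from going through. You claim, citing \eqref{yfe2}, that under the flow $e^{-cX_n}$ \emph{only} the $n$th entry of $\beta({\mathcal F})$ changes. This is false: in type $A_{2n}$ with $p=n$ we have $R+2-p=n+2$, so the modified basis has $r_n - c\,r_{n+2}$ at position $n$, and the $(n{+}1)$st divided Wronskian
\[
\operatorname{Wr}^\dag(r_1,\dots,r_{n-1},\, r_n - c\,r_{n+2},\, r_{n+1})
 = y^{\mathcal F}_{n+1} - c\,\operatorname{Wr}^\dag(r_1,\dots,r_{n-1},r_{n+2},r_{n+1})
\]
also changes. (Only from position $n+2$ onward does $r_{n+2}$ already occur in the Wronskian, forcing the correction term to vanish.) This is not a cosmetic point: the $L_n=2$ cyclotomic generation of Section~\ref{sgen1} modifies \emph{both} $y_n$ and $y_{n+1}$ (here $\bi=n+1$), so if the flag flow left $y_{n+1}$ fixed the two procedures could never match. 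Your own paragraph observing that generation ``affects only components $y_n$ and $y_{n+1}$'' is already in contradiction with your first claim.

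Relatedly, the plan to show that the single object $\operatorname{Wr}^\dag(r_1,\dots,r_{n-1},r_{n+2})$ satisfies a ``composite differential equation obtained by chaining the three Wronskian relations'' is misdirected. The paper does not chain anything. It introduces the \emph{intermediate} quasi-polynomial
\[
y^{(n)}_n := \operatorname{Wr}^\dag(r_1,\dots,r_{n-1},r_{n+1})
\]
(note $r_{n+1}$, not $r_{n+2}$) and then checks, directly from Lemma~\ref{MVlem}, the three separate identities
\begin{align*}
 \operatorname{Wr}\big(y^{\mathcal F}_n,\, y^{(n)}_n\big) &= \tilde T_n\, y^{\mathcal F}_{n-1}\, y^{\mathcal F}_{n+1},\\
 \operatorname{Wr}\big(y^{\mathcal F}_{n+1},\, y_{n+1}(x,c)\big) &= -c\,\tilde T_{n+1}\, y^{(n)}_n\, y^{\mathcal F}_{n+2},\\
 \operatorname{Wr}\big(y^{(n)}_n,\, y_n(x,c)\big) &= \tilde T_n\, y_{n-1}\, y_{n+1}(x,c),
\end{align*}
which are exactly the defining relations of the three generation steps $y_n\rightsquigarrow y^{(n)}_n$, $y_{n+1}\rightsquigarrow y^{(n+1,n)}_{n+1}$, and $y^{(n)}_n\rightsquigarrow y^{(n,n+1,n)}_n$. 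No appeal to the Witt relation~\eqref{rwitt}, to holomorphy at the origin, or to a uniqueness argument is needed; the match is purely algebraic once one has correctly recorded both changed entries $y_n(x,c)$ and $y_{n+1}(x,c)$ and written down the right intermediate object.
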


\begin{proof}
We have \eqref{yfe2} with $p=n$. Namely,
\begin{gather*}
 e^{cX_n} \bm r = (r_1,\dots, r_{n-1}, r_n + c r_{n+2}, r_{n+1}, r_{n+2}, \dots, r_{2n+1})
 \end{gather*}
and hence
\begin{gather*} \beta\big(e^{cX_n}{\mathcal F}\big)
= \big(y_1^{\mathcal F}, \dots, y_{n-1}^{\mathcal F}, y_n(x,c), y_{n+1}(x,c), y_{n+2}^{\mathcal F} ,\dots, y_{2n+1}^{\mathcal F}\big),
\end{gather*}
where
\begin{gather*} y_n(x,c)  :=  \operatorname{Wr}^\dag(r_1,\dots,r_{n-1},r_n+c r_{n+2})
 = y_n^{\mathcal F} + c \operatorname{Wr}^\dag(r_1,\dots,r_{n-1}, r_{n+2})
 \end{gather*}
and
\begin{gather*} y_{n+1}(x,c)  :=  \operatorname{Wr}^\dag(r_1,\dots,r_{n-1},r_n+c r_{n+2},r_{n+1})
 = y_{n+1}^{\mathcal F} + c \operatorname{Wr}^\dag(r_1,\dots,r_{n-1}, r_{n+2}, r_{n+1}).
 \end{gather*}
Now let
\begin{gather*}
 y^{(n)}_n := \operatorname{Wr}^\dag(r_1,\dots,r_{n-1},r_{n+1}).
 \end{gather*}
Then by Lemma~\ref{MVlem} we have
\begin{gather*}
 \operatorname{Wr}\big( y^{\mathcal F}_n, y^{(n)}_n\big)  =  {\tilde T}_n y^{\mathcal F}_{n-1} y^{\mathcal F}_{n+1},  \\
\operatorname{Wr}\big(y_{n+1}^{\mathcal F},y_{n+1}(x,c)\big)  = -c{\tilde T}_{n+1} y^{(n)}_n y_{n+2}^{\mathcal F},\\
    \operatorname{Wr}\big(y^{(n)}_n,y_n(x,c)\big)  = {\tilde T}_{n} y_{n-1} y_{n+1}(x,c).
    \end{gather*}
This establishes the lemma.
\end{proof}

\subsection*{Acknowledgments}

The research of AV is supported in part by NSF grant DMS-1362924. CY is grateful to the Department of Mathematics at UNC Chapel Hill for hospitality during a visit in October 2014 when this work was initiated. CY thanks Benoit Vicedo for valuable discussions.


\pdfbookmark[1]{References}{ref}
\LastPageEnding

\end{document}